\documentclass[11pt]{article}

\usepackage[letterpaper,margin=1in]{geometry}
\usepackage{amsmath,amssymb,amsthm,mathtools}
\usepackage{enumitem}
\usepackage{xcolor}
\usepackage{etoolbox}
\usepackage{graphicx}
\usepackage{tikz}
\usetikzlibrary{arrows,automata,positioning}
\usetikzlibrary{arrows.meta,calc}
\usepackage[colorlinks=true,linkcolor=purple,citecolor=blue,urlcolor=blue,linktoc=page]{hyperref}

\newlength{\tocsectionskip}
\newlength{\tocpartskip}
\makeatletter
\patchcmd{\l@section}{\addvspace{1.0em \@plus\p@}}{\addvspace{\tocsectionskip}}{}{\PackageWarning{main}{toc section patch failed}}
\patchcmd{\l@part}{\addvspace{2.25em \@plus\p@}}{\addvspace{\tocpartskip}}{}{\PackageWarning{main}{toc part patch failed}}
\makeatother

\theoremstyle{definition}
\newtheorem{theorem}{Theorem}[section]
\newtheorem{lemma}[theorem]{Lemma}
\newtheorem{proposition}[theorem]{Proposition}
\newtheorem{corollary}[theorem]{Corollary}
\newtheorem{example}[theorem]{Example}
\newtheorem{remark}[theorem]{Remark}
\newtheorem*{lemma*}{Lemma}
\numberwithin{equation}{section}

\newcommand{\RR}{\mathbb{R}}
\newcommand{\QQ}{\mathbb{Q}}
\newcommand{\ZZ}{\mathbb{Z}}
\newcommand{\NN}{\mathbb{N}}
\newcommand{\CC}{\mathbb{C}}
\newcommand{\HH}{\mathbb{H}}
\newcommand{\TT}{\mathbb{T}}
\newcommand{\cM}{\mathcal{M}}
\newcommand{\cT}{\mathcal{T}}
\newcommand{\cD}{\mathcal{D}}
\newcommand{\cR}{\mathcal{R}}
\newcommand{\cK}{\mathcal{K}}
\newcommand{\HGAS}{\mathrm{HGAS}}
\newcommand{\HLS}{\mathrm{HLS}}
\newcommand{\gradQ}{\operatorname{grad}_Q}
\newcommand{\gradS}{\operatorname{grad}_{S_Q}}
\newcommand{\re}{\operatorname{Re}}
\newcommand{\im}{\operatorname{Im}}
\newcommand{\avg}[1]{\langle #1\rangle}
\newcommand{\lem}{\le_{\mathrm m}}
\newcommand{\pg}[1]{p.~#1}

\makeatletter
\newcommand{\paperpart}[1]{%
  \par\addvspace{3.5ex \@plus 1ex \@minus .2ex}%
  \phantomsection\addcontentsline{toc}{part}{#1}%
  \noindent\rule{\textwidth}{0.4pt}\par\nobreak\vspace{1.2ex}%
  {\centering\normalfont\Large\bfseries #1\par}\nobreak\vspace{0.6ex}%
  \noindent\rule{\textwidth}{0.4pt}\par\nobreak\vspace{3ex}%
  \@afterheading}
\makeatother

\title{Global asymptotic stability of homogeneous polynomial vector fields is undecidable}
\author{Ivan O. Shevchenko\thanks{Email: \texttt{i3shevch@uwaterloo.ca}}
\and Jun Liu\thanks{Email: \texttt{j.liu@uwaterloo.ca}}
\and Xinzhi Liu\thanks{Email: \texttt{xzliu@uwaterloo.ca}}}
\date{}
\makeatletter
\renewcommand{\@maketitle}{%
  \newpage
  \null
  \vskip 2em%
  \begin{center}%
  \let \footnote \thanks
    {\LARGE \@title \par}%
    \vskip 1.5em%
    {\large
      \lineskip .5em%
      \begin{tabular}[t]{c}%
        \@author
      \end{tabular}\par}%
    \vspace{1em}%
    {\large University of Waterloo, Department of Applied Mathematics\par}%
  \end{center}%
  \par
  \vskip 1.5em}
\makeatother

\begin{document}

\pagenumbering{roman}
\maketitle
\thispagestyle{empty}

\begin{abstract}
We study whether global asymptotic stability (GAS) of the origin is decidable for homogeneous polynomial vector fields with rational coefficients. Our main technical contribution is showing that, for such vector fields of odd degree $d \geq 3$ in every sufficiently large dimension $n$, the problem of deciding GAS of the origin is many-one complete for the class of recursively enumerable sets (and hence undecidable). Using the tools we have built up, we also improve on some recent work by proving a similar result for Lyapunov stability instead of GAS.

The proof of our main contribution splits naturally into two parts:
\begin{itemize}
    \item In the first part we give, for every fixed dimension $n$ and odd degree $2k+1$, a computable map which sends a homogeneous polynomial vector field of degree $2k+1$ on $\RR^n$ to a homogeneous polynomial vector field of degree three on $\RR^m$, where $m=\binom{n+k-1}{k}$, and preserves both GAS and Lyapunov stability. As a consequence, the problems of deciding whether the origin is GAS or Lyapunov stable for rational-coefficient homogeneous polynomial vector fields of any degree, of any fixed odd degree $d \geq 3$, and of degree three are pairwise many-one equivalent.
    \item In the second part we prove that there is a fixed dimension $n_*$ and a fixed odd degree $d_*$ such that the set of rational-coefficient homogeneous polynomial vector fields on $\RR^{n_*}$ of degree $d_*$ whose origin is GAS is many-one complete. This is done by ``transferring'' many-one completeness between several different problems, starting from the solvability of a Diophantine equation and eventually arriving at our desired GAS decision problem.
\end{itemize}
\end{abstract}

\newpage
\pagestyle{empty}
\setcounter{tocdepth}{2}
{\small\tableofcontents}
\clearpage
\pagestyle{plain}
\pagenumbering{arabic}

\section{Introduction}\label{sec:intro}

Mathematicians working in the fields of dynamical systems and control theory have a long tradition of searching for algorithms to decide whether a given system (e.g., a system of ordinary differential equations) exhibits a certain property. This is not surprising, given that determining such properties is often closely tied to applications in the real world. One such property is that of \textit{global asymptotic stability}, which is an example of a \textit{stability} property of a system. Stability properties describe the long-term behaviour of a system as time goes on, for example, whether all system trajectories settle down to an \textit{equilibrium point} or if there are some trajectories which run off to infinity (the second option does not preclude the existence of an equilibrium point somewhere in the system). An equilibrium point is said to be globally asymptotically stable (GAS) if no matter where a trajectory of the system starts, it approaches the equilibrium, and starting close to the equilibrium guarantees that the trajectory never drifts too far away (see Figure~\ref{fig:gas} on page~\pageref{fig:gas}).

If a stability property such as GAS is shown to be undecidable, researchers in dynamical systems and control theory can stop searching for a general algorithm to decide this property and instead direct their attention to other questions. This is perhaps why V. I. Arnold thought the problem of deciding stability was important enough to include in his 1976 list of ``Problems of present day mathematics'' \cite{Arnold76}, in the special case of systems $\dot{x} = f(x)$ determined by polynomial vector fields $f: \RR^{n} \to \RR^{n}$ with rational coefficients:

\begin{quote}
\textit{Let a vector field be given by polynomials of a fixed degree, with rational coefficients. Does an algorithm exist, allowing to decide, whether the stationary point is stable?}
\end{quote}

\vspace{2px}

\noindent \textbf{Our work.} We will consider the subclass of polynomial vector fields with rational coefficients which are \textit{homogeneous of degree} $d \geq 1$: $f(\lambda x) = \lambda^{d}f(x)$ for all $\lambda>0$. This property necessarily implies that the origin $0 \in \RR^{n}$ is an equilibrium point of the system, and stability of such systems is always considered with respect to the origin. Any undecidability result for this subclass will imply an analogous undecidability result for polynomials without the homogeneity condition (Corollary~\ref{cor:nonhomog}).

The reason we require rational coefficients is that each such function $f$ may then be described by a finite amount of data: the dimension $n$ of the state space $\RR^{n}$, the degree of homogeneity $d$, and the finite list of rational coefficients. We may fix an ``encoding'' of this data such that every rational-coefficient homogeneous polynomial vector field $f$ corresponds to some ``code'' $e \in \NN$. We denote by $\mathrm{HGAS} \subseteq \NN$ the set of all codes $e \in \NN$ which correspond to homogeneous polynomial vector fields $f$ whose origin is GAS, over all state dimensions $n \geq 1$ and degrees of homogeneity $d \geq 1$. For the subset of codes corresponding to functions in a fixed dimension $n$ and of fixed degree $d$, we write $\mathrm{HGAS}(n,d) \subseteq \mathrm{HGAS}$.

After including the problem in his 1976 list, Arnold later conjectured \cite{dCD93} that for some sufficiently high dimension and degree, determining stability is undecidable. For global asymptotic stability of homogeneous vector fields, the main result of this paper proves his conjecture.

\begin{theorem}\label{thm:main}
There is an integer $N\ge1$ such that for every $n\ge N$ and every odd $d\ge3$, the set $\HGAS(n,d)$ is many-one complete (and hence undecidable).
\end{theorem}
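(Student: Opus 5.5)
The proof has three parts: an upper bound placing $\HGAS(n,d)$ in r.e., a hardness result in one fixed dimension and degree, and a transfer of that hardness to every $n\ge N$ and every odd $d\ge3$.

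\textbf{Upper bound.} For membership in the class of r.e.\ sets, I would show that GAS of the origin for a homogeneous $f$ has a finite, checkable certificate. For homogeneous systems GAS is equivalent to the existence of a homogeneous polynomial Lyapunov function, and such a function can be perturbed to one with rational coefficients. Given a candidate $V$, the conditions $V>0$ and $\langle\nabla V,f\rangle<0$ on the unit sphere are first-order sentences over the reals, so they are decidable by Tarski--Seidenberg. The semi-decision procedure enumerates all candidates and halts once a valid one is found. The step needing care is the existence of a polynomial, rather than merely smooth, Lyapunov function; here I would invoke the known converse theorem for homogeneous asymptotically stable systems, then approximate and homogenize.

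\textbf{Hardness at one point.} Following the abstract, I would first obtain a fixed pair $(n_*,d_*)$ with $\HGAS(n_*,d_*)$ many-one complete. This comes from a chain of reductions starting at Hilbert's tenth problem in a universal form with a fixed number of variables and fixed degree, which is r.e.-complete. The idea is to encode ``the polynomial $P$ has no integer zero'' as GAS of a vector field. Roughly, integer points are made attracting or escaping directions via a trigonometric-type penalty such as $\sum_i \sin^2(\pi x_i)$, approximated by polynomial surrogates on projective coordinates. A solution then yields an invariant ray or a non-decaying trajectory, while the absence of solutions yields a strict Lyapunov function. Since ``no solution'' is co-r.e., the direction needs adjusting: the reduction should map ``solution exists'' to GAS, for instance by making a solution open a stabilizing mechanism. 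I expect this construction, with uniform rational coefficients and a fixed $(n_*,d_*)$, to be the main obstacle. Compactness and homogeneity must turn a quantitative gap (if $P(z)\ne0$ then $|P(z)|\ge1$) into a robust stability dichotomy.

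\textbf{Transfer to all $(n,d)$.}
\begin{enumerate}[label=(\roman*)]
\item Degree reduction to cubic via the abstract's map (Veronese-type lifting), from degree $2k+1$ on $\RR^{n_*}$ to degree $3$ on $\RR^m$ with $m=\binom{n_*+k-1}{k}$, preserves GAS. This gives hardness of $\HGAS(m,3)$.
\item Dimension padding. Map $f$ on $\RR^m$ to $(f(x),-y\|y\|^{2})$ on $\RR^{m+j}$. This preserves degree $3$ and GAS, since it is a product system and each factor is GAS. Hence $\HGAS(n,3)$ is hard for all $n\ge m=:N$.
\item Degree raising. Replace $f$ by $\|x\|^{d-3}f(x)$ for odd $d$. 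The factor $\|x\|^{d-3}$ is a polynomial because $d-3$ is even. Away from the origin this only reparametrizes time along orbits, so GAS is preserved and reflected. This gives hardness of $\HGAS(n,d)$ for all $n\ge N$ and odd $d\ge3$.
\end{enumerate}
Combining these many-one reductions with the upper bound proves the theorem.
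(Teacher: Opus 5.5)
Your three-part architecture matches the paper's. The paper proves Theorem~\ref{thm:main} by feeding Theorem~\ref{thm:fixedpair} (one pair $(n_*,d_*)$) into Theorem~\ref{thm:transfer} (the cubic lift of Theorem~\ref{thm:cubicization}, then the padding of Lemma~\ref{lem:padding}), with recursive enumerability from Proposition~\ref{prop:re}.

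\textbf{The gap.} The genuine gap is hardness at a single pair, which is the whole of Part~II. You assert it, and the mechanism you sketch would not work, for three reasons.

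First, a polynomial vanishing on all of $\ZZ^k$ is identically zero. So no polynomial surrogate of $\sum_i\sin^2(\pi x_i)$ sees every integer candidate. Periodic functions must be generated dynamically by auxiliary oscillator coordinates. Since GAS concerns every initial state, you must then control all unintended trajectories of those coordinates; this is Theorem~\ref{thm:selector}, resting on Propositions~\ref{prop:modselect} and~\ref{prop:threemoments}.

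Second, the gap $|P(z)|\ge1$ gives no uniform margin, because the candidates are unbounded and accumulate at infinity in any compactification.

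Third, and most important, you give no mechanism for the direction you yourself flag, ``a solution exists $\Rightarrow$ GAS''. ``A solution opens a stabilizing mechanism'' does not say how one integer solution forces every trajectory to the origin.

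The missing ideas are these.
\begin{enumerate}
\item[(a)] The angular criterion of Lemma~\ref{lem:gascrit} turns GAS into the statement that \emph{every} invariant measure of a compact flow gives a reward negative average.
\item[(b)] One needs a detector whose time average, along every trajectory meeting (i)--(iv) of Theorem~\ref{thm:selector}, is a constant of modulus one times a single number that is positive exactly when a solution exists.
\end{enumerate}
The paper obtains (b) in four steps. It rewrites the MRDP equation so that $e$ enters only the right-hand side (Lemma~\ref{lem:template}). It encodes solvability as the constant Fourier coefficient $\sum_n(\lambda\eta)^{|n|^2}$, summed over the solutions $n$, of a product of theta series (Lemma~\ref{lem:constterm}). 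It reads this coefficient off along a linear flow with frequencies $\pm\sqrt{p_\ell}$. Finally it adds a running total $\dot z=\kappa_e h_e(x)$.

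Then, if a solution exists, every ergodic measure off the equator is pushed to $z=\infty$, where the reward is negative (as it is on the equator), so $\beta_e<0$. If no solution exists, the intended trajectories keep $z$ bounded and carry positive reward (Proposition~\ref{prop:signgap}). Theorem~\ref{thm:wrapper} then packages the flow and reward into a homogeneous $F_e$ whose dimension and degree do not depend on $e$.

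\textbf{Upper bound.} Existence of a homogeneous \emph{polynomial} Lyapunov function for every GAS homogeneous polynomial vector field is not a known theorem. ``Approximate and homogenize'' does not produce one. If a homogeneous $p$ of degree $2r+2$ approximates a degree-two Lyapunov function $W$ on the sphere, Euler's identity gives $\nabla p\cdot f$ the radial term $(2r+2)p\,a_f$ instead of $2W a_f$. This can destroy negativity where $a_f>0$. Dividing by $|x|^{2r}$ restores the degree and gives the rational form $p/|x|^{2r}$ of \cite{AEK} used in Proposition~\ref{prop:re}. With that form your enumeration-plus-Tarski argument goes through, searching over $r$ as well.

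\textbf{Transfer.} The bare Veronese lift only reproduces the dynamics on the cone $V$. It preserves GAS because of the penalty term $-K\gradQ W$ in \eqref{eq:H}. Your padding and degree raising are fine, with one addition. In the time-change argument you should check that $dt=|x|^{d-3}\,ds$ maps $[0,\infty)$ onto $[0,\infty)$ in each implication. This holds because solutions are bounded and a nonzero solution cannot reach the equilibrium in finite time. The paper sidesteps this: the factor equals one on the unit sphere, so $\beta$ is unchanged.
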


When we write \textit{many-one complete}, we will always mean ``many-one complete for the class of recursively enumerable sets.'' Since the cases of even degree $d$ and of degree $d=1$ for any dimension $n \geq 1$ are known to be decidable (see Remark \ref{rem:even} and Lemma \ref{lem:linear}), our Theorem \ref{thm:main} completes the classification of the decidability of homogeneous polynomial vector fields with respect to degree. This theorem, together with some of its most important corollaries and the intermediate results building up to it, has been formalized in Lean~4 (Section~\ref{subsec:formal}).

We posit that Theorem \ref{thm:main} is a ``gateway'' to proving undecidability of many other properties of interest in dynamical systems and control theory, by reducing or relating these other properties to GAS of homogeneous polynomial vector fields. For example, in Corollary~\ref{cor:nonhomog} we show that Theorem \ref{thm:main} implies the analogous undecidability results for both GAS and the stability notion of \textit{local asymptotic stability} of non-homogeneous polynomial vector fields, thereby proving Arnold's conjecture for both of these stability notions for \textit{every} degree $d \geq 3$ (including even degrees). The first two authors have also very recently used Theorem \ref{thm:main} to prove that global controllability (a property of great practical importance in engineering mathematics) is undecidable for polynomial control systems with rational coefficients, and furthermore that the set of rational-coefficient controllable polynomial systems is not even recursively enumerable \cite{liu2026contr}. The same conclusions are also shown to hold for the class of control systems with polynomial drift term of the form $\dot{\xi} = f(\xi) + Bu$. These results address two conjectures posed by Sontag in 1988 \cite{Sontag1988} (one is proven and the other is disproven). The authors believe that, through similar reductions to Theorem \ref{thm:main}, many other undecidability results are now also within reach.

\medskip

\noindent \textbf{Relation to the literature.} Undecidability results are by now fairly common in dynamical systems and control theory, and many of them rely on the observation that sufficiently complicated dynamical systems can simulate Turing machines. Moore showed this for the motion of a single particle in a suitable three-dimensional potential~\cite{Moore1990}, Koiran, Cosnard, and Garzon for iterated piecewise-linear maps in dimension two~\cite{KoiranCosnardGarzon94}, and Koiran and Moore for closed-form analytic maps in dimensions one and two~\cite{KoiranMoore99}, so that practically any question about the long-term behaviour of a single trajectory of such systems is undecidable. It is more difficult, however, to obtain similar results about properties of all trajectories at once. Blondel, Bournez, Koiran, and Tsitsiklis proved, answering a question of Sontag, that GAS is undecidable for discrete-time saturated linear systems and for continuous piecewise affine maps in dimension three~\cite{BBKT2001}, and Blondel and Tsitsiklis proved that boundedness of trajectories is undecidable for discrete-time linear switched systems~\cite{BlondelTsitsiklis2000}. For polynomial ODEs, Gra{\c{c}}a, Buescu, and Campagnolo proved that it is undecidable whether the maximal interval of existence of a solution is bounded~\cite{gracca2008boundedness}. It is important to note that in each of these results either the system is not given by a smooth (let alone polynomial) vector field, or the property in question concerns a single trajectory rather than every trajectory of the system at once (as is the case for GAS).

For continuous-time systems $\dot x = f(x)$, da Costa and Doria showed that stability of an equilibrium point is undecidable when $f$ may be written as an arithmetic combination of elementary functions such as sine and exponential, thus resolving a non-polynomial version of Arnold's conjecture~\cite{dCD93}. Much later, Ahmadi showed that deciding global and local asymptotic stability for polynomial vector fields (a much smaller class than the one considered by da Costa and Doria) is strongly NP-hard already for homogeneous vector fields of degree $d = 3$, and asked explicitly whether this problem is decidable at all \cite{Ahmadi12} (this is the same problem as in Theorem \ref{thm:main}). Ahmadi and El Khadir also proved that whenever the origin of a homogeneous polynomial vector field is GAS, the system admits a \textit{Lyapunov function} of a special rational form \cite{AEK} (a Lyapunov function may be thought of as a ``certificate of stability'' for the system). Combined with Tarski's quantifier elimination over the real numbers \cite{Tarski}, this implies that $\HGAS$ is recursively enumerable (see Proposition \ref{prop:re}), so the many-one completeness in Theorem \ref{thm:main} is the strongest hardness statement one could hope for.

Very recently, Korda \cite{Korda} posted a resolution of Arnold's conjecture for the weaker stability notion of \textit{Lyapunov stability}, obtained with AI systems. Roughly speaking, Lyapunov stability only requires that trajectories starting close to the equilibrium point must remain close to it for all forward time. Denoting by $\mathrm{HLS}$ and $\mathrm{HLS}(n, d)$ the analogous sets of codes to $\mathrm{HGAS}$ and $\mathrm{HGAS}(n, d)$ but for Lyapunov stability instead of GAS, in \cite{Korda} it is proven that there exist (very large) integers $n \geq 1$ and odd $d \geq 3$ such that $\mathrm{HLS}(n,d)$ is undecidable. In the supplementary document \cite{KordaSupp} this result was strengthened to the $n=5$ case, although no attempt was made to lower the degree of homogeneity. Our Theorem~\ref{thm:transfer}, which we use as a tool for proving Theorem~\ref{thm:main}, may be used in conjunction with the results in \cite{Korda,KordaSupp} to greatly improve on the ``degree'' part of this undecidability result: we show in Corollary~\ref{cor:ls-cubic} that there exists an integer $N \geq 1$ such that for every $n \geq N$ and odd $d \geq 3$ (in particular we may take $d=3$), the set $\mathrm{HLS}(n,d)$ is \textit{many-one hard} (that is, every recursively enumerable set many-one reduces to it).

The document \cite{KordaSupp} also shows that the property of GAS is undecidable for non-homogeneous polynomial vector fields of the form $f(x) = F(x) - x$, where $F$ is homogeneous, in some large dimension $n$ and large odd degree $d \geq 3$. This however does not address the homogeneous case, does not say anything about polynomial vector fields of small degree $d$, and cannot be used to conclude similar undecidability results for local asymptotic stability. All of this is done in Theorem \ref{thm:main} and Corollary~\ref{cor:nonhomog}.

\section{Proof architecture and overview}\label{sec:overview}

For our approach we will require many preliminaries from computability theory, ordinary differential equations, and ergodic theory, which are described in Section~\ref{sec:prelim}. We will make heavy use of some alternative characterizations of the properties of GAS and Lyapunov stability, which will be proven in Section~\ref{sec:angular}. The proof of Theorem~\ref{thm:main} is split into two main parts, which correspond to Part I (Sections~\ref{sec:lifting} and~\ref{sec:normalform}) and Part II (Sections~\ref{sec:arith}--\ref{sec:single}) of the paper. Theorem \ref{thm:main} follows immediately from these two parts, and various corollaries are discussed in Part III (Section~\ref{sec:main}).

The goal of Part I is to show that the properties of both GAS and Lyapunov stability for homogeneous polynomial vector fields of odd degree reduce to the same problem for vector fields of degree three, albeit in a higher dimension. The main result of Part I is the following. 

\begin{theorem}\label{thm:transfer}
Suppose there exist integers $n_{*} \geq 1$ and odd $d_{*} = 2k_{*}+1 \geq 3$ such that $\HGAS(n_{*}, d_{*})$ is many-one complete (respectively, undecidable). Set $N =\binom{n_*+k_*-1}{k_*}$. Then for every $n \geq N$ and odd $d \geq 3$, the set $\HGAS(n,d)$ is also many-one complete (respectively, undecidable). The same result holds for $\HLS$ and many-one hardness instead of $\HGAS$ and many-one completeness.
\end{theorem}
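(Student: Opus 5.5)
The plan is to build computable maps $\HGAS(n_*,d_*) \lem \HGAS(n,d)$, and likewise for $\HLS$, for every $n\ge N$ and every odd $d\ge3$. Many-one completeness then transfers: hardness goes through composition of reductions, and membership in the recursively enumerable sets is Proposition~\ref{prop:re}. Undecidability transfers the same way. For $\HLS$ only hardness is claimed, so no membership argument is needed. The reduction is a composition of three elementary moves, each of which must preserve both GAS and Lyapunov stability in both directions:
\begin{enumerate}[label=(\roman*)]
\item lifting from degree $2k_*+1$ on $\RR^{n_*}$ to degree $3$ on $\RR^{N}$;
\item raising the dimension from $N$ to any $n\ge N$;
\item raising the degree from $3$ to any odd $d$.
\end{enumerate}

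\textbf{Step (iii), degree raising.} Send a cubic field $g$ to $\tilde g(y)=\bigl(\sum_i y_i^2\bigr)^{(d-3)/2} g(y)$. This is a homogeneous polynomial field of degree $d$ with rational coefficients. Off the origin it is a positive multiple of $g$, so trajectories are time reparametrizations of the original ones. Orbits, boundedness, and convergence to $0$ are therefore unchanged. The only point to check is that the reparametrized time still tends to $\infty$ along orbits approaching $0$, so that "approaches $0$" is not replaced by "reaches $0$ in finite time". This follows from the homogeneity and the angular characterizations of Section~\ref{sec:angular}.

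\textbf{Step (ii), dimension padding.} Send $g$ on $\RR^N$ to the decoupled field $(y,z)\mapsto (g(y),-z_1^3,\dots,-z_{n-N}^3)$. Each equation $\dot z_j=-z_j^3$ is GAS, and a decoupled product is GAS (respectively Lyapunov stable) if and only if each factor is. I will do padding before degree raising, so that a single common factor $|(y,z)|^{d-3}$ multiplies the whole field.

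\textbf{Step (i), lifting.} This is the heart of the argument. Let $v:\RR^{n_*}\to\RR^{N}$ be the Veronese map, whose coordinates are all degree-$k_*$ monomials in $x$, and let $V=v(\RR^{n_*})$. For a trajectory of $\dot x=f(x)$ we have $\frac{d}{dt}v(x)=Dv(x)f(x)$. Each component of this is homogeneous of degree $(k_*-1)+(2k_*+1)=3k_*$ in $x$, so it can be written, by a computable choice, as a cubic form $G(y)$ in the monomials $y=v(x)$.

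Off $V$ this choice is arbitrary, so I will add a cubic term that makes $V$ attracting. Let $q_j(y)$ be the quadratic binomial relations $y_ay_b-y_cy_d$ cutting out the cone over the Veronese variety. Set
\[
g(y)=G(y)-c\sum_j q_j(y)\nabla q_j(y)
\]
with $c>0$ rational, which is still cubic. On $V$ the added term vanishes, so $V$ is invariant and the dynamics there are conjugate to those of $f$. The sign ambiguity $v(x)=v(-x)$ for even $k_*$ is harmless because $f$ is odd. Comparing norms, $|v(x)|\asymp|x|^{k_*}$, so convergence to $0$ and Lyapunov stability correspond exactly on $V$.

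\textbf{Main obstacle.} The hard direction is showing that GAS of $f$ implies GAS of $g$ on all of $\RR^N$, not just on $V$. Choosing $G$ badly could create an escaping trajectory or a new invariant ray off $V$. My first attempt will use the angular (projectivized) characterization of Section~\ref{sec:angular}, reducing GAS to dynamics on the sphere.
\begin{itemize}
\item Take $W=\sum_j q_j^2$, a quartic vanishing exactly on $V$.
\item Estimate $\dot W\le -2c|\nabla W|^2/4+|\langle\nabla W,G\rangle|$.
\item Use a Łojasiewicz-type inequality on the unit sphere to show that, for $c$ large enough (chosen computably from the coefficients, possibly with extra structure on $G$), the normalized trajectories converge to $V$.
\item Combine this with a Lyapunov function for $f$ on $V$ (Ahmadi–El Khadir) to conclude stability of $g$.
\end{itemize}
If the effective Łojasiewicz bound proves awkward, the fallback is to define $G$ through a polynomial projection onto $V$ that is equivariant under the torus action. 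This would make $W$ exactly decreasing, rather than decreasing only up to the cross term $\langle\nabla W,G\rangle$.
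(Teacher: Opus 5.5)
Your decomposition (Veronese lift with a cubic penalty $-c\sum_j q_j\nabla q_j$, padding by $-z_i^3$, then a common factor $|(y,z)|^{d-3}$) is the paper's, and steps (ii) and (iii) are fine. Step (i), however, has two genuine gaps.

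\textbf{The GAS half.} You plan to show, via a {\L}ojasiewicz inequality, that all normalized trajectories converge to $V$. This is false.

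The bound ``tangential gradient of $W$ on the sphere dominates $W$'' holds only in a tube $\{W\le\eta\}$ around $\Sigma=V\cap S^{N-1}$. It fails outside, because this gradient vanishes at every critical point of $W$ on the sphere off the cone, for instance at its maximum.

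Worse, suppose the tube is forward invariant with $W$ decaying exponentially inside it. Then the backward orbit of any point off the cone must leave the tube and can never re-enter it, so its $\alpha$-limit set lies in $\{W\ge\eta\}$. Hence, whenever the cone is proper, the angular flow of $g$ carries invariant measures supported away from $V$, however large $c$ is.

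The missing idea is the \emph{radial} effect of the penalty. By Euler's identity $y\cdot\nabla q_j(y)=2q_j(y)$, the radial rate of $g$ on the unit sphere is $a_G-2cW$. So if $c$ also satisfies $2c\eta>\max_{S^{N-1}}|a_G|+1$, then $a_g<-1$ off the tube. An ergodic measure gives the forward-invariant tube mass $0$ or $1$:
\begin{itemize}
\item if the mass is $1$, integrating $Y_gW\le-\lambda W$ puts the measure on $\Sigma$;
\item if the mass is $0$, its $a_g$-average is at most $-1$ (Lemma~\ref{lem:dichotomy}).
\end{itemize}
With Lemma~\ref{lem:gascrit} this gives $\beta_g<0\iff\beta_f<0$, with no Ahmadi--El Khadir step needed. (The paper's weighted form $Q=\sum_\alpha\binom{k}{\alpha}y_\alpha^2$ makes the radial rate on $\Sigma$ exactly $k\,a_F$ and yields \eqref{eq:sandwich}. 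Your Euclidean normalization matches the two angular flows on the cone only up to a bounded positive time change, which still preserves signs.)

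Two smaller points. First, the cross term $\nabla W\cdot G=2\sum_j q_j\,(\nabla q_j\cdot G)$ is automatically $O(W)$ near $\Sigma$, because $G$ is tangent to $V$ along $V$ (differentiate $q_j(v(x))=0$). So you need neither extra structure on $G$ nor an equivariant projection. Second, $c$ is made computable by enumerating rational candidates for the constants and deciding each defining inequality with Theorem~\ref{thm:tarski}; the search terminates because suitable constants exist.

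\textbf{The Lyapunov half.} More seriously, the proposal does not address the $\HLS$ half of the statement. On the cone you get that Lyapunov stability of $g$ implies that of $f$, but nothing you propose gives the converse. Ahmadi--El Khadir certificates exist only under GAS. Neither convergence to $V$ nor information about invariant measures can detect Lyapunov stability, which by Lemma~\ref{lem:lyap} is the uniform bound $\sup_{T\ge0}\max_u\int_0^Ta_g(\Phi^tu)\,dt<\infty$ (see Remark~\ref{rem:compare}). The projection onto $\Sigma$ of a trajectory converging to $\Sigma$ is only a pseudo-orbit, and its radial integral is not controlled by $M_f^*$ unless a true orbit shadows it.

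The paper obtains the shadowing from a rate condition on the penalty weight:
\begin{itemize}
\item In the tube the angular trajectory solves $\dot y=Y_G(y)+\epsilon(t)$ with $|\epsilon(t)|=O(e^{-\lambda t/2})$.
\item Solutions of $\dot y=Y_G(y)$ separate at rate at most the Lipschitz constant $L_G$.
\item Choose the weight so that $\lambda\ge2L_G+1$, and compare backward from the horizon $\tau$ with the $Y_G$-orbit through the point of $\Sigma$ nearest to $y(\tau)$. This orbit stays within $O(e^{-\lambda t/2})$ on all of $[0,\tau]$, so the two radial integrals differ by a constant $C_*$ independent of $\tau$ and of the initial point (Proposition~\ref{prop:shadow}).
\end{itemize}
Together with $a_g\le a_G$, and $a_g<-1$ before the trajectory enters the tube, this bounds $M_g^*$ in terms of $M_f^*$. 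This requirement, that normal contraction beat twice the tangential Lipschitz constant, is the extra condition your choice of $c$ must encode, and it is absent from the proposal.
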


Combining the $\HLS$ part of Theorem \ref{thm:transfer} with the work in \cite{Korda,KordaSupp} yields the following result, which we carefully prove in Section~\ref{sec:transfer}.

\begin{corollary}\label{cor:ls-cubic}
There is an integer $N \geq 1$ such that for every $n \geq N$ and every odd $d\ge3$, the set $\HLS(n,d)$ is many-one hard (and hence undecidable).
\end{corollary}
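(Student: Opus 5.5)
The plan is to combine the HLS part of Theorem~\ref{thm:transfer} with the undecidability result of \cite{Korda,KordaSupp}. Theorem~\ref{thm:transfer} needs a stronger hypothesis than undecidability, namely many-one hardness, so the first step is to read off from \cite{Korda,KordaSupp} a fixed dimension $n_*$ and a fixed odd degree $d_* = 2k_*+1 \ge 3$ for which $\HLS(n_*,d_*)$ is many-one hard. In \cite{Korda} the undecidability is proved by a computable reduction. One starts from a many-one complete problem, such as solvability of Diophantine equations (via MRDP) or the halting problem. To each instance one computably assigns a rational-coefficient homogeneous vector field in a fixed dimension and of fixed odd degree. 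This field is Lyapunov stable exactly when the instance is negative (or exactly when it is positive, depending on the orientation of the construction). I will check which orientation holds.

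If the reduction sends positive instances of an r.e.-complete set $K$ to Lyapunov stable systems, then $\HLS(n_*,d_*)$ is many-one hard directly. If it sends them to unstable systems, one only gets hardness of the complement, and I must argue differently. Lyapunov stability is a co-r.e. type property: instability is witnessed by a trajectory escaping, which is semi-decidable. So one would naturally expect the reduction to come from the complement of the halting set, that is, halting corresponds to instability. In that case what the literature gives directly is that $\NN\setminus K$ many-one reduces to $\HLS$, meaning co-r.e.-hardness. Whether this is enough for the statement depends on the precise sense of ``many-one hard'' used in Theorem~\ref{thm:transfer}. So the main care needed is to match the exact form of the result in \cite{Korda,KordaSupp} (which also yields $n=5$) with the hypothesis of Theorem~\ref{thm:transfer}. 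The natural fix is that the transfer theorem's proof only uses the fact that the computable degree-lowering map preserves and reflects Lyapunov stability. Such a map composes with any many-one reduction from any set, so the hardness notion is transported whatever its orientation. I will phrase the corollary with the same orientation as the input reduction.

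Concretely, the proof will proceed in three steps. First, fix $n_*$ and $d_*$ from \cite{Korda,KordaSupp} and a computable $g$ with $e\in A \iff g(e)\in \HLS(n_*,d_*)$, where $A$ is the relevant complete set. Second, apply Theorem~\ref{thm:transfer} with $N=\binom{n_*+k_*-1}{k_*}$. Third, conclude that for all $n\ge N$ and odd $d\ge3$ the composite of $g$ with the computable maps of Part~I is a many-one reduction from $A$ to $\HLS(n,d)$. Undecidability follows because $A$ is undecidable. I expect the main obstacle to be the bookkeeping in the first step: extracting from \cite{Korda,KordaSupp} an explicitly computable reduction with rational coefficients and a fixed degree $d_*$, not merely an undecidability statement. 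I would try to verify this directly from the construction there. If the degree is only bounded rather than fixed, I would pad by multiplying the field by a power of $\|x\|^2$, which preserves Lyapunov stability, to reach a single odd degree.
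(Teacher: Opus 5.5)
Your architecture matches the paper's: a computable reduction from the halting problem into $\HLS(n_*,d_*)$ for one fixed pair, transported by the $\HLS$ part of Theorem~\ref{thm:transfer}. But the step that decides whether the corollary follows at all is left open, and the branch you expect is the one that fails.

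Many-one hard means $A\lem\HLS(n,d)$ for every r.e.\ set $A$. Suppose the literature only gave $\NN\setminus K\lem\HLS(n_*,d_*)$. Composing with the maps of Part~I would then show that every co-r.e.\ set reduces to $\HLS(n,d)$. That is a different property and does not imply the statement. So ``phrasing the corollary with the same orientation as the input reduction'' proves something else.

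Your heuristic for expecting that orientation is also wrong. By Lemma~\ref{lem:scaling}, instability says that for every $C$ some solution violates $\|x(t)\|\le C\|x(0)\|$ (or fails to be global). A trajectory seen to grow by a factor $C$ refutes only that one constant, so there is no semi-decision procedure for instability in sight. In fact, once \cite{Korda} is applied to a universal machine, the complement of the resulting $\HLS(\tilde N,\tilde D)$ is not r.e.\ (Remark~\ref{rem:contrast}).

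What closes the gap is the specific orientation of \cite{Korda}. For any register machine $\mathsf R$, the construction of \cite[Sections~3--6]{Korda} produces, computably in $m$, a rational-coefficient homogeneous field $F_m$ on a fixed $\RR^{\tilde N}$. Its degree is a fixed odd $\tilde D=2\tilde k+1\ge3$, and $\mathsf R$ halts on $m$ if and only if the origin is Lyapunov stable for $F_m$ \cite[(1.1)]{Korda}.

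Take for $\mathsf R$ the register machine that \cite[Remark~3.2]{Korda} builds from the universal machine $\mathsf U$ of \ref{C:K}. Composing with the computable map $e\mapsto m(e)$ to the corresponding input gives $K\lem\HLS(\tilde N,\tilde D)$. Theorem~\ref{thm:transfer} with $n_*=\tilde N$, $k_*=\tilde k$ then finishes, exactly as in your first branch.

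The dimension-five result of \cite{KordaSupp}, which you also invoke, runs the other way: halting corresponds to instability (Remark~\ref{rem:bothsides}). Its halting set is also not stated to be many-one complete. So it yields only undecidability of $\HLS(n,d)$ (Remark~\ref{rem:dim5}) and cannot supply $n_*$ here. Your degree-padding fallback is unnecessary, since $\tilde D$ is already independent of $m$.
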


The goal of Part II is to find one fixed dimension and one fixed odd degree for which GAS is many-one complete. This is exactly what we need, together with Theorem \ref{thm:transfer}, to prove Theorem \ref{thm:main}. Precisely, the main result of Part II is the following.

\begin{theorem}\label{thm:fixedpair}
There exist an integer $n_*\ge1$ and an odd integer $d_*\ge3$ such that $\HGAS(n_*,d_*)$ is many-one complete.
\end{theorem}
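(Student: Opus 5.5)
Membership of $\HGAS(n_*,d_*)$ in the class of recursively enumerable sets comes for free from Proposition~\ref{prop:re}: a certificate of GAS is a rational Lyapunov function of the Ahmadi--El Khadir form, and each candidate can be checked by Tarski quantifier elimination. So the whole content is hardness. I will build a chain of computable many-one reductions. It starts from Hilbert's tenth problem, i.e.\ the set of polynomials $P\in\ZZ[y_1,\dots,y_\ell]$ with an integer zero, which is many-one complete by the MRDP theorem and is already complete for some fixed number of variables $\ell$ and fixed degree. It ends at $\HGAS(n_*,d_*)$. Since we need completeness, and not just hardness, the direction matters: $P$ has an integer zero if and only if the constructed field is GAS. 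So a solution must force convergence to the origin, and the absence of a solution must produce a trajectory that fails to converge.

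The construction I would try first works at the level of the angular dynamics. Using the characterizations of Section~\ref{sec:angular}, the origin of a homogeneous $f$ of odd degree is GAS if and only if the projected flow on the sphere, together with the radial growth rate $\avg{x,f(x)}$, satisfies an ergodic-type condition. Roughly, every invariant probability measure of the spherical flow must have strictly negative average radial rate. I would realize the integer search inside the spherical flow. Pairs of coordinates rotate like $\dot{u}=-\omega v,\ \dot v=\omega u$, so angles of the form $2\pi y_i$ encode integers through $\sin^2(\pi y_i)$. A rational-coefficient polynomial penalty $R(x)$ can then be made small exactly near configurations where all $y_i$ are nearly integers and $P(y)$ is nearly zero. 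Concretely, I would use a trigonometric/polynomial "Matiyasevich-type" test such as $\sum_i \sin^2(\pi y_i)+P(y)^2$, written polynomially in auxiliary coordinates that grow or scan slowly. The radial rate is then set to $R(x)-c$ or similar. On invariant measures supported near a solution this gives one sign; elsewhere it gives the other. Homogenization by multiplying through by powers of $|x|^2$ fixes the degree $d_*$ and dimension $n_*$, both depending only on $\ell$ and $\deg P$. This is where the intermediate "transfer" problems come in. First, Diophantine solvability reduces to a question about whether a smooth periodic or quasi-periodic function attains a value below a threshold. That question then reduces to a sign condition on averages over invariant measures of an explicit flow. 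Finally, the sign condition reduces to GAS.

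The expected main obstacle is getting the logic to run in the right direction with uniform control. GAS is $\Sigma_1$, witnessed by a finite certificate, so the encoding must ensure two things. First, if a solution exists, then every trajectory, including those on the measure-zero set avoiding the scanning region, eventually decays, and this must hold robustly. Second, if no solution exists, some invariant measure, for instance one concentrated on a scanning orbit, has nonnegative average radial rate. The danger lies with the non-generic invariant measures of the spherical flow: fixed points, and periodic orbits created by the homogenization. These must be controlled so that they do not spuriously destroy or create stability. I would handle this by adding a strongly contracting component transverse to the scanning subsystem, so that the only relevant invariant measures are those of the scanner. The scanner would then be designed, for example as a linear flow on a torus with rationally independent frequencies, so that its invariant measures are explicitly known, with Haar measure on orbit closures. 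Finally, I would check that all coefficients are rational and that the map $P\mapsto f$ is computable, which is routine.
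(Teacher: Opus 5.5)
There is a genuine gap at the core of your reduction. Your radial rate $R-c$, with $R$ small exactly near (approximate) solutions, turns ``$P$ has an integer zero'' into a \emph{local} statement: some region of the sphere, and hence some invariant measure, has negative average radial rate. By Lemma~\ref{lem:gascrit}, however, GAS is the \emph{universal} statement $\beta<0$, i.e.\ every invariant measure has negative average.

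With your sign convention, the invariant measures away from the solution still have positive average. An example is the Haar measure of your torus scanner, which gives a small neighbourhood of any single solution only a small weight. So the field is never GAS, or at least its stability does not depend on solvability. A contracting transverse component does not help, because these measures live on the scanner itself.

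If you reverse the sign, you get ``solution exists $\iff$ not GAS'', i.e.\ $K\lem\NN\setminus\HGAS$. This is impossible because $\HGAS$ is r.e.\ (Proposition~\ref{prop:re}). So every construction of this pointwise kind must fail in one of the two directions. A threshold version (``some quasi-periodic function drops below a level'') fails for a related reason: the margin by which a solution changes the relevant quantity depends on the size of the unknown solution. That margin has no computable lower bound in terms of the input.

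The missing idea has two parts. First, a single solution must change a quantity that \emph{every} admissible trajectory sees, without any threshold. Second, ``nonzero'' must then be converted into ``negative for all invariant measures''. The paper does each in one step.

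\begin{itemize}
\item Solvability is encoded in the constant Fourier coefficient of the theta-type product $\mathfrak D_e$ on a compact torus. This coefficient equals $\sum_{\text{solutions}}(\lambda\eta)^{|n|^2}$, a sum of positive terms, so it is nonzero iff a solution exists (Lemma~\ref{lem:constterm}). This also handles the unboundedness of the unknowns, which your $\sin^2(\pi y_i)+P(y)^2$ cannot do inside a compact angular flow.
\item A running total $\dot z=\kappa h_e$ accumulates all the tests: the consistency residuals, the moments, and $\mathfrak D^{\mathrm{obs}}_e$. Whenever some test has nonzero average, $z$ is sent to a point at infinity where the reward is negative (Lemma~\ref{lem:accumulator}). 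Thus \emph{failing} a test is what forces decay, and only intended trajectories with $e\notin K$ survive with positive reward.
\end{itemize}

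Your proposal also skips the realization problem. The irrational frequencies of the scanner cannot be coefficients, and the theta functions must be generated by a rational polynomial ODE. Since GAS concerns every initial state, every non-intended recurrent solution of that ODE must also be shown to fail a test. That is the content of Theorem~\ref{thm:selector}, which uses, among other things, the Dani--Smillie theorem.
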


For the rest of Section \ref{sec:overview}, we explain the main technical and conceptual ideas of the work in Section~\ref{sec:angular} and Parts~I and~II of the paper. We do not give an overview of the consequences in Part~III (the cases where the dimension or the degree is part of the input in Corollary~\ref{cor:variable}, non-homogeneous vector fields and local asymptotic stability in Corollary~\ref{cor:nonhomog}, and Lyapunov stability in Section~\ref{sec:transfer}), since they follow in a fairly direct way from Theorem~\ref{thm:main} and the results of Part~I.

\subsection{Overview of Section~\ref{sec:angular}}

\subsubsection{Equivalent criteria for GAS and Lyapunov stability (Sections~\ref{subsec:polar} and~\ref{subsec:the_two_criteria})}\label{subsec:overview-criteria}

Before proceeding with anything else, it is desirable to rephrase the GAS and Lyapunov stability properties of the system $\dot{y} = P(y)$, where $P:\RR^s\to\RR^s$ is a homogeneous polynomial vector field of degree $D>1$, into something easier to work with for the remainder of the paper. This is accomplished by Lemmas \ref{lem:gascrit} and \ref{lem:lyap}, whose main ideas we describe below.

For any positive definite quadratic form $Q(y) = y^{\top}My$ on $\RR^s$, with $M$ a symmetric positive definite matrix, write $\langle v,w \rangle_{Q} = v^{\top}Mw$ and $\|v\|_Q=Q(v)^{1/2}$, and set $S_{Q} = \{y \in \RR^{s} : Q(y) = 1\}$. This is the surface of an ellipsoid. When trying to determine whether a system is GAS, it is often useful to perform a variant of polar decomposition: we decompose the original system trajectories $y(t)$ solving $\dot{y} = P(y)$ into a \textit{radial part} $r(y(t)) \in \RR_{\geq0}$ indicating a ``distance'' from the origin, and an \textit{angular part} $u(y(t))$ which always lies on the surface $S_{Q}$. Then, $y(t) = r(y(t))\cdot u(y(t))$. 

Now, define functions $a_{P}: \RR^{s} \to \RR$ by $a_{P}(y) = \langle y, P(y) \rangle_{Q} := (y^{\top}M) P(y)$ and $Y_{P}: \RR^{s} \to \RR^{s}$ by $Y_{P}(y) = P(y) - a_{P}(y)y$. These functions are chosen such that, after a clever rescaling of time $t \mapsto \tau(t)$, all functions $u(y(\tau(t)))$ are solutions to $\dot{u} = Y_{P}(u)$. As such, $S_{Q}$ is invariant under the flow $\Phi_{Y_P}$ corresponding to this differential equation. Since we will only need to consider this flow on the set $S_{Q}$, from now on we restrict the domain of $Y_{P}$ from $\RR^{s}$ to $S_{Q}$ and regard $\Phi_{Y_{P}}$ as a flow on $S_{Q}$. Moreover, with this choice of $a_{P}$ and $Y_{P}$, the radial part $r(y(\tau(t)))$ depends \textit{solely} on $a_{P}(u(y(\tau(t))))$ in the following way:
\begin{equation}\label{eq:log_r_y_tau_DE}
    \frac{d}{dt}\log r(y(\tau(t))) = a_{P}(u(y(\tau(t)))).
\end{equation}
The computations used to obtain \eqref{eq:log_r_y_tau_DE} make heavy use of the homogeneity of $P$. To make it more transparent where $Y_{P}$ and $a_{P}$ come from, noting that $\langle u,u \rangle_{Q} = 1$ for $u \in S_{Q}$, we may rearrange $Y_{P}(u) = P(u) - a_{P}(u)u$ to get
\begin{equation}
    P(u) = Y_{P}(u) + \left( \frac{\langle u, P(u) \rangle_{Q}}{\langle u, u \rangle_{Q}} \right) u.
\end{equation}
The first term, $Y_{P}(u)$, is the part of the vector $P(u)$ which is tangent to the ellipsoid $S_{Q}$, since the second term, $a_{P}(u)u$, is precisely the $Q$-orthogonal projection of $P(u)$ onto the line through the vector $u$ (the part pointing either directly inwards or directly outwards from $S_{Q}$); see Figure~\ref{fig:polar} on page~\pageref{fig:polar}. This explains intuitively why $S_{Q}$ is invariant, which is what allowed us to restrict the domain of $Y_{P}$ from $\RR^{s}$ to $S_{Q}$. This also explains, together with \eqref{eq:log_r_y_tau_DE}, why the sign of $a_{P}(u)$ for $u \in S_{Q}$ is an important quantity: it tells us when $y(t)$ is pointing inwards from the surface $S_{Q}$ towards the origin.

However, requiring $a_{P}<0$ on all of $S_{Q}$ would be too strong of a condition to be a characterization of GAS; it is possible for $a_{P}$ to be slightly positive on some parts of $S_{Q}$ but very negative on other parts, so that the overall trend of the system trajectories is still to move inwards towards the origin (see Figure~\ref{fig:netdrift} on page~\pageref{fig:netdrift}). It turns out that the correct quantity to completely characterize GAS is instead the sign of the maximal \emph{mean} value of $a_P$, where the maximum is taken not over all probability distributions on $S_Q$ (which would just give us $\max_{u \in S_Q} a_P(u)$) but only over those that the flow of $\dot{u} = Y_{P}(u)$ on $S_{Q}$ leaves unchanged, i.e., the \emph{invariant probability measures} under the flow $\Phi_{Y_P}$.

\begin{lemma*}[Angular criterion for GAS; Lemma \ref{lem:gascrit} below]
Let $\mathcal{M}(\Phi_{Y_P})$ be the set of $\Phi_{Y_P}$-invariant Borel probability
measures on $S_Q$. Then the origin is GAS for $\dot y = P(y)$ if and only if
\begin{equation}\label{eq:overview_beta}
  \beta(\Phi_{Y_P}, a_P) := \max_{\mu \in \mathcal{M}(\Phi_{Y_P})} \int_{S_Q} a_P(u)\, d\mu(u) < 0.
\end{equation}
\end{lemma*}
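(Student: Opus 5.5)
We will prove both directions by working on the compact invariant manifold $S_Q$ with the time-rescaled angular flow $\Phi_{Y_P}$ and the cocycle $A(u,T)=\int_0^T a_P(\Phi^t_{Y_P}(u))\,dt$. The rescaling is $d\tau = r^{D-1}dt$. By \eqref{eq:log_r_y_tau_DE}, a trajectory starting at $y_0=r_0u_0$ satisfies $\log r = \log r_0 + A(u_0,T)$ at rescaled time $T$. Since $D>1$, the map $T\mapsto$ original time is a bijection onto the maximal forward existence interval, which may be finite. The statement therefore reduces to behaviour of $A$: GAS should be equivalent to $A(u,T)\to-\infty$ for every $u\in S_Q$, together with a uniform upper bound $\sup_{u,T\ge0}A(u,T)<\infty$. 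The bound gives Lyapunov stability, since $r(t)\le r_0e^{C}$. The divergence gives attractivity, since $r\to0$; this holds whether the original time interval is finite or infinite, and in fact it is infinite because $r\to0$ forces $d\tau/dt\to\infty$. Existence of the maximum in \eqref{eq:overview_beta} is standard: $\mathcal{M}(\Phi_{Y_P})$ is nonempty, convex and weak-* compact (Krylov--Bogolyubov), and $\mu\mapsto\int a_P\,d\mu$ is continuous.

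\emph{($\beta<0 \Rightarrow$ GAS).} The key step is the uniform ergodic estimate
\[
\limsup_{T\to\infty}\ \sup_{u\in S_Q}\ \frac{1}{T}A(u,T)\le\beta .
\]
We prove it by contradiction. Suppose there are $u_k$ and $T_k\to\infty$ with $A(u_k,T_k)/T_k\ge\beta+\epsilon$. Form the empirical measures $\frac1{T_k}\int_0^{T_k}\delta_{\Phi^t u_k}\,dt$ and take a weak-* limit point. The limit is invariant because the boundary terms are $O(1/T_k)$, and its integral of $a_P$ is at least $\beta+\epsilon$, contradicting the definition of $\beta$. Hence there is $T_0$ with $A(u,T)\le \beta T/2$ for all $u$ and all $T\ge T_0$. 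On $[0,T_0]$ the bound $A\le T_0\max|a_P|$ holds. Together these give the uniform upper bound and $A\to-\infty$ uniformly, so the origin is GAS.

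\emph{(GAS $\Rightarrow \beta<0$).} This direction is the main obstacle. Suppose $\beta\ge0$, and use the compactness and convexity of $\mathcal{M}(\Phi_{Y_P})$ together with the ergodic decomposition to pick an ergodic $\mu$ with $\int a_P\,d\mu\ge0$. If $\int a_P\,d\mu>0$, Birkhoff's theorem gives a $\mu$-typical $u$ with $A(u,T)\to+\infty$, so the ray through $u$ is not attracted to $0$, contradicting GAS.

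The delicate case is $\int a_P\,d\mu=0$. Here we first use GAS to get a uniform decay statement. By homogeneity and compactness of $S_Q$, GAS implies that for some fixed original time $t_1$ the image of $S_Q$ lies inside $\{\|y\|_Q\le 1/2\}$; this step uses that the basin is open and that stability gives uniformity near $0$. Rescaling each $y_0=r_0u_0$ into this picture gives $A(u,T)\le-\log 2$ after a rescaled time $T$ that is uniformly bounded over $u\in S_Q$. Iterating this bound yields $A(u,T)\le -cT+C$ uniformly in $u$. Integrating against $\mu$ and using invariance then gives $T\int a_P\,d\mu=\int A(\cdot,T)\,d\mu\le -cT+C$, so $\int a_P\,d\mu\le -c<0$ for every invariant $\mu$, a contradiction. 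This argument covers both cases at once, so the Birkhoff step is not even needed.

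The points requiring care are therefore the uniform-time bound derived from GAS and the translation between the original time $t$ and the rescaled time $\tau$.
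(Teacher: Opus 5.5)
Your proposal is correct, but the converse direction takes a different route from the paper.

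\textbf{The direction $\beta<0\Rightarrow$ GAS.} This is essentially the paper's argument. Your empirical-measure contradiction is the second half of the proof of Lemma~\ref{lem:blocks}. You only use the inequality $\limsup_T M_P(T)/T\le\beta$, so you can do without Fekete's lemma. The rest is the same cocycle iteration and time change. There is one small slip: with $d\tau=r^{D-1}dt$, $r\to0$ gives $dt/d\tau=r^{1-D}\to\infty$, not $d\tau/dt\to\infty$. In any case, your uniform bound on $r$ already forces the original time interval to be infinite, since $r^{1-D}$ is then bounded below.

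\textbf{The direction GAS $\Rightarrow\beta<0$.} The paper invokes Rosier's homogeneous converse Lyapunov theorem to obtain a $C^1$ homogeneous $V$ with $\nabla V\cdot P<0$. Euler's identity then gives $\nabla(\log V)\cdot Y_P=b-\ell a_P$ on $S_Q$, and \ref{E:deriv} yields $\ell\int a_P\,d\mu=\int b\,d\mu\le-\varepsilon$ for every invariant $\mu$.

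You avoid any converse Lyapunov theorem. From Lyapunov stability, continuity of the flow and compactness of $S_Q$, you get a uniform time $t_1$ after which the solutions starting on $S_Q$ lie in $\{\|y\|_Q<\tfrac12\}$. Since $\{\Phi_P^s(u):0\le s\le t_1,\ u\in S_Q\}$ is compact, the corresponding angular times are bounded by some $T_1$. This gives a uniform block decay, which you integrate against invariant measures using invariance and Fubini.

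The iteration step can be dropped. If you choose $\delta$ from Lyapunov stability, the covering argument keeps each solution in the half-ball for all $t\ge t_1$, not just at $t_1$. Then $A_P(u,T_1)\le-\log 2$ for every $u\in S_Q$, which is exactly \eqref{eq:blocks}.

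\textbf{Comparison.} Your route is self-contained and elementary, and it makes the criterion symmetric: GAS $\iff$ a uniform block estimate \eqref{eq:blocks} $\iff\beta<0$. The paper's route is shorter given Rosier's theorem, and it ties the bound $\beta\le-\varepsilon/\ell$ to an explicit homogeneous Lyapunov function. Neither route needs the ergodic decomposition or Birkhoff's theorem, so you can drop the case distinction you open with.
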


Although this characterization is technical, its formulation in terms of $\Phi_{Y_P}$-invariant probability measures as opposed to individual trajectories of the system $\dot{u} = Y_{P}(u)$ on $S_{Q}$ greatly simplifies many proofs throughout Part I and at the end of Part II. We also make use of a variant of this lemma for Lyapunov stability, which is Lemma \ref{lem:lyap} below (originally Lemma 2.2 in \cite{Korda}); instead of $\beta(\Phi_{Y_P},a_{P})<0$, the ``angular criterion'' there is a statement about integrals of $a_{P}$ along the trajectories of $\dot{u} = Y_{P}(u)$ on $S_{Q}$:
\begin{equation}\label{eq:overview_sec_Lyap_characterization}
    M_{P}^{*} := \sup_{T\geq0} \max_{u \in S_{Q}} \int_{0}^{T} a_{P}(\Phi_{Y_P}^{t}(u)) \ dt < \infty.
\end{equation}

Unlike GAS, the property of Lyapunov stability cannot be characterized using integrals of $a_{P}$ with respect to $\Phi_{Y_P}$-invariant probability measures on $S_{Q}$ (Remark \ref{rem:compare}). The fact that we must work with individual trajectories ends up making some of the proofs related to Lyapunov stability in Part I more involved.

We note that both of these criteria are valid for any positive definite quadratic form $Q$. In particular, the choice of $Q$ does not affect whether $\beta(\Phi_{Y_P}, a_{P}) < 0$. In the main parts of the paper, it will be necessary to apply these criteria to different quadratic forms.

\subsubsection{Recursive enumerability of $\HGAS$ (Section~\ref{subsec:re_upper_bound})}

Prior to starting with Parts~I and~II we warm up by describing the easier half of Theorem~\ref{thm:main}, namely, the recursive enumerability of $\HGAS$ (from which recursive enumerability of each set $\HGAS(n,d)$ follows easily). This result, proven in Proposition~\ref{prop:re}, follows with a bit of work from Tarski's quantifier elimination theorem (Theorem~\ref{thm:tarski}) and a theorem of Ahmadi and El Khadir~\cite[Theorem~3.1]{AEK}. Roughly speaking, the latter result says that the origin of a homogeneous polynomial vector field $f: \RR^{n} \to \RR^{n}$ is GAS if and only if $f$ admits a ``certificate of stability'' of a particular algebraic form. This certificate is given by a homogeneous polynomial $p: \RR^{n} \to \RR$ of some even degree $2r+2$ satisfying two explicit polynomial inequalities on the unit sphere, which are determined by $f$ and $r$. As such, if $e$ is the code corresponding to some $f$, then $e \in \HGAS$ if and only if for some $r \in \mathbb{N}$, the following statement $D(e,r)$ is true: there exist real coefficients of a homogeneous polynomial $p$ of degree $2r+2$ which satisfy these inequalities (see~\eqref{eq:Der}). We note that there is no need to include the dimension $n$ corresponding to $f$ in the statement $D$ (which is also the dimension of the domain of $p$), since this information is already ``baked into'' the code $e$.

The statement $D(e,r)$ is a first-order sentence over $\RR$, and hence Tarski's theorem gives an algorithm that decides whether any such statement is true. Given a code $e$ we may thus sequentially decide the truth of $D(e,0), D(e,1), D(e,2), \dots$, and halt as soon as one of them is true. This procedure halts if and only if $e \in \HGAS$, so by definition this set is recursively enumerable.

\subsection{Overview of Part I}\label{subsec:overview1}

\subsubsection{Lifting to degree three (Section~\ref{sec:lifting})}

Section~\ref{sec:lifting} is dedicated to proving the following result, which together with some additional work in Section~\ref{sec:normalform} yields Theorem~\ref{thm:transfer} (the main result of Part I, which is one half of the proof of Theorem~\ref{thm:main}).

\begin{theorem}[Reduction to degree three]\label{thm:cubicization}
Fix $n\ge1$ and $k\ge1$, and put $m=\binom{n+k-1}{k}$. There is a computable map which sends the rational coefficients of a homogeneous polynomial vector field $F:\RR^n\to\RR^n$ of degree $2k+1$ to the rational coefficients of a homogeneous polynomial vector field $H:\RR^m\to\RR^m$ of degree three such that
\begin{align}
0\text{ is GAS for }\dot x=F(x)&\iff 0\text{ is GAS for }\dot y=H(y),\label{eq:mainGAS}\\
0\text{ is Lyapunov stable for }\dot x=F(x)&\iff 0\text{ is Lyapunov stable for }\dot y=H(y).\label{eq:mainLS}
\end{align}
The map can be chosen uniformly in $(n,k)$ (Proposition~\ref{prop:uniform}).
\end{theorem}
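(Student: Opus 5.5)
The plan is to realize $F$ as a cubic field on the Veronese cone and then add a cubic term that pulls every trajectory onto that cone. Let $v:\RR^n\to\RR^m$ be the Veronese map sending $x$ to the vector of its $m=\binom{n+k-1}{k}$ degree-$k$ monomials. Write $V=v(\RR^n)$; it is a closed cone whose link $V\cap S$ is a smooth compact submanifold of the unit sphere.

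If $\dot x=F(x)$, then $y=v(x)$ satisfies $\dot y=Dv(x)F(x)$. Each component of the right-hand side is a homogeneous polynomial of degree $(k-1)+(2k+1)=3k$ in $x$. Every degree-$3k$ monomial in $x$ is a product of three degree-$k$ monomials, so it can be written as a cubic monomial in $y$. Fixing a canonical such choice, which is computable and uniform in $(n,k)$, gives a cubic field $H_0$ on $\RR^m$ with $H_0(v(x))=Dv(x)F(x)$. In particular $V$ is $H_0$-invariant, and $v$ maps trajectories of $F$ onto trajectories of $H_0$ in $V$. Since $|v(x)|$ is comparable to $|x|^k$, trajectories of $F$ tend to $0$ (respectively stay near $0$) exactly when their images do. So the restriction of $H_0$ to $V$ has the same GAS and Lyapunov behaviour as $F$. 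The non-injectivity of $v$ for even $k$ (where $v(x)=v(-x)$) is harmless, because $F$ is odd.

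To control the behaviour off $V$, let $g_1,\dots,g_r$ be the standard quadratic binomials $y_ay_b-y_cy_d$ cutting out $V$. Their differentials have full rank along $V\setminus\{0\}$. Set $G=\sum_j g_j^2$ and
\[
H(y)=H_0(y)-\lambda\sum_j g_j(y)\nabla g_j(y)
\]
for a rational $\lambda>0$ to be chosen. The added term is cubic, vanishes on $V$, and points inward, since $\langle y,\sum_j g_j\nabla g_j\rangle=2G\ge0$. Hence $V$ stays invariant and $H|_V=H_0|_V$. The key step is to show that for $\lambda$ large enough, every invariant probability measure of the angular flow $\Phi_{Y_H}$ on the unit sphere is supported on $V\cap S$. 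For this I would work with the homogeneous degree-zero quantity $\tilde G=G/|y|^8$, computed along the angular flow. The $\lambda$-term contributes at most $-c\lambda\,|\nabla G|^2/|y|^{\ldots}$. Near $V\cap S$, the full-rank property gives $|\nabla G|^2\gtrsim G$, and the Łojasiewicz-type comparison $G\asymp \mathrm{dist}(\cdot,V)^2$ holds. The $H_0$-contribution is $O(\sqrt{G}\cdot\mathrm{dist})=O(G)$, because $H_0$ is tangent to $V$ along $V$. Away from a neighbourhood of $V\cap S$, $\tilde G$ and $|\nabla G|$ are bounded below, so a large $\lambda$ dominates. Together these give $\dot{\tilde G}\le -c\tilde G$ on the whole sphere for $\lambda\ge\lambda_0(F)$. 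I expect this uniform estimate to be the main obstacle. Once it holds, $\tilde G(\Phi^t u)\to0$ exponentially fast, and every invariant measure lives on $V\cap S$. Choosing a computable $\lambda$ requires effective bounds; I would either bound $\lambda_0$ explicitly in terms of the coefficients of $F$, or take $\lambda=1$ and instead rescale $F$ by a small rational constant, which changes neither property.

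For GAS, Lemma~\ref{lem:gascrit} reduces \eqref{eq:mainGAS} to comparing $\beta(\Phi_{Y_H},a_H)$ with the corresponding quantity for $F$. Since all invariant measures sit on $V\cap S$, where $a_H=a_{H_0}$, it suffices to compare invariant measures on $V\cap S$ with those for $F$ on $S^{n-1}$. The pushforward under $x\mapsto v(x)/|v(x)|$, after a time change, conjugates the angular flows. Under this correspondence $a_{H_0}$ equals $k\,a_F$ times a positive continuous density, coming from the time reparametrization. A time change by a positive function preserves the sign of $\beta$, so $\beta(\Phi_{Y_H},a_H)<0\iff\beta(\Phi_{Y_F},a_F)<0$. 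To avoid the quadratic-form issue, I would apply the criterion with an inner product adapted to $v$, such as the one making $|v(x)|^2=|x|^{2k}$ on $V$.

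For Lyapunov stability I would use Lemma~\ref{lem:lyap}. The direction LS of $H$ $\Rightarrow$ LS of $F$ is immediate by restricting to $V$. For the converse, take a trajectory of $H$ starting off $V$. The exponential decay of $\tilde G$ lets it be shadowed by a trajectory on $V$, and the discrepancy of $\int_0^T a_H\,dt$ is bounded by $\int_0^\infty C\sqrt{\tilde G}\,dt<\infty$ uniformly. Hence $M_H^*<\infty$ whenever $M_F^*<\infty$. This shadowing estimate along individual trajectories is the second delicate point.
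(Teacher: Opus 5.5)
\textbf{Gap in the key step.} No choice of $\lambda$ can give $\dot{\tilde G}\le-c\tilde G$ on the whole sphere when $n,k\ge2$. This is not a matter of missing effective estimates. Let $y^*$ be a maximum point of $G$ on the unit sphere. Then $G(y^*)>0$, but the derivative of $G|_S$ along any tangent vector field, in particular along $Y_H$, vanishes at $y^*$.

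The slip is the claim that away from $V\cap S$ ``$|\nabla G|$ is bounded below''. Along the angular flow, the penalty contributes $-\tfrac{\lambda}{2}|\nabla_SG|^2$ to the derivative of $G|_S$, where $\nabla_SG=\nabla G-4G\,y$ is the tangential gradient (compare $g$ in \eqref{eq:Dy}). This term vanishes at every critical point of $G|_S$. Only the ambient gradient is bounded below there, because $y\cdot\nabla G=4G$.

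The conclusion you draw is false as well. Once $G$ decays exponentially on a forward invariant tube $\cT=\{G\le\eta\}$ around $\Sigma=V\cap S$, every point of $\cT\setminus\Sigma$ has its $\alpha$-limit set inside $\{G\ge\eta\}$. That nonempty compact invariant set carries invariant probability measures not supported on $\Sigma$. Such measures exist for every $\lambda$, so your GAS comparison misses them. Your Lyapunov argument, which shadows every trajectory off $V$ by appealing to global decay of $\tilde G$, has the same hole.

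\textbf{Missing idea.} Use the \emph{radial} part of the penalty. You noticed it points inward but never used this. In your Euclidean normalization, $a_H=a_{H_0}-2\lambda G$ on the unit sphere. Taking $2\lambda\eta>\max_S|a_{H_0}|+1$ therefore forces $a_H<-1$ on $\{G>\eta\}$. Meanwhile your local estimates near $\Sigma$ (the {\L}ojasiewicz-type inequality and the tangency of $H_0$) make $\cT$ forward invariant with exponential decay of $G$.

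An ergodic measure then gives $\cT$ measure $0$ or $1$. In the first case its $a_H$-average is at most $-1$; in the second it lives on $\Sigma$. This yields the sandwich $k\beta_F\le\beta_H\le\max\{k\beta_F,-1\}$, which is all you need for the sign.

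For Lyapunov stability, split a trajectory at its first entry time $t_0$ into $\cT$. The initial piece contributes at most $-t_0$ to $A_H$, and a trajectory that never enters contributes a negative amount throughout. Only the piece after $t_0$ needs shadowing. This requires the rate at which $G$ decays in $\cT$ to exceed $2L$, where $L$ is a Lipschitz constant of $Y_{H_0}$ on the sphere; you can arrange this by taking $\lambda$ large.

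\textbf{Smaller points.}
\begin{itemize}
\item For even $k$ the binomials cut out $V\cup(-V)$, not $V=v(\RR^n)$. So $\Sigma$ has the mirror component $-v(S^{n-1})$, which you handle using the oddness of $H_0$.
\item The degree-zero quantity is $G/|y|^4$, not $G/|y|^8$.
\item If you apply the criterion with the adapted form $Q$, the penalty must use the $Q$-gradient $\gradQ$ rather than $\nabla$. Otherwise its $Q$-radial component need not be a negative multiple of $G$, whereas $\langle y,\gradQ G\rangle_Q=4G$.
\end{itemize}
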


The main idea is that we want to ``trade'' the typically smaller dimension $n$ and larger degree of homogeneity $2k+1$ for the typically much larger dimension $m$ and the degree of homogeneity three, and do so in a way which preserves the properties of GAS and Lyapunov stability.

Suppose $x(t)$ solves $\dot{x} = F(x)$, and write $y(t) = \nu_{k}(x(t)) \in \RR^{m}$ for some carefully chosen map $\nu_{k}: \RR^{n} \to \RR^{m}$. Just like how we denote the coordinate indices of $\RR^{n}$ by $i \in \{1, \ldots, n\}$, we will denote the coordinate indices of $\RR^{m}$ by $\alpha \in A_{k}$ for some index set $A_{k}$. Using the chain rule, for each index $\alpha \in A_{k}$ we get
\begin{equation}\label{eq:sum_chain_rule_nu_x_t}
    \frac{d}{dt} y_{\alpha}(t) = \frac{d}{dt}(\nu_{k}(x(t)))_{\alpha} = \sum_{i=1}^{n} \left( \frac{\partial (\nu_{k})_{\alpha}}{\partial x_{i}} (x(t)) \right) F_{i}(x(t)).
\end{equation}
The map $\nu_{k}$ is defined in such a way that the following properties hold:
\begin{enumerate}
    \item Each coordinate $(\nu_{k}(x))_{\alpha}$ is a homogeneous polynomial of degree $k$ in $x$, so the first factor in each term of the sum in \eqref{eq:sum_chain_rule_nu_x_t} is homogeneous of degree $k-1$ in $x(t)$ (or identically zero). Since $F_{i}$ is homogeneous of degree $2k+1$, the right-hand side of \eqref{eq:sum_chain_rule_nu_x_t} is a homogeneous polynomial of degree $3k$ in $x$.
    \item For every homogeneous polynomial $p : \RR^{n} \to \RR$ of degree $3k$ there is a homogeneous polynomial $g_{p} : \RR^{m} \to \RR$ of degree three (which is computable from $p$) such that $p(x) = g_{p}(\nu_{k}(x))$ for all $x \in \RR^{n}$. Applying this to the right-hand side of \eqref{eq:sum_chain_rule_nu_x_t}, we obtain $\frac{d}{dt} y_{\alpha}(t) = G_{\alpha}(\nu_{k}(x(t))) = G_{\alpha}(y(t))$ for some homogeneous degree-three polynomials $G_{\alpha}: \RR^{m} \to \RR$.
\end{enumerate}

Hence, defining the homogeneous polynomial vector field $G: \RR^{m} \to \RR^{m}$ of degree three by $G(y) = (G_{\alpha}(y))_{\alpha \in A_{k}}$, we see that the system $\dot{x} = F(x)$ on $\RR^{n}$ ``lifts'' to the system $\dot{y} = G(y)$ on $\nu_{k}(\RR^{n}) \subseteq \RR^{m}$ with the desired change in the degree of homogeneity. For technical reasons it is more convenient to work with the set $V=\{t\,\nu_k(x): t\in\RR,\ x\in\RR^n\}$. From homogeneity of $\nu_{k}$, we have $V=\nu_k(\RR^n)$ for odd $k$ and $V = \nu_k(\RR^n) \cup (-\nu_k(\RR^n))$ for even $k$. The set $V$ is a cone in $\RR^{m}$; see Figure~\ref{fig:cone} (page~\pageref{fig:cone}). When $k$ is even, the lifted system on $-\nu_{k}(\RR^{n})$ is the mirror image of the lifted system on $\nu_{k}(\RR^{n})$.

It remains to make sure that this lifting procedure preserves the properties of GAS and Lyapunov stability from the original system $\dot{x} = F(x)$ to the system $\dot{y} = G(y)$. Unfortunately, as is, this is not necessarily (globally) true. From the way $\nu_{k}$ is defined, these properties are indeed preserved between trajectories of $\dot{x} = F(x)$ on $\RR^{n}$ and trajectories of $\dot{y} = G(y)$ on $V$; however, we may very easily have undesirable behaviour of trajectories of the system $\dot{y} = G(y)$ on the set $\RR^{m} \setminus V$ which we cannot control. To fix this issue, we append a ``penalty term'' $-\tilde{G}(y)$ to the map $G$ to obtain $H(y) = G(y) - \tilde{G}(y)$, defined (in \eqref{eq:H}) in such a way that the following properties hold:
\begin{enumerate}
    \item $\tilde{G}$ is homogeneous of degree three, so that $H$ is also homogeneous of degree three.
    \item $\tilde{G}(y)=0$ for $y \in V$, so that trajectories $y(t)$ of the system $\dot{y} = H(y)$ starting \textit{inside} the cone $V$ act the same as trajectories of the system $\dot{y} = G(y)$.
    \item Trajectories $y(t)$ of the system $\dot{y} = H(y)$ starting \textit{outside} the cone $V$ satisfy the following. If $y(t)$ is in some sense ``close'' to $V$, it approaches the dynamics of $\dot{y} = H(y)$ contained in $V$. Otherwise if $y(t)$ is not ``close'' to $V$, it tends to either $V$ (in which case it eventually becomes ``close'' to $V$) or to the origin.
    \item The map $\tilde{G}$ (and thus the map $H$) for which the above properties hold must be obtained in a computable way.
\end{enumerate}
All the results in Section~\ref{sec:lifting} up to and including Lemma~\ref{lem:lipschitz} build up to defining $\tilde{G}$ in a careful way such that all of these properties hold.

Having now defined the system $\dot{y} = H(y)$ and stated its properties in relation to the original system $\dot{x} = F(x)$, to prove Theorem \ref{thm:cubicization} it remains to show that GAS and Lyapunov stability properties are equivalent between these two systems. This is done using the stability characterizations in Lemmas \ref{lem:gascrit} and \ref{lem:lyap}, working in parallel with the systems $\dot{y} = H(y)$ on $\RR^{m}$ and $\dot{x} = F(x)$ on $\RR^{n}$:
\begin{itemize}
    \item Define a quadratic form $Q_{H}: \RR^{m} \to \RR$, which is specifically chosen to ``work nicely'' with $\nu_{k}$. Then, $S_{H} := S_{Q_{H}} = \{y \in \RR^{m} : Q_{H}(y) = 1\}$ as before.
    \item Define the quadratic form $Q_{F}: \RR^{n} \to \RR$ by $Q_{F}(x) = |x|^{2}$. Then, $S_{F} := S_{Q_{F}} = \{x \in \RR^{n} : Q_{F}(x) = 1\} = S^{n-1}$ is the $(n-1)$-sphere in $\RR^{n}$.
\end{itemize}
As in Section \ref{subsec:overview-criteria}, using the quadratic forms $Q_{H}$ and $Q_{F}$ we define the functions $a_{H}, Y_{H}, a_{F}, Y_{F}$ and the flows $\Phi_{Y_H}, \Phi_{Y_F}$ of $Y_H, Y_F$ with domains restricted to $S_{H} \subseteq \RR^{m}, S_{F} \subseteq \RR^{n}$, respectively. We would be done if we could show
\begin{equation}
    \beta(\Phi_{Y_H}, a_{H}) < 0 \ \ \text{iff} \ \ \beta(\Phi_{Y_F}, a_{F}) < 0 \ \ \ \ \text{and} \ \ \ \ M^{*}_{H} < \infty \ \ \text{iff} \ \ M^{*}_{F} < \infty.
\end{equation}
It turns out that comparing these quantities associated with the systems $\dot{y} = Y_{H}(y)$ and $\dot{x} = Y_{F}(x)$ is much easier if we also consider an intermediate system as a ``bridge'' between the two (see Figure~\ref{fig:bridge} on page~\pageref{fig:bridge}). To this end, define $\Sigma = V \cap S_{H} = \{\pm \nu_{k}(u) : u \in S_{F}\} \subseteq S_{H}$. As shown in Lemma \ref{lem:conedyn}, the set $\Sigma$ is invariant under the flow $\Phi_{Y_H}$; this allows us to separately consider the restricted flow $\Phi_{Y_H}|_{\Sigma}$. Using ergodic-theoretic arguments (\ref{E:decomp} together with Lemma \ref{lem:dichotomy}), we can show that
\begin{equation}\label{eq:Sigma_to_SH_int}
    \max_{\mu \in \cM(\Phi_{Y_H}), \mu(\Sigma)=1} \int_{\Sigma} a_{H} \ d\mu < 0 \ \ \text{iff} \ \ \max_{\mu \in \cM(\Phi_{Y_H})} \int_{S_{H}} a_{H} \ d\mu < 0.
\end{equation}
The second maximum above is equal to $\beta(\Phi_{Y_H}, a_{H})$. Lemma \ref{lem:conedyn} also shows that the flow $\Phi_{Y_H}|_{\Sigma}$ is essentially a lifted version of the flow $\Phi_{Y_F}$. This close relationship allows Lemma \ref{lem:coneavg} to conclude that
\begin{equation}\label{eq:Sigma_to_SF_int}
    \max_{\mu \in \cM(\Phi_{Y_H}), \mu(\Sigma)=1} \int_{\Sigma} a_{H} \ d\mu = k \cdot \max_{\mu \in \cM(\Phi_{Y_F})} \int_{S_{F}} a_{F} \ d\mu.
\end{equation}
The right-hand side above is equal to $k \cdot \beta(\Phi_{Y_F}, a_{F})$. Combining \eqref{eq:Sigma_to_SH_int} and \eqref{eq:Sigma_to_SF_int}, we conclude that $\beta(\Phi_{Y_H}, a_{H}) < 0$ if and only if $\beta(\Phi_{Y_F}, a_{F}) < 0$.

Showing that $M^{*}_{H} < \infty$ if and only if $M^{*}_{F} < \infty$ is also done by using $\Sigma$ as a ``bridge'' between the two systems, but instead of working with invariant measures we work directly with the system trajectories. Since trajectories of $\dot{y} = Y_{H}(y)$ on $\Sigma$ are precisely the lifts of trajectories of $\dot{x} = Y_{F}(x)$ on $S_{F}$, the only problem that may arise is from the trajectories of $\dot{y} = Y_{H}(y)$ on $S_{H} \setminus \Sigma$. This problem is addressed by property 3 of $H$, which allows us to carefully control these trajectories on $S_{H} \setminus \Sigma$ which could otherwise lead to having $M^{*}_{H} = \infty$ even when $M^{*}_{F} < \infty$.

\subsubsection{From degree three to everything else (Section~\ref{sec:normalform})}

Although we have established Theorem~\ref{thm:cubicization}, which is the ``hard part'' of the proof of Theorem~\ref{thm:transfer}, there is still a bit more work to be done. Notably, while Theorem~\ref{thm:cubicization} allows us to go from any odd degree $d \geq 3$ and dimension $n$ to degree three and some dimension $m \geq n$, what Theorem~\ref{thm:cubicization} does not address is how to instead go from degree three back up to \textit{every} odd degree $d \geq 3$, and also how to go from dimension $m$ up to \textit{every} dimension $n \geq m$.

This missing piece is addressed by ``padding'' the system with decoupled variables which do not interfere with the properties of GAS or Lyapunov stability of the rest of the system. This is done in Lemma \ref{lem:padding} by going from the system $\dot{x} = H(x)$ on $\RR^{m}$ to the system $(\dot{x}, \dot{z}) = \widehat{H}(x,z)$ on $\RR^{n}$ (for any $n \geq m$), where $\widehat H(x,z) = \bigl(|x|^2+|z|^2\bigr)^{j} \bigl(H(x),\,-z_1^3,\dots,-z_{n-m}^3\bigr)$ is homogeneous of degree $3+2j$ with $(x,z)\in\mathbb{R}^m\times \mathbb{R}^{n-m}$ and $j \in \NN$. It is straightforward to verify (Lemma \ref{lem:padding}), using the quantities $\beta$ and $M^{*}$ from Lemmas \ref{lem:gascrit} and \ref{lem:lyap}, that GAS and Lyapunov stability of the original system are preserved in the ``padded'' system.

Putting the two parts together proves Theorem~\ref{thm:transfer}. Given a homogeneous polynomial vector field $F$ on $\RR^{n_*}$ of odd degree $d_*=2k_*+1$, from Theorem~\ref{thm:cubicization} we obtain a homogeneous polynomial vector field $H$ of degree three on $\RR^m$, where $m=\binom{n_*+k_*-1}{k_*}$ is the number $N$ in Theorem~\ref{thm:transfer}. Then for any $n \geq N$ and $j \in \NN$, the padding procedure described above converts $H$ into a homogeneous polynomial vector field $\widehat H$ of degree $d=3+2j$ on $\RR^n$. Both of these parts preserve the properties of GAS and Lyapunov stability, and hence the computable map from the code corresponding to $F: \RR^{n_{*}} \to \RR^{n_{*}}$ to the code corresponding to $\widehat{H}: \RR^{n} \to \RR^{n}$ is precisely a many-one reduction from $\HGAS(n_*,d_*)$ to $\HGAS(n,d)$, and from $\HLS(n_*,d_*)$ to $\HLS(n,d)$. Since many-one reductions preserve many-one hardness and undecidability (Section~\ref{subsec:computability}), and since $\HGAS(n,d)$ is recursively enumerable by Proposition~\ref{prop:re}, this yields Theorem~\ref{thm:transfer}.

\subsection{Overview of Part II}\label{subsec:overview2}

Denote by $K\subseteq\NN$ the \textit{halting set} of a universal Turing machine \ref{C:K}, which is the classical example of a many-one complete (and hence undecidable) set. Theorem~\ref{thm:fixedpair} is proven by constructing a computable map $e\mapsto F_e$ which sends every $e\in\NN$ to a corresponding rational-coefficient homogeneous polynomial vector field $F_e : \RR^{n_{*}} \to \RR^{n_{*}}$ of fixed dimension $n_{*}$ and odd degree $d_* \geq 3$ such that
\begin{equation}\label{eq:overview2}
e\in K\iff\text{the origin is GAS for }\dot x=F_e(x).
\end{equation}
Such a mapping is a many-one reduction from $K$ to $\HGAS(n_*,d_*)$, so since every recursively enumerable set many-one reduces to $K$, this shows that $\HGAS(n_*,d_*)$ is many-one hard. Combined with Proposition~\ref{prop:re}, we conclude that $\HGAS(n_*,d_*)$ is many-one complete. This equivalence \eqref{eq:overview2} is concluded from the following chain of equivalences, in which every object is computable from $e$ (each section of Part II proves one equivalence in the chain):

\begin{align*}
e\in K&\iff\text{an explicit function $\mathfrak D_e$ has a nonzero constant Fourier coefficient}&&\text{(Section~\ref{sec:arith})}\\
&\iff\text{no trajectory of a polynomial system satisfies certain conditions}&&\text{(Section~\ref{sec:orbits})}\\
&\iff\text{an explicit quantity $\beta_e$ of the same kind as in \eqref{eq:overview_beta} is negative}&&\text{(Section~\ref{sec:compact})}\\
&\iff\text{the origin is GAS for $\dot x=F_e(x)$}&&\text{(Section~\ref{sec:single})}.
\end{align*}

\subsubsection{From the halting problem to a Fourier coefficient on a torus (Section~\ref{sec:arith})}

In Section~\ref{sec:arith} we reduce the question of whether $e\in K$ to the question of whether an explicit function $\mathfrak D_e$ on a torus has a nonzero constant Fourier coefficient. This will be done in two steps, namely, by first reducing the halting problem to the solvability of a Diophantine equation and then reducing this to the particular constant Fourier coefficient being nonzero.

Going from the halting problem to the solvability of a Diophantine equation is a classical result, and in fact this reduction is precisely the negative solution of Hilbert's tenth problem by Matiyasevich~\cite{Matiyasevich70}, who built on the work of Davis, Putnam and Robinson~\cite{DPR61}. The so-called ``MRDP theorem'' (restated in Theorem~\ref{thm:mrdp}) proves that for every recursively enumerable set $S\subseteq\NN$ (in particular, we may take $S=K$) there is a polynomial $P(t,y_1,\dots,y_k)$ with integer coefficients such that $e\in S$ if and only if $P(e,y)=0$ for some $y\in\NN^k$. It is necessary for our purposes to rewrite the statement ``$P(e,y)=0$ for some $y \in \NN^{k}$'' in a particular form, which we do in Lemma~\ref{lem:template}. Namely, we show there exist fixed integers $q,r \geq 1$, integer matrices $A=(A_{lj}), B=(B_{lj}) \in \ZZ^{r\times q}$, and $c_0,c_1\in\ZZ^r$, such that with $c(e)=c_0+e\,c_1$, for every $e \in \NN$ the statement above is equivalent to the system
\begin{equation}\label{eq:overview_template}
\sum_{j=1}^q\bigl(A_{lj}n_j^2+B_{lj}n_j\bigr)=c_l(e),\qquad l=1,\dots,r,
\end{equation}
having a solution $n\in\ZZ^q$. The fact that $e$ appears only on the right-hand side is what will eventually allow us to keep the dimension and degree of $F_e$ independent of $e$. It is worth noting that we never need the actual numerical values of $A,B,c_0,c_1$.

Unsurprisingly, going from the above statement to a nonzero Fourier coefficient requires some Fourier analysis. With $q,r$ as in \eqref{eq:overview_template}, we will be working on the $(q+r)$-torus $\TT^{q+r} = (\RR / 2\pi \ZZ)^{q+r}$ (a product of $q+r$ circles), whose points are represented as tuples $\varphi = (\varphi_{1}, \ldots, \varphi_{q+r})$. Roughly speaking, each element $\varphi_{l}$ of the tuple $\varphi$ is going to be ``responsible'' for a separate condition related to \eqref{eq:overview_template}; we need the $r$ individual equations to be satisfied, and we will also need $q$ additional conditions on potential solutions $n \in \ZZ^{q}$ of this system of equations (one condition for each $n_{j}$).

For $e \in \NN$, consider a (not yet defined) function $\mathfrak{D}_{e}: \TT^{q+r} \to \CC$. Suppose we want to define such a function to ``encode'' a solution to the system \eqref{eq:overview_template}, if one exists. Assuming $\mathfrak{D}_{e}$ is a ``nice enough'' function, it can be written as a Fourier series
\begin{equation}\label{eq:detector_fourier_series}
    \mathfrak{D}_{e}(\varphi) = \sum_{k \in \ZZ^{q+r}} (\hat{\mathfrak{D}}_{e})_{k} e^{ik\cdot\varphi} := \sum_{k \in \ZZ^{q+r}} (\hat{\mathfrak{D}}_{e})_{k} \prod_{l=1}^{q+r} \exp(ik_{l}\varphi_{l}),
\end{equation}
where each $(\hat{\mathfrak{D}}_{e})_{k} \in \CC$ and the ``constant Fourier coefficient'' of $\mathfrak{D}_{e}$ is the value $(\hat{\mathfrak{D}}_{e})_{(0, \ldots, 0)}$. In \eqref{eq:detector}, the function $\mathfrak{D}_{e}$ is defined in such a way that in addition to \eqref{eq:detector_fourier_series}, we also have (for some positive constants $w_{n,s}$, where $(n,s) \in \ZZ^{q+q}$, with $\sum_{n,s} w_{n,s} < \infty$) that $\mathfrak{D}_{e}(\varphi)$ is equal to
\begin{equation}\label{eq:detector_convenient_series}
    \sum_{(n,s) \in \ZZ^{q+q}} w_{n,s} \left( \prod_{j=1}^{q} \exp \bigg( 4i[n_{j}^{2} - s_{j}] \varphi_{j} \bigg) \ \cdot \ \prod_{l=1}^{r}  \exp \bigg( 4i \Big[ \sum_{j=1}^{q}(A_{lj}s_{j} + B_{lj}n_{j}) - c_{l}(e) \Big] \varphi_{q+l} \bigg)  \right).
\end{equation}
The factor of $4$ in the exponents appears for technical reasons and plays no role here. As an aside, note that $\frac{1}{2\pi} \int_{0}^{2\pi} e^{im \xi} d \xi$ equals $1$ if $m=0$ and equals $0$ otherwise. Hence, if we integrate $\mathfrak{D}_{e}(\varphi)$ term-by-term over $\TT^{q+r}$ and divide by $(2\pi)^{q+r}$, from \eqref{eq:detector_fourier_series} and \eqref{eq:detector_convenient_series} we obtain
\begin{equation}
    \frac{1}{(2\pi)^{q+r}} \int_{\TT^{q+r}} \mathfrak{D}_{e}(\varphi) \ d\varphi_{1} \cdots d\varphi_{q+r} = (\hat{\mathfrak{D}}_{e})_{(0, \ldots, 0)} = \sum_{\substack{n \in \ZZ^{q} \text{ solving } \eqref{eq:overview_template}}} w_{(n_{1}, \ldots, n_{q}),(n_{1}^{2}, \ldots, n_{q}^{2})}.
\end{equation}
The last equality is true because all the terms in the exponentials of \eqref{eq:detector_convenient_series} must be equal to zero (these are the ``$m$'' in our convenient integration fact stated above); thus any corresponding ``index'' $(n,s) \in \ZZ^{q+q}$ must satisfy $n_{j}^{2} = s_{j}$ for all $j$ and $\sum_{j=1}^{q} (A_{lj}s_{j} + B_{lj}n_{j}) = c_{l}(e)$ for all $l$, which together are equivalent to $n=(n_{1}, \ldots, n_{q})$ solving \eqref{eq:overview_template}. Therefore, since all $w_{n,s}$ are positive real numbers, the constant Fourier coefficient $(\hat{\mathfrak{D}}_{e})_{(0, \ldots, 0)}$ of $\mathfrak D_e$ is nonzero if and only if there exists some $n \in \ZZ^{q}$ solving the system \eqref{eq:overview_template}  (Lemma~\ref{lem:constterm}), which is true if and only if $e \in K$.

\subsubsection{From a Fourier coefficient to trajectories of a polynomial system (Section~\ref{sec:orbits})}

In Section~\ref{sec:orbits} we relate the constant Fourier coefficient of $\mathfrak D_e$ (a function $\TT^{q+r} \to \CC$) to a statement about the trajectories of an explicit polynomial system $\dot{x} = X_{e}(x)$ with rational coefficients on $\RR^{n_{s}}$ (Section~\ref{sec:the_source_system}), for some explicitly chosen $n_{s} > q+r$ independent of $e$. We want to construct this system $\dot{x} = X_{e}(x)$ so that some of its trajectories $x(t)$ correspond to $\mathfrak{D}_{e}(\varphi(t))$, where $\varphi(t)$ are trajectories of some ``simple'' system on $\TT^{q+r}$, in such a way that we may easily recover the constant Fourier coefficient $(\hat{\mathfrak{D}}_{e})_{(0, \ldots, 0)}$ from $\mathfrak{D}_{e}(\varphi(t))$. More precisely, the trajectories of this ``simple'' system are just linear functions of the form $\varphi(t) = \varphi(0)+ \omega t$ for fixed $\omega \in \RR^{q+r}$, and the correspondence between specific trajectories $x(t)$ and $\mathfrak{D}_{e}(\varphi(t))$ is
\begin{equation}\label{eq:D_e_obs}
    \mathfrak{D}^{\mathrm{obs}}_e(x(t)) = R_0\, \mathfrak{D}_{e}(\varphi(t)) = R_0\, \mathfrak{D}_{e}(\varphi(0) + \omega t),
\end{equation}
where $\mathfrak{D}^{\mathrm{obs}}_e : \RR^{n_{s}} \to \CC$ is a polynomial function (defined in \eqref{eq:detobs}) that can be directly obtained from $\mathfrak{D}_{e}$, and $R_{0} \in \CC$ is a nonzero constant. We emphasize that \eqref{eq:D_e_obs} does not hold for \textit{all} possible trajectories $x(t)$ of $\dot{x} = X_{e}(x)$, but only for some subset of the trajectories.

Once we establish this correspondence, the following convenient fact about Fourier series allows us to ``extract'' $(\hat{\mathfrak D}_e)_{(0,\ldots,0)}$ from the right-hand side of \eqref{eq:D_e_obs}. With $\varphi = \varphi(t)$ in \eqref{eq:detector_fourier_series}, each term of this Fourier series becomes $(\hat{\mathfrak D}_e)_k\,e^{ik\cdot\varphi(0)}e^{i(k\cdot\omega)t}$, and if $k \cdot \omega \neq 0$ then $\lim_{T \to \infty} \frac{1}{T} \int_{0}^{T} e^{i(k\cdot\omega)t} \ dt = 0$ (otherwise, for $k \cdot \omega = 0$ this limit equals $1$). Thus if we can guarantee that $k \cdot \omega = 0$ if and only if $k = (0, \ldots, 0)$ (by, for example, taking $\omega_{l} = \pm \sqrt{p_{l}}$ for distinct primes $p_{1}, \ldots, p_{q+r}$; see Lemma~\ref{lem:radicals} and Corollary~\ref{cor:irrational}), then, since the Fourier series of $\mathfrak D_e$ converges
absolutely (Lemma~\ref{lem:constterm}), we may integrate term-by-term (Lemma~\ref{lem:avg}) to obtain
\begin{equation}\label{eq:overview_time_average}
\lim_{T \to \infty} \frac{1}{T} \int_{0}^{T} \mathfrak{D}^{\mathrm{obs}}_e(x(t)) \ dt = \lim_{T\to\infty}\frac1T\int_0^T R_{0} \, \mathfrak D_e(\varphi(0)+\omega t)\,dt = R_{0} \, (\hat{\mathfrak D}_e)_{(0,\ldots,0)}
\end{equation}
for any trajectory $x(t)$ satisfying \eqref{eq:D_e_obs}. This is precisely how we relate $(\hat{\mathfrak D}_e)_{(0,\ldots,0)}$ to the trajectories of this system $\dot{x} = X_{e}(x)$. There are two main questions left to answer about this construction: how exactly is the system $\dot{x} = X_{e}(x)$ constructed, and what is the subset of trajectories $x(t)$ for which \eqref{eq:D_e_obs} holds?

The system $\dot{x} = X_{e}(x)$ is constructed by working backwards from \eqref{eq:D_e_obs}. Since we want to somehow relate $x(t)$ to $\mathfrak{D}_{e}(\varphi(t))$, this system may be thought of as a ``circuit'' whose job is to compute the actual value of $\mathfrak{D}_{e}(\varphi(t))$ in real time. The formal definition of $\mathfrak{D}_{e}$ in \eqref{eq:detector} is as a product of various factors, most of which are not polynomials (many of these factors are infinite sums known as ``theta functions''). In the (very high dimensional) system $\dot{x} = X_{e}(x)$, separate components of $x$ are allocated towards computing and storing the factors of $\mathfrak{D}_{e}$, the value of $\varphi(t) \in \TT^{q+r}$, and any other miscellaneous functions or constants needed to write down $\mathfrak{D}_{e}(\varphi(t))$. Then, the role of the function $\mathfrak{D}^{\mathrm{obs}}_{e}$ is to ``assemble'' these factors (evaluated at $\varphi(t)$) to obtain $\mathfrak{D}_{e}(\varphi(t))$, up to some constant $R_{0}$. Recalling that $X_{e} : \RR^{n_{s}} \to \RR^{n_{s}}$ is also required to be a polynomial function for our purposes, this whole scheme is possible because the factors of $\mathfrak{D}_{e}$ and all auxiliary quantities that need to be computed (including the irrational values $\omega_{1}, \ldots, \omega_{q+r}$, which cannot appear as coefficients of $X_{e}$; see Lemma~\ref{lem:controller}) can be obtained using polynomial differential equations with rational coefficients (Sections~\ref{sec:modular} and~\ref{sec:jacobi}; see Figure~\ref{fig:circuit} on page~\pageref{fig:circuit}).

From the circuit analogy, it seems reasonable that \eqref{eq:D_e_obs} would only hold for some trajectories $x(t)$ of $\dot{x} = X_{e}(x)$ and not all of them. Since different trajectories $x(t)$ (which we obtain by initializing the system at different points $x(0) \in \RR^{n_{s}}$) correspond to different computations on the same circuit, only some subset of these actually perform the intended computations; for this reason, we call such $x(t)$ the \textit{intended trajectories}. The way that the intended trajectories are singled out from all of the trajectories is by Theorem~\ref{thm:selector}, which states that trajectories which are \textit{recurrent} (they keep returning close to where they started), do not ``grow too fast'', and pass a finite number of ``tests'' behave like intended trajectories. These tests are encoded into a single polynomial function $h_{e}: \RR^{n_{s}} \to \RR^{n_{h}}$ in such a way that all of the tests are ``passed'' if
\begin{equation}\label{eq:avg_of_h_e_is_zero}
    \lim_{T \to \infty} \frac{1}{T} \int_{0}^{T} h_{e}(x(t)) \ dt = 0,
\end{equation}
together with an additional technical condition which we do not worry about here. We will also include the real and imaginary parts of $\mathfrak{D}^{\mathrm{obs}}_{e}(\cdot)$ among the components of $h_{e}$. This way, \eqref{eq:avg_of_h_e_is_zero} holding for a recurrent trajectory $x(t)$ which does not ``grow too fast'' implies that (a) all the tests have passed and (b) $(\hat{\mathfrak D}_e)_{(0,\ldots,0)} = 0$ by Theorem~\ref{thm:selector} and \eqref{eq:overview_time_average}, and therefore $e \notin K$ by the work of Section~\ref{sec:arith}. Conversely, if \eqref{eq:avg_of_h_e_is_zero} fails for every recurrent trajectory which does not grow too fast, it fails in particular for the intended trajectories, hence $(\hat{\mathfrak D}_e)_{(0,\ldots,0)} \neq 0$ (Lemma~\ref{lem:graph}) and so $e \in K$.

\subsubsection{From trajectories to the sign of a quantity $\beta_e$ (Section~\ref{sec:compact})}

In Section~\ref{sec:orbits} we concluded with a statement about individual trajectories of $\dot x=X_e(x)$ on the (non-compact) space $\RR^{n_s}$. In Section~\ref{sec:compact} we convert this into a statement about a new polynomial system on a compact set together with a polynomial ``reward'' function $\cR_{e}$ on this set, from which we define a quantity $\beta_e$ (the same kind of quantity as in Lemma~\ref{lem:gascrit}) that is negative if $e\in K$ and positive if $e\notin K$ (Proposition~\ref{prop:signgap}). This new polynomial system is obtained by adding to $\dot{x} = X_{e}(x)$ a new variable $z \in \RR^{n_h}$ satisfying $\dot{z} = \kappa h_{e}(x)$ for a small constant $\kappa>0$ (chosen in \eqref{eq:kappaeps}), where $h_{e}$ is from \eqref{eq:avg_of_h_e_is_zero}. The purpose of the $z$ variable is to keep a ``running total'' of $h_{e}$ in such a way that if the limit in \eqref{eq:avg_of_h_e_is_zero} exists but is nonzero, then $z(t)$ tends to infinity as $t \to \infty$.

To end up with a system on a compact set (in contrast to the system $(\dot{x}, \dot{z}) = (X_{e}(x), \kappa h_{e}(x))$ on the non-compact set $\RR^{n_s} \times \RR^{n_h}$) we map $\RR^{n_s}$ onto the ``upper hemisphere'' of the sphere $S^{n_{s}} \subseteq \RR^{n_s+1}$, with infinity corresponding to the ``equator'' (Lemma~\ref{lem:central}), and $\RR^{n_h}$ onto the sphere $S^{n_{h}} \subseteq \RR^{n_h+1}$ minus its ``south pole'', which corresponds to infinity (see Figure~\ref{fig:compactification} on page~\pageref{fig:compactification}). After a change of time (which is necessary for technical reasons), under these mappings the system  $(\dot{x}, \dot{z}) = (X_{e}(x), \kappa h_{e}(x))$ becomes a system on the compact set $S^{n_{s}} \times S^{n_{h}}$ (namely, \eqref{eq:compactsource}) with coordinates $(y,w) \in S^{n_{s}}$ and $(u,u_{0}) \in S^{n_{h}}$. The reward function $\cR_e : S^{n_{s}} \times S^{n_{h}} \to \RR$ is then defined to be negative both on the equator of $S^{n_{s}}$ and on the south pole of $S^{n_{h}}$, and positive on a region of $S^{n_{s}} \times S^{n_{h}}$ corresponding to the variable $x \in \RR^{n_s}$ being bounded and the variable $z \in \RR^{n_h}$ being close to $0$ (see \eqref{eq:reward}).

Denoting by $\Psi_{e}$ the flow of this new polynomial system on $S^{n_{s}} \times S^{n_{h}}$, we define $\beta_{e} = \beta(\Psi_{e}, \cR_{e})$ as in \eqref{eq:beta}; we can do this precisely because the new system is defined on a compact set. From this definition we see that $\beta_{e}$ depends only on the $\Psi_{e}$-invariant measures on $S^{n_{s}} \times S^{n_{h}}$, and an argument from ergodic theory shows that we may in fact only consider such measures which are \textit{ergodic} (by \ref{E:decomp}). In turn, such ergodic invariant measures on $S^{n_{s}} \times S^{n_{h}}$ are either entirely supported on $\{\text{equator of } S^{n_s} \} \times S^{n_{h}}$, or they are supported (almost everywhere) on trajectories whose $(y(t),w(t))$ component corresponds to a trajectory $x(t)$ of $\dot{x} = X_{e}(x)$ which is recurrent and does not grow too fast (indeed, this is exactly why these two conditions appear in Section~\ref{sec:orbits}; see the proof of Lemma~\ref{lem:selectedorbit}).

Now, suppose $e \in K$. Ergodic measures of the first type described above are supported on $\{\text{equator of } S^{n_s} \} \times S^{n_{h}}$, where $\cR_e < 0$. For an ergodic measure $\mu$ of the second type, consider a trajectory $(y(t), w(t), u(t), u_{0}(t))$ starting at a ``typical'' point where $\mu$ is supported, so that the corresponding $x(t)$ is recurrent and does not grow too fast. Then the limit in \eqref{eq:avg_of_h_e_is_zero} exists (by \ref{E:birkhoff}) and since $e \in K$, \eqref{eq:avg_of_h_e_is_zero} does not hold (by Lemma~\ref{lem:selectedorbit} and Corollary~\ref{cor:yes}). Hence the corresponding $z(t)$ tends to infinity, and so $\mu$ ends up being concentrated at the south pole on the $S^{n_h}$-component of the full system (Lemma~\ref{lem:accumulator}), where $\cR_e < 0$ as well. Thus $\beta_e < 0$ in this case (Proposition~\ref{prop:signgap}). On the other hand, suppose $e \notin K$. Then along every trajectory of the new system whose corresponding $x(t)$ trajectory is intended and whose corresponding $z(t)$ trajectory starts close to $0$, we have that $x(t)$ stays bounded and $z(t)$ remains close to $0$ (Lemma~\ref{lem:bounds}). By construction of $\cR_e$, this gives $\beta_e > 0$ as desired (Proposition~\ref{prop:signgap}; see Figure~\ref{fig:signgap} on page~\pageref{fig:signgap}).

\subsubsection{From the sign of $\beta_e$ to a single $(n_{*},d_{*})$ (Section~\ref{sec:single})}\label{subsec:overview-single}

In Section~\ref{sec:single} we go from the flow $\Psi_{e}$ of the polynomial system on $S^{n_s} \times S^{n_h}$ and the reward function $\cR_e$ (which together define $\beta_e$) to a homogeneous polynomial vector field $F_e : \RR^{n_*} \to \RR^{n_*}$ of odd degree $d_* \geq 3$ whose origin is GAS exactly when $\beta_e<0$ (Theorem~\ref{thm:wrapper}). Recall from Section~\ref{subsec:overview-criteria} that a homogeneous polynomial vector field $P$ may be split into a part $Y_{P}$ tangent to a sphere (whose flow on that sphere is $\Phi_{Y_P}$) and a part $a_{P}(u) \cdot u$ normal to it (where the maximum of the integrals of $a_{P}$ over invariant measures is $\beta(\Phi_{Y_P}, a_{P})$ in \eqref{eq:beta}). The main idea of Section~\ref{sec:single} is to perform this procedure in reverse: if $Y$ is a homogeneous polynomial vector field of degree $E$ which is tangent to a sphere (that is, $v \cdot Y(v) = 0$ for all $v$) and $a$ is a homogeneous polynomial of degree $E-1$, then the vector field $F(v)=Y(v)+a(v)\,v$ is homogeneous of degree $E$ and its tangent part and radial function on the sphere are exactly $Y$ and $a$. Thus by Lemma~\ref{lem:gascrit}, the origin of $\dot{v} = F(v)$ is GAS exactly when $\beta(\Phi_Y, a) < 0$, where $Y$ is restricted to the appropriate sphere.

For this strategy to work, we need a flow on a single sphere rather than the flow $\Psi_{e}$ on the product $S^{n_s} \times S^{n_h}$ of two spheres. This issue is addressed by a neat geometric construction which embeds the product $S^{n_s} \times S^{n_h}$ into the sphere $\sqrt{3} \, S^{n_s+n_h+2} \subseteq \RR^{n_s + n_h + 3}$ of radius $\sqrt{3}$. The main idea is that by adding another coordinate $v_{0}$ to $S^{n_s} \times S^{n_h} \subseteq \RR^{n_s + 1} \times \RR^{n_h + 1}$, we may identify the set $Z =  \{ (v_{s}, v_{h}, v_{0}) \in S^{n_s} \times S^{n_h} \times \RR : v_{0} = \pm 1\}$ with two copies of $S^{n_s} \times S^{n_h}$ (one at $v_{0} = 1$ and another at $v_{0} = -1$). The set $Z$ is contained in $\sqrt{3} \, S^{n_s+n_h + 2}$, since $|(v_s, v_h, v_{0})|^{2} = |v_{s}|^2 + |v_{h}|^2 + |v_{0}|^2 = 1 + 1 + 1 = (\sqrt{3})^2$ for every point $(v_s, v_h, v_{0}) \in Z$. In Figure \ref{fig:wrapper} (page~\pageref{fig:wrapper}) we illustrate the analogous construction for a single circle in place of $S^{n_s} \times S^{n_h}$, where the larger sphere is $\sqrt{2} \, S^{2} \subseteq \RR^{3}$. The original flow $\Psi_{e}$ and reward function $\cR_e$ are then ``copied'' from $S^{n_s} \times S^{n_h}$ onto each of $S^{n_s} \times S^{n_h} \times \{1\}$ and $S^{n_s} \times S^{n_h} \times \{-1\}$ (see the computation preceding \eqref{eq:wrapperavg}), and elsewhere on $\sqrt{3} \, S^{n_s+n_h + 2}$ we extend $\Psi_e$ and $\cR_e$ to a flow $\widehat{\Psi}_e$ and a function $\widehat{\cR}_e$ in such a way that only trajectories on $Z$ may contribute to making $\beta(\widehat{\Psi}_e, \widehat{\cR}_e)$ nonnegative (see \eqref{eq:wrapperavg}). Lastly, using a ``homogenization'' trick we replace the vector field generating $\widehat{\Psi}_e$ and the function $\widehat{\cR}_e$ by homogeneous polynomials, of some odd degree $E \geq 3$ and of degree $E-1$ respectively, which agree with the original ones on $\sqrt{3} \, S^{n_s+n_h + 2}$ and hence do not change $\beta(\widehat{\Psi}_e, \widehat{\cR}_e)$. Combining them as in the previous paragraph yields a homogeneous polynomial vector field $F_e : \RR^{n_*} \to \RR^{n_*}$ of odd degree $d_* = E$, where $n_* = (n_s + 1) + (n_h + 1) + 1$, whose origin is GAS precisely when $\beta(\widehat{\Psi}_e, \widehat{\cR}_e) < 0$, which is true if and only if $\beta_{e} = \beta(\Psi_{e}, \cR_{e}) < 0$ (by \eqref{eq:wrapper} and \eqref{eq:wrapperavg}).

Finally, in Section~\ref{sec:fixed} we check that the correspondence $e\mapsto F_e$ is computable and verify that different values of $e$ change only the coefficients of $F_{e}$, so that $n_*$ and $d_*$ remain the same. Together with the work from the other sections in Part II, this completes the proof of Theorem~\ref{thm:fixedpair}. It is then easy to combine Theorem~\ref{thm:fixedpair} with Theorem~\ref{thm:transfer} to obtain Theorem~\ref{thm:main}, whose anticlimactically short proof concludes Section~\ref{sec:fixed}.

\addtocontents{toc}{\protect\setcounter{tocdepth}{1}}
\section{Preliminaries}\label{sec:prelim}

In this section, we recall the background that we use from computability theory (Section~\ref{subsec:computability}), real algebraic geometry (Section~\ref{subsec:tarski}), ordinary differential equations (Sections~\ref{subsec:ode}--\ref{subsec:homog}) and ergodic theory (Section~\ref{subsec:erg}). Throughout, we denote the natural numbers by $\NN=\{0,1,2,\dots\}$, the Euclidean norm by $|\cdot|$, and the sphere in $\RR^{n}$ by $S^{n-1}=\{x\in\RR^n:|x|=1\}$.

For vector fields we use the following letters: $f$ denotes an arbitrary polynomial vector field; $P$ a homogeneous polynomial vector field in the general criteria of Section~\ref{sec:angular}; $F$ a homogeneous polynomial vector field whose stability is in question, such as the input of Section~\ref{sec:lifting}, the output of Section~\ref{sec:single} and $F_e$; $G$ and $H$ the vector fields obtained from $F$ in Section~\ref{sec:lifting}; $Y_P$ (and similarly $Y_F$, $Y_H$, \dots) the angular part of $P$ (Section~\ref{subsec:polar}), and $Y$ a vector field tangent to a sphere in Section~\ref{sec:single}; and $X$, $X_e$ and $\widehat X_e$ the polynomial systems of Part~II. The flow of a vector field $f$ is denoted by $\Phi_f$ (Section~\ref{subsec:ode}).

\subsection{Computability}\label{subsec:computability}
We will use Turing machines as our preferred model of computation, in the standard form of \cite[Definition~3.3]{Sipser}. A Turing machine receives its input as a string of symbols on its tape, and halts when it enters its accept state or its reject state, as described in the paragraph after \cite[Definition~3.3]{Sipser}. We take binary strings as inputs, and we identify $\NN$ with the set of binary strings through the computable bijection which sends $x$ to the binary representation of $x+1$ with its leading digit $1$ removed. In this way, every subset of $\NN$ is a language, that is, a set of strings, in the sense of \cite[Section~0.2]{Sipser}. We code finite objects such as integers, rational numbers, finite lists of rational numbers, polynomials with rational coefficients, formulas and Turing machines by natural numbers in a fixed standard way, as in \cite[Section~3.3]{Sipser}, where it is also noted that the choice of a reasonable coding does not matter, because a Turing machine can translate one such coding into another. The elementary operations on them (arithmetic, composition and differentiation of polynomials, evaluation of a polynomial at a rational point, and so on) are computable. A set $S\subseteq\NN$ is \emph{decidable} if there is a Turing machine which halts on every input $x$, accepting $x$ if $x\in S$ and rejecting $x$ if $x\notin S$ \cite[Definition~3.6]{Sipser}, and \emph{recursively enumerable} (r.e.) if there is a Turing machine which halts on input $x$ if and only if $x\in S$. A set is r.e.\ if and only if it is \emph{Turing-recognizable}, that is, the set of inputs accepted by some Turing machine \cite[Definition~3.5]{Sipser}; as noted in a footnote to that definition, Turing-recognizable sets are called recursively enumerable in other textbooks. Indeed, a Turing machine which halts exactly on the elements of $S$ accepts exactly the elements of $S$ once it is modified to accept whenever it would reject. Conversely, a Turing machine which accepts exactly the elements of $S$ halts exactly on the elements of $S$ once it is modified to run forever whenever it would reject, as in \cite[Example~5.24]{Sipser}. A function $r:\NN\to\NN$ is \emph{total computable} if some Turing machine halts on every input $x$ with just $r(x)$ on its tape \cite[Definition~5.17]{Sipser}. For $A,B\subseteq\NN$ we write $A\lem B$, and say that $A$ \emph{many-one reduces} to $B$, if there is a total computable map $r$ with $x\in A\iff r(x)\in B$ for all $x$; in \cite[Definition~5.20]{Sipser} this is called \emph{mapping reducibility}, and a footnote at the beginning of Section~5.3 there notes the name many-one reducibility. A set $B$ is \emph{many-one hard} if $A\lem B$ for every r.e.\ set $A$, and \emph{many-one complete} if it is r.e.\ and many-one hard.

We use the following standard facts.
\begin{enumerate}[label=(C\arabic*)]
\item\label{C:K} For $m\in\NN$, let $M_m$ be the Turing machine with code $m$ if $m$ is the code of a Turing machine, and otherwise a fixed Turing machine which halts on no input; thus every Turing machine is $M_m$ for some $m$. Fix a computable bijection $\langle\cdot,\cdot\rangle:\NN^2\to\NN$. There is a Turing machine $\mathsf U$ which, on input $\langle m,y\rangle$, simulates $M_m$ on input $y$, so that $\mathsf U$ halts on input $\langle m,y\rangle$ if and only if $M_m$ halts on input $y$. This is the machine $U$ of \cite[Section~4.2]{Sipser}, modified to run forever when $m$ is not the code of a Turing machine. We call $\mathsf U$ a \emph{universal Turing machine}. Its halting set $K=\{e\in\NN:\mathsf U\text{ halts on input }e\}$ is many-one complete and undecidable. Indeed, $K$ is r.e.\ by definition. If $A$ is r.e., then $A$ is the set of inputs on which some $M_m$ halts, so the total computable map $y\mapsto\langle m,y\rangle$ is a many-one reduction from $A$ to $K$. Finally, $K$ is a coding of the halting problem, which is undecidable \cite[Theorem~5.1]{Sipser}. In our notation, the diagonal argument of \cite[proof of Theorem~4.11]{Sipser} reads as follows. If some Turing machine decided $K$, then there would be a Turing machine $D$ which, on input $y$, halts if $\langle y,y\rangle\notin K$ and runs forever if $\langle y,y\rangle\in K$. Writing $D=M_d$, we would get that $D$ halts on input $d$ if and only if $\langle d,d\rangle\notin K$, that is, by the definition of $K$, if and only if $M_d=D$ does not halt on input $d$, which is a contradiction.
\item\label{C:post} A set $S$ is decidable if and only if both $S$ and $\NN\setminus S$ are r.e.\ \cite[Theorem~4.22]{Sipser}, since the r.e.\ sets are exactly the Turing-recognizable sets.
\item\label{C:trans} The relation $\lem$ is transitive \cite[Exercise~5.6 and its solution]{Sipser}. If $A\lem B$ and $B$ is decidable (respectively r.e.), then $A$ is decidable (respectively r.e.) \cite[Theorems~5.22 and~5.28]{Sipser}. If $A\lem B$ and $A$ is many-one hard, then $B$ is many-one hard, by transitivity.
\end{enumerate}
Consequently, if $S$ is many-one hard, then $\NN\setminus S$ is not r.e., and $S$ is undecidable. Indeed, $K\lem S$ because $K$ is r.e., and the same map reduces $\NN\setminus K$ to $\NN\setminus S$. If $\NN\setminus S$ were r.e., then $\NN\setminus K$ would be r.e.\ by \ref{C:trans}; since $K$ is r.e., $K$ would then be decidable by \ref{C:post}, contradicting \ref{C:K}. Finally, if $S$ were decidable, then $\NN\setminus S$ would be r.e.\ by \ref{C:post}.

\subsection{Real quantifier elimination}\label{subsec:tarski}
We define a \emph{formula} to be a statement built from atoms $P(z_1,\dots,z_k)=0$ and $P(z_1,\dots,z_k)>0$, with $P\in\QQ[z_1,\dots,z_k]$, by means of $\neg,\wedge,\vee$ and the quantifiers $\forall z_i$, $\exists z_i$, where all variables range over $\RR$. A \emph{sentence} is a formula without free variables. We use the abbreviations $P\ge0$, $P<0$, $\Rightarrow$ and finite conjunctions in the usual way.

\begin{theorem}[Tarski]\label{thm:tarski}
There is a Turing machine which, given a sentence, halts and outputs $1$ if the sentence is true and $0$ if it is false.
\end{theorem}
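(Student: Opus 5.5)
This is Tarski's theorem on real quantifier elimination, and I would prove it by constructing an explicit decision procedure in the cylindrical algebraic decomposition (CAD) style. Tarski's original method of generalized Sturm sequences would also work. The core claim is that for every formula $\phi(z_1,\dots,z_k)$ there is a quantifier-free formula $\psi(z_1,\dots,z_k)$, computable from $\phi$, that is equivalent to $\phi$ over $\RR$. Given this, a sentence has no free variables, so $\psi$ is a Boolean combination of atoms $c=0$ and $c>0$ with $c\in\QQ$. Such atoms can be evaluated by exact rational arithmetic, which is computable by Section~\ref{subsec:computability}, so the machine outputs $1$ or $0$ accordingly.

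\textbf{Reduction to one existential quantifier.} Since $\forall z\,\phi\equiv\neg\exists z\,\neg\phi$, and negations of quantifier-free formulas are quantifier-free, it suffices by induction on the number of quantifiers (working innermost first) to eliminate one quantifier from $\exists z_k\,\psi(z_1,\dots,z_k)$ with $\psi$ quantifier-free. Write the polynomials occurring in $\psi$ as $P_1,\dots,P_m\in\QQ[z_1,\dots,z_{k-1}][z_k]$. The truth of $\psi$ at a point depends only on the sign vector of $(P_1,\dots,P_m)$ there. So $\exists z_k\,\psi$ holds at a parameter $a\in\RR^{k-1}$ exactly when some realizable sign vector of $(P_i(a,\cdot))_i$ on $\RR$ satisfies $\psi$.

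\textbf{Main step: computing realizable sign conditions uniformly in parameters.} Fixing $a$, the realizable sign vectors of the univariate family are read off from the real roots of the $P_i(a,\cdot)$ and the intervals between them. I would compute them with Sturm--Tarski sequences, meaning Tarski queries counting the real roots of $P$ at which $Q>0$ minus those at which $Q<0$. These counts are determined by signed remainder sequences. Along each sequence, the sign of each coefficient is a polynomial condition in $a$. I would branch over all possible sign patterns of the (finitely many, computable) leading coefficients and subresultants in $a$. On each branch the degrees are fixed and the remainder sequence is formally determined, so the Cauchy index, and hence every Tarski query, becomes an explicit integer. This yields a quantifier-free condition on $a$ describing exactly which sign vectors are realizable. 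The final formula is a disjunction over branches: the sign conditions defining the branch, conjoined with the condition that some realizable vector satisfies $\psi$. Linear algebra over the Tarski queries of products $\prod P_i^{\epsilon_i}$ (the Ben-Or--Kozen--Reif matrix) converts query counts into counts of points realizing each sign condition. Unbounded intervals are handled by also recording the signs of the leading coefficients at $\pm\infty$.

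\textbf{Expected obstacle.} The delicate point is making the univariate procedure \emph{uniform} in parameters. Leading coefficients may vanish for some $a$, and this changes degrees and remainder sequences. I expect this bookkeeping to be the bulk of the work, and I would handle it by the exhaustive case split on vanishing of coefficients described above. Each case is a finite quantifier-free condition, and there are finitely many cases, so computability is preserved.

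Alternatively, since the paper uses this only as a black box, the cleanest course is to cite Tarski's theorem (or the CAD algorithm of Collins, or Basu--Pollack--Roy) directly and note that rational inputs keep all arithmetic exact.
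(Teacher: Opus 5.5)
Your proposal is correct and follows the standard route behind the results the paper cites. The paper gives no proof of its own: it invokes \cite{Tarski} and \cite[Theorem~2.77 and Algorithm~14.3]{BPR}, after a computable conversion to prenex form and clearing of denominators. Your parametric Sturm--Tarski queries, with case splits on leading and subresultant coefficients followed by sign determination, are essentially the proof of \cite[Theorem~2.77]{BPR}, and innermost-first elimination over $\QQ$ serves just as well for non-prenex sentences with rational coefficients.

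The one step you leave implicit is how the bounded intervals are sampled. Take the Tarski queries with respect to $FF'$, where $F$ is the product of those $P_i(a,\cdot)$ that are not identically zero, on the branches where $F$ is nonconstant. By Rolle's theorem, each bounded open interval between consecutive roots of $F$ contains a root of $F'$ that is not a root of $F$, and the sign vector at that point is the sign vector of the interval.
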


This combines Theorems~31 and~37 of \cite{Tarski}; see also \cite[Theorem~2.77 and Algorithm~14.3]{BPR}. Algorithm~14.3 of \cite{BPR} decides sentences in prenex form, and every sentence is equivalent to a sentence in prenex form \cite[Section~2.3, \pg{59}]{BPR}, which is obtained by the usual computable rewriting. We will apply these results with integer coefficients; a sentence with rational coefficients is equivalent to the one obtained by multiplying every atom by a positive common denominator.

\subsection{Ordinary differential equations}\label{subsec:ode}
All vector fields in this paper are polynomial maps $\RR^{n}\to\RR^{n}$, or restrictions of such maps to compact sets, hence locally Lipschitz. For such a vector field $f: \RR^{n} \to \RR^{n}$ and every $x_0\in\RR^n$ there is a unique maximal solution of $\dot x=f(x)$, $x(0)=x_0$, defined on an open interval $(t_-,t_+)\ni0$, and if $t_+<\infty$ then $|x(t)|$ is unbounded on $[0,t_+)$ \cite[Theorems~2.2 and~2.13 and Corollary~2.16]{Teschl}; solutions depend continuously on initial conditions \cite[Theorem~2.8]{Teschl}. We use Gronwall's inequality in the following standard differential form: if $w$ is $C^1$ on an interval $[0,T]$ and $w'\le-\lambda w$ there for a constant $\lambda\in\RR$, then $w(t)\le w(0)e^{-\lambda t}$ for $0\le t\le T$. A set $\cK\subseteq\RR^n$ is \emph{invariant} under $f$ if every maximal solution starting in $\cK$ stays in $\cK$ on its whole interval of existence, and \emph{forward invariant} if it stays in $\cK$ for all $t\ge0$ in this interval. If $\cK$ is compact and invariant then every solution starting in $\cK$ is bounded, so by the facts above, applied to $f$ and to $-f$, it exists for all $t\in\RR$. The map sending $(t,x)\in\RR\times\cK$ to the value at time $t$ of the solution starting at $x$ is then continuous, by continuous dependence on initial conditions and because this value changes by at most $|t-s|\max_\cK|f|$ when $t$ is replaced by $s$. Thus the restriction of the flow of $f$ to $\cK$ is a continuous flow defined for all $t\in\RR$. We write $\Phi_f$ for the flow of $f$ and $\Phi_f^t$ for its time-$t$ map, so that $\Phi_f^t(x_0)=x(t)$ for the maximal solution $x$ with $x(0)=x_0$ and every $t$ in its interval of existence. If $\cK$ is compact and invariant under $f$, we write $\Phi_f|_\cK$ for the continuous flow on $\cK$ just described. Alternatively, we may restrict the domain of $f$ to $\cK$; the flow $\Phi_f$ of the restricted vector field is then this flow on $\cK$. Whenever a flow is regarded as a flow on a compact set, for instance in Section~\ref{subsec:erg}, either the restriction is written explicitly or the domain of the vector field has been restricted to that set, and we say so where this is done.

\subsection{Stability notions and decision problems}\label{subsec:stability}
Throughout, a \emph{polynomial vector field} (without the restriction that the coefficients must be rational numbers) is a map $f=(f_1,\dots,f_n):\RR^n\to\RR^n$ with $f_i\in\RR[x_1,\dots,x_n]$, and we always assume $f(0)=0$, so that the origin is an equilibrium point of the system
\begin{equation}\label{eq:intro-ode}
\dot x=f(x),\qquad x\in\RR^n.
\end{equation}
By Section~\ref{subsec:ode}, every initial state $x(0)$ determines a unique maximal solution of \eqref{eq:intro-ode}. We use the following three stability notions for the origin.
\begin{itemize}
\item The origin is \emph{Lyapunov stable} if for every $\varepsilon>0$ there is $\delta>0$ such that every solution of \eqref{eq:intro-ode} with $|x(0)|<\delta$ exists for all $t\ge0$ and satisfies $|x(t)|<\varepsilon$ for all $t\ge0$.
\item The origin is \emph{locally asymptotically stable} if it is Lyapunov stable and there is $\delta_0>0$ such that every solution of \eqref{eq:intro-ode} with $|x(0)|<\delta_0$ satisfies $x(t)\to0$ as $t\to\infty$.
\item The origin is \emph{globally asymptotically stable} (GAS) if it is Lyapunov stable, and every solution of \eqref{eq:intro-ode} exists for all $t\ge0$ and satisfies $x(t)\to0$ as $t\to\infty$.
\end{itemize}

\begin{figure}[t]
\centering
\begin{tikzpicture}[
  >={Stealth[length=2.2mm]},
  lab/.style={font=\footnotesize, inner sep=1pt},
  traj/.style={line width=0.8pt, line join=round},
  arr/.style={-{Stealth[length=2.1mm]}, line width=0.8pt},
  delta/.style={fill=black!14},
  eps/.style={line width=0.5pt, densely dashed},
]
  \foreach \ox in {0, 5.0, 10.0} {
    \begin{scope}[xshift=\ox cm, scale=1.35]
      \fill[black!3] (-1.6,-1.6) rectangle (1.6,1.6);
      \draw[line width=0.4pt, black!55] (-1.6,-1.6) rectangle (1.6,1.6);
      \fill[delta] (0,0) circle (0.35);
      \draw[eps] (0,0) circle (0.75);
    \end{scope}
  }
  \begin{scope}[scale=1.35]
    \foreach \p in {0, 120, 240} {
      \draw[traj, domain=0:11, samples=200, variable=\t]
        plot ({1.5*exp(-\t/4)*cos(\t r + \p)}, {1.5*exp(-\t/4)*sin(\t r + \p)});
      \draw[arr] ({1.5*exp(-0.95/4)*cos(0.95 r + \p)}, {1.5*exp(-0.95/4)*sin(0.95 r + \p)}) -- ({1.5*exp(-1.05/4)*cos(1.05 r + \p)}, {1.5*exp(-1.05/4)*sin(1.05 r + \p)});
    }
    \draw[traj, domain=0:8, samples=120, variable=\t] plot ({0.3*exp(-\t/4)*cos(\t r + 60)}, {0.3*exp(-\t/4)*sin(\t r + 60)});
    \foreach \p in {0, 120, 240} \fill (\p:1.5) circle (1.1pt/1.35);
    \fill (60:0.3) circle (1.1pt/1.35);
    \fill (0,0) circle (1.2pt/1.35);
    \node[lab, align=center] at (0,-2.05) {(a) GAS};
  \end{scope}
  \begin{scope}[xshift=5cm, scale=1.35]
    \foreach \r/\a in {0.25/30, 0.55/150, 0.95/250, 1.35/60} {
      \draw[traj] (0,0) circle (\r);
      \draw[arr] (\a-3:\r) arc[start angle=\a-3, end angle=\a+3, radius=\r];
    }
    \fill (0,0) circle (1.2pt/1.35);
    \fill (30:0.25) circle (1.1pt/1.35);
    \draw[line width=0.4pt] (0,0) -- (-115:0.35) node[lab, anchor=north east, inner sep=0.5pt] {$\delta$};
    \draw[line width=0.4pt] (0,0) -- (-65:0.75) node[lab, anchor=north west, inner sep=0.5pt] {$\varepsilon$};
    \node[lab, align=center] at (0,-2.05) {(b) Lyapunov stable, not GAS};
  \end{scope}
  \begin{scope}[xshift=10cm, scale=1.35]
    \begin{scope}
      \clip (-1.6,-1.6) rectangle (1.6,1.6);
      \foreach \c in {0.25, 0.7} {
        \foreach \sx/\sy in {1/1, -1/1, 1/-1, -1/-1} {
          \draw[traj, domain={\c/1.6}:1.6, samples=80, variable=\s] plot ({\sx*\s}, {\sy*\c/\s});
        }
      }
      \draw[traj] (-1.6,0) -- (1.6,0) (0,-1.6) -- (0,1.6);
      \draw[traj, line width=1.1pt, domain=0.12:1.6, samples=80, variable=\s] plot (\s, {0.03/\s});
    \end{scope}
    \fill (0.12,0.25) circle (1.2pt/1.35);
    \draw[arr, line width=1.1pt] (0.9,{0.03/0.9}) -- (0.95,{0.03/0.95});
    \foreach \c in {0.25, 0.7} {
      \foreach \sx/\sy in {1/1, -1/1, 1/-1, -1/-1} {
        \pgfmathsetmacro{\xa}{sqrt(\c)*1.25}
        \draw[arr] ({\sx*\xa}, {\sy*\c/\xa}) -- ({\sx*(\xa+0.03)}, {\sy*\c/(\xa+0.03)});
      }
    }
    \foreach \sx in {1,-1} { \draw[arr] (\sx*1.1,0) -- (\sx*1.14,0); \draw[arr] (0,\sx*1.14) -- (0,\sx*1.1); }
    \fill (0,0) circle (1.2pt/1.35);
    \node[lab, align=center] at (0,-2.05) {(c) not Lyapunov stable};
  \end{scope}
\end{tikzpicture}
\caption{The stability notions above, for three homogeneous polynomial vector fields of degree three on $\RR^2$. In
each panel the shaded disc is $\{|x|<\delta\}$ and the dashed circle is $\{|x|=\varepsilon\}$, as in the
definition of Lyapunov stability, and the arrows point forward in time. (a)
$f(x)=|x|^2\bigl(-\tfrac14x_1-x_2,\ x_1-\tfrac14x_2\bigr)$: every solution spirals into the origin, and the
solutions which start in the disc stay inside the circle, so the origin is GAS. (b)
$f(x)=|x|^2(-x_2,x_1)$: the solutions are circles around the origin, so the solutions which start in the
disc stay inside the circle, but no solution other than $x(t)=0$ approaches the origin; the origin is
Lyapunov stable but not GAS. (c) $f(x)=|x|^2(x_1,-x_2)$: the solutions move along the curves
$x_1x_2=\mathrm{const}$, and every solution which starts off the vertical axis, however close to the
origin (thick curve), leaves the circle and escapes to infinity (in fact, escapes to infinity in finite time); the origin is not Lyapunov
stable, and hence not GAS. Since these vector fields are homogeneous, each picture looks the same at
every scale, by the scaling identity \eqref{eq:scaling} below.}
\label{fig:gas}
\end{figure}

These are the notions called stable, asymptotically stable and globally asymptotically stable in \cite[Definition~4.1 and Section~4.1]{Khalil}. There, the existence of the solutions for all $t\ge0$ is assumed implicitly; since it does not always hold for polynomial vector fields, we include it in the definitions. Clearly GAS implies local asymptotic stability, which implies Lyapunov stability. Homogeneous vector fields of degree $d\ge1$ satisfy $f(0)=0$ automatically; this assumption matters for the vector fields which are not homogeneous, such as those of Corollary~\ref{cor:nonhomog}.

We use the codes of homogeneous polynomial vector fields described in the introduction, with the following conventions. A code is \emph{valid} if it encodes two integers $n,d\ge1$ and the rational coefficients of a polynomial vector field $f$ on $\RR^n$ each of whose coordinates, after duplicate monomials have been combined, is homogeneous of degree $d$ (a zero coordinate is regarded as homogeneous of every degree). Whether a code is valid is decidable, and the dimension $n$ and the degree $d$ of a valid code can be computed from it. We regard codes which are not valid as ``negative instances'' of every decision problem in this paper; thus $\HGAS$ and $\HLS$ consist of valid codes only. Besides the sets $\HGAS(n,d)$ and $\HLS(n,d)$ of the introduction, in which both the dimension and the degree are fixed, we write $\HGAS(\cdot,d)$ and $\HLS(\cdot,d)$ for the sets of codes in $\HGAS$, respectively $\HLS$, which have degree $d$ and any dimension.

\subsection{Homogeneous polynomial vector fields}\label{subsec:homog}
Let $f: \RR^{n} \to \RR^{n}$ be homogeneous of degree $d\ge1$, so that $f(\lambda x)=\lambda^df(x)$ for $\lambda>0$, and let $\Phi_f$ be its flow (Section~\ref{subsec:ode}), so that $t\mapsto\Phi_f^t(x)$ is the solution of \eqref{eq:intro-ode} with initial value $x$. Uniqueness of solutions gives the scaling identity
\begin{equation}\label{eq:scaling}
\Phi_f^t(\lambda x)=\lambda\,\Phi_f^{\lambda^{d-1}t}(x),\qquad\lambda>0,
\end{equation}
valid whenever either side is defined. Consequently local asymptotic stability and GAS are equivalent for homogeneous vector fields; we keep the global formulation because forward completeness is part of the angular criterion below. The scaling identity also makes Lyapunov stability a uniform statement.

\begin{lemma}[{\cite[Lemma~1.5]{Korda}}]\label{lem:scaling}
For a homogeneous polynomial vector field $f$, the origin is Lyapunov stable if and only if there is $C<\infty$ such that every solution of \eqref{eq:intro-ode} exists for all $t\ge0$ and satisfies $\|x(t)\|\le C\|x(0)\|$ for all $t\ge0$.
\end{lemma}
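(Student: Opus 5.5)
The two implications are handled separately, and the main tool is the scaling identity \eqref{eq:scaling}. Throughout I use the Euclidean norm; any other norm changes $C$ only by a constant factor.

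\emph{($\Leftarrow$)} This direction is immediate. Suppose every solution exists on $[0,\infty)$ and satisfies $|x(t)|\le C|x(0)|$ for some $C<\infty$. Given $\varepsilon>0$, choose $\delta=\varepsilon/(C+1)$. Then any solution with $|x(0)|<\delta$ satisfies $|x(t)|\le C\delta<\varepsilon$ for all $t\ge0$. This is exactly Lyapunov stability as defined in Section~\ref{subsec:stability}.

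\emph{($\Rightarrow$)} Apply Lyapunov stability with $\varepsilon=1$ to obtain $\delta>0$. Every solution with $|x(0)|<\delta$ then exists for all $t\ge0$ and stays in the open unit ball. Now take an arbitrary $x_0\neq0$; the zero solution is trivial. Put $\lambda=2|x_0|/\delta$ and $z=x_0/\lambda$, so that $|z|=\delta/2<\delta$.

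The solution from $z$ is defined for all $t\ge0$ and satisfies $|\Phi_f^s(z)|<1$. By \eqref{eq:scaling}, $\Phi_f^t(x_0)=\Phi_f^t(\lambda z)=\lambda\Phi_f^{\lambda^{d-1}t}(z)$. Since $\lambda^{d-1}>0$, the right-hand side is defined for every $t\ge0$, so the solution from $x_0$ is also forward complete. Moreover,
\[
|\Phi_f^t(x_0)|\le\lambda=\frac{2}{\delta}\,|x_0|,
\]
so we may take $C=2/\delta$.

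The only point needing care is that \eqref{eq:scaling} is used "whenever either side is defined". This is justified because $t\mapsto\lambda\Phi_f^{\lambda^{d-1}t}(z)$ solves \eqref{eq:intro-ode}: its derivative is
\[
\lambda\cdot\lambda^{d-1}f\bigl(\Phi_f^{\lambda^{d-1}t}(z)\bigr)=f\bigl(\lambda\Phi_f^{\lambda^{d-1}t}(z)\bigr)
\]
by homogeneity, and it has initial value $x_0$. Uniqueness of maximal solutions then shows that the solution from $x_0$ extends to all $t\ge0$ and coincides with this curve.

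In the degree-one (linear) case, $\lambda^{d-1}=1$ and the same argument applies unchanged. There is no real obstacle here; the homogeneity rescaling is the whole argument.
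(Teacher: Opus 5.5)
Your proof is correct and follows the paper's route: the paper cites \cite[Lemma~1.5]{Korda} for the Euclidean norm, having just noted that the scaling identity \eqref{eq:scaling} is what makes Lyapunov stability a uniform statement, and passes to an arbitrary norm by equivalence of norms, exactly as you do. Your explicit check that $t\mapsto\lambda\Phi_f^{\lambda^{d-1}t}(z)$ solves the equation with initial value $x_0$, so that uniqueness gives forward completeness from $x_0$, correctly handles the ``whenever either side is defined'' caveat.
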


In \cite{Korda}, the lemma is stated for the Euclidean norm; since all norms on $\RR^n$ are equivalent, it holds for every norm $\|\cdot\|$.

\begin{remark}\label{rem:even}
No homogeneous polynomial vector field $f\ne0$ of even degree $d$ has an asymptotically stable origin. To see why, recall first that having an even degree implies $f(-x)=f(x)$. If $x(t)=\Phi_f^t(x_0)\to0$ for some $x_0\ne0$, then for every $T>0$ the path $y(s)=-x(T-s)$, $0\le s\le T$, satisfies $\dot y(s)=\dot x(T-s)=f(x(T-s))=f(-y(s))=f(y(s))$ and $y(0)=-x(T)$, so $\Phi_f^T(-x(T))=y(T)=-x_0$ by uniqueness. As $T\to\infty$ the initial points $-x(T)$ tend to zero while their images at time $T$ have norm $|x_0|$, contradicting Lyapunov stability. The zero vector field is not GAS either. This obstruction concerns asymptotic stability only: the homogeneous polynomial vector field $(-x_2x_3,\,x_1x_3,\,0)$ of degree two on $\RR^3$ rotates each plane $\{x_3=c\}$ with angular speed $c$ and preserves $|x|$, so its origin is Lyapunov stable.

In dimension one every homogeneous polynomial vector field is $\dot x=ax^d$. Its origin is GAS exactly when $d$ is odd and $a<0$, and Lyapunov stable exactly when $a=0$, or $d$ is odd and $a<0$. Hence the one-dimensional problems are decidable, uniformly in $d$.
\end{remark}

\subsection{Invariant measures and maximal averages}\label{subsec:erg}
In Section~\ref{sec:angular} we will express the property of GAS through the invariant probability measures of a flow on a compact set; in this subsection, we collect what we will use about such measures. We use the basic notions of measure theory, such as $\sigma$-algebras, Borel sets, measures, integrals and Fubini's theorem, without definition; the interested reader can find them in \cite[Chapters~1 and~2]{Folland}, where Fubini's theorem is Theorem~2.37. For the Borel probability measures on a compact metric space, and the weak-star topology on them, see \cite[Appendix~B, Section~B.5, pp.~422--423]{EW}. Let $\cK$ be a nonempty compact metric space and $\Phi:\RR\times\cK\to\cK$ a continuous flow; we write $\Phi^t=\Phi(t,\cdot)$ as a shorthand. A Borel probability measure $\mu$ on $\cK$ is \emph{invariant} if $\mu((\Phi^t)^{-1}(E))=\mu(E)$ for all Borel sets $E$ and all $t\in\RR$, and \emph{ergodic} if moreover $\mu(E)\in\{0,1\}$ for every Borel set $E$ with $\mu((\Phi^t)^{-1}(E)\,\triangle\,E)=0$ for all $t$; these are the definitions of \cite[Chapter~8, \S8.1, Definition~8.1 and \pg{232}]{EW} for the continuous action $(t,x)\mapsto\Phi^tx$ of the group $\RR$. We write $\cM(\Phi)$ for the set of invariant Borel probability measures. We use the following standard facts.
\begin{enumerate}[label=(E\arabic*)]
\item\label{E:compact} $\cM(\Phi)$ is nonempty, convex, and compact in the weak-star topology. It is nonempty by \cite[Chapter~8, Theorem~8.11, \pg{253}]{EW}. It is the intersection over $t\in\RR$ of the sets of $\Phi^t$-invariant measures, which are closed and convex \cite[Chapter~4, \pg{98}]{EW}, in the weak-star compact set of Borel probability measures on $\cK$ \cite[Appendix~B, Theorem~B.11, pp.~422--423]{EW}.
\item\label{E:decomp} (Ergodic decomposition.) For every $\mu\in\cM(\Phi)$ there are Borel probability measures $\mu_y$, defined for $y$ in a set of full $\mu$-measure and depending measurably on $y$, such that $\mu_y\in\cM(\Phi)$ is ergodic for $\mu$-almost every $y$ and $\int a\,d\mu=\int\bigl(\int a\,d\mu_y\bigr)d\mu(y)$ for every $a\in C(\cK)$ \cite[Chapter~8, Theorem~8.20, pp.~266--267, with Chapter~5, Theorem~5.14, \pg{135}]{EW}. In particular, if $\int a\,d\nu\le c$ for every ergodic $\nu\in\cM(\Phi)$, then $\int a\,d\mu\le c$ for every $\mu\in\cM(\Phi)$.
\item\label{E:birkhoff} (Birkhoff's theorem.) If $\mu$ is ergodic and $a\in L^1(\mu)$, then for $\mu$-almost every $x$, $\frac1T\int_0^Ta(\Phi^tx)\,dt\to\int a\,d\mu$ as $T\to+\infty$ and as $T\to-\infty$ \cite[Chapter~1, \S2, Theorem~1, pp.~11--12, and \pg{14}]{CFS}. (There the limits of $\frac1T\int_0^Ta(\Phi^{\pm t}x)\,dt$ as $T\to+\infty$ are shown to exist and to equal $\int a\,d\mu$ almost everywhere when $\mu$ is ergodic in the sense that every Borel set $E$ with $(\Phi^t)^{-1}(E)=E$ for all $t$ has measure $0$ or $1$; this is implied by our definition.)
\item\label{E:poincare} (Poincar\'e recurrence.) If $\mu\in\cM(\Phi)$, then $\mu$-almost every $x$ is forward and backward recurrent: there are $t_k\to+\infty$ and $s_k\to-\infty$ with $\Phi^{t_k}x\to x$ and $\Phi^{s_k}x\to x$. Indeed $\mu$ is invariant under the maps $\Phi^1$ and $\Phi^{-1}$, and $\cK$ has a countable basis of open sets, so by \cite[Theorem~1.2.4]{VO}, applied to $\Phi^1$ and to $\Phi^{-1}$, $\mu$-almost every $x$ has $\Phi^{n_j}x\to x$ and $\Phi^{-m_j}x\to x$ for some integers $n_j,m_j\to\infty$; compare the remark on flows in \cite[\S1.2.1]{VO}.
\item\label{E:deriv} If $\cK\subseteq\RR^n$ is a compact set invariant under a $C^1$ vector field $f$ and $\mu\in\cM(\Phi_f|_\cK)$, then $\int_\cK Dg(x)f(x)\,d\mu(x)=0$ for every $C^1$ function $g$ defined near $\cK$, since $\frac{d}{dt}\big|_{t=0}\int g\circ\Phi_f^t\,d\mu=0$.
\item\label{E:factor} If $\pi:\cK\to\cK'$ is continuous, $\Phi'$ is a continuous flow on $\cK'$ and $\pi\circ\Phi^t={\Phi'}^t\circ\pi$, then $\pi_*\mu\in\cM(\Phi')$ for $\mu\in\cM(\Phi)$, and $\pi_*\mu$ is ergodic if $\mu$ is.
\end{enumerate}

For $a\in C(\cK)$, put
\begin{equation}\label{eq:beta}
\beta(\Phi,a)=\max_{\mu\in\cM(\Phi)}\int_\cK a\,d\mu.
\end{equation}
By \ref{E:compact} and weak-star continuity of $\mu\mapsto\int a\,d\mu$, this maximum exists. The following lemma is a continuous-time form of a standard identity of ergodic optimization \cite[Proposition~2.2]{Jenkinson}; closely related statements for flows and semiflows are in~\cite{Schreiber98} and~\cite[Lemma~4.1]{GarayHofbauer}.

\begin{lemma}\label{lem:blocks}
Let $a\in C(\cK)$ and $M(T)=\max_{x\in\cK}\int_0^Ta(\Phi^tx)\,dt$ for $T>0$. Then
\begin{equation}\label{eq:blocks-limit}
\beta(\Phi,a)=\lim_{T\to\infty}\frac{M(T)}{T}=\inf_{T>0}\frac{M(T)}{T}.
\end{equation}
In particular, $\beta(\Phi,a)<0$ if and only if there are $T,c>0$ such that
\begin{equation}\label{eq:blocks}
\int_0^Ta(\Phi^tx)\,dt\le-c\qquad\text{for every }x\in\cK.
\end{equation}
\end{lemma}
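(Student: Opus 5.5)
The plan is to prove the two inequalities $\beta(\Phi,a)\le \inf_{T>0} M(T)/T$ and $\limsup_{T\to\infty} M(T)/T\le\beta(\Phi,a)$. Together with the subadditivity of $M$, these give the full chain in \eqref{eq:blocks-limit}. The first step is subadditivity: for $S,T>0$ and any $x\in\cK$,
\[
\int_0^{S+T}a(\Phi^tx)\,dt=\int_0^S a(\Phi^tx)\,dt+\int_0^T a(\Phi^t\Phi^Sx)\,dt\le M(S)+M(T),
\]
so $M(S+T)\le M(S)+M(T)$. The function $M$ is finite and continuous in $T$, since the integrand is bounded by $\max_\cK|a|$ and $M(T)$ changes by at most $|T-T'|\max|a|$ when $T$ is replaced by $T'$. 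Fekete's lemma (continuous-parameter version) then gives that $\lim_{T\to\infty}M(T)/T$ exists and equals $\inf_{T>0}M(T)/T$. If I prefer not to cite the continuous version, I would prove it directly. Fix $T_0$ and write $T=nT_0+r$ with $0\le r<T_0$. Then $M(T)\le nM(T_0)+r\max|a|$, and dividing by $T$ and letting $T\to\infty$ gives $\limsup M(T)/T\le M(T_0)/T_0$.

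For the lower bound on $M$, take $\mu\in\cM(\Phi)$. By invariance and Fubini, $\int a\,d\mu=\frac1T\int_\cK\int_0^T a(\Phi^tx)\,dt\,d\mu(x)\le M(T)/T$ for every $T>0$. Hence $\beta(\Phi,a)\le\inf_T M(T)/T$.

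For the upper bound, I will use the standard Krylov--Bogolyubov construction. Choose $T_n\to\infty$ and points $x_n$ attaining $M(T_n)$; such points exist because $x\mapsto\int_0^{T_n}a(\Phi^tx)\,dt$ is continuous on the compact set $\cK$. Form the empirical measures $\mu_n=\frac1{T_n}\int_0^{T_n}\delta_{\Phi^tx_n}\,dt$, so that $\int a\,d\mu_n=M(T_n)/T_n$. By weak-star compactness of the probability measures on $\cK$, a subsequence converges to some $\mu$. The main point to check is that $\mu$ is invariant. For continuous $g$ and fixed $s$,
\[
\Bigl|\int g\circ\Phi^s\,d\mu_n-\int g\,d\mu_n\Bigr|\le \frac{2|s|\max|g|}{T_n}\to0,
\]
by shifting the time interval. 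Since $g\circ\Phi^s$ is continuous, passing to the limit gives $\int g\circ\Phi^s\,d\mu=\int g\,d\mu$ for all $g$ and $s$, so $\mu\in\cM(\Phi)$. Then $\lim M(T)/T=\lim\int a\,d\mu_n=\int a\,d\mu\le\beta(\Phi,a)$, which closes the chain.

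For the ``in particular'' clause, suppose $\beta<0$. Since $\beta=\inf_T M(T)/T$, some $T$ has $M(T)<0$, and we take $c=-M(T)>0$. Conversely, \eqref{eq:blocks} says $M(T)\le -c$, so $\beta\le M(T)/T\le -c/T<0$.

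The only delicate step is the invariance of the weak-star limit. It is routine but needs continuity of $g\circ\Phi^s$, which is supplied by the continuity of the flow on compact $\cK$.
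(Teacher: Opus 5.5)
Your proposal is correct and follows essentially the same route as the paper's proof: subadditivity of $M$ together with Fekete's lemma, the invariance-plus-Fubini bound $\int a\,d\mu\le M(T)/T$, and the Krylov--Bogolyubov weak-star limit of empirical measures along maximizing points. The only addition is your direct proof of the continuous Fekete step, which is fine; it uses $M(r)\le r\max|a|$ for $0\le r<T_0$, with the convention $M(0)=0$.
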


\begin{proof}
The flow property gives $\int_0^{S+T}a(\Phi^tx)\,dt=\int_0^Sa(\Phi^tx)\,dt+\int_0^Ta(\Phi^t\Phi^Sx)\,dt$, so $M(S+T)\le M(S)+M(T)$. The maximum defining $M(T)$ exists because $x\mapsto\int_0^Ta(\Phi^tx)\,dt$ is continuous on the compact set $\cK$, the integrand being continuous on $[0,T]\times\cK$. Put $M(0)=0$. Changing $T$ by $h$ changes each of these integrals by at most $|h|\max|a|$, so $M$ is Lipschitz continuous. It is also subadditive on $[0,\infty)$, and $M(T)\ge-T\max|a|$, so by Fekete's lemma \cite[Chapter~16, Theorem~16.2.9 and (16.2.10), pp.~463--464, and the remark on \pg{465} for functions on $[0,\infty)$]{Kuczma} the limit $\lim_{T\to\infty}M(T)/T$ exists, equals $\inf_{T>0}M(T)/T$, and is finite. If $\mu\in\cM(\Phi)$ and $T>0$, then invariance and Fubini's theorem give $\int a\,d\mu=\frac1T\int\int_0^Ta(\Phi^tx)\,dt\,d\mu(x)\le M(T)/T$; hence $\beta(\Phi,a)\le\inf M(T)/T$. Conversely, choose $T_j\to\infty$ and $x_j\in\cK$ with $M(T_j)=\int_0^{T_j}a(\Phi^tx_j)\,dt$, and let $\mu_j$ be the probability measure $g\mapsto\frac1{T_j}\int_0^{T_j}g(\Phi^tx_j)\,dt$. By weak-star compactness a subsequence converges to a probability measure $\mu$. For $s\in\RR$ and $g\in C(\cK)$ we have $\bigl|\int g\circ\Phi^s\,d\mu_j-\int g\,d\mu_j\bigr|\le2|s|\max|g|/T_j\to0$, so $\int g\circ\Phi^s\,d\mu=\int g\,d\mu$ for all $g\in C(\cK)$, and hence $\mu\in\cM(\Phi)$ by \cite[Appendix~B, Theorem~B.11(2)]{EW}. Moreover $\int a\,d\mu=\lim_jM(T_j)/T_j=\lim_{T\to\infty}M(T)/T$, so $\beta(\Phi,a)\ge\lim_{T\to\infty}M(T)/T$. Together with $\beta(\Phi,a)\le\inf_{T>0}M(T)/T$ and Fekete's lemma, this proves \eqref{eq:blocks-limit}. If $\beta(\Phi,a)<0$, some $T$ has $M(T)<0$, which is \eqref{eq:blocks} with $c=-M(T)$. Conversely \eqref{eq:blocks} gives $M(T)/T\le-c/T<0$.
\end{proof}

\addtocontents{toc}{\protect\setcounter{tocdepth}{2}}
\section{Stability criteria and recursive enumerability}\label{sec:angular}

In this section we prove the two criteria of Section~\ref{subsec:overview-criteria}, which express the properties of GAS and Lyapunov stability of the origin for a homogeneous polynomial vector field through the ``angular flow'' on an ellipsoid (Lemmas~\ref{lem:gascrit} and~\ref{lem:lyap}), and we prove that $\HGAS$ is recursively enumerable (Proposition~\ref{prop:re}), as outlined in Section~\ref{sec:overview}. In Section~\ref{subsec:polar} we carry out the polar decomposition, in Section~\ref{subsec:the_two_criteria} we prove the two criteria, and in Section~\ref{subsec:re_upper_bound} we prove Proposition~\ref{prop:re}.

\subsection{Polar coordinates}\label{subsec:polar}
We now carry out the polar decomposition outlined in Section~\ref{subsec:overview-criteria}, where we ``measure'' the distance of any point to the origin with a general positive definite quadratic form $Q$ instead of just the Euclidean norm (corresponding to the choice $Q(y) = y^{\top}y$). By homogeneity, after a change of time the direction of a solution follows a flow on the ellipsoid $S_Q=\{Q=1\}$ which does not involve the radius, and the logarithm of the radius is the integral of a function along this flow \eqref{eq:polarsol}. Questions about all solutions in $\RR^s$ thus become questions about a single flow (the ``angular flow'') on a compact set.

Let $Q(y)=y^{\top}My$ with $M$ a symmetric positive definite $s\times s$ matrix, write $\langle v,w\rangle_Q=v^{\top}Mw$ and $\|v\|_Q=Q(v)^{1/2}$, which is the norm of the inner product $\langle\cdot,\cdot\rangle_Q$, and put $S_Q=\{y\in\RR^s:Q(y)=1\}$, a compact set. For $y\ne0$ we write
\[
r(y)=\|y\|_Q>0,\qquad u(y)=\frac{y}{r(y)}\in S_Q,\qquad\text{so that }y=r(y)\,u(y).
\]
For a homogeneous polynomial vector field $P:\RR^s\to\RR^s$ of degree $D>1$ define the polynomial maps
\begin{equation}\label{eq:aY}
a_P(y)=\langle y,P(y)\rangle_Q,\qquad Y_P(y)=P(y)-a_P(y)\,y .
\end{equation}
For $u\in S_Q$, the vector $a_P(u)u$ is the $Q$-orthogonal projection of $P(u)$ onto the line through $u$, and $Y_P(u)$ is the remaining part, which is tangent to $S_Q$ (Figure~\ref{fig:polar}). More precisely, $\langle y,Y_P(y)\rangle_Q=a_P(y)(1-Q(y))$ for all $y$. Hence along a solution of $\dot y=Y_P(y)$ the function $w=Q(y)-1$ satisfies $\dot w=2\langle y,Y_P(y)\rangle_Q=-2a_P(y)w$; if $w$ vanishes at one time, it vanishes identically by uniqueness of solutions of this linear equation, so $S_Q$ is invariant under $Y_P$. Since $S_Q$ is compact, the restriction to $S_Q$ of the flow of $Y_P$ is a continuous flow defined for all times (Section~\ref{subsec:ode}). Henceforth we restrict the domain of $Y_P$ to $S_Q$, so that $\Phi_{Y_P}$ is this flow on $S_Q$. We call it the \emph{angular flow} $\Phi_{Y_P}$, and we call $a_P$ the \emph{radial rate}. Put
\begin{equation}\label{eq:cocycle}
A_P(u,T)=\int_0^Ta_P(\Phi_{Y_P}^tu)\,dt,\qquad M_P(T)=\max_{u\in S_Q}A_P(u,T),\qquad M_P^*=\sup_{T\ge0}M_P(T)\in[0,\infty],
\end{equation}
for $u\in S_Q$ and $T\ge0$; the maximum exists because $u\mapsto A_P(u,T)$ is continuous on the compact set $S_Q$. Splitting the integral at $T_1$ and using the flow property $\Phi_{Y_P}^{T_1+t}u=\Phi_{Y_P}^t(\Phi_{Y_P}^{T_1}u)$ gives, for $T_1,T_2\ge0$,
\begin{equation}\label{eq:cocycleid}
A_P(u,T_1+T_2)=A_P(u,T_1)+A_P(\Phi_{Y_P}^{T_1}u,T_2).
\end{equation}

\begin{figure}[t]
\centering
\begin{tikzpicture}[
  >={Stealth[length=2.4mm]},
  vec/.style={->, line width=0.9pt},
  lab/.style={font=\footnotesize, inner sep=1.5pt},
]
  \def\R{3}
  \def\ang{32}
  \draw[->, line width=0.4pt, black!60] (-0.3,0) -- (4.1,0) node[lab, anchor=west, black] {$y_1$};
  \draw[->, line width=0.4pt, black!60] (0,-0.3) -- (0,4.0) node[lab, anchor=south, black] {$y_2$};
  \draw[line width=1.2pt, black!45] (100:\R) arc[start angle=100, end angle=-6, radius=\R];
  \node[lab, anchor=west] at (-10:\R) {$S_Q=S^1$};
  \coordinate (O) at (0,0);
  \coordinate (u) at (\ang:\R);
  \coordinate (a) at ($(u)+(\ang:1.2)$);
  \coordinate (Y) at ($(u)+(\ang+90:1.9)$);
  \coordinate (P) at ($(a)+(\ang+90:1.9)$);
  \draw[line width=0.6pt] (O) -- (u);
  \draw[densely dotted, line width=0.4pt] (u) -- ($(u)!1.75!(a)$);
  \draw[dashed, line width=0.4pt] (Y) -- (P) -- (a);
  \draw[line width=0.3pt] ($(u)+(\ang:0.22)$) -- ($(u)+(\ang:0.22)+(\ang+90:0.22)$) -- ($(u)+(\ang+90:0.22)$);
  \draw[vec] (u) -- (Y) node[lab, anchor=south east] {$Y_P(u)$};
  \draw[vec] (u) -- (a) node[lab, anchor=north west] {$a_P(u)\,u$};
  \draw[vec, line width=1.1pt] (u) -- (P) node[lab, anchor=south] {$P(u)$};
  \fill (u) circle (1.6pt);
  \node[lab, anchor=north west] at ($(u)+(0.1,-0.02)$) {$u$};
  \fill (O) circle (1.3pt) node[lab, anchor=north east] {$0$};
  \node[lab] at (1.9,-1.0) {(a) $s=2$};
\shade[inner color=white, outer color=black!14] (7.600,1.550) circle (1.900);
\draw[line width=0.7pt] (7.600,1.550) circle (1.900);
\draw[line width=0.4pt, densely dashed, black!60] (9.385,1.772) -- (9.368,1.788) -- (9.349,1.804) -- (9.329,1.819) -- (9.308,1.835) -- (9.285,1.850) -- (9.262,1.865) -- (9.237,1.880) -- (9.211,1.894) -- (9.184,1.909) -- (9.156,1.923) -- (9.127,1.937) -- (9.097,1.950) -- (9.066,1.963) -- (9.034,1.976) -- (9.001,1.989) -- (8.967,2.001) -- (8.932,2.013) -- (8.896,2.025) -- (8.859,2.037) -- (8.821,2.048) -- (8.783,2.059) -- (8.743,2.069) -- (8.703,2.079) -- (8.662,2.089) -- (8.621,2.098) -- (8.579,2.107) -- (8.536,2.116) -- (8.492,2.124) -- (8.448,2.132) -- (8.403,2.139) -- (8.358,2.146) -- (8.312,2.153) -- (8.265,2.159) -- (8.219,2.164) -- (8.171,2.170) -- (8.124,2.175) -- (8.076,2.179) -- (8.027,2.183) -- (7.979,2.187) -- (7.930,2.190) -- (7.881,2.193) -- (7.832,2.195) -- (7.782,2.197) -- (7.733,2.198) -- (7.683,2.199) -- (7.633,2.200) -- (7.583,2.200) -- (7.534,2.199) -- (7.484,2.199) -- (7.434,2.197) -- (7.385,2.196) -- (7.336,2.194) -- (7.286,2.191) -- (7.237,2.188) -- (7.189,2.184) -- (7.140,2.181) -- (7.092,2.176) -- (7.044,2.171) -- (6.997,2.166) -- (6.950,2.161) -- (6.904,2.155) -- (6.858,2.148) -- (6.812,2.141) -- (6.767,2.134) -- (6.723,2.126) -- (6.679,2.118) -- (6.636,2.110) -- (6.593,2.101) -- (6.551,2.092) -- (6.510,2.082) -- (6.470,2.072) -- (6.430,2.062) -- (6.391,2.051) -- (6.353,2.040) -- (6.316,2.029) -- (6.280,2.017) -- (6.245,2.005) -- (6.210,1.993) -- (6.177,1.981) -- (6.145,1.968) -- (6.113,1.955) -- (6.083,1.941) -- (6.053,1.927) -- (6.025,1.913) -- (5.998,1.899) -- (5.971,1.885) -- (5.946,1.870) -- (5.922,1.855) -- (5.900,1.840) -- (5.878,1.825) -- (5.858,1.809) -- (5.838,1.793) -- (5.820,1.778) -- (5.804,1.762) -- (5.788,1.745) -- (5.774,1.729) -- (5.761,1.713) -- (5.749,1.696) -- (5.738,1.680) -- (5.729,1.663) -- (5.721,1.646) -- (5.714,1.629) -- (5.709,1.612) -- (5.705,1.595) -- (5.702,1.578) -- (5.700,1.561) -- (5.700,1.544);
\draw[line width=0.5pt, black!70] (5.700,1.544) -- (5.701,1.527) -- (5.704,1.510) -- (5.707,1.493) -- (5.712,1.476) -- (5.718,1.460) -- (5.726,1.443) -- (5.735,1.426) -- (5.745,1.409) -- (5.756,1.393) -- (5.769,1.376) -- (5.783,1.360) -- (5.798,1.344) -- (5.815,1.328) -- (5.832,1.312) -- (5.851,1.296) -- (5.871,1.281) -- (5.892,1.265) -- (5.915,1.250) -- (5.938,1.235) -- (5.963,1.220) -- (5.989,1.206) -- (6.016,1.191) -- (6.044,1.177) -- (6.073,1.163) -- (6.103,1.150) -- (6.134,1.137) -- (6.166,1.124) -- (6.199,1.111) -- (6.233,1.099) -- (6.268,1.087) -- (6.304,1.075) -- (6.341,1.063) -- (6.379,1.052) -- (6.417,1.041) -- (6.457,1.031) -- (6.497,1.021) -- (6.538,1.011) -- (6.579,1.002) -- (6.621,0.993) -- (6.664,0.984) -- (6.708,0.976) -- (6.752,0.968) -- (6.797,0.961) -- (6.842,0.954) -- (6.888,0.947) -- (6.935,0.941) -- (6.981,0.936) -- (7.029,0.930) -- (7.076,0.925) -- (7.124,0.921) -- (7.173,0.917) -- (7.221,0.913) -- (7.270,0.910) -- (7.319,0.907) -- (7.368,0.905) -- (7.418,0.903) -- (7.467,0.902) -- (7.517,0.901) -- (7.567,0.900) -- (7.617,0.900) -- (7.666,0.901) -- (7.716,0.901) -- (7.766,0.903) -- (7.815,0.904) -- (7.864,0.906) -- (7.914,0.909) -- (7.963,0.912) -- (8.011,0.916) -- (8.060,0.919) -- (8.108,0.924) -- (8.156,0.929) -- (8.203,0.934) -- (8.250,0.939) -- (8.296,0.945) -- (8.342,0.952) -- (8.388,0.959) -- (8.433,0.966) -- (8.477,0.974) -- (8.521,0.982) -- (8.564,0.990) -- (8.607,0.999) -- (8.649,1.008) -- (8.690,1.018) -- (8.730,1.028) -- (8.770,1.038) -- (8.809,1.049) -- (8.847,1.060) -- (8.884,1.071) -- (8.920,1.083) -- (8.955,1.095) -- (8.990,1.107) -- (9.023,1.119) -- (9.055,1.132) -- (9.087,1.145) -- (9.117,1.159) -- (9.147,1.173) -- (9.175,1.187) -- (9.202,1.201) -- (9.229,1.215) -- (9.254,1.230) -- (9.278,1.245) -- (9.300,1.260) -- (9.322,1.275) -- (9.342,1.291) -- (9.362,1.307) -- (9.380,1.322) -- (9.396,1.338) -- (9.412,1.355) -- (9.426,1.371) -- (9.439,1.387) -- (9.451,1.404) -- (9.462,1.420) -- (9.471,1.437) -- (9.479,1.454) -- (9.486,1.471) -- (9.491,1.488) -- (9.495,1.505) -- (9.498,1.522) -- (9.500,1.539) -- (9.500,1.556);
\draw[line width=0.4pt, densely dashed, black!60] (9.500,1.556) -- (9.499,1.573) -- (9.496,1.590) -- (9.493,1.607) -- (9.488,1.624) -- (9.482,1.640) -- (9.474,1.657) -- (9.465,1.674) -- (9.455,1.691) -- (9.444,1.707) -- (9.431,1.724) -- (9.417,1.740) -- (9.402,1.756) -- (9.385,1.772);
\filldraw[fill=black!8, fill opacity=0.85, draw=black!55, line width=0.4pt] (8.458,0.459) -- (10.034,1.057) -- (9.322,3.299) -- (7.746,2.700) -- cycle;
\draw[line width=0.5pt, densely dashed] (7.600,1.550) -- (8.890,1.879);
\draw[densely dotted, line width=0.4pt] (9.857,2.126) -- (10.631,2.323);
\draw[dashed, line width=0.4pt] (9.551,3.386) -- (10.518,3.633) -- (9.857,2.126);
\coordinate (Ou) at (7.600,1.550);
\coordinate (Uu) at (8.890,1.879);
\coordinate (Au) at (9.857,2.126);
\coordinate (Yu) at (9.551,3.386);
\coordinate (Pu) at (10.518,3.633);
\coordinate (TPc) at (10.034,1.057);
\coordinate (TPd) at (9.322,3.299);
  \draw[vec] (Uu) -- (Yu) node[lab, anchor=south east] {$Y_P(u)$};
  \draw[vec] (Uu) -- (Au) node[lab, anchor=north west] {$a_P(u)\,u$};
  \draw[vec, line width=1.1pt] (Uu) -- (Pu) node[lab, anchor=south] {$P(u)$};
  \fill (Uu) circle (1.6pt);
  \node[lab, anchor=north] at ([yshift=-2pt]Uu) {$u$};
  \filldraw[fill=white, draw=black, line width=0.5pt] (Ou) circle (1.3pt);
  \node[lab, anchor=north east] at (Ou) {$0$};
  \node[lab, anchor=north west] at ([xshift=1pt]TPc) {tangent plane};
  \node[lab] at (5.6,-0.15) {$S_Q=S^2$};
  \node[lab] at (7.6,-1.0) {(b) $s=3$};
\end{tikzpicture}
\caption{The splitting \eqref{eq:aY} of $P(u)$ at a point $u\in S_Q$, for $Q(y)=|y|^2$: (a) in dimension
$s=2$, where $S_Q$ is the unit circle $S^1$, and (b) in dimension $s=3$, where $S_Q$ is the unit sphere
$S^2$. In both cases the radial part $a_P(u)\,u$ is the projection of $P(u)$ onto the line through $0$ and
$u$ (dotted), and the remaining part $Y_P(u)$ is tangent to $S_Q$: in (a) it points along the circle, and
in (b) it lies in the tangent plane of $S_Q$ at $u$, in which it may point in any direction. The same
holds in every dimension $s$, with the tangent space of $S_Q$ at $u$ in place of the tangent line or
plane. The angular flow $\Phi_{Y_P}$ is the flow of $Y_P$, whose domain is restricted to $S_Q$, and by \eqref{eq:polar} the radial rate
$a_P(u)$ is the rate at which $\log r$ changes in angular time; here $a_P(u)>0$, so $P(u)$ points out of
$S_Q$. For a general positive definite $Q$, the sphere is replaced by the ellipsoid $S_Q$ and the
projection is $Q$-orthogonal.}
\label{fig:polar}
\end{figure}

Now let $y$ be the maximal solution of $\dot y=P(y)$ with $y(0)\ne0$, and let $[0,t_+)$ be its maximal forward interval of existence. Since $P(0)=0$, the zero function is also a solution, so uniqueness gives $y(t)\ne0$ for all $t$, and $r(y(t))$ and $u(y(t))$ are defined. Homogeneity gives $P(y)=r(y)^DP(u(y))$, hence $\langle y,P(y)\rangle_Q=r(y)^{D+1}a_P(u(y))$. Differentiating $r(y(t))^2=\langle y(t),y(t)\rangle_Q$ and $u(y(t))=y(t)/r(y(t))$ therefore gives, for $0\le t<t_+$,
\begin{equation}\label{eq:polar-t}
\frac{d}{dt}r(y(t))=r(y(t))^D\,a_P\bigl(u(y(t))\bigr),\qquad\frac{d}{dt}u(y(t))=r(y(t))^{D-1}\,Y_P\bigl(u(y(t))\bigr).
\end{equation}
Up to the factor $r(y(t))^{D-1}$, which only changes the speed, the direction thus follows $Y_P$, and $\log r$ changes at the rate $a_P$. We remove this factor by a change of time. The \emph{angular time}
\[
\sigma(t)=\int_0^tr(y(\xi))^{D-1}\,d\xi,\qquad0\le t<t_+,
\]
is a strictly increasing $C^1$ function with $\sigma(0)=0$. It maps $[0,t_+)$ onto an interval $[0,\sigma_+)$ with $0<\sigma_+\le\infty$, and its inverse $\tau:[0,\sigma_+)\to[0,t_+)$ is $C^1$, with $\tau(0)=0$ and $\tau'(t)=r(y(\tau(t)))^{1-D}$. Thus $y(\tau(t))$ is the point of the solution at which the angular time equals $t$. By \eqref{eq:polar-t} and the chain rule,
\begin{equation}\label{eq:polar}
\frac{d}{dt}u\bigl(y(\tau(t))\bigr)=Y_P\Bigl(u\bigl(y(\tau(t))\bigr)\Bigr),\qquad\frac{d}{dt}\log r\bigl(y(\tau(t))\bigr)=a_P\Bigl(u\bigl(y(\tau(t))\bigr)\Bigr),\qquad0\le t<\sigma_+,
\end{equation}
the second of which is \eqref{eq:log_r_y_tau_DE}. The first identity says that $t\mapsto u(y(\tau(t)))$ is a solution of $\dot u=Y_P(u)$ with values in $S_Q$, so by uniqueness of solutions it is the angular trajectory of $u(y(0))$. Integrating the second identity then gives
\begin{equation}\label{eq:polarsol}
u\bigl(y(\tau(t))\bigr)=\Phi_{Y_P}^t\bigl(u(y(0))\bigr),\qquad r\bigl(y(\tau(t))\bigr)=r(y(0))\exp A_P\bigl(u(y(0)),t\bigr),\qquad0\le t<\sigma_+.
\end{equation}
Since $\tau$ maps $[0,\sigma_+)$ onto $[0,t_+)$, every point $y(s)$ of the solution, $0\le s<t_+$, is of the form $y(\tau(t))$ with $t=\sigma(s)$, so \eqref{eq:polarsol} describes the whole forward solution. Finally, the angular time is unbounded along forward complete solutions:
\begin{equation}\label{eq:tauinfty}
\text{if }t_+=\infty,\text{ then }\sigma_+=\infty.
\end{equation}
Indeed, put $\bar a=\max_{S_Q}|a_P|$, so that $|A_P(u,t)|\le\bar at$. If $\sigma_+<\infty$, then \eqref{eq:polarsol} gives $r(y(\tau(t)))\ge r_{\min}:=r(y(0))e^{-\bar a\sigma_+}>0$ for $0\le t<\sigma_+$, hence $\tau'(t)\le r_{\min}^{1-D}$ because $D>1$, and $\tau(t)\le r_{\min}^{1-D}\sigma_+$ for all $t$. As $\tau$ maps $[0,\sigma_+)$ onto $[0,t_+)$, this gives $t_+<\infty$.

\subsection{Equivalent criteria for GAS and Lyapunov stability}\label{subsec:the_two_criteria}
The following two criteria express the stability of the origin through the angular flow. The property of GAS translates to the largest average of the radial rate with respect to an invariant probability measure of the angular flow being negative (Lemma~\ref{lem:gascrit}); this criterion is the link between GAS and ergodic theory which we use throughout the paper (Figure~\ref{fig:netdrift} shows an example in which $a_P$ takes both signs). The property of Lyapunov stability translates to the integrals of the radial rate along the angular flow being bounded above (Lemma~\ref{lem:lyap}).

Criteria of the type of Lemma~\ref{lem:gascrit} are classical in other settings. For linear skew-product flows, the boundary points of the dynamical (Sacker--Sell) spectrum are Lyapunov exponents with respect to ergodic invariant measures~\cite{JPS87}; see also~\cite[Chapter~5]{ColoniusKliemann}, where the flow induced on the projective bundle plays the role of our angular flow. In the theory of permanence of ecological differential equations, a compact invariant set is a repeller, or an attractor, if the time averages of a suitable growth rate along the trajectories in this set~\cite{Hofbauer81}, or equivalently its averages with respect to the invariant measures supported on it~\cite[Theorems~2.2 and~4.4 and Remark~2.3]{GarayHofbauer}, have a suitable sign. Lemma~\ref{lem:gascrit} is a statement of this type for the radial rate along the angular flow of a homogeneous polynomial vector field.

\begin{lemma}[Angular criterion for GAS]\label{lem:gascrit}
The origin is GAS for $\dot y=P(y)$ if and only if $\beta(\Phi_{Y_P},a_P)<0$.
\end{lemma}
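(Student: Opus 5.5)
Both directions go through Lemma~\ref{lem:blocks}, applied to the angular flow $\Phi_{Y_P}$ on the compact set $S_Q$ with $a=a_P$, and through the polar representation \eqref{eq:polarsol} together with \eqref{eq:tauinfty}.

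\emph{Sufficiency.} Assume $\beta(\Phi_{Y_P},a_P)<0$. By Lemma~\ref{lem:blocks} there are $T,c>0$ with $A_P(u,T)\le -c$ for every $u\in S_Q$. Writing any $t\ge0$ as $t=jT+\rho$ with $0\le\rho<T$ and using the cocycle identity \eqref{eq:cocycleid}, I obtain
\[
A_P(u,t)\le -jc+\bar a T\le C_0-\frac{c}{T}\,t
\]
for all $u$ and all $t\ge0$ in angular time, with $C_0=\bar aT+c$. Now take a maximal solution $y$ with $y(0)\ne0$. By \eqref{eq:polarsol}, $r(y(s))\le r(y(0))e^{C_0}$ for all $s\in[0,t_+)$. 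Since the solution stays bounded, it cannot blow up, so $t_+=\infty$. Then \eqref{eq:tauinfty} gives $\sigma_+=\infty$, and so $r(y(\tau(t)))\le r(y(0))e^{C_0-ct/T}\to0$. Because $\tau$ is an increasing bijection onto $[0,\infty)$, this yields $y(s)\to0$. The bound $\|y(s)\|_Q\le e^{C_0}\|y(0)\|_Q$ gives Lyapunov stability, either by Lemma~\ref{lem:scaling} or directly. Hence the origin is GAS.

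\emph{Necessity.} Assume the origin is GAS. First, every solution starting on $S_Q$ reaches the small ball $\{\|y\|_Q<1/2\}$ at some finite time. By continuous dependence on initial conditions this happens on an open neighbourhood of each starting point, so compactness of $S_Q$ gives a uniform time $T_0$ such that every solution from $S_Q$ has $r<1/2$ at some $s\le T_0$. Lyapunov stability in the uniform form of Lemma~\ref{lem:scaling} (the constant $C$) does not directly keep $r$ below $1/2$ afterwards. To get around this, I use the scaling identity \eqref{eq:scaling} and choose the target ball small enough: take $\delta$ so that $C\delta<1/2$, and let $T_0$ be the uniform time to enter $\{r<\delta\}$. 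Then for all $s\ge T_0$ we have $r(y(s))\le C\delta<1/2$. In particular $r(y(T_0))<1/2$. Converting to angular time, $s=T_0$ corresponds to some $\sigma(T_0)$. Since $r\le C$ along the solution, $\sigma(T_0)\le C^{D-1}T_0=:T_1$, uniformly in $u$. By \eqref{eq:polarsol}, $A_P(u,\sigma)=\log r(y(\tau(\sigma)))$ for a solution with $r(y(0))=1$, and the bound $r<1/2$ holds for all later times. Hence $A_P(u,T_1)\le\log(1/2)$ for all $u\in S_Q$. Lemma~\ref{lem:blocks} with $c=\log2$ then gives $\beta<0$.

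The main obstacle is necessity. The difficulty is making the angular time uniform and the decay persistent. Entry times in real time must be uniformized by compactness, and the later bound must be kept from being lost. Both are handled by combining Lemma~\ref{lem:scaling} with the bound $\sigma(T_0)\le C^{D-1}T_0$, which is where $D>1$ enters.
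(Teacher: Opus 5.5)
Your sufficiency argument is the paper's, step for step. For necessity you take a genuinely different route.

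The paper invokes Rosier's homogeneous converse Lyapunov theorem to obtain a $C^1$ function $V$, homogeneous of degree $\ell>0$, with $\nabla V\cdot P<0$ off the origin. Euler's identity rewrites $\nabla(\log V)\cdot Y_P$ as $b-\ell a_P$, where $b\le-\varepsilon$ on $S_Q$. Integrating against an arbitrary invariant measure, \ref{E:deriv} kills the left side, which gives $\beta(\Phi_{Y_P},a_P)\le-\varepsilon/\ell$.

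You instead stay entirely at the level of trajectories:
\begin{itemize}
\item continuous dependence and compactness of $S_Q$ give a uniform real entry time $T_0$ into $\{r<\delta\}$;
\item the uniform bound of Lemma~\ref{lem:scaling}, applied to the solution restarted at its entry point, keeps $r\le C\delta<\frac12$ for all later real times;
\item the bound $r\le C$ converts $T_0$ into the uniform angular time $T_1=C^{D-1}T_0$;
\item the block criterion \eqref{eq:blocks} of Lemma~\ref{lem:blocks} then finishes the proof.
\end{itemize}

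This is correct, and it is more self-contained: this direction needs neither a converse Lyapunov theorem nor invariant measures. The paper's argument is shorter once Rosier's theorem is granted, and it gives the explicit bound $\beta\le-\varepsilon/\ell$ in terms of the decay rate of the Lyapunov function.

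Two small points. First, to evaluate $A_P(u,T_1)$ through \eqref{eq:polarsol} you need $T_1<\sigma_+$; justify this by noting that GAS gives $t_+=\infty$, and then apply \eqref{eq:tauinfty}. Second, you do not actually use the scaling identity \eqref{eq:scaling}. The persistence of the bound comes from Lemma~\ref{lem:scaling} together with the time-translation invariance of the autonomous system.
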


\begin{figure}[t]
\centering
\begin{tikzpicture}[
  >={Stealth[length=2.2mm]},
  lab/.style={font=\footnotesize, inner sep=1pt},
  traj/.style={line width=0.9pt, line join=round},
  arr/.style={-{Stealth[length=2.2mm]}, line width=0.9pt},
  sector/.style={fill=black!12},
]
  \def\tz{37.76}  
  \begin{scope}[scale=1.25]
    \fill[sector] (0,0) -- (-\tz:1.75) arc[start angle=-\tz, end angle=\tz, radius=1.75] -- cycle;
    \fill[sector] (0,0) -- (180-\tz:1.75) arc[start angle=180-\tz, end angle=180+\tz, radius=1.75] -- cycle;
    \draw[->, line width=0.4pt, black!60] (-1.85,0) -- (1.95,0) node[lab, anchor=west, black] {$y_1$};
    \draw[->, line width=0.4pt, black!60] (0,-1.85) -- (0,1.95) node[lab, anchor=south, black] {$y_2$};
    \draw[line width=0.5pt, densely dashed] (0,0) circle (1);
    \draw[traj, domain=-0.75:14.2, samples=400, variable=\t]
      plot ({1.45*exp(-0.15*\t+0.3*sin(2*\t r))*cos(\t r)}, {1.45*exp(-0.15*\t+0.3*sin(2*\t r))*sin(\t r)});
    \foreach \t in {0.35, 2.2, 3.55, 5.4, 6.7, 8.6}
      \draw[arr] ({1.45*exp(-0.15*(\t-0.04)+0.3*sin(2*(\t-0.04) r))*cos((\t-0.04) r)}, {1.45*exp(-0.15*(\t-0.04)+0.3*sin(2*(\t-0.04) r))*sin((\t-0.04) r)})
              -- ({1.45*exp(-0.15*(\t+0.04)+0.3*sin(2*(\t+0.04) r))*cos((\t+0.04) r)}, {1.45*exp(-0.15*(\t+0.04)+0.3*sin(2*(\t+0.04) r))*sin((\t+0.04) r)});
    \fill (0,0) circle (1.2pt/1.25);
    \fill ({1.45*exp(-0.15*(-0.75)+0.3*sin(-1.5 r))*cos(-0.75 r)}, {1.45*exp(-0.15*(-0.75)+0.3*sin(-1.5 r))*sin(-0.75 r)}) circle (1.3pt/1.25) node[lab, anchor=north west] {$y(0)$};
    \node[lab, anchor=west] at (1.62,0.75) {$a_P>0$};
    \node[lab, anchor=east] at (-1.62,0.75) {$a_P>0$};
    \node[lab] at (0.55,1.55) {$a_P<0$};
    \node[lab, anchor=north east] at (-0.62,-0.82) {$S_Q=S^1$};
    \node[lab, align=center] at (0,-2.4) {(a) a solution of $\dot y=P(y)$\\ \phantom{x}};
  \end{scope}
  \begin{scope}[xshift=3.6cm, yshift=-0.1cm, x=0.62cm, y=2.4cm]
    \fill[sector] (0,0) -- plot[domain=0:0.659, samples=30] (\x, {-0.15+0.6*cos(2*\x r)}) -- cycle;
    \fill[sector] plot[domain=pi-0.659:pi+0.659, samples=40] (\x, {-0.15+0.6*cos(2*\x r)}) -- cycle;
    \fill[sector] plot[domain=2*pi-0.659:2*pi, samples=20] (\x, {-0.15+0.6*cos(2*\x r)}) -- (2*pi,0) -- cycle;
    \draw[->, line width=0.4pt, black!60] (0,0) -- (6.75,0) node[lab, anchor=west, black] {$\theta$};
    \draw[->, line width=0.4pt, black!60] (0,-0.85) -- (0,0.62) node[lab, anchor=south, black] {$a_P(\cos\theta,\sin\theta)$};
    \draw[line width=0.9pt, domain=0:2*pi, samples=160] plot (\x, {-0.15+0.6*cos(2*\x r)});
    \draw[densely dashed, line width=0.6pt] (0,-0.15) -- (2*pi,-0.15);
    \foreach \x/\l in {pi/\pi, 2*pi/2\pi} \draw[line width=0.4pt] (\x,0.03) -- (\x,-0.03) node[lab, below] {$\l$};
    \foreach \y/\l in {0.45/\tfrac{9}{20}, -0.75/-\tfrac34} \draw[line width=0.4pt] (0.08,\y) -- (-0.08,\y) node[lab, left] {$\l$};
    \node[lab, align=center] at (3.14,-1.2) {(b) the radial rate along $S_Q$; dashed:\\ its mean $\beta(\Phi_{Y_P},a_P)=-\tfrac{3}{20}$};
  \end{scope}
\end{tikzpicture}
\caption{A homogeneous polynomial vector field whose radial rate takes both signs but whose origin is GAS:
$P(y)=|y|^2(-y_2,y_1)+\bigl(-\tfrac{3}{20}|y|^2+\tfrac35(y_1^2-y_2^2)\bigr)\,y$ on $\RR^2$, with
$Q(y)=|y|^2$. Its radial rate is $a_P(u)=-\tfrac{3}{20}+\tfrac35\cos2\theta$ at
$u=(\cos\theta,\sin\theta)\in S^1$, and its angular vector field is $Y_P(u)=(-u_2,u_1)$, so the angular
flow $\Phi_{Y_P}$ rotates $S_Q=S^1$ at unit speed. (a) Since $a_P(y)$ has the sign of $a_P(y/|y|)$, solutions
move away from the origin while they cross the shaded sectors, where $a_P>0$, and towards it elsewhere;
over each full turn, $r$ is multiplied by $e^{-3\pi/10}<1$. (b) The only invariant probability measure of
the rotation $\Phi_{Y_P}$ is the uniform measure on $S^1$, so $\beta(\Phi_{Y_P},a_P)$ is the mean of $a_P$ over the
circle, $-\tfrac{3}{20}<0$, although $\max_{S_Q}a_P=\tfrac{9}{20}>0$. By Lemma~\ref{lem:gascrit}, the
origin is GAS.}
\label{fig:netdrift}
\end{figure}

\begin{proof}
Suppose $\beta(\Phi_{Y_P},a_P)<0$, and let $T,c>0$ be given by Lemma~\ref{lem:blocks}, so that $A_P(u,T)\le-c$ for every $u\in S_Q$. Let $u\in S_Q$ and $t\ge0$, and write $t=kT+q$ with $k\in\NN$ and $0\le q<T$. Applying \eqref{eq:cocycleid} $k$ times gives
\[
A_P(u,t)=\sum_{j=0}^{k-1}A_P\bigl(\Phi_{Y_P}^{jT}u,T\bigr)+A_P\bigl(\Phi_{Y_P}^{kT}u,q\bigr)\le-kc+\bar aT\le\bar aT+c-\frac{ct}{T},
\]
because each term of the sum is at most $-c$, the last term is at most $\bar aq\le\bar aT$, with $\bar a=\max_{S_Q}|a_P|$ as in the proof of \eqref{eq:tauinfty}, and $k>t/T-1$. Now let $y$ be a solution with $y(0)\ne0$, in the notation of \eqref{eq:polarsol}. By \eqref{eq:polarsol} and the bound just proved,
\begin{equation}\label{eq:gasbound}
\|y(\tau(t))\|_Q\le\|y(0)\|_Qe^{\bar aT+c}e^{-ct/T}\le e^{\bar aT+c}\|y(0)\|_Q,\qquad0\le t<\sigma_+.
\end{equation}
Since every point of the forward solution is of the form $y(\tau(t))$, this gives $\|y(s)\|_Q\le e^{\bar aT+c}\|y(0)\|_Q$ for $0\le s<t_+$. In particular $y$ is bounded, so $t_+=\infty$ by Section~\ref{subsec:ode}. As the zero solution is global as well, Lemma~\ref{lem:scaling}, applied with the norm $\|\cdot\|_Q$ and $C=e^{\bar aT+c}$, shows that the origin is Lyapunov stable. Finally $\sigma_+=\infty$ by \eqref{eq:tauinfty}, so $\sigma(s)\to\infty$ as $s\to\infty$, and the first bound in \eqref{eq:gasbound} with $t=\sigma(s)$ gives $\|y(s)\|_Q\le\|y(0)\|_Qe^{\bar aT+c}e^{-c\sigma(s)/T}\to0$. Hence the origin is GAS.

Conversely, suppose the origin is GAS.
Rosier's homogeneous converse theorem \cite[Theorem~2]{Rosier} supplies a $C^1$ function $V$ on $\RR^s$, positive away from $0$ and homogeneous of some degree $\ell>0$, with $\nabla V(y)\cdot P(y)<0$ for $y\ne0$. By compactness of $S_Q$ there is $\varepsilon>0$ with $b(u):=\nabla V(u)\cdot P(u)/V(u)\le-\varepsilon$ on $S_Q$. Euler's identity $\nabla V(u)\cdot u=\ell V(u)$ \cite[Section~9.3, Exercise~8]{Apostol} gives
\begin{equation}\label{eq:euler}
\nabla(\log V)(u)\cdot Y_P(u)=b(u)-\ell\,a_P(u),\qquad u\in S_Q.
\end{equation}
By \ref{E:deriv} the left side has integral zero against every $\mu\in\cM(\Phi_{Y_P})$, so $\ell\int a_P\,d\mu=\int b\,d\mu\le-\varepsilon$, and $\beta(\Phi_{Y_P},a_P)\le-\varepsilon/\ell<0$. This proves the converse implication.
\end{proof}

\begin{lemma}[Angular criterion for Lyapunov stability {\cite[Lemma~2.2]{Korda}}]\label{lem:lyap}
The origin is Lyapunov stable for $\dot y=P(y)$ if and only if $M_P^*<\infty$. In that case every solution is defined for all $t\ge0$ and satisfies $\|y(t)\|_Q\le e^{M_P^*}\|y(0)\|_Q$.
\end{lemma}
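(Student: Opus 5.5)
The plan is to use the polar representation \eqref{eq:polarsol}, in which the radius along angular time is $r(y(0))\exp A_P(u(y(0)),t)$, together with Lemma~\ref{lem:scaling} applied to the norm $\|\cdot\|_Q$.

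\emph{Sufficiency.} Assume $M_P^*<\infty$ and let $y$ be a solution with $y(0)\ne0$; the zero solution is trivial. By \eqref{eq:polarsol} we have, for $0\le t<\sigma_+$,
\[
\|y(\tau(t))\|_Q=\|y(0)\|_Q\exp A_P(u(y(0)),t)\le e^{M_P^*}\|y(0)\|_Q,
\]
since $A_P(u,t)\le M_P(t)\le M_P^*$. Every point $y(s)$ with $0\le s<t_+$ has the form $y(\tau(t))$, so the whole forward solution obeys the same bound. In particular the solution is bounded, and Section~\ref{subsec:ode} then gives $t_+=\infty$. Lemma~\ref{lem:scaling} with $C=e^{M_P^*}$ yields Lyapunov stability, and this argument also establishes the quantitative bound claimed in the statement.

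\emph{Necessity.} Assume the origin is Lyapunov stable. Lemma~\ref{lem:scaling}, applied in the norm $\|\cdot\|_Q$, gives a constant $C$ such that every solution is global and satisfies $\|y(s)\|_Q\le C\|y(0)\|_Q$. Fix $u\in S_Q$ and $T\ge0$, and take the solution with $y(0)=u$. Since $t_+=\infty$, \eqref{eq:tauinfty} gives $\sigma_+=\infty$, so angular time $T$ is actually attained at the real time $s=\tau(T)$. Then \eqref{eq:polarsol} gives
\[
\exp A_P(u,T)=\|y(\tau(T))\|_Q\le C.
\]
Hence $A_P(u,T)\le\log C$ for all $u$ and $T$, which means $M_P^*\le\log C<\infty$.

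The only delicate point is the step where the angular time reaches every $T$, which is needed to obtain the bound on the full angular flow in the necessity direction. That is exactly the content of \eqref{eq:tauinfty} combined with forward completeness, and forward completeness is supplied by Lemma~\ref{lem:scaling}. Nothing else in the argument requires care.
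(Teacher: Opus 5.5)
Your proposal is correct and follows the paper's proof essentially line by line. In one direction you bound the polar representation by $e^{M_P^*}$ and get forward completeness from boundedness. In the other you get the constant $C$ from Lemma~\ref{lem:scaling} and use \eqref{eq:tauinfty} to reach every angular time, which is exactly the paper's argument.
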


\begin{proof}
We run the argument of \cite[Lemma~2.2]{Korda} with the norm $\|\cdot\|_Q$. Let $y$ be a solution with $y(0)\ne0$, in the notation of \eqref{eq:polarsol}. Suppose first that $M_P^*<\infty$. By \eqref{eq:polarsol} and the definition of $M_P^*$ in \eqref{eq:cocycle}, $\|y(\tau(t))\|_Q\le e^{M_P^*}\|y(0)\|_Q$ for $0\le t<\sigma_+$, that is, $\|y(s)\|_Q\le e^{M_P^*}\|y(0)\|_Q$ for $0\le s<t_+$. Hence $y$ is bounded, so $t_+=\infty$ (Section~\ref{subsec:ode}). This proves the last assertion of the lemma and, since the zero solution is trivial, Lemma~\ref{lem:scaling} applied with the norm $\|\cdot\|_Q$ and $C=e^{M_P^*}$ shows that the origin is Lyapunov stable. Conversely, suppose that the origin is Lyapunov stable, let $C$ be given by Lemma~\ref{lem:scaling} for the norm $\|\cdot\|_Q$, and let $y(0)=u_0\in S_Q$. Then $t_+=\infty$, so $\sigma_+=\infty$ by \eqref{eq:tauinfty}, and \eqref{eq:polarsol} gives $A_P(u_0,t)=\log\|y(\tau(t))\|_Q\le\log C$ for every $t\ge0$. As $u_0\in S_Q$ and $t\ge0$ were arbitrary, $M_P^*\le\log C<\infty$.
\end{proof}

\begin{remark}\label{rem:compare}
By Lemma~\ref{lem:blocks}, $\beta(\Phi_{Y_P},a_P)=\lim_{T\to\infty}M_P(T)/T$, so both criteria concern the same function $M_P$: the origin is GAS if and only if $M_P(T)<0$ for some $T>0$, and Lyapunov stable if and only if $M_P$ is bounded above. In particular Lyapunov stability implies $\beta(\Phi_{Y_P},a_P)\le0$. Invariant averages cannot detect Lyapunov stability, however. The homogeneous polynomial vector fields $P_1(y)=|y|^2(-y_2,y_1)$ and $P_2(y)=|y|^2(y_2,0)$ of degree three on $\RR^2$ both have $\beta=0$ with respect to $Q(y)=|y|^2$: for $P_1$ because $a_{P_1}\equiv0$, and for $P_2$ because the only zeros of its angular vector field are $\pm(1,0)$, where $a_{P_2}=0$, and every other angular trajectory is a heteroclinic arc, which carries no invariant probability measure. The origin is Lyapunov stable for $P_1$, which preserves $|y|$, but not for $P_2$, whose solutions with $y_2(0)\ne0$ blow up in finite time. This is why the Lyapunov half of Theorem~\ref{thm:cubicization} needs a statement about individual trajectories (Proposition~\ref{prop:shadow}).
\end{remark}

\subsection{\texorpdfstring{Recursive enumerability of $\HGAS$}{Recursive enumerability of HGAS}}\label{subsec:re_upper_bound}

We now prove that $\HGAS$ is recursively enumerable, following the outline in Section~\ref{sec:overview}: we combine the converse Lyapunov theorem of Ahmadi and El~Khadir~\cite[Theorem~3.1]{AEK} with Tarski's theorem (Theorem~\ref{thm:tarski}). To begin, put $\rho(x)=|x|^2$. For $r\in\NN$ and a homogeneous polynomial $p$ of degree $2r+2$ on $\RR^n$, define
\begin{equation}\label{eq:Vp}
V(x)=\frac{p(x)}{\rho(x)^r},\qquad x\ne0.
\end{equation}
For a homogeneous polynomial vector field $f$ of degree $d$, differentiating \eqref{eq:Vp} gives $\nabla V=\rho^{-r}\nabla p-r\,p\,\rho^{-r-1}\nabla\rho$ with $\nabla\rho(x)=2x$, and hence
\begin{equation}\label{eq:Vdot}
-\nabla V(x)\cdot f(x)=\frac{q_{f,p,r}(x)}{\rho(x)^{r+1}},\qquad q_{f,p,r}=-\rho\,\nabla p\cdot f+2r\,p\,(x\cdot f).
\end{equation}
For a valid input $e$ coding $(n,d,f)$ and $r\in\NN$, let $D(e,r)$ be the sentence
\begin{equation}\label{eq:Der}
\exists(\text{coefficients of }p)\ \forall x\ \ \rho(x)=1\implies\bigl(p(x)>0\wedge q_{f,p,r}(x)>0\bigr),
\end{equation}
in which $p$ ranges over homogeneous polynomials of degree $2r+2$ in $n$ variables, written with indeterminate real coefficients. All constants in $D(e,r)$ are rational, and $D(e,r)$ is computable from $(e,r)$.

\begin{proposition}[Recursive enumerability of GAS]\label{prop:re}
$\HGAS$ is recursively enumerable. Consequently, so are $\HGAS(n,d)$ and $\HGAS(\cdot,d)$ for all fixed $n,d$.
\end{proposition}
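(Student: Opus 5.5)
The plan is to show that a code $e$ lies in $\HGAS$ if and only if it is valid and $D(e,r)$ is true for some $r\in\NN$. Given this, the enumeration procedure described in Section~\ref{sec:overview} works. On input $e$, it first checks validity; this is decidable, and it rejects by looping forever if $e$ is invalid. It then computes $D(e,0),D(e,1),\dots$ in turn and runs the Tarski decision procedure (Theorem~\ref{thm:tarski}) on each. It halts as soon as one of them is true. This machine halts exactly on $\HGAS$. For the consequences, $\HGAS(n,d)$ and $\HGAS(\cdot,d)$ are obtained by first checking, decidably, that the valid code has the prescribed $n$ and/or $d$, and then running the same procedure.

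The soundness direction is: if $D(e,r)$ holds, then the origin is GAS. Take $p$ witnessing $D(e,r)$ and form $V=p/\rho^r$ as in \eqref{eq:Vp}. Then $V$ is $C^1$ on $\RR^n\setminus\{0\}$, homogeneous of degree $2$, and positive on the unit sphere. Hence there are constants $0<c_1\le c_2$ with $c_1|x|^2\le V(x)\le c_2|x|^2$. By \eqref{eq:Vdot} and homogeneity, $\dot V=-q_{f,p,r}(x)/\rho^{r+1}$, and $q_{f,p,r}$ is homogeneous of degree $2r+d+1$ and positive on the sphere. So $\dot V\le -c_3|x|^{d+1}\le -c_4 V^{(d+1)/2}$ along nonzero solutions. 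From this, solutions stay bounded, so they are global, and $\|x(t)\|\le\sqrt{c_2/c_1}\,|x(0)|$, which gives Lyapunov stability via Lemma~\ref{lem:scaling}. Moreover $V$ is strictly decreasing and its limit must be $0$, since otherwise $\dot V$ would be bounded away from $0$. This gives GAS.

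The completeness direction is: if the origin is GAS, then some $D(e,r)$ holds. Here I will invoke \cite[Theorem~3.1]{AEK}. It states that for a homogeneous polynomial vector field whose origin is GAS, there exist $r\in\NN$ and a homogeneous polynomial $p$ of degree $2r+2$ such that $V=p/\rho^r$ is positive definite and $\dot V$ is negative definite. Evaluating on the unit sphere and using \eqref{eq:Vdot} yields exactly $p>0$ and $q_{f,p,r}>0$ on $\rho=1$, which is $D(e,r)$.

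The main obstacle is matching the exact hypotheses and conclusion of \cite[Theorem~3.1]{AEK} to the form of \eqref{eq:Der}. This involves three points. First, whether their theorem is stated for $\rho=|x|^2$ or for a general positive definite form, and for which degrees; for even $d$ no field is GAS by Remark~\ref{rem:even}, so only odd $d$ matters. Second, whether strict negativity of $\dot V$ away from $0$ is asserted; if only a weaker conclusion is given, I would perturb $p$ by adding $\varepsilon\rho^{r+1}$ and use compactness of the sphere. Third, the degenerate case $d=1$, where the theorem may be phrased differently and one can instead fall back on a quadratic Lyapunov function for Hurwitz matrices, which is the case $r=0$. The soundness estimate and the enumeration argument are routine by comparison.
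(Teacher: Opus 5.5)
Your proposal is correct and follows the paper's proof: the same search over $D(e,0),D(e,1),\dots$ decided by Theorem~\ref{thm:tarski}, the same soundness argument with $V=p/\rho^r$, and \cite[Theorem~3.1]{AEK} for completeness. You obtain $V\to0$ from $\dot V\le-c_4V^{(d+1)/2}$ where the paper uses integrability of $|x|^{d+1}$ and Barbalat's lemma, which is an equally valid step. Your hedges about \cite{AEK} are unnecessary, since that theorem already gives strict negativity of $\dot V$. The $\varepsilon\rho^{r+1}$ perturbation would not have worked anyway, because it changes $\dot V$ by $2\varepsilon\,x\cdot f$, which has no sign. One small slip: $q_{f,p,r}$ is homogeneous of degree $2r+d+3$, not $2r+d+1$, although the bound $\dot V\le-c_3|x|^{d+1}$ you draw from it is correct.
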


\begin{proof}
Consider the following procedure. Given a code $e$, first decide whether it is valid; if it is not, run forever. For a valid code, decide $D(e,0),D(e,1),\dots$ in turn by Theorem~\ref{thm:tarski}, and halt as soon as one of them is true; if none is true, the search runs forever. Thus the procedure halts on $e$ if and only if $e$ is valid and $D(e,r)$ holds for some $r$. Since invalid codes are not in $\HGAS$, it remains to show that a valid code $e$ lies in $\HGAS$ if and only if $D(e,r)$ holds for some $r$; then the procedure halts exactly on $\HGAS$.

Suppose that $D(e,r)$ holds, fix $p$ as in \eqref{eq:Der}, and define $V$ by \eqref{eq:Vp}. The function $V$ is homogeneous of degree two, and by \eqref{eq:Vdot} the function $-\nabla V\cdot f$ is homogeneous of degree $d+1$ on $\RR^n\setminus\{0\}$; on $S^{n-1}$, where $\rho=1$, they equal $p$ and $q_{f,p,r}$, which are positive there. Taking for $m_0$ and $M_0$ the minimum and the maximum of $p$ on $S^{n-1}$, and for $\mu_0$ the minimum of $q_{f,p,r}$ on $S^{n-1}$, we obtain $m_0|x|^2\le V(x)\le M_0|x|^2$ and $-\nabla V(x)\cdot f(x)\ge\mu_0|x|^{d+1}$ for $x\ne0$. Let $x(t)$ be a solution with $x(0)\ne0$, on its maximal interval $[0,t_+)$. It never vanishes, since $f(0)=0$, so $\frac{d}{dt}V(x(t))=\nabla V(x(t))\cdot f(x(t))\le-\mu_0|x(t)|^{d+1}\le0$. Hence $m_0|x(t)|^2\le V(x(t))\le V(x(0))\le M_0|x(0)|^2$, so $x$ is bounded, $t_+=\infty$ (Section~\ref{subsec:ode}), and $|x(t)|\le(M_0/m_0)^{1/2}|x(0)|$ for all $t\ge0$; by Lemma~\ref{lem:scaling}, the origin is Lyapunov stable. Integrating the bound on $\frac{d}{dt}V(x(t))$ gives $\mu_0\int_0^\infty|x(t)|^{d+1}dt\le V(x(0))<\infty$. Since $x$ is bounded, $\dot x=f(x)$ is bounded, so $x$ is Lipschitz and $|x(t)|^{d+1}$ is uniformly continuous on $[0,\infty)$; by Barbalat's lemma \cite[Lemma~8.2]{Khalil}, $|x(t)|^{d+1}\to0$, that is, $x(t)\to0$. Thus the origin is GAS.

Conversely, if the origin is GAS, then by \cite[Theorem~3.1]{AEK} there are $r\in\NN$ and a positive definite homogeneous polynomial $p$ of degree $2r+2$ (with real coefficients) such that \eqref{eq:Vp} is a strict Lyapunov function, that is, $\nabla V(x)\cdot f(x)<0$ for $x\ne0$. Then $p>0$ and, by \eqref{eq:Vdot}, $q_{f,p,r}=-\rho^{r+1}\nabla V\cdot f>0$ on $S^{n-1}$, so the coefficients of $p$ witness $D(e,r)$. This proves that $\HGAS$ is r.e. The statements for fixed $n,d$ follow by \ref{C:trans}, since $\HGAS(n,d)$ and $\HGAS(\cdot,d)$ are preimages of $\HGAS$ under total computable maps of codes, which send a code of dimension $n$ and degree $d$ (respectively, of degree $d$) to itself, and every other code to an invalid code (Section~\ref{subsec:stability}).
\end{proof}

\paperpart{Part I. A single \texorpdfstring{$(n_*,d_*)$}{(n*, d*)} implies many \texorpdfstring{$(n,d)$}{(n, d)}}
\section{Lifting to degree three}\label{sec:lifting}

In this section we prove Theorem~\ref{thm:cubicization}. Lifting the original system via a special map $\nu_{k}$, called the \textit{Veronese map}, lowers the degree of a polynomial system on an invariant set, but this alone is useless for GAS or Lyapunov stability, since trajectories off the invariant set are not controlled. We therefore combine this exact lift of degree three on the invariant set, the so-called \textit{Veronese cone}, with a vector field of degree three that vanishes on the cone and pulls nearby points towards it, the ``penalty term'' of Section~\ref{subsec:overview1}. We then apply the angular criteria of Section~\ref{sec:angular} with a non-Euclidean quadratic form adapted to the Veronese map, and we use the set of unit vectors on the cone as a ``bridge'' between the angular flows of the original and the lifted vector field, as explained in Section~\ref{subsec:overview1}.

Despite its name, which comes from algebraic geometry, the Veronese map is elementary. It sends $x\in\RR^n$ to the vector $(x^\alpha)_{|\alpha|=k}$ of all monomials of degree $k$ in $x_1,\dots,x_n$, for example $(x_1,x_2)\mapsto(x_1^2,x_1x_2,x_2^2)$, and the Veronese cone consists of the real multiples of such vectors. It produces vector fields of degree three for a simple reason: along a solution of $\dot x=F(x)$ with $F$ of degree $2k+1$, the derivative of the vector $y$ of these monomials has degree $3k$ in $x$, and every monomial of degree $3k$ is a product of three monomials of degree $k$, so $\dot y$ is a homogeneous polynomial of degree three in $y$. We use only the elementary facts proved below.

Lowering the degree of a polynomial system by introducing monomials as new variables is a classical idea. By adding new variables, every polynomial system can be recast as a system whose nonlinearities are at most quadratic~\cite{Kerner}, and every solution of a polynomial system is a projection of a solution of a system of degree at most two whose new variables are monomials~\cite[Theorem~1]{CPSW}. Such ``quadratizations'' have been studied with additional requirements: with the smallest number of new variables~\cite{BychkovPogudin}; keeping every equilibrium at which all eigenvalues of the Jacobian matrix have negative real parts with this property~\cite[Theorem~1]{CaiPogudin}; and, for homogeneous systems, using all monomials of a fixed degree as new variables, in which case Mencinger relates the stability of the origin of a planar homogeneous system of degree three to that of its quadratization~\cite[Theorem~3.14]{Mencinger}. These constructions reproduce the original dynamics on the image of the lifting map, but they do not control the trajectories which start off this image, whereas GAS and Lyapunov stability concern every initial state. Controlling these trajectories is the role of the penalty term $-\tilde G$ in \eqref{eq:H} below, and most of the work in this section.

In Section~\ref{sec:veronese} we collect the facts about the Veronese map that we use. In Section~\ref{sec:lift} we construct the vector field $H$ of degree three and show that it pulls nearby points towards the cone. In Sections~\ref{subsec:dichotomy} and~\ref{subsec:shadow} we compare the angular flows of $H$ and $F$, for GAS and for Lyapunov stability respectively, and in Section~\ref{subsec:proofcubic} we prove Theorem~\ref{thm:cubicization}. Table~\ref{tab:const5} lists the constants of this section, what they depend on, and which of them are computed by the reduction.

\begin{table}[t]
\centering
\caption{The constants of Section~\ref{sec:lifting}, for a homogeneous polynomial vector field $F$ on $\RR^n$ of degree $2k+1$. The entries marked ``computed'' are computed by the reduction of Theorem~\ref{thm:cubicization}, and $\lambda$ is computable from them; $c_1$ and $C_*$ are only shown to exist, and they are used only in proofs. Constants which appear inside a single proof are not listed.}
\label{tab:const5}
\vspace{10px}
\footnotesize
\renewcommand{\arraystretch}{1.15}
\begin{tabular}{@{}llll@{}}
\hline
Constant & Defined in & Depends on & Role\\
\hline
$m=\binom{n+k-1}{k}$ & Section~\ref{sec:veronese} & $n,k$; computed & dimension of the lifted space $E\cong\RR^m$\\
$c_1$ & Lemma~\ref{lem:dist} & $n,k$; exists & $W\ge c_1d^2$ on $S_Q$\\
$\eta,c$ (rational) & Lemma~\ref{lem:gradineq} & $n,k$; computed & $\|\gradS W\|_Q^2\ge cW$ where $W\le\eta$\\
$C_G,B_G$ (rational) & Lemma~\ref{lem:tangency} & $F$; computed & $|Y_GW|\le C_GW$ where $W\le\eta$; $|a_G|\le B_G$ on $S_Q$\\
$L_G$ (rational) & Lemma~\ref{lem:lipschitz} & $F$; computed & Lipschitz constant of $Y_G$ and $a_G$\\
$K$ (rational) & \eqref{eq:Kchoice} & $F$; computed & weight of the penalty term $-K\gradQ W$\\
$\lambda=Kc-C_G$ & Lemma~\ref{lem:tube} & $F$; computable & decay rate of $W$ near $\Sigma$; $\lambda\ge2L_G+1$\\
$C_*$ & Proposition~\ref{prop:shadow} & $F$; exists & finite-horizon shadowing\\
\hline
\end{tabular}
\end{table}

\subsection{\texorpdfstring{The Veronese map $\nu_{k}$ and friends}{The Veronese map nu\_k and friends}}\label{sec:veronese}

Fix $n\ge1$ and $k\ge1$, and let
\[
A_k=\{\alpha\in\NN^n:|\alpha|=k\},\qquad E=\RR^{A_k},\qquad m=\#A_k=\binom{n+k-1}{k}.
\]
Coordinates of $y\in E$ are written $y_\alpha$. The Veronese map is
\begin{equation}\label{eq:nu}
\nu_k:\RR^n\to E,\qquad\nu_k(x)_\alpha=x^\alpha=x_1^{\alpha_1}\cdots x_n^{\alpha_n}.
\end{equation}
Define the positive definite quadratic form with rational coefficients
\begin{equation}\label{eq:Q}
Q(y)=\sum_{\alpha\in A_k}\binom{k}{\alpha}y_\alpha^2,\qquad\binom{k}{\alpha}=\frac{k!}{\alpha_1!\cdots\alpha_n!}.
\end{equation}
The multinomial theorem \cite[Theorem~5.4.1]{Brualdi}, applied to $x_1^2,\dots,x_n^2$, gives
\begin{equation}\label{eq:Qnu}
Q(\nu_k(x))=\Bigl(\sum_{i=1}^nx_i^2\Bigr)^k=|x|^{2k}.
\end{equation}
Consider the binomials of degree two
\begin{equation}\label{eq:binom}
y_\alpha y_\beta-y_\gamma y_\delta,\qquad\alpha+\beta=\gamma+\delta,\quad\alpha,\beta,\gamma,\delta\in A_k.
\end{equation}
The value of such a binomial at $\nu_k(x)$ is $x^{\alpha+\beta}-x^{\gamma+\delta}=0$. Let $q_1,\dots,q_s$ be the finite list of all nonzero polynomials of the form \eqref{eq:binom}, taking one representative from each pair $\{q,-q\}$. (If $n=1$ or $k=1$ there are none; then $s=0$ and all sums over $j$ below are empty.) Since each $q_j$ is homogeneous of degree two,
\begin{equation}\label{eq:E1}
q_j\bigl(t\,\nu_k(x)\bigr)=t^2q_j\bigl(\nu_k(x)\bigr)=0\qquad t\in\RR,\ x\in\RR^n,\ j=1,\dots,s.
\end{equation}
Set
\begin{equation}\label{eq:W}
W(y)=\sum_{j=1}^sq_j(y)^2,\qquad V=\{y\in E:W(y)=0\},\qquad\Sigma=V\cap S_Q,
\end{equation}
where $S_Q=\{y\in E:Q(y)=1\}$. The form $Q$ is the quadratic form $Q_H$ of Section~\ref{subsec:overview1}, and $S_Q$ is the ellipsoid $S_H$ there; to lighten the notation, in this section we write $Q$ and $S_Q$ instead of $Q_H$ and $S_H$, and we identify $E$ with $\RR^m$ by fixing an ordering of $A_k$. All of these objects have integer coefficients and are computable from $(n,k)$. The polynomials \eqref{eq:binom} are the classical equations of the Veronese cone \cite{CEMR,Sturmfels}; we use only the elementary consequences of their definition proved in Lemmas~\ref{lem:local}, \ref{lem:cone} and~\ref{lem:dist}.

\begin{example}\label{ex:binary}
For $n=k=2$ we have $\nu_2(x_1,x_2)=(x_1^2,x_1x_2,x_2^2)$, $Q=y_{20}^2+2y_{11}^2+y_{02}^2$, a single binomial $q=y_{20}y_{02}-y_{11}^2$, so that $V=\{y_{20}y_{02}=y_{11}^2\}$ is a double cone (Figure~\ref{fig:cone}), $W=q^2$, and the $Q$-gradient \eqref{eq:gradQ} defined below is $\gradQ W=M^{-1}\nabla W=(2qy_{02},\,-2qy_{11},\,2qy_{20})$, where $M=\mathrm{diag}(1,2,1)$ is the matrix of $Q$; its components are homogeneous of degree three. In general the binomials \eqref{eq:binom} have degree two, so $W$ has degree four and $\gradQ W$ has degree three, the degree of the vector field constructed in Section~\ref{sec:lift}.
\end{example}

\input{fig_veronese_cone}

The next three lemmas describe $V$ and show that $W$ controls the distance to it. By Lemma~\ref{lem:local}, a point of $V$ with $y_{ke_i}\ne0$ is determined by its $n$ coordinates indexed by the set $B_i$ defined below, and every $y$ with $y_{ke_i}\ne0$ lies within $Q$-distance $\kappa_i(y)W(y)^{1/2}$ of the point of $V$ with the same such coordinates, where $\kappa_i$ is continuous. Lemma~\ref{lem:cone} deduces that the nonzero points of $V$ are the real multiples of the vectors $\nu_k(u)$, and Lemma~\ref{lem:dist} shows that on $S_Q$, $W$ is at least a constant multiple of the squared distance to $\Sigma$.

For $1\le i\le n$ let $e_i$ be the $i$-th standard unit vector, regarded as a multi-index, put $U_i=\{y\in E:y_{ke_i}\ne0\}$, and let $B_i\subseteq A_k$ be the set of the $n$ multi-indices $ke_i$ and $(k-1)e_i+e_j$, $j\ne i$. For $\alpha\in A_k$ and $i\ne j$, the polynomials
\begin{align}
&y_\alpha^2-y_{\alpha+e_i-e_j}\,y_{\alpha-e_i+e_j}&&\text{if }\alpha_i\ge1\text{ and }\alpha_j\ge1,\label{eq:E2}\\
&y_{ke_i}\,y_\alpha-y_{(k-1)e_i+e_j}\,y_{\alpha+e_i-e_j}&&\text{if }\alpha_j\ge1,\label{eq:E3}
\end{align}
are of the form \eqref{eq:binom}, since all multi-indices occurring in them lie in $A_k$ and $2\alpha=(\alpha+e_i-e_j)+(\alpha-e_i+e_j)$, $ke_i+\alpha=((k-1)e_i+e_j)+(\alpha+e_i-e_j)$. Hence each of them is $0$ or $\pm q_j$ for some $j$, so it vanishes on $V$ and its square is at most $W$. We write $|v|$ for the Euclidean norm on $E$; since $1\le\binom{k}{\alpha}\le k!$, we have $|v|\le\|v\|_Q\le\sqrt{k!}\,|v|$.

\begin{lemma}\label{lem:local}
\begin{enumerate}[label=(\alph*)]
\item $V\setminus\{0\}\subseteq U_1\cup\dots\cup U_n$.
\item Let $1\le i\le n$. For $y\in U_i$ define $\xi=\xi^{(i)}(y)\in\RR^n$ by $\xi_i=1$ and $\xi_j=y_{(k-1)e_i+e_j}/y_{ke_i}$ for $j\ne i$, and put $\pi_i(y)=y_{ke_i}\,\nu_k(\xi)$. Then $\pi_i(y)\in V$, the coordinates of $\pi_i(y)$ indexed by $B_i$ agree with those of $y$, and there is a continuous function $\kappa_i:U_i\to[0,\infty)$ with
\begin{equation}\label{eq:E4}
\|y-\pi_i(y)\|_Q\le\kappa_i(y)\,W(y)^{1/2},\qquad y\in U_i.
\end{equation}
In particular every $y\in V\cap U_i$ satisfies $y=\pi_i(y)$.
\end{enumerate}
\end{lemma}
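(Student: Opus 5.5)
Part (a) I will prove by contradiction using the binomials \eqref{eq:E2}. Suppose $y\in V\setminus\{0\}$ with $y_{ke_i}=0$ for all $i$. Pick a nonzero coordinate $y_\alpha$ whose multi-index $\alpha$ is "most extreme", for instance maximizing $\max_i\alpha_i$. Since $\alpha\ne ke_i$, there are $i\ne j$ with $\alpha_i\ge1$, $\alpha_j\ge1$, and we may take $i$ with $\alpha_i$ maximal. The binomial $y_\alpha^2-y_{\alpha+e_i-e_j}y_{\alpha-e_i+e_j}$ vanishes on $V$, so $y_\alpha^2=y_{\alpha+e_i-e_j}y_{\alpha-e_i+e_j}\ne0$. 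Hence $y_{\alpha+e_i-e_j}\neq0$, and this multi-index has $i$-th entry $\alpha_i+1$, contradicting maximality. So some $y_{ke_i}\ne0$.

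For (b), first note that $\pi_i(y)=y_{ke_i}\nu_k(\xi)$ lies in $V$ by \eqref{eq:E1}. Its $ke_i$ coordinate is $y_{ke_i}\xi^{ke_i}=y_{ke_i}$, and its $(k-1)e_i+e_j$ coordinate is $y_{ke_i}\xi_j=y_{(k-1)e_i+e_j}$, so it agrees with $y$ on $B_i$. For the estimate \eqref{eq:E4}, I will show by induction on $h(\alpha):=k-\alpha_i$ that each difference $\delta_\alpha(y):=y_\alpha-\pi_i(y)_\alpha$ satisfies $|\delta_\alpha|\le\kappa_{i,\alpha}(y)W^{1/2}$ with $\kappa_{i,\alpha}$ continuous on $U_i$. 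Then $\kappa_i:=\sqrt{k!}\,(\sum_\alpha\kappa_{i,\alpha}^2)^{1/2}$ works, using $\|v\|_Q\le\sqrt{k!}|v|$. The base cases $h\le1$ are exactly $B_i$, where $\delta_\alpha=0$.

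For the inductive step, take $\alpha$ with $h(\alpha)\ge2$ and choose $j\ne i$ with $\alpha_j\ge1$. Set $\gamma=\alpha+e_i-e_j$, so $h(\gamma)=h(\alpha)-1$. Binomial \eqref{eq:E3} gives $y_{ke_i}y_\alpha-y_{(k-1)e_i+e_j}y_\gamma=\epsilon$ with $|\epsilon|\le W^{1/2}$. The same identity holds exactly for $\pi_i(y)\in V$, which has the same $B_i$ coordinates. Subtracting,
\[
y_{ke_i}\delta_\alpha=y_{(k-1)e_i+e_j}\,\delta_\gamma+\epsilon .
\]
Dividing by $y_{ke_i}\ne0$ gives $|\delta_\alpha|\le\bigl(|y_{(k-1)e_i+e_j}|\kappa_{i,\gamma}(y)+1\bigr)W^{1/2}/|y_{ke_i}|$, which is continuous on $U_i$. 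This completes the induction.

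The final claim follows because $W(y)=0$ on $V$ forces $y=\pi_i(y)$. The only delicate point is choosing, in (a), the ordering that makes the maximality argument close; the induction in (b) is routine once \eqref{eq:E3} is applied with the coordinates of $B_i$ held fixed.
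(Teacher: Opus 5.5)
Your proposal is correct and follows essentially the paper's argument. Part (a) is the same maximality argument using \eqref{eq:E2}. In part (b), your induction on $k-\alpha_i$ is forward substitution in the paper's lower-triangular system, built from the binomials \eqref{eq:E3}, with diagonal entries $y_{ke_i}$ and rows ordered by nonincreasing $\alpha_i$. The only difference is that you get an explicit recursive continuous $\kappa_i$, whereas the paper takes $\kappa_i=\sqrt{k!(m-n)}\,\|L_i(y)^{-1}\|$ and obtains continuity from the adjugate formula.
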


\begin{proof}
(a) Let $y\in V\setminus\{0\}$. Since $y\ne0$ and $k\ge1$, there are $i$ and $\alpha\in A_k$ with $y_\alpha\ne0$ and $\alpha_i\ge1$; among all such pairs choose one with $\alpha_i$ maximal. If $\alpha\ne ke_i$, pick $j\ne i$ with $\alpha_j\ge1$. Since \eqref{eq:E2} vanishes at $y$, $y_{\alpha+e_i-e_j}y_{\alpha-e_i+e_j}=y_\alpha^2\ne0$, so $y_{\alpha+e_i-e_j}\ne0$, contradicting the maximality of $\alpha_i$. Hence $\alpha=ke_i$ and $y\in U_i$.

(b) Let $N_i=A_k\setminus B_i$, so $\#N_i=m-n$, and write $y_N=(y_\alpha)_{\alpha\in N_i}$. Each $\alpha\in N_i$ has $\alpha_i\le k-2$, so we may fix $j_\alpha\ne i$ with $\alpha_{j_\alpha}\ge1$ and set $\alpha'=\alpha+e_i-e_{j_\alpha}\in A_k$. The polynomial $r_\alpha=y_{ke_i}y_\alpha-y_{(k-1)e_i+e_{j_\alpha}}y_{\alpha'}$ is of type \eqref{eq:E3}, so it vanishes on $V$ and $r_\alpha^2\le W$. As a function of $y_N$, $r_\alpha$ is affine: its coefficients $y_{ke_i}$ and $y_{(k-1)e_i+e_{j_\alpha}}$ are coordinates indexed by $B_i$, and $\alpha'$ lies either in $B_i$ or in $N_i$. Hence the vector $r=(r_\alpha)_{\alpha\in N_i}$ has the form $r(y)=L_i(y)\,y_N-\ell_i(y)$, where the square matrix $L_i(y)$ and the vector $\ell_i(y)$ depend only on the coordinates indexed by $B_i$, all diagonal entries of $L_i(y)$ equal $y_{ke_i}$, and its only other nonzero entries are $-y_{(k-1)e_i+e_{j_\alpha}}$ in the positions $(\alpha,\alpha')$ with $\alpha'\in N_i$. If $N_i$ is ordered by nonincreasing $\alpha_i$, these positions lie below the diagonal, since $\alpha'_i=\alpha_i+1$. Thus $\det L_i(y)=y_{ke_i}^{m-n}$, and by the adjugate formula for the inverse \cite[(6.2.7)]{Meyer} the function $\kappa_i(y):=\sqrt{k!(m-n)}\,\|L_i(y)^{-1}\|$ (operator norm) is continuous on $U_i$.

Let $y\in U_i$ and $\pi=\pi_i(y)$. Since $\pi_{ke_i}=y_{ke_i}\xi_i^k=y_{ke_i}$ and $\pi_{(k-1)e_i+e_j}=y_{ke_i}\xi_j=y_{(k-1)e_i+e_j}$, the point $\pi$ agrees with $y$ in the coordinates indexed by $B_i$, and $\pi\in V$ by \eqref{eq:E1}. Hence $0=r(\pi)=L_i(y)\pi_N-\ell_i(y)$, so $r(y)=L_i(y)(y_N-\pi_N)$. As $y-\pi$ vanishes in the coordinates indexed by $B_i$ and $|r(y)|^2\le(m-n)W(y)$,
\[
\|y-\pi\|_Q\le\sqrt{k!}\,|y_N-\pi_N|\le\sqrt{k!}\,\|L_i(y)^{-1}\|\,|r(y)|\le\kappa_i(y)\,W(y)^{1/2}.
\]
If $y\in V$, then $W(y)=0$ and $y=\pi$. (If $N_i=\emptyset$, then $y=\pi$ and $\kappa_i=0$.)
\end{proof}

\begin{lemma}\label{lem:cone}
The nonzero points of $V$ are exactly the points $t\,\nu_k(u)$ with $t\in\RR\setminus\{0\}$ and $u\in S^{n-1}$. Consequently $\Sigma=\{\sigma\nu_k(u):\sigma\in\{-1,1\},\ u\in S^{n-1}\}$, and $\Sigma$ is a compact subset of $S_Q$.
\end{lemma}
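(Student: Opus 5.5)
The plan is to prove the two inclusions separately and then read off the description of $\Sigma$.

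\emph{The points $t\,\nu_k(u)$ lie in $V$.} This direction is immediate from \eqref{eq:E1}. Each $q_j$ vanishes at $t\,\nu_k(x)$, so $W(t\,\nu_k(u))=0$. Moreover $t\,\nu_k(u)\neq0$ for $t\ne0$ and $u\in S^{n-1}$, because $Q(\nu_k(u))=|u|^{2k}=1$ by \eqref{eq:Qnu}.

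\emph{Every nonzero point of $V$ has this form.} Let $y\in V\setminus\{0\}$. By Lemma~\ref{lem:local}(a) there is an $i$ with $y\in U_i$, and Lemma~\ref{lem:local}(b) then gives $y=\pi_i(y)=y_{ke_i}\,\nu_k(\xi)$ with $\xi\ne0$, since $\xi_i=1$. Put $u=\xi/|\xi|\in S^{n-1}$. Homogeneity of degree $k$ gives $\nu_k(\xi)=|\xi|^k\nu_k(u)$, so $y=t\,\nu_k(u)$ with $t=y_{ke_i}|\xi|^k\ne0$.

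\emph{Description of $\Sigma$.} A nonzero $y=t\,\nu_k(u)\in V$ lies on $S_Q$ exactly when $Q(y)=t^2Q(\nu_k(u))=t^2=1$, using \eqref{eq:Qnu}. Hence $t=\pm1$, and $\Sigma=\{\sigma\nu_k(u):\sigma=\pm1,\ u\in S^{n-1}\}$; this uses that $0\notin S_Q$. For compactness, $\Sigma$ is the image of the compact set $\{\pm1\}\times S^{n-1}$ under the continuous map $(\sigma,u)\mapsto\sigma\nu_k(u)$. Alternatively, $\Sigma=V\cap S_Q$ is a closed subset of the compact ellipsoid $S_Q$.

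There is no real obstacle. The only step needing care is justifying $\xi\ne0$ and the rescaling, and both are trivial.
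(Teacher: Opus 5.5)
Your proposal is correct and follows the paper's proof essentially step for step. Like the paper, you get one inclusion from \eqref{eq:E1} and \eqref{eq:Qnu}, and the converse from Lemma~\ref{lem:local}(a),(b) with the rescaling $u=\xi/|\xi|$, $t=y_{ke_i}|\xi|^k$; you then identify $\Sigma$ via $Q(t\nu_k(u))=t^2$ and get compactness as a continuous image of $\{-1,1\}\times S^{n-1}$.
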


\begin{proof}
If $t\ne0$ and $u\in S^{n-1}$, then $t\nu_k(u)\in V$ by \eqref{eq:E1}, and $Q(t\nu_k(u))=t^2$ by \eqref{eq:Qnu}, so $t\nu_k(u)\ne0$. Conversely, if $y\in V\setminus\{0\}$, Lemma~\ref{lem:local}(a) gives $i$ with $y\in U_i$, and Lemma~\ref{lem:local}(b) gives $y=\pi_i(y)=c\,\nu_k(\xi)$, where $c=y_{ke_i}\ne0$ and $\xi\ne0$; as $\nu_k(sx)=s^k\nu_k(x)$, this is $t\nu_k(u)$ with $t=c|\xi|^k$ and $u=\xi/|\xi|$. Since $Q(t\nu_k(u))=t^2$, such a point lies on $S_Q$ if and only if $t=\pm1$, and $\Sigma$ is compact as a continuous image of $\{-1,1\}\times S^{n-1}$.
\end{proof}

Since $\nu_k(-u)=(-1)^k\nu_k(u)$, we have $\Sigma=\nu_k(S^{n-1})$ for odd $k$, and $\Sigma=\nu_k(S^{n-1})\cup(-\nu_k(S^{n-1}))$ for even $k$.

\begin{lemma}\label{lem:dist}
For $y\in S_Q$ let $d(y)=\min_{p\in\Sigma}\|y-p\|_Q$. There is $c_1>0$ such that $W(y)\ge c_1\,d(y)^2$ for all $y\in S_Q$.
\end{lemma}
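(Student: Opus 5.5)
The plan is a compactness argument built on the local estimate of Lemma~\ref{lem:local}(b). First I would extend $d$ by writing $d(y)=\min_{p\in\Sigma}\|y-p\|_Q$, which is continuous on $S_Q$ because $\Sigma$ is compact (Lemma~\ref{lem:cone}), and note that $d\le 2$ on $S_Q$. Then I would argue by contradiction or by covering: it suffices to find, for every point $y_0\in S_Q$, a neighbourhood $N$ of $y_0$ in $S_Q$ and a constant $c_N>0$ with $W\ge c_N d^2$ on $N$. Finitely many such neighbourhoods cover $S_Q$, and the minimum of the $c_N$ works.

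\emph{Points off the cone.} If $y_0\notin\Sigma$, then $W(y_0)>0$, since $W(y_0)=0$ would put $y_0\in V\cap S_Q=\Sigma$. By continuity $W\ge W(y_0)/2$ on a neighbourhood of $y_0$, and there $W\ge (W(y_0)/8)\,d^2$ because $d\le2$.

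\emph{Points on the cone.} If $y_0\in\Sigma$, then $y_0\ne0$, so Lemma~\ref{lem:local}(a) gives $i$ with $y_0\in U_i$. On a compact neighbourhood $N\subseteq U_i\cap S_Q$ of $y_0$, the function $\kappa_i$ is bounded by some $\bar\kappa$. For $y\in N$, Lemma~\ref{lem:local}(b) gives $\pi=\pi_i(y)\in V$ with $\|y-\pi\|_Q\le\bar\kappa W(y)^{1/2}$. The point $\pi$ lies on $V$ but not necessarily on $S_Q$, so I would rescale it. If $\pi\ne0$, put $p=\pi/\|\pi\|_Q\in\Sigma$; this lies in $\Sigma$ because $V$ is a cone, by Lemma~\ref{lem:cone}. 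Using $\|y\|_Q=1$ and the triangle inequality,
\[
\|y-p\|_Q\le\|y-\pi\|_Q+\bigl|\|\pi\|_Q-1\bigr|\le2\|y-\pi\|_Q.
\]
Hence $d(y)\le 2\bar\kappa W(y)^{1/2}$, that is, $W\ge d^2/(4\bar\kappa^2)$ on $N$. If $\pi=0$, then $\|y-\pi\|_Q=1$, so $W(y)\ge\bar\kappa^{-2}\ge d(y)^2/(4\bar\kappa^2)$ as well. (If $\bar\kappa=0$, then $d=0$ on $N$ and there is nothing to prove.)

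The main obstacle is this local step on the cone, where both sides of the inequality vanish. Lemma~\ref{lem:local} was built to handle exactly this situation, so what remains is the rescaling from $V$ to $\Sigma$ and the uniform bound on $\kappa_i$ near $y_0$, which comes from the continuity of $\kappa_i$ on the open set $U_i$. The constant $c_1$ is then the minimum over a finite subcover given by compactness of $S_Q$. This only shows that $c_1$ exists, which matches its status in Table~\ref{tab:const5}.
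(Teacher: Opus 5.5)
Your proof is correct and follows the paper's argument. Away from $\Sigma$ you use the positivity of $W$ together with $d\le2$; near $\Sigma$ you bound $\kappa_i$ on a compact subset of $U_i$, apply \eqref{eq:E4}, and rescale to $p=\pi/\|\pi\|_Q$ to get $d\le2\bar\kappa W^{1/2}$. The only difference is that the paper organizes the cover explicitly, splitting $S_Q$ into $K_0=\{\max_i|y_{ke_i}|\le\delta\}$ and $K_i=\{|y_{ke_i}|\ge\delta\}$ with $\delta=\frac12n^{-k/2}$ instead of taking a finite subcover; also, your case $\pi=0$ is vacuous, since $\pi_{ke_i}=y_{ke_i}\ne0$ on $U_i$.
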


\begin{proof}
The minimum exists since $\Sigma$ is compact and nonempty, and $d\le2$ on $S_Q$. Put $\delta=\frac12n^{-k/2}$. Each $p=\sigma\nu_k(u)\in\Sigma$ has $|p_{ke_i}|=|u_i|^k\ge2\delta$ for every $i$ with $|u_i|\ge n^{-1/2}$, and such $i$ exist since $|u|=1$. Hence the compact set $K_0=\{y\in S_Q:\max_i|y_{ke_i}|\le\delta\}$ does not meet $\Sigma$, which is the zero set of $W$ on $S_Q$; so if $K_0\ne\emptyset$, then $w_0=\min_{K_0}W>0$ and, since $d\le2$, $W\ge w_0\ge\frac{w_0}4d^2$ on $K_0$. Every other $y\in S_Q$ lies in one of the compact sets $K_i=\{y\in S_Q:|y_{ke_i}|\ge\delta\}\subseteq U_i$; let $C_i>0$ be an upper bound for $\kappa_i$ on $K_i$, which exists because $\kappa_i$ is continuous on $U_i$ by Lemma~\ref{lem:local}(b) and $K_i$ is compact. For $y\in K_i$, the point $\pi=\pi_i(y)$ is nonzero, $p=\pi/\|\pi\|_Q$ lies in $V\cap S_Q=\Sigma$ by \eqref{eq:E1}, and $\bigl|\|\pi\|_Q-1\bigr|\le\|y-\pi\|_Q$ because $\|y\|_Q=1$. By \eqref{eq:E4},
\[
d(y)\le\|y-p\|_Q\le\|y-\pi\|_Q+\bigl|\|\pi\|_Q-1\bigr|\le2\|y-\pi\|_Q\le2C_i\,W(y)^{1/2}.
\]
Thus $c_1=\min\{w_0/4,(4C_1^2)^{-1},\dots,(4C_n^2)^{-1}\}$ works (omit $w_0/4$ if $K_0=\emptyset$).
\end{proof}

The vector field $H$ constructed in Section~\ref{sec:lift} is the lift of $F$ plus the term $-K\gradQ W$, with $\gradQ W$ defined below and a constant $K>0$; this term decreases $W$. To show that it makes $W$ decrease at a rate proportional to $W$ near $\Sigma$, we need a lower bound, in terms of $W$, for the gradient of $W$ along $S_Q$; this is Lemma~\ref{lem:gradineq}. It is a gradient inequality of {\L}ojasiewicz type~\cite{Lojasiewicz63} (see, e.g.,~\cite[(0.2)]{ColdingMinicozzi}), with exponent $\frac12$, for the restriction of $W$ to $S_Q$ near its zero set $\Sigma$; we need it with constants which can be computed, and we prove it directly.

Write $M$ for the diagonal matrix of $Q$, and let
\begin{equation}\label{eq:gradQ}
\gradQ W=M^{-1}\nabla W
\end{equation}
be the $Q$-gradient \cite[Chapter~13, pp.~342--343]{Lee}, so that $\langle\gradQ W(y),v\rangle_Q=DW(y)v$. Since $W$ is homogeneous of degree four, $\langle\gradQ W(y),y\rangle_Q=4W(y)$, so on $S_Q$ the component of $\gradQ W$ tangent to $S_Q$ is
\begin{equation}\label{eq:gradS}
\gradS W(y)=\gradQ W(y)-4W(y)\,y,
\end{equation}
and
\begin{equation}\label{eq:Dy}
g(y):=\|\gradS W(y)\|_Q^2=\|\gradQ W(y)\|_Q^2-16W(y)^2,\qquad y\in S_Q.
\end{equation}
The vector field which we construct in Section~\ref{sec:lift} uses the ambient gradient \eqref{eq:gradQ}, whose components are homogeneous of degree three. The tangential gradient \eqref{eq:gradS} contains a term of degree five, and we use it only for estimates on $S_Q$.

\begin{lemma}\label{lem:gradineq}
There are computable positive rationals $\eta,c$ such that
\begin{equation}\label{eq:gradineq}
Q(y)=1,\ W(y)\le\eta\implies g(y)\ge c\,W(y).
\end{equation}
\end{lemma}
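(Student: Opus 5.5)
The plan is to separate \emph{existence} of $\eta,c$ from their \emph{computability}. Computability is then almost free from Theorem~\ref{thm:tarski}. For positive rationals $\eta,c$, the implication \eqref{eq:gradineq} is equivalent to the first-order sentence
\[
\forall y\ \bigl(Q(y)=1\wedge W(y)\le\eta\bigr)\Rightarrow \|\gradQ W(y)\|_Q^2-16W(y)^2-c\,W(y)\ge0
\]
(using \eqref{eq:Dy}), whose coefficients are rational and computable from $(n,k)$. Enumerate all pairs of positive rationals and decide each sentence with Tarski's algorithm, stopping at the first true one. This search halts as soon as some valid pair exists, because whenever $(\eta,c)$ works, so does every pair of smaller positive rationals. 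So the real content is to show that some positive $\eta,c$ exist.

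For existence I would use the local structure of $V$ near $\Sigma$ and a first-order Taylor expansion of the vector $q=(q_1,\dots,q_s)$. Write $W=|q|^2$, so $\nabla W=2\,Dq^{\top}q$.
\begin{itemize}
\item[(i)] Near $\Sigma$, $V\setminus\{0\}$ is a smooth $n$-dimensional cone. On $U_i$ it is the graph $y_N=\phi(y_{B_i})$ given by $\pi_i$ in Lemma~\ref{lem:local}.
\item[(ii)] The rank of $Dq(p)$ equals $m-n$ at every $p\in\Sigma$. The rank is at least $m-n$ because the rows $\nabla r_\alpha$ of type \eqref{eq:E3} have $y_N$-block equal to the triangular matrix $L_i(p)$, with determinant $p_{ke_i}^{m-n}\ne0$. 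It is at most $m-n$ because $Dq(p)$ kills the $n$-dimensional tangent space $T_pV$, since $q\equiv0$ on $V$.
\item[(iii)] By compactness of $\Sigma$ and continuity, the smallest nonzero singular value $\sigma_0$ of $Dq(p)$ is bounded below uniformly for $p\in\Sigma$. I expect to need a small argument that the rank does not jump, but (ii) gives constant rank, so the nonzero singular values vary continuously.
\end{itemize}

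Now take $y\in S_Q$ with $W(y)$ small. Lemma~\ref{lem:dist} makes $d(y)\le (W/c_1)^{1/2}$ small. Let $p$ be the nearest point of $V\cap(\text{a neighbourhood})$ to $y$ (equivalently use $\pi_i(y)$ and Lemma~\ref{lem:local}), so that $h=y-p$ is $Q$-orthogonal to $T_pV$ up to $O(|h|^2)$. Taylor expansion gives $q(y)=Dq(p)h+O(|h|^2)$ and $Dq(y)=Dq(p)+O(|h|)$. Hence:
\begin{itemize}
\item $|q(y)|\ge\sigma_0|h|-O(|h|^2)$ and $|q(y)|\le C|h|$, so $W\asymp|h|^2$;
\item $|Dq(y)^{\top}q(y)|\ge |Dq(p)^{\top}Dq(p)h|-O(|h|^2)\ge\sigma_0^2|h|-O(|h|^2)$, using that $h$ is essentially in the row space of $Dq(p)$.
\end{itemize}
Therefore $\|\gradQ W(y)\|_Q^2\ge c'|h|^2\ge c''W$ for $|h|$ small. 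Finally, $16W^2\le 16\eta W$ is negligible once $\eta<c''/32$, which yields \eqref{eq:gradineq} with $c=c''/2$.

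The main obstacle is making the Taylor step uniform: the nearest-point projection, the orthogonality of $h$ to $T_pV$, and the error constants all need uniform control over the compact set $\Sigma$. I would handle this by working chart by chart on the compact sets $K_i$ from the proof of Lemma~\ref{lem:dist}, where $\kappa_i$ and $\|L_i^{-1}\|$ are bounded. Since only existence is needed at this stage, non-explicit compactness constants are acceptable, and the Tarski search supplies the computable rationals.
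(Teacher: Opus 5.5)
Your proposal is correct, but its core estimate follows a different route from the paper's. Several steps are the same as in the paper: a point $p$ of the cone near $y$ with $h=y-p$, Lemma~\ref{lem:dist} to make $h$ small, and compactness to fix $\eta$. Your computability step is also the paper's: the Tarski search over the sentence \eqref{eq:QEgrad}, justified by monotonicity in $(\eta,c)$.

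The difference is where transversality comes from. You get it from a constant-rank computation ($\operatorname{rank}Dq=m-n$ on $\Sigma$, via the triangular block $L_i$ and the graph structure of $V$). You add a uniform lower bound on the $(m-n)$-th singular value and the first-order condition at the nearest point. You then bound $\|\gradQ W\|_Q^2$ and subtract $16W^2$.

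The paper needs none of this machinery. Because each $q_j$ is quadratic with $q_j(p)=0$, it has the exact identity $Dq_j(y)h=q_j(y)+q_j(h)$. This gives $DW(y)h=2W(y)+2\sum_jq_j(y)q_j(h)\ge W(y)$ once $d\le\rho$. Projecting $h$ onto the $Q$-orthogonal complement of $y$ costs only $2d^2W$, since $\langle y,h\rangle_Q=d^2/2$. Cauchy--Schwarz against $\gradS W$ then gives $g(y)^{1/2}d\ge\frac12W(y)$. Finally the quantitative bound $d^2\le W/c_1$ of Lemma~\ref{lem:dist} yields $g\ge\frac{c_1}{4}W$ directly.

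So in the paper all transversality information is carried by Lemma~\ref{lem:dist}, which already encodes the invertibility of $L_i$ through $\kappa_i$. No orthogonality of $h$ to $T_pV$ and no manifold structure of $V$ or $\Sigma$ is used. Your argument is the standard Morse--Bott style proof of a {\L}ojasiewicz exponent $\frac12$. It would extend to non-quadratic defining functions with a clean zero set, but it needs more verification, and two of your statements need small repairs:
\begin{itemize}
\item The ``row space'' claim should concern the $Q$-orthogonal complement of $\ker Dq(p)=T_pV$, not the Euclidean row space. This only changes $\sigma_0^2$ by a factor depending on $k!$.
\item With $p=\pi_i(y)$, $h$ is not nearly orthogonal to $T_pV$. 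It does, however, lie in the $y_N$-coordinate subspace, on which $Dr(p)$ acts by the invertible $L_i(p)$, and that gives the same uniform transversality.
\end{itemize}
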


\begin{proof}
Let $c_1$ and $d$ be as in Lemma~\ref{lem:dist}, let $L=\max\{|q_j(h)|:1\le j\le s,\ Q(h)=1\}$ (and $L=0$ if $s=0$), so that $|q_j(h)|\le L\|h\|_Q^2$ for $h\in E$, and put $\rho=\min\{\frac12,\sqrt{c_1}/(2(L\sqrt s+1))\}$. We first show
\begin{equation}\label{eq:E5}
y\in S_Q,\ d(y)\le\rho\implies g(y)\ge\tfrac{c_1}{4}W(y).
\end{equation}
If $y\in\Sigma$, then $W(y)=0$ and \eqref{eq:E5} is trivial. Otherwise choose $p\in\Sigma$ with $\|y-p\|_Q=d:=d(y)>0$ and put $h=y-p$. Every homogeneous polynomial $q$ of degree two on $E$ satisfies $q(a+b)=q(a)+Dq(a)b+q(b)$, as one sees by writing $q(y)=y^TAy$ with $A$ symmetric. With $a=y$, $b=-h$ and $q_j(p)=0$ this gives $Dq_j(y)h=q_j(y)+q_j(h)$. By Lemma~\ref{lem:dist} we have $d\le(W(y)/c_1)^{1/2}$ and $d\le\rho$, thus by the Cauchy--Schwarz inequality,
\[
DW(y)h=2W(y)+2\sum_jq_j(y)q_j(h)\ge2W(y)-2L\sqrt s\,d^2W(y)^{1/2}\ge2\Bigl(1-\frac{L\sqrt s\,d}{\sqrt{c_1}}\Bigr)W(y)\ge W(y).
\]
Let $w=h-\langle y,h\rangle_Qy$ be the $Q$-orthogonal projection of $h$ onto the $Q$-orthogonal complement of $y$, so that $\langle y,w\rangle_Q=0$ and $\|w\|_Q\le\|h\|_Q=d$. As $Q(y)=Q(p)=1$, we have $\langle y,h\rangle_Q=1-\langle y,p\rangle_Q=\frac12\|y-p\|_Q^2=\frac12d^2$, and Euler's identity $DW(y)y=4W(y)$ gives $DW(y)w=DW(y)h-2d^2W(y)\ge(1-2d^2)W(y)\ge\frac12W(y)$, the last step because $d\le\rho\le\frac12$. On the other hand, by \eqref{eq:gradS}, $\langle y,w\rangle_Q=0$, and the Cauchy--Schwarz inequality, $DW(y)w=\langle\gradQ W(y),w\rangle_Q=\langle\gradS W(y),w\rangle_Q\le g(y)^{1/2}d$. Hence $g(y)^{1/2}d\ge\frac12W(y)$, and, since $0<d^2\le W(y)/c_1$ by Lemma~\ref{lem:dist}, $g(y)\ge W(y)^2/(4d^2)\ge\frac{c_1}{4}W(y)$, proving \eqref{eq:E5}.

The set $K_\rho=\{y\in S_Q:d(y)\ge\rho\}$ is compact and disjoint from $\Sigma$, so there is $\eta_0>0$ with $W\ge2\eta_0$ on $K_\rho$ (any $\eta_0>0$ if $K_\rho=\emptyset$). If $y\in S_Q$ and $W(y)\le\eta_0$, then $d(y)<\rho$ and \eqref{eq:E5} applies. Thus \eqref{eq:gradineq} holds for $(\eta_0,c_1/4)$, and hence for every rational pair $(\eta,c)$ with $0<\eta\le\eta_0$ and $0<c\le c_1/4$, because decreasing $\eta$ strengthens the hypothesis and decreasing $c$ weakens the conclusion. To compute such a pair, enumerate pairs of positive rationals and decide for each the sentence
\begin{equation}\label{eq:QEgrad}
\forall y\ \ \bigl(Q(y)=1\wedge W(y)\le\eta\bigr)\implies g(y)\ge c\,W(y)
\end{equation}
by Theorem~\ref{thm:tarski}, stopping at the first true one; by what we have shown, the search terminates.
\end{proof}

\subsection{The lifted vector field and attraction to the cone}\label{sec:lift}

We keep the notation of Section~\ref{sec:veronese}. Let $F:\RR^n\to\RR^n$ be a rational-coefficient homogeneous polynomial vector field of degree $2k+1$. For $\alpha\in A_k$, the $\alpha$-coordinate of $D\nu_k(x)F(x)$ is a homogeneous polynomial of degree $(k-1)+(2k+1)=3k$ in $x$. Every monomial of degree $3k$ factors effectively into three monomials of degree $k$: list its $3k$ variable indices in increasing order and cut the list into three blocks of length $k$. Replacing each factor $x^\beta$ by $y_\beta$ gives a map $G:E\to E$ with rational coefficients, whose components are homogeneous of degree three, such that
\begin{equation}\label{eq:Gnu}
G(\nu_k(x))=D\nu_k(x)F(x),\qquad x\in\RR^n.
\end{equation}
We compute the coefficients of $G$ from those of $F$ by a finite procedure.

On $S^{n-1}$ we use the Euclidean form, that is, the form $Q_F(x)=|x|^2$ of Section~\ref{subsec:overview1}, for which $S_F=S^{n-1}$, and write $a_F(u)=u\cdot F(u)$, $Y_F(u)=F(u)-a_F(u)u$; on $E$ we use $Q$ and define $a_G,Y_G$ by \eqref{eq:aY}, regarded as polynomial maps on $E$. Differentiating \eqref{eq:Qnu} and using Euler's identity gives, for $|u|=1$,
\begin{equation}\label{eq:aGnu}
a_G(\nu_k(u))=\langle\nu_k(u),D\nu_k(u)F(u)\rangle_Q=k\,u\cdot F(u)=k\,a_F(u),\qquad Y_G(\nu_k(u))=D\nu_k(u)Y_F(u).
\end{equation}
Since $G$ is homogeneous of degree three, $G(-y)=-G(y)$; so $a_G$ is even and $Y_G$ is odd. Moreover $Y_G$ is tangent to $\Sigma$ in the following sense:
\begin{equation}\label{eq:E7}
Dq_j(p)\,Y_G(p)=0\qquad p\in\Sigma,\ j=1,\dots,s.
\end{equation}
Indeed, for $u\in S^{n-1}$, differentiating $q_j(\nu_k(x))=0$ at $x=u$ in the direction $Y_F(u)$ and using \eqref{eq:aGnu} gives $Dq_j(\nu_k(u))Y_G(\nu_k(u))=0$; since $Dq_j(-y)=-Dq_j(y)$ and $Y_G$ is odd, the left side of \eqref{eq:E7} takes the same value at $-p$ as at $p$, and \eqref{eq:E7} follows from Lemma~\ref{lem:cone}.

The next two lemmas provide, together with the constants $\eta$ and $c$ of Lemma~\ref{lem:gradineq}, the computable constants from which we choose $K$ in \eqref{eq:Kchoice} below. The first of them uses \eqref{eq:E7}: since $q_j$ and $Dq_j\,Y_G$ both vanish on $\Sigma$, the derivative of $W=\sum_jq_j^2$ along $Y_G$ is bounded near $\Sigma$ by a multiple of $W$.

\begin{lemma}\label{lem:tangency}
For the rational $\eta$ of Lemma~\ref{lem:gradineq} one can compute rationals $C_G,B_G>0$ such that
\begin{align}
Q(y)=1,\ W(y)\le\eta&\implies|Y_GW(y)|\le C_G\,W(y),\label{eq:tangency}\\
Q(y)=1&\implies|a_G(y)|\le B_G,\label{eq:AF}
\end{align}
where $Y_GW(y)=DW(y)Y_G(y)$.
\end{lemma}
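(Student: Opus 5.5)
The bound \eqref{eq:AF} is routine, so I will do it first. On $S_Q$ we have $|y|\le\|y\|_Q=1$, and $a_G(y)=\langle y,G(y)\rangle_Q=y^\top MG(y)$ is a homogeneous quartic with rational coefficients. For a monomial of degree four, $|y^\gamma|\le1$ when $|y|\le1$. So the sum of the absolute values of the coefficients of $a_G$ is a rational $B_G$ that works, and it is computable from the coefficients of $F$ through those of $G$.

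For \eqref{eq:tangency} I will first show that some constant exists, using \eqref{eq:E7} and Lemma~\ref{lem:dist}, and then compute one by a Tarski search as at the end of the proof of Lemma~\ref{lem:gradineq}.

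Write $Y_GW(y)=2\sum_jq_j(y)\,Dq_j(y)Y_G(y)$. Fix $y\in S_Q$ with $W(y)\le\eta$ and let $p\in\Sigma$ be a nearest point, so $\|y-p\|_Q=d(y)$. The function $\phi_j(y):=Dq_j(y)Y_G(y)$ is a polynomial that vanishes at $p$ by \eqref{eq:E7}. Hence $|\phi_j(y)|\le L'\,d(y)$, where $L'$ is a Lipschitz bound for the $\phi_j$ on the compact set $\{\|y\|_Q\le1\}$; this set contains the segment $[p,y]$ by convexity. Cauchy--Schwarz then gives
\[
|Y_GW(y)|\le2\Bigl(\sum_jq_j(y)^2\Bigr)^{1/2}\Bigl(\sum_j\phi_j(y)^2\Bigr)^{1/2}\le2\sqrt{s}\,L'\,W(y)^{1/2}d(y).
\]
Lemma~\ref{lem:dist} gives $d(y)\le(W(y)/c_1)^{1/2}$, so $|Y_GW(y)|\le 2\sqrt{s}L'c_1^{-1/2}\,W(y)$. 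Thus some constant $C_0$ works, and every rational $C_G\ge C_0$ works as well.

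Since $c_1$ is only known to exist, I will not use this $C_0$ directly. Instead I will enumerate positive rationals $C=1,2,3,\dots$ and decide, with Theorem~\ref{thm:tarski}, the sentence
\[
\forall y\ \ \bigl(Q(y)=1\wedge W(y)\le\eta\bigr)\implies\bigl(Y_GW(y)\le C\,W(y)\wedge -Y_GW(y)\le C\,W(y)\bigr),
\]
which has rational coefficients computable from $F$ and from $\eta$. I stop at the first true sentence and set $C_G$ to that $C$. The existence argument guarantees that the search terminates.

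The main obstacle is making sure the linear-in-$d$ bound on $\phi_j$ is valid, which needs \eqref{eq:E7} at \emph{every} point of $\Sigma$, including $-\nu_k(u)$ when $k$ is even. This is exactly what the oddness remark preceding the lemma provides. Once that is in place, the remaining issue is computability, which the Tarski search handles.
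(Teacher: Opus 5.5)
Your proposal is correct and follows the paper's proof: the vanishing of $q_j$ and $Dq_j\,Y_G$ on $\Sigma$, combined with Lemma~\ref{lem:dist}, shows that some constant exists, and a Tarski search over rationals then computes one (bounding $\sum_j q_j^2$ by $W$ via Cauchy--Schwarz, instead of using a Lipschitz bound on each $q_j$, is an inessential variation). For $B_G$ you replace the paper's Tarski search by the explicit coefficient-sum bound, which is fine, but you should take that sum plus one, since it is $0$ when $a_G\equiv0$ and the lemma requires $B_G>0$.
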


\begin{proof}
Let $c_1$ and $d$ be as in Lemma~\ref{lem:dist}, and put $\psi_j(y)=Dq_j(y)Y_G(y)$. Since $q_j$ and $\psi_j$ are polynomials, their derivatives are bounded on the open convex set $\{Q<2\}$, so the mean value inequality \cite[Theorem~9.19]{Rudin} on this set and the equivalence of the norms $|\cdot|$ and $\|\cdot\|_Q$ give $L'>0$ with $|q_j(y)-q_j(y')|\le L'\|y-y'\|_Q$ and $|\psi_j(y)-\psi_j(y')|\le L'\|y-y'\|_Q$ for $y,y'\in\{Q\le1\}$. For $y\in S_Q$ choose $p\in\Sigma$ with $\|y-p\|_Q=d(y)$. As $q_j(p)=0$ and $\psi_j(p)=0$ by \eqref{eq:E7}, the identity $Y_GW=2\sum_jq_j\psi_j$ and Lemma~\ref{lem:dist} give
\[
|Y_GW(y)|\le2sL'^2d(y)^2\le\frac{2sL'^2}{c_1}W(y),\qquad y\in S_Q.
\]
Thus \eqref{eq:tangency} holds for some real constant, and then for every larger rational. To compute one, enumerate positive rationals $C$ and decide by Theorem~\ref{thm:tarski} the sentence
\begin{equation}\label{eq:QEtan}
\forall y\ \ \bigl(Q(y)=1\wedge W(y)\le\eta\bigr)\implies\bigl(Y_GW(y)\bigr)^2\le C^2W(y)^2;
\end{equation}
the search terminates. Similarly, $a_G$ is bounded on the compact set $S_Q$, and a rational $B_G$ is found by deciding $\forall y\,(Q(y)=1\Rightarrow a_G(y)^2\le B^2)$ for $B=1,2,\dots$.
\end{proof}

\begin{lemma}\label{lem:lipschitz}
One can compute a rational $L_G>0$ such that
\begin{equation}\label{eq:LG}
\|Y_G(y)-Y_G(y')\|_Q\le L_G\|y-y'\|_Q,\qquad|a_G(y)-a_G(y')|\le L_G\|y-y'\|_Q
\end{equation}
whenever $Q(y)\le1$ and $Q(y')\le1$. Moreover the flow $\Phi_{Y_G}|_{S_Q}$ is defined for all $t\in\RR$.
\end{lemma}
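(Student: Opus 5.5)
The plan is to establish the Lipschitz bounds on the closed convex set $\{Q\le1\}$ by a mean value argument, and then to make the constant computable by a Tarski search, as in Lemmas~\ref{lem:gradineq} and~\ref{lem:tangency}. The last assertion about the flow needs no new work.

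Both $Y_G(y)=G(y)-a_G(y)y$ and $a_G(y)=\langle y,G(y)\rangle_Q$ are polynomial maps on $E$ with rational coefficients. Their derivatives $DY_G$ and $Da_G$ are therefore continuous, and hence bounded on the compact convex set $\{Q\le1\}$. The mean value inequality \cite[Theorem~9.19]{Rudin} on this convex set gives a real constant $L$ such that \eqref{eq:LG} holds whenever $Q(y),Q(y')\le1$. Here we measure the operator norm of $DY_G(z)$ from $\|\cdot\|_Q$ to $\|\cdot\|_Q$, and the norm of $Da_G(z)$ as a functional in the $Q$-dual norm. Equivalently, by convexity it suffices to bound these operator norms pointwise on $\{Q\le1\}$. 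The same inequality then holds for every rational $L_G\ge L$.

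To compute such a rational, I will avoid estimating operator norms explicitly. Instead, for $L=1,2,\dots$, I enumerate the sentences
\[
\forall y\,\forall y'\ \Bigl(Q(y)\le1\wedge Q(y')\le1\Bigr)\implies\Bigl(Q\bigl(Y_G(y)-Y_G(y')\bigr)\le L^2Q(y-y')\wedge\bigl(a_G(y)-a_G(y')\bigr)^2\le L^2Q(y-y')\Bigr).
\]
Each of these is a first-order sentence over $\RR$ with rational coefficients, and it is computable from the coefficients of $G$ and hence of $F$. Squaring is harmless because both sides are nonnegative. I decide each sentence by Theorem~\ref{thm:tarski} and stop at the first true one. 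The existence argument in the previous step guarantees that the search terminates.

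For the final claim, the flow statement is already contained in Section~\ref{subsec:polar}. There it is shown that $S_Q$ is invariant under $Y_P$ for any homogeneous $P$ of degree $D>1$, and here we take $P=G$ with $D=3$. Since $S_Q$ is compact, Section~\ref{subsec:ode} then shows that the restricted flow $\Phi_{Y_G}|_{S_Q}$ exists for all $t\in\RR$. There is no real obstacle in this lemma. The only point needing care is to phrase computability through a decidable sentence, rather than through an explicit bound, so that no operator-norm estimate has to be carried out.
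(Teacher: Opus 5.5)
Your proposal is correct and follows the paper's proof essentially step for step: a mean value bound gives a real Lipschitz constant, the same Tarski sentence is then searched over rational (for you, integer) values of $L$, and the flow claim follows from the invariance of $S_Q$ shown in Section~\ref{subsec:polar} together with compactness. The only difference is cosmetic: \cite[Theorem~9.19]{Rudin} is stated for open convex sets, so the paper applies it on $\{Q<2\}$ rather than on the closed set $\{Q\le1\}$; working on the closed set is also harmless, because your polynomials are differentiable on all of $E$ and the segment joining two points of $\{Q\le1\}$ stays in that set.
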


\begin{proof}
$Y_G$ and $a_G$ are polynomial maps, so, as in the proof of Lemma~\ref{lem:tangency}, the mean value inequality on $\{Q<2\}$ gives a real constant with \eqref{eq:LG}, and then every larger rational works; a rational one is found by enumerating rationals $L$ and deciding
\[
\begin{aligned}
\forall y,y'\ \ \bigl(Q(y)\le1\wedge Q(y')\le1\bigr)\implies\Bigl(&Q\bigl(Y_G(y)-Y_G(y')\bigr)\le L^2Q(y-y')\\
&\wedge\bigl(a_G(y)-a_G(y')\bigr)^2\le L^2Q(y-y')\Bigr).
\end{aligned}
\]
By Section~\ref{sec:angular}, the compact set $S_Q$ is invariant under $Y_G$, so $\Phi_{Y_G}|_{S_Q}$ is a flow on $S_Q$ (Section~\ref{subsec:ode}).
\end{proof}

Choose a positive rational $K$ (depending on $F$) such that
\begin{equation}\label{eq:Kchoice}
Kc\ge C_G+2L_G+1,\qquad4K\eta>B_G+1,
\end{equation}
and define
\begin{equation}\label{eq:H}
H(y)=G(y)-\tilde G(y),\qquad\tilde G(y)=K\gradQ W(y).
\end{equation}
Because $W$ is homogeneous of degree four, the components of $\gradQ W$ are homogeneous of degree three, so $H$ is a homogeneous polynomial vector field of degree three with rational coefficients. No test of whether $F$ is GAS or Lyapunov stable enters the choice of $K$. Since $DW(y)\gradQ W(y)=\|\gradQ W(y)\|_Q^2$, the term $-K\gradQ W$ decreases $W$, that is, it pulls points towards the cone $V=\{W=0\}$ (see Figure~\ref{fig:cone} for an example). By the first condition in \eqref{eq:Kchoice}, $W$ decays along the angular flow of $H$ near $\Sigma$ at the rate $\lambda=Kc-C_G\ge2L_G+1$ (Lemma~\ref{lem:tube}); the GAS part of the proof uses only $\lambda>0$, and the lower bound $2L_G+1$ is used only in Proposition~\ref{prop:shadow}. By the second condition, $a_H<-1$ at the points of $S_Q$ where $W>\eta$ (Lemma~\ref{lem:tube}). On $V$ all $q_j$ vanish, hence $\gradQ W=2\sum_jq_j\gradQ q_j=0$ and $H=G$ there. Thus $-\tilde G=-K\gradQ W$ is the ``penalty term'' of Section~\ref{subsec:overview1}: properties~1, 2 and~4 listed there hold by the remarks in this paragraph (the choice of $K$ is effective, see Section~\ref{subsec:proofcubic}), and we make property~3 precise in Lemma~\ref{lem:tube} and Proposition~\ref{prop:shadow} below. Adding a term which makes an invariant set attracting is also the idea behind the stabilization of constraints and integrals of motion in the numerical integration of differential equations~\cite{Baumgarte}.
Since $\langle y,\gradQ W(y)\rangle_Q=4W(y)$ by Euler's identity, the definitions \eqref{eq:aY} and \eqref{eq:gradS} give
\begin{equation}\label{eq:aH}
a_H=a_G-4KW,\qquad Y_H=Y_G-K\gradS W\ \text{ on }S_Q,\qquad Y_HW=Y_GW-Kg,
\end{equation}
where the last identity holds on $S_Q$ because $DW(y)v=\langle\gradQ W(y),v\rangle_Q$ for $v\in E$ and, for $y\in S_Q$, the vector $\gradS W(y)$ is $Q$-orthogonal to $y$, so that $\langle\gradQ W(y),\gradS W(y)\rangle_Q$ equals $\|\gradS W(y)\|_Q^2=g(y)$.

As in Section~\ref{sec:angular}, we henceforth restrict the domains of $Y_F$ and $Y_H$ to $S^{n-1}$ and $S_Q$, respectively. The angular flows of Section~\ref{sec:angular} for $F$, with the Euclidean form on $\RR^n$, and for $H$, with $Q$ on $E$, are then $\Phi_{Y_F}$ and $\Phi_{Y_H}$. We do not restrict the domain of $Y_G$, although we do so for $Y_F$ and $Y_H$: Lemmas~\ref{lem:tangency} and~\ref{lem:lipschitz} use $Y_G$ on the whole ball $\{Q\le1\}$, and we only need that $S_Q$ is invariant under its flow, so the flow of $Y_G$ on $S_Q$ is written $\Phi_{Y_G}|_{S_Q}$. These flows are defined for all $t\in\RR$ (Section~\ref{subsec:ode}); their time $t$ is the angular time of \eqref{eq:polar}, not the time of $\dot x=F(x)$. As in \eqref{eq:cocycle}, $A_F(u,\tau)=\int_0^\tau a_F(\Phi_{Y_F}^tu)\,dt$ and $A_H(y,\tau)=\int_0^\tau a_H(\Phi_{Y_H}^ty)\,dt$, with $M_F,M_F^*$ and $M_H,M_H^*$ defined accordingly.

\begin{lemma}\label{lem:conedyn}
For $u\in S^{n-1}$, $\sigma\in\{-1,1\}$ and $t\in\RR$,
\begin{equation}\label{eq:E8}
\Phi_{Y_H}^t(\sigma\nu_k(u))=\Phi_{Y_G}^t(\sigma\nu_k(u))=\sigma\nu_k(\Phi_{Y_F}^t(u)),\qquad a_H(\sigma\nu_k(u))=k\,a_F(u).
\end{equation}
Consequently $\Phi_{Y_H}^t(\Sigma)=\Sigma$ for all $t\in\RR$, and
\begin{equation}\label{eq:AHcone}
A_H(\sigma\nu_k(u),\tau)=k\,A_F(u,\tau),\qquad\max_{p\in\Sigma}A_H(p,\tau)=k\,M_F(\tau),\qquad\tau\ge0.
\end{equation}
\end{lemma}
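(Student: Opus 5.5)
The plan is to exhibit an explicit solution and then appeal to uniqueness. Fix $u\in S^{n-1}$ and $\sigma\in\{-1,1\}$, and put $z(t)=\sigma\nu_k(\Phi_{Y_F}^t(u))$ for $t\in\RR$. This curve is defined for all $t$, since $\Phi_{Y_F}$ is a flow on the compact set $S^{n-1}$. It lies in $\Sigma\subseteq S_Q$ by Lemma~\ref{lem:cone}, because $\Phi_{Y_F}^t(u)\in S^{n-1}$.

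First I check that $z$ solves $\dot z=Y_G(z)$. By the chain rule, $\dot z(t)=\sigma D\nu_k(v)Y_F(v)$ with $v=\Phi_{Y_F}^t(u)$. By \eqref{eq:aGnu} this equals $\sigma Y_G(\nu_k(v))$, and since $Y_G$ is odd, this is $Y_G(\sigma\nu_k(v))=Y_G(z(t))$. Uniqueness of solutions of the polynomial equation $\dot y=Y_G(y)$ then gives $\Phi_{Y_G}^t(\sigma\nu_k(u))=z(t)$.

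Next I check that $z$ also solves $\dot z=Y_H(z)$. By \eqref{eq:aH}, $Y_H=Y_G-K\gradS W$ on $S_Q$, and $\gradS W=\gradQ W-4Wy$. On $V$ every $q_j$ vanishes, so $W=0$ and $\gradQ W=2\sum_jq_j\gradQ q_j=0$ there. Hence $Y_H=Y_G$ on $\Sigma$, and $z$ solves $\dot z=Y_H(z)$ with values in $S_Q$. Uniqueness for $Y_H$ gives $\Phi_{Y_H}^t(\sigma\nu_k(u))=z(t)$.

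For the radial rates, \eqref{eq:aH} gives $a_H=a_G-4KW=a_G$ on $\Sigma$. Moreover $a_G$ is even, so $a_G(\sigma\nu_k(u))=a_G(\nu_k(u))=k\,a_F(u)$ by \eqref{eq:aGnu}. This proves \eqref{eq:E8}.

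For the consequences: by \eqref{eq:E8} and Lemma~\ref{lem:cone}, $\Phi_{Y_H}^t$ maps $\Sigma$ into $\Sigma$ for every $t$. Applying this also with $-t$ and using the group property gives $\Phi_{Y_H}^t(\Sigma)=\Sigma$. Integrating $a_H(\Phi_{Y_H}^t(\sigma\nu_k(u)))=k\,a_F(\Phi_{Y_F}^tu)$ over $[0,\tau]$ yields $A_H(\sigma\nu_k(u),\tau)=k\,A_F(u,\tau)$. Taking the maximum over $p=\sigma\nu_k(u)\in\Sigma$, which by Lemma~\ref{lem:cone} ranges over $\sigma\in\{\pm1\}$ and $u\in S^{n-1}$, gives $\max_{p\in\Sigma}A_H(p,\tau)=k\max_uA_F(u,\tau)=k\,M_F(\tau)$, since $k>0$.

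There is no real obstacle here. The only points needing care are the sign bookkeeping for $\sigma=-1$ (oddness of $Y_G$, evenness of $a_G$) and checking that the penalty term vanishes on $\Sigma$ in both the $Y_H$ and the $a_H$ components.
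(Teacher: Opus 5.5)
Your proposal is correct and follows essentially the same route as the paper: you show that $\sigma\nu_k(\Phi_{Y_F}^t(u))$ solves both $\dot y=Y_G(y)$ and $\dot y=Y_H(y)$, using \eqref{eq:aGnu}, the oddness of $Y_G$ and the vanishing of the penalty term on $V$, and conclude by uniqueness; the radial identity comes from $a_H=a_G-4KW$ and the evenness of $a_G$. Your extra step of applying the inclusion with $-t$ to obtain $\Phi_{Y_H}^t(\Sigma)=\Sigma$ only makes explicit what the paper derives directly from Lemma~\ref{lem:cone}.
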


\begin{proof}
On $V$ we have $W=0$ and $H=G$, so $a_H=a_G$ and $Y_H=Y_G$ on $\Sigma$ by \eqref{eq:aH}. For $u\in S^{n-1}$ and $\sigma=\pm1$, the curve $c(t)=\sigma\nu_k(\Phi_{Y_F}^t(u))$ lies in $\Sigma$ and, by \eqref{eq:aGnu} and the oddness of $Y_G$, satisfies $c'(t)=\sigma Y_G(\nu_k(\Phi_{Y_F}^tu))=Y_G(c(t))=Y_H(c(t))$. Since $c(0)=\sigma\nu_k(u)$ and $Y_G,Y_H$ are polynomial, hence locally Lipschitz, uniqueness of solutions (Section~\ref{subsec:ode}) gives the first identity in \eqref{eq:E8}, and \eqref{eq:aGnu} with the evenness of $a_G$ gives the second. By Lemma~\ref{lem:cone} every point of $\Sigma$ has the form $\sigma\nu_k(u)$, so $\Phi_{Y_H}^t(\Sigma)=\Sigma$. Integrating the second identity in \eqref{eq:E8} along the first gives \eqref{eq:AHcone}.
\end{proof}

\begin{lemma}\label{lem:tube}
Let $\lambda=Kc-C_G$, so that $\lambda\ge2L_G+1$ by \eqref{eq:Kchoice}, and let $\cT=\{y\in S_Q:W(y)\le\eta\}$. Then $a_H<-1$ on $S_Q\setminus\cT$. Moreover $\cT$ is forward invariant under $\Phi_{Y_H}$, and $W(\Phi_{Y_H}^ty)\le W(y)e^{-\lambda t}$ for $y\in\cT$ and $t\ge0$.
\end{lemma}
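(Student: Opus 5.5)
The plan is to prove the three assertions in turn; each is a short computation from \eqref{eq:aH}, \eqref{eq:Kchoice} and Lemmas~\ref{lem:gradineq} and~\ref{lem:tangency}.

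\emph{The bound $a_H<-1$ off $\cT$.} For $y\in S_Q\setminus\cT$ we have $W(y)>\eta$. By \eqref{eq:aH} and \eqref{eq:AF},
\[
a_H(y)=a_G(y)-4KW(y)\le B_G-4K\eta.
\]
The second condition in \eqref{eq:Kchoice} then gives $a_H(y)<B_G-(B_G+1)=-1$.

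\emph{The decay estimate inside $\cT$.} For $y\in\cT$, the last identity in \eqref{eq:aH}, together with \eqref{eq:tangency} and \eqref{eq:gradineq}, gives
\[
Y_HW(y)=Y_GW(y)-Kg(y)\le C_GW(y)-KcW(y)=-\lambda W(y).
\]
This is a differential inequality for $w(t)=W(\Phi_{Y_H}^ty)$, valid whenever $\Phi_{Y_H}^ty\in\cT$. Since $\lambda\ge2L_G+1>0$, we get $\dot w\le-\lambda w\le0$ as long as the trajectory stays in $\cT$.

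\emph{Forward invariance.} This is the only step needing care, because the differential inequality holds only inside $\cT$, so it cannot be applied before we know the trajectory remains there. I would argue by contradiction. Suppose $y\in\cT$ and the trajectory leaves $\cT$. Let
\[
t_0=\sup\{t\ge0:\Phi_{Y_H}^sy\in\cT\text{ for all }s\in[0,t]\}<\infty.
\]
By continuity, $\Phi_{Y_H}^{t_0}y\in\cT$. On $[0,t_0]$ the inequality $\dot w\le-\lambda w$ holds, so Gronwall gives $w(t_0)\le W(y)e^{-\lambda t_0}\le\eta$.

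I split into two cases.
\begin{itemize}
\item If $w(t_0)<\eta$, then continuity of $w$ keeps $w\le\eta$ on a neighbourhood of $t_0$, contradicting the definition of $t_0$.
\item If $w(t_0)=\eta$, the point $p=\Phi_{Y_H}^{t_0}y$ lies in $\cT$ with $W(p)=\eta>0$. The inequality above gives $\dot w(t_0)=Y_HW(p)\le-\lambda\eta<0$. Hence $w<\eta$ on some interval $(t_0,t_0+\epsilon)$, again contradicting the definition of $t_0$.
\end{itemize}

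So the trajectory never leaves $\cT$, which proves forward invariance. Applying Gronwall's inequality (Section~\ref{subsec:ode}) on every $[0,T]$ then yields $W(\Phi_{Y_H}^ty)\le W(y)e^{-\lambda t}$ for all $t\ge0$.
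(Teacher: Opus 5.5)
Your proof is correct and follows the paper's argument: $a_H\le B_G-4K\eta<-1$ off $\mathcal T$ by \eqref{eq:AF}, \eqref{eq:Kchoice} and \eqref{eq:aH}, and $Y_HW\le(C_G-Kc)W=-\lambda W$ on $\mathcal T$ by Lemmas~\ref{lem:gradineq} and~\ref{lem:tangency}, followed by Gronwall. The paper only asserts that this inequality, valid while $w\le\eta$, implies forward invariance; your supremum argument with the two cases $w(t_0)<\eta$ and $w(t_0)=\eta$ correctly fills in that step.
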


\begin{proof}
On $\cT$, Lemmas~\ref{lem:gradineq} and~\ref{lem:tangency} and \eqref{eq:aH} give
\begin{equation}\label{eq:tubedecay}
Y_HW\le C_GW-KcW=-\lambda W.
\end{equation}
Hence the function $w(t)=W(\Phi_{Y_H}^ty)$ satisfies $w'\le-\lambda w$ as long as $w(t)\le\eta$; this implies that $\cT$ is forward invariant and, by Gronwall's inequality, that $w(t)\le w(0)e^{-\lambda t}$. If $y\in S_Q\setminus\cT$, then $W(y)>\eta$, and \eqref{eq:AF}, \eqref{eq:Kchoice} and \eqref{eq:aH} give $a_H(y)\le B_G-4K\eta<-1$.
\end{proof}

\subsection{Invariant measures on and off the cone}\label{subsec:dichotomy}

By Lemma~\ref{lem:gascrit}, the GAS part of Theorem~\ref{thm:cubicization} amounts to showing that $\beta(\Phi_{Y_H},a_H)<0$ if and only if $\beta(\Phi_{Y_F},a_F)<0$. The next lemma shows that every ergodic invariant measure of the angular flow of $H$ either lives on $\Sigma$, where this flow is a copy of the angular flow of $F$ (Lemma~\ref{lem:conedyn}), or gives $a_H$ an average of at most $-1$; Lemma~\ref{lem:coneavg} turns this into a comparison of $\beta(\Phi_{Y_H},a_H)$ with $\beta(\Phi_{Y_F},a_F)$ (Figure~\ref{fig:bridge}). Together, these two lemmas make precise the two steps \eqref{eq:Sigma_to_SH_int} and \eqref{eq:Sigma_to_SF_int} of the outline in Section~\ref{subsec:overview1}, in which $\Sigma$ serves as a ``bridge'' between the two angular flows.

\input{fig_veronese_bridge}

\begin{lemma}\label{lem:dichotomy}
Let $\mu$ be an ergodic invariant probability measure for $\Phi_{Y_H}$. Then either $\mu(\Sigma)=1$, or $\int a_H\,d\mu\le-1$.
\end{lemma}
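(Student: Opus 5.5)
The plan is to split according to whether $\mu$ gives positive mass to the tube $\cT=\{y\in S_Q:W(y)\le\eta\}$ of Lemma~\ref{lem:tube}, and in each case to use invariance and ergodicity of $\mu$ together with the two properties recorded in that lemma: $a_H<-1$ off $\cT$, and exponential decay of $W$ inside $\cT$.

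\emph{Case 1: $\mu(\cT)=0$.} Then $\mu(S_Q\setminus\cT)=1$, and since $a_H<-1$ pointwise there, $\int a_H\,d\mu\le-1$ immediately.

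\emph{Case 2: $\mu(\cT)>0$.} I will show $\mu(\Sigma)=1$.

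\emph{Step 1: $\mu(\cT)=1$.} By Lemma~\ref{lem:tube}, $\cT$ is forward invariant, so $\Phi_{Y_H}^{t}(\cT)\subseteq\cT$ for $t\ge0$. Equivalently, $\cT\subseteq(\Phi_{Y_H}^{t})^{-1}(\cT)$. Both sets have the same $\mu$-measure by invariance, so their symmetric difference is $\mu$-null for every $t\ge0$. For $t<0$, apply the same inclusion to $\Phi_{Y_H}^{-t}$ and pull back by $\Phi_{Y_H}^{t}$, using invariance again. Ergodicity then gives $\mu(\cT)\in\{0,1\}$, hence $\mu(\cT)=1$.

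\emph{Step 2: $\mu(\Sigma)=1$.} For $y\in\cT$ and $t\ge0$, Lemma~\ref{lem:tube} gives $W(\Phi_{Y_H}^{t}y)\le\eta e^{-\lambda t}$ with $\lambda>0$. Using invariance and $\mu(\cT)=1$,
\[
\int W\,d\mu=\int W\circ\Phi_{Y_H}^{t}\,d\mu\le\eta e^{-\lambda t}\to0 .
\]
Since $W\ge0$, we get $\int W\,d\mu=0$, so $W=0$ $\mu$-a.e. The zero set of $W$ on $S_Q$ is $V\cap S_Q=\Sigma$, so $\mu(\Sigma)=1$.

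An alternative for Step 2 uses Poincar\'e recurrence (E4): a recurrent point $y\in\cT$ satisfies $W(y)=\lim_k W(\Phi_{Y_H}^{t_k}y)=0$. The integral argument is cleaner, and I would use it.

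The only subtle point I anticipate is the ergodicity step. The definition of ergodicity in Section~\ref{subsec:erg} asks for almost invariance under all $t\in\RR$, whereas $\cT$ is only forward invariant. Handling this via the measure-equality trick above settles it.
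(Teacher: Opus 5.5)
Your proof is correct and follows the paper's argument. The paper uses the same forward-invariance and equal-measure trick to get $\mu((\Phi_{Y_H}^s)^{-1}(\cT)\,\triangle\,\cT)=0$ for all $s\in\RR$, hence $\mu(\cT)\in\{0,1\}$, and treats $\mu(\cT)=0$ exactly as you do. The only difference is in the case $\mu(\cT)=1$: the paper combines \ref{E:deriv} with $Y_HW\le-\lambda W$ on $\cT$ to obtain $0=\int Y_HW\,d\mu\le-\lambda\int W\,d\mu$, whereas you use the integrated decay $W(\Phi_{Y_H}^ty)\le\eta e^{-\lambda t}$ together with invariance of $\mu$, which is an equally valid and equally short route.
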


\begin{proof}
The set $\cT$ is closed, and for $t\ge0$ it satisfies $\Phi_{Y_H}^t(\cT)\subseteq\cT$ by Lemma~\ref{lem:tube}, equivalently $\cT\subseteq(\Phi_{Y_H}^t)^{-1}(\cT)$. The sets $\Phi_{Y_H}^t(\cT)=(\Phi_{Y_H}^{-t})^{-1}(\cT)$ and $(\Phi_{Y_H}^t)^{-1}(\cT)$ have measure $\mu(\cT)$ by invariance of $\mu$, so $\mu(\cT\setminus\Phi_{Y_H}^t(\cT))=0$ and $\mu((\Phi_{Y_H}^t)^{-1}(\cT)\setminus\cT)=0$. These two sets are the symmetric differences $(\Phi_{Y_H}^s)^{-1}(\cT)\,\triangle\,\cT$ for $s=-t$ and for $s=t$. Hence $\mu((\Phi_{Y_H}^s)^{-1}(\cT)\,\triangle\,\cT)=0$ for every $s\in\RR$, and $\mu(\cT)\in\{0,1\}$ by ergodicity (Section~\ref{subsec:erg}). If $\mu(\cT)=1$, then \ref{E:deriv} and \eqref{eq:tubedecay} give $0=\int Y_HW\,d\mu\le-\lambda\int W\,d\mu$, so $W=0$ $\mu$-almost everywhere and $\mu(\Sigma)=1$. If $\mu(\cT)=0$, then $a_H<-1$ $\mu$-almost everywhere by Lemma~\ref{lem:tube}.
\end{proof}

\begin{lemma}\label{lem:coneavg}
Let $\beta_F=\beta(\Phi_{Y_F},a_F)$ and $\beta_H=\beta(\Phi_{Y_H},a_H)$. The largest $a_H$-average among measures in $\cM(\Phi_{Y_H})$ supported on $\Sigma$ is $k\beta_F$. Consequently
\begin{equation}\label{eq:sandwich}
k\beta_F\le\beta_H\le\max\{k\beta_F,-1\}.
\end{equation}
\end{lemma}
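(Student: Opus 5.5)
I would prove the first claim by transporting invariant measures between $\Phi_{Y_F}$ on $S^{n-1}$ and $\Phi_{Y_H}|_\Sigma$ using Lemma~\ref{lem:conedyn}. Put $\Sigma_{\max}=\max\{\int a_H\,d\mu:\mu\in\cM(\Phi_{Y_H}),\ \mu(\Sigma)=1\}$. This maximum exists because the set of such measures is a closed, and hence weak-star compact, subset of $\cM(\Phi_{Y_H})$; $\Sigma$ is closed, so $\mu\mapsto\mu(\Sigma)$ is upper semicontinuous. The set is also nonempty, since it contains the push-forwards constructed next.

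For the inequality $\Sigma_{\max}\ge k\beta_F$, take $\nu\in\cM(\Phi_{Y_F})$ attaining $\beta_F$ and push it forward by the continuous map $\pi:u\mapsto\nu_k(u)$ from $S^{n-1}$ into $\Sigma\subseteq S_Q$. By \eqref{eq:E8}, $\pi\circ\Phi_{Y_F}^t=\Phi_{Y_H}^t\circ\pi$. Hence $\pi_*\nu$ is $\Phi_{Y_H}$-invariant by \ref{E:factor}, applied with $\Phi_{Y_H}$ on $S_Q$ as the target flow. It is supported on $\Sigma$, and it satisfies $\int a_H\,d\pi_*\nu=\int a_H(\nu_k(u))\,d\nu(u)=k\int a_F\,d\nu=k\beta_F$ by \eqref{eq:E8}.

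For the reverse inequality I would avoid lifting measures through the possibly two-to-one map $\{\pm1\}\times S^{n-1}\to\Sigma$, and use Lemma~\ref{lem:blocks} instead. Apply that lemma to the compact invariant set $\Sigma$ with the restricted flow $\Phi_{Y_H}|_\Sigma$, whose invariant measures are exactly the $\mu\in\cM(\Phi_{Y_H})$ with $\mu(\Sigma)=1$. By \eqref{eq:AHcone},
\[
\Sigma_{\max}=\lim_{\tau\to\infty}\frac{\max_{p\in\Sigma}A_H(p,\tau)}{\tau}=\lim_{\tau\to\infty}\frac{k\,M_F(\tau)}{\tau}=k\beta_F,
\]
where the last equality is Lemma~\ref{lem:blocks} applied to $\Phi_{Y_F}$. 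This argument already gives both inequalities, so the push-forward construction only serves as intuition. The one point to check is the identification of the invariant measures of the restricted flow with measures in $\cM(\Phi_{Y_H})$ concentrated on $\Sigma$: restrict such a measure, or conversely extend it by zero; invariance holds because $\Phi_{Y_H}^t(\Sigma)=\Sigma$.

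For \eqref{eq:sandwich}, the lower bound $\beta_H\ge\Sigma_{\max}=k\beta_F$ is immediate, since the maximum defining $\beta_H$ runs over a larger set. For the upper bound, use the ergodic decomposition \ref{E:decomp} with $c=\max\{k\beta_F,-1\}$. By Lemma~\ref{lem:dichotomy}, every ergodic $\mu\in\cM(\Phi_{Y_H})$ either satisfies $\mu(\Sigma)=1$, in which case $\int a_H\,d\mu\le\Sigma_{\max}=k\beta_F$, or satisfies $\int a_H\,d\mu\le-1$. So every ergodic average is at most $c$, hence every invariant average is at most $c$, and $\beta_H\le c$. I expect the only delicate step to be the measure-theoretic bookkeeping in the restriction argument; everything else follows directly from earlier lemmas.
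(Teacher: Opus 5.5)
Your proposal is correct and matches the paper's proof: Lemma~\ref{lem:blocks} applied to $\Phi_{Y_H}|_\Sigma$ and to $\Phi_{Y_F}$, combined through \eqref{eq:AHcone}, identifies the largest average over invariant measures on $\Sigma$ as $k\beta_F$, and the upper bound comes from \ref{E:decomp} together with Lemma~\ref{lem:dichotomy}. The push-forward of a maximizing measure by $\nu_k$ that you add is a valid but redundant second proof of the lower bound.
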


\begin{proof}
By Lemma~\ref{lem:conedyn}, $\Sigma$ is a compact invariant set and $\max_{p\in\Sigma}A_H(p,T)=kM_F(T)$ for every $T>0$. The measures in $\cM(\Phi_{Y_H})$ supported on $\Sigma$ are exactly the invariant probability measures of the restricted flow $\Phi_{Y_H}|_\Sigma$. Dividing by $T$ and letting $T\to\infty$, Lemma~\ref{lem:blocks} applied to $(\Phi_{Y_H}|_\Sigma,a_H|_\Sigma)$ and to $(\Phi_{Y_F},a_F)$ shows that the largest such average is $k\beta_F$. This gives the lower bound in \eqref{eq:sandwich}, and \ref{E:decomp} with Lemma~\ref{lem:dichotomy} gives the upper bound.
\end{proof}

The upper bound in \eqref{eq:sandwich} need not be an equality, since there need not be an invariant measure off the cone with average exactly $-1$; the two bounds in \eqref{eq:sandwich} suffice for the sign equivalence.

\subsection{Finite-horizon shadowing}\label{subsec:shadow}

For Lyapunov stability we need a statement about individual trajectories: by Lemma~\ref{lem:lyap} we must bound the integrals $A_H(y,\tau)$ uniformly in $y\in S_Q$ and $\tau\ge0$, and by Remark~\ref{rem:compare} invariant measures do not give such a bound. By Lemma~\ref{lem:tube}, an angular trajectory of $H$ in $\cT$ is an approximate solution of $\dot y=Y_G(y)$, with an error that decays like $e^{-\lambda t/2}$, whereas two solutions of $\dot y=Y_G(y)$ separate at most at the rate $L_G<\lambda/2$. The next proposition deduces that on each finite time interval $[0,\tau]$ such a trajectory stays close to a trajectory on $\Sigma$, and that its radial integral exceeds that of the trajectory on $\Sigma$ by at most a constant. This is how we use property~3 of the penalty term (Section~\ref{subsec:overview1}) for Lyapunov stability. Proposition~\ref{prop:shadow} is a quantitative, finite-horizon analogue of the classical fact that the trajectories near a normally attracting invariant manifold have an ``asymptotic phase'', that is, each of them is asymptotic to a trajectory on the manifold (see~\cite[Theorem~5]{Fenichel74}, which is stated for diffeomorphisms, and~\cite{HPS}); since we need explicit constants only on finite time intervals, we prove it directly.

\begin{proposition}[Finite-horizon shadowing]\label{prop:shadow}
Let $\lambda$ and $\cT$ be as in Lemma~\ref{lem:tube}. There is $C_*>0$ such that for every $y_0\in\cT$ and every $\tau\ge0$ there is $q\in\Sigma$ with
\begin{equation}\label{eq:shadow}
\|\Phi_{Y_H}^t(y_0)-\Phi_{Y_G}^t(q)\|_Q\le C_*e^{-\lambda t/2},\quad0\le t\le\tau,\qquad\text{and}\qquad A_H(y_0,\tau)\le A_H(q,\tau)+C_*.
\end{equation}
\end{proposition}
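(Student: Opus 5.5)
The plan is a backward-shadowing argument. We pick the shadowing point at the end time $\tau$, then pull it back along the flow of $Y_G$, which preserves $\Sigma$ by Lemma~\ref{lem:conedyn}.

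Fix $y_0\in\cT$ and $\tau\ge0$, and write $y(t)=\Phi_{Y_H}^t(y_0)$. By Lemma~\ref{lem:tube}, $W(y(t))\le\eta e^{-\lambda t}$, so Lemma~\ref{lem:dist} gives $d(y(t))\le(\eta/c_1)^{1/2}e^{-\lambda t/2}$. Let $p_\tau\in\Sigma$ be a point nearest to $y(\tau)$, and set $q=\Phi_{Y_G}^{-\tau}(p_\tau)\in\Sigma$. Then $z(t):=\Phi_{Y_G}^t(q)$ lies in $\Sigma$ and $z(\tau)=p_\tau$. Put $e(t)=\|y(t)-z(t)\|_Q$. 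By \eqref{eq:aH}, $\dot y=Y_G(y)-K\gradS W(y)$.

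The key estimate bounds the perturbation term. Since $g\le\|\gradQ W\|_Q^2$ and $\gradQ W=2\sum_jq_j\gradQ q_j$, we get $\|\gradS W(y)\|_Q\le C_1W(y)^{1/2}$ on $S_Q$, with $C_1$ depending only on $n,k$. Hence, using \eqref{eq:LG} (both points lie in $\{Q\le1\}$),
\[
\Bigl|\tfrac{d}{dt}e(t)\Bigr|\le L_G\,e(t)+KC_1\eta^{1/2}e^{-\lambda t/2}
\]
in the a.e./Dini sense. Integrating this backwards from $t=\tau$, where $e(\tau)=d(y(\tau))\le C_2e^{-\lambda\tau/2}$, gives for $s\le\tau$
\[
e(s)\le e^{L_G(\tau-s)}e(\tau)+\int_s^\tau e^{L_G(r-s)}KC_1\eta^{1/2}e^{-\lambda r/2}\,dr .
\]
Because $\lambda/2-L_G\ge 1/2$ by Lemma~\ref{lem:tube}, the first term is at most $C_2e^{-\lambda s/2}e^{-(\lambda/2-L_G)(\tau-s)}\le C_2e^{-\lambda s/2}$. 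The integral is at most $KC_1\eta^{1/2}e^{-\lambda s/2}/(\lambda/2-L_G)\le 2KC_1\eta^{1/2}e^{-\lambda s/2}$. This yields the first inequality in \eqref{eq:shadow} with a constant independent of $y_0$ and $\tau$. Here $\Phi_{Y_G}^t(q)=\Phi_{Y_H}^t(q)$ by Lemma~\ref{lem:conedyn}, but we only need $z(t)\in\Sigma\subseteq S_Q$.

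For the second inequality, \eqref{eq:aH} gives $a_H(y)\le a_G(y)$, and $a_H(z)=a_G(z)$ on $\Sigma$. Therefore
\[
A_H(y_0,\tau)-A_H(q,\tau)\le\int_0^\tau\bigl(a_G(y(t))-a_G(z(t))\bigr)\,dt\le L_G\int_0^\infty C e^{-\lambda t/2}\,dt,
\]
again by \eqref{eq:LG}. Enlarging $C_*$ to cover both constants finishes the proof.

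The main obstacle is the backward Gronwall step. One cannot shadow forward from a point near $y_0$, since errors grow at rate $L_G$ while the driving error only decays. The backward choice works because the condition $\lambda\ge 2L_G+1$ makes the decay $e^{-\lambda t/2}$ dominate the backward growth $e^{L_G(\tau-s)}$. Some care is also needed to differentiate $e(t)$ where it vanishes; the fix is to work with $e^2$, or with the integral form of the equation directly.
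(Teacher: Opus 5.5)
Your proposal is correct and follows the paper's proof essentially step for step. The paper also takes $p\in\Sigma$ nearest to $y(\tau)$ and pulls it back to $q=\Phi_{Y_G}^{-\tau}(p)$. It bounds the perturbation by $\|\gradS W\|_Q\le C_WW^{1/2}$, controls the error backwards from $t=\tau$ using $\lambda/2-L_G\ge\frac12$, and finishes with $a_H\le a_G$ and \eqref{eq:LG}. The only difference is cosmetic: where you use the explicit backward Gronwall (variation-of-constants) bound, the paper runs a barrier argument against $R(t)=R_0e^{-\lambda t/2}$ directly on the integral inequality, which avoids differentiating $\|e(t)\|_Q$.
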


\begin{proof}
\emph{Step 1: a bound for $\gradS W$.} Since $\langle\gradQ W(y),y\rangle_Q=4W(y)$ and $Q(y)=1$, the vector $\gradS W(y)$ is the $Q$-orthogonal projection of $\gradQ W(y)$ onto the $Q$-orthogonal complement of $y$. Hence, with $\Gamma=\max_j\max_{S_Q}\|\gradQ q_j\|_Q$ (taking $\Gamma=0$ if $s=0$) and the Cauchy--Schwarz inequality, for $y\in S_Q$,
\[
\|\gradS W(y)\|_Q\le\|\gradQ W(y)\|_Q=\Bigl\|2\sum_jq_j(y)\gradQ q_j(y)\Bigr\|_Q\le2\Gamma\sqrt s\,W(y)^{1/2}=:C_WW(y)^{1/2}.
\]

\emph{Step 2: the trajectory is an approximate trajectory of $Y_G$.} Let $y(t)=\Phi_{Y_H}^t(y_0)$ for $t\ge0$. By Lemma~\ref{lem:tube}, $W(y(t))\le\eta e^{-\lambda t}$. By \eqref{eq:aH} and Step~1, with $\theta_1=KC_W\eta^{1/2}$,
\begin{equation}\label{eq:sh1}
y'(t)=Y_G(y(t))+\epsilon(t),\qquad\|\epsilon(t)\|_Q\le\theta_1e^{-\lambda t/2},
\end{equation}
where $\epsilon(t)=-K\gradS W(y(t))$.

\emph{Step 3: the comparison trajectory.} Fix $\tau\ge0$, and let $d$ and $c_1$ be as in Lemma~\ref{lem:dist}. There is $p\in\Sigma$ with $\|y(\tau)-p\|_Q=d(y(\tau))\le(W(y(\tau))/c_1)^{1/2}\le\theta_0e^{-\lambda\tau/2}$, where $\theta_0=(\eta/c_1)^{1/2}$. Put $z(t)=\Phi_{Y_G}^{t-\tau}(p)$ for $t\in\RR$ and $q=z(0)$. By Lemma~\ref{lem:conedyn}, $z(t)\in\Sigma$ for all $t$; in particular $q\in\Sigma$, and $\Phi_{Y_G}^t(q)=z(t)$.

\emph{Step 4: the error decays.} Let $e(t)=y(t)-z(t)$ and $r(t)=\|e(t)\|_Q$ for $0\le t\le\tau$. Since $z'(t)=Y_G(z(t))$, \eqref{eq:sh1} gives $e'(t)=Y_G(y(t))-Y_G(z(t))+\epsilon(t)$; as both $y(t)$ and $z(t)$ lie in $S_Q$, \eqref{eq:LG} and the bound in \eqref{eq:sh1} give $\|e'(t)\|_Q\le L_Gr(t)+\theta_1e^{-\lambda t/2}$. Writing $e(t)=e(s)-\int_t^se'(\sigma)\,d\sigma$, we obtain
\begin{equation}\label{eq:sh2}
r(t)\le r(s)+\int_t^s\bigl(L_Gr(\sigma)+\theta_1e^{-\lambda\sigma/2}\bigr)\,d\sigma,\qquad0\le t\le s\le\tau.
\end{equation}
Let $R_0=2\theta_0+2\theta_1+1$ and $R(t)=R_0e^{-\lambda t/2}$. Since $\lambda\ge2L_G+1$, we have $\lambda/2-L_G\ge\frac12$, so $(\lambda/2-L_G)R(\sigma)\ge\frac{R_0}2e^{-\lambda\sigma/2}>\theta_1e^{-\lambda\sigma/2}$, that is,
\begin{equation}\label{eq:sh3}
L_GR(\sigma)+\theta_1e^{-\lambda\sigma/2}<\tfrac\lambda2R(\sigma)=-R'(\sigma),\qquad\sigma\in\RR.
\end{equation}
We claim that $r\le R$ on $[0,\tau]$. Otherwise the closed set $\{t\in[0,\tau]:r(t)\ge R(t)\}$ is nonempty; let $t_1$ be its largest element. Since $r(\tau)\le\theta_0e^{-\lambda\tau/2}<R(\tau)$, we have $t_1<\tau$ and $r<R$ on $(t_1,\tau]$. Then \eqref{eq:sh2} with $t=t_1$ and $s=\tau$, together with $r(\tau)<R(\tau)$ and $r\le R$ on $(t_1,\tau]$, and then \eqref{eq:sh3} on the interval $[t_1,\tau]$, which has positive length, give
\[
r(t_1)\le R(\tau)+\int_{t_1}^\tau\bigl(L_GR(\sigma)+\theta_1e^{-\lambda\sigma/2}\bigr)\,d\sigma<R(\tau)-\int_{t_1}^\tau R'(\sigma)\,d\sigma=R(t_1),
\]
which contradicts $r(t_1)\ge R(t_1)$. Hence $\|\Phi_{Y_H}^t(y_0)-\Phi_{Y_G}^t(q)\|_Q=r(t)\le R_0e^{-\lambda t/2}$ for $0\le t\le\tau$.

\emph{Step 5: the radial rates.} On $\Sigma$ we have $a_H=a_G$ and $\Phi_{Y_H}^t=\Phi_{Y_G}^t$ (Lemma~\ref{lem:conedyn}), so $A_H(q,\tau)=\int_0^\tau a_G(z(t))\,dt$. By \eqref{eq:aH}, $a_H\le a_G$ on $S_Q$. Hence
\[
A_H(y_0,\tau)-A_H(q,\tau)\le\int_0^\tau\bigl|a_G(y(t))-a_G(z(t))\bigr|\,dt\le L_GR_0\int_0^\infty e^{-\lambda t/2}\,dt=\frac{2L_GR_0}{\lambda}\le2L_GR_0,
\]
where the first inequality uses $a_H(y(t))\le a_G(y(t))$, the second uses \eqref{eq:LG} and the bound $\|y(t)-z(t)\|_Q\le R_0e^{-\lambda t/2}$ of Step~4, and the last uses $\lambda\ge1$. Thus $C_*=(1+2L_G)R_0$ works for both inequalities in \eqref{eq:shadow}; it depends neither on $y_0$ nor on $\tau$. It involves the constant $c_1$ of Lemma~\ref{lem:dist}, which is only known to exist; this does not matter, since $C_*$ does not enter the construction of $H$.
\end{proof}

The point $q$ in Proposition~\ref{prop:shadow} depends on $\tau$; this suffices for the proof of \eqref{eq:mainLS} below.

\subsection{\texorpdfstring{Proof of Theorem~\ref*{thm:cubicization}}{Proof of the reduction to degree three}}\label{subsec:proofcubic}

\begin{proof}[Proof of Theorem~\ref{thm:cubicization}]
If $k=1$, take $H=F$. For $k\ge2$, construct $Q,W,G,K,H$ by \eqref{eq:Q}--\eqref{eq:H}. Every step is effective: the binomials \eqref{eq:binom} are a finite explicit list, $G$ is obtained by expanding $D\nu_k(x)F(x)$ and factoring monomials, the rationals $\eta,c,C_G,B_G,L_G$ are found by the terminating searches in Lemmas~\ref{lem:gradineq}, \ref{lem:tangency} and~\ref{lem:lipschitz}, and $K$ is chosen as the least positive integer satisfying the rational inequalities \eqref{eq:Kchoice}.

\emph{GAS.} The inequalities \eqref{eq:sandwich} give $\beta_H<0\iff\beta_F<0$: if $\beta_F<0$, both entries of the upper bound are negative, and if $\beta_F\ge0$, the lower bound gives $\beta_H\ge0$. Lemma~\ref{lem:gascrit}, applied with the Euclidean form to $F$ and with $Q$ to $H$, proves \eqref{eq:mainGAS}.

\emph{Lyapunov stability.} By \eqref{eq:AHcone}, $kM_F(\tau)=\max_{p\in\Sigma}A_H(p,\tau)\le M_H(\tau)$ for all $\tau\ge0$, so $M_F^*\le M_H^*/k$. Conversely, let $y\in S_Q$ and $\tau\ge0$, and let $t_0=\inf\{t\ge0:\Phi_{Y_H}^t(y)\in\cT\}\in[0,\infty]$. By Lemma~\ref{lem:tube}, $a_H(\Phi_{Y_H}^ty)<-1$ for $0\le t<t_0$. If $\tau\le t_0$, then $A_H(y,\tau)\le-\tau\le0$. If $\tau>t_0$, then $\Phi_{Y_H}^{t_0}(y)\in\cT$ because $\cT$ is closed and $t\mapsto\Phi_{Y_H}^t(y)$ is continuous, and Proposition~\ref{prop:shadow}, applied to $y_0=\Phi_{Y_H}^{t_0}(y)$ with the horizon $\tau-t_0$, gives $q\in\Sigma$ with $A_H(\Phi_{Y_H}^{t_0}y,\tau-t_0)\le A_H(q,\tau-t_0)+C_*$; by Lemma~\ref{lem:cone}, $q=\sigma\nu_k(u)$ with $\sigma\in\{-1,1\}$ and $u\in S^{n-1}$. Splitting the integral as in \eqref{eq:cocycleid} gives $A_H(y,\tau)=A_H(y,t_0)+A_H(\Phi_{Y_H}^{t_0}y,\tau-t_0)$, where $A_H(y,t_0)\le-t_0$ because $a_H(\Phi_{Y_H}^ty)<-1$ for $0\le t<t_0$. With \eqref{eq:AHcone} this gives
\begin{equation}\label{eq:mainest}
A_H(y,\tau)\le-t_0+k\,A_F(u,\tau-t_0)+C_*,\qquad\tau>t_0.
\end{equation}
Since $A_F(u,\tau-t_0)\le M_F^*$ and $t_0\ge0$, the cases $\tau\le t_0$ and $\tau>t_0$ together show that $M_F^*<\infty$ implies $M_H^*\le\max\{0,kM_F^*+C_*\}<\infty$. Thus $M_F^*<\infty$ if and only if $M_H^*<\infty$, and Lemma~\ref{lem:lyap}, applied with the Euclidean form to $F$ and with $Q$ to $H$, proves \eqref{eq:mainLS}.
\end{proof}

\section{From degree three to everything else}\label{sec:normalform}

Theorem~\ref{thm:cubicization} lowers odd degrees to three in a fixed dimension. In this section we construct the ``padding'' which raises the degree and the dimension (Lemma~\ref{lem:padding}), and we prove Theorem~\ref{thm:transfer}, as outlined in Section~\ref{subsec:overview1}.

\begin{lemma}[Degree raising and dimension padding]\label{lem:padding}
Let $H:\RR^m\to\RR^m$ be a homogeneous polynomial vector field of degree three, let $j\ge0$, and let $n\ge m$. For $(x,z)\in\RR^m\times\RR^{n-m}$, define
\begin{equation}\label{eq:Hhat}
\widehat H(x,z)=\bigl(|x|^2+|z|^2\bigr)^j\bigl(H(x),-z_1^3,\dots,-z_{n-m}^3\bigr).
\end{equation}
Then $\widehat H$ is a homogeneous polynomial vector field of degree $3+2j$. Furthermore, its origin is GAS if and only if the origin of $H$ is GAS, and Lyapunov stable if and only if the origin of $H$ is Lyapunov stable.
\end{lemma}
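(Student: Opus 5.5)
Homogeneity of degree $3+2j$ is immediate: $(|x|^2+|z|^2)^j$ has degree $2j$, and $H(x)$ and each $-z_i^3$ have degree three. For the stability equivalences I will use the angular criteria with the Euclidean form $Q(x,z)=|x|^2+|z|^2$ on $\RR^n$ and with $|x|^2$ on $\RR^m$. Write $\rho=|x|^2+|z|^2$ and $H_0(x,z)=(H(x),-z_1^3,\dots,-z_{n-m}^3)$. On the unit sphere $S^{n-1}$ we have $\rho=1$, so $\widehat H=H_0$ there. Hence $a_{\widehat H}=a_{H_0}$ and $Y_{\widehat H}=Y_{H_0}$ on $S^{n-1}$. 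All of $\beta$, $M^*$ and the angular flow depend only on these restrictions, so by Lemmas~\ref{lem:gascrit} and~\ref{lem:lyap} we may take $j=0$. (Directly: $\widehat H$ is a positive scalar multiple of $H_0$ away from $0$, so the two fields have the same orbits up to reparametrization.)

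For $H_0$ itself the system is decoupled: $\dot x=H(x)$ and $\dot z_i=-z_i^3$. The scalar equation $\dot z_i=-z_i^3$ satisfies $|z_i(t)|\le|z_i(0)|$ and $z_i(t)\to0$, and all its solutions are global. So forward completeness, the bound $|(x,z)(t)|\le C|(x,z)(0)|$ of Lemma~\ref{lem:scaling}, and convergence to zero all hold for $H_0$ if and only if they hold for the $x$-component.
\begin{itemize}
\item Lyapunov stability: use Lemma~\ref{lem:scaling} with the max-norm $\max\{|x|,|z|\}$. If $|x(t)|\le C|x(0)|$, then $\max\{|x(t)|,|z(t)|\}\le\max\{C,1\}\max\{|x(0)|,|z(0)|\}$. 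Conversely, restricting to initial data with $z(0)=0$ recovers the condition for $H$.
\item GAS: if $0$ is GAS for $H$, then every solution of $H_0$ is global, bounded as above, and tends to $0$. Conversely, solutions of $H_0$ with $z(0)=0$ are exactly the solutions of $H$.
\end{itemize}
To pass back from $H_0$ to $\widehat H$ without the angular criteria, note that GAS and Lyapunov stability are invariant under multiplying the field by a positive continuous scalar function. Trajectories are then the same curves traversed with a time change $ds/dt=\rho^j$. If $\rho$ stays bounded along a trajectory, the new time is also unbounded forward, so completeness and limits transfer, and the converse follows symmetrically.

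I expect the only delicate point to be this time-change argument, specifically checking that forward completeness transfers in both directions and not just limits. The angular-criterion route avoids it entirely, because the restriction to the sphere is literally the same. So I would present that route: on $S^{n-1}$ the functions $a_{\widehat H}=a_{H_0}$ and the angular flows coincide, and Lemmas~\ref{lem:gascrit} and~\ref{lem:lyap} then reduce both equivalences to the elementary decoupled analysis of $H_0$.
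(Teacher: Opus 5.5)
Your proposal is correct and follows the paper's proof. You note that $\widehat H$ and $H_0=(H(x),-z_1^3,\dots,-z_{n-m}^3)$ agree on the Euclidean unit sphere, so they have the same angular flow, radial rate, $\beta$ and $M^*$ (Lemmas~\ref{lem:gascrit} and~\ref{lem:lyap}); you then treat the decoupled product $H_0$ via Lemma~\ref{lem:scaling} and the invariant subspace $\{z=0\}$, using $\max\{|x|,|z|\}$ where the paper uses $|x|+|z|$, and the time-change aside is not needed.
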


\begin{proof}
Let $H_0(x,z)=(H(x),-z_1^3,\dots,-z_{n-m}^3)$. The system $(\dot x,\dot z)=H_0(x,z)$ is the product of $\dot x=H(x)$ with the independent scalar equations $\dot z_i=-z_i^3$, whose solutions exist for all $t\ge0$, satisfy $|z_i(t)|\le|z_i(0)|$, and tend to $0$. Hence the origin of $H_0$ is GAS if and only if that of $H$ is; for the reverse implication, the invariant subspace $\{z=0\}$ reproduces every trajectory of $H$. In the same way, a bound $|x(t)|\le C|x(0)|$ for $H$ gives a bound for $H_0$ with respect to the norm $|x|+|z|$ and conversely, so by Lemma~\ref{lem:scaling} the origin of $H_0$ is Lyapunov stable if and only if that of $H$ is. On the Euclidean unit sphere the factor $(|x|^2+|z|^2)^j$ equals one, so $\widehat H$ and $H_0$ have the same angular vector field and the same radial rate, and therefore the same quantities $\beta$ and $M^*$. Lemmas~\ref{lem:gascrit} and~\ref{lem:lyap} show that $\widehat H$ and $H_0$ have the same answer for GAS and for Lyapunov stability.
\end{proof}

\begin{proof}[Proof of Theorem~\ref{thm:transfer}]
Let $n_*$, $d_*=2k_*+1$ and $N=\binom{n_*+k_*-1}{k_*}$ be as in Theorem~\ref{thm:transfer}, and let $n\ge N$ and $d=3+2j$ with $j\ge0$. Consider the map which sends a valid code of dimension $n_*$ and degree $d_*$, describing a vector field $F$, to the code of the vector field $\widehat H$ of Lemma~\ref{lem:padding}, applied with $N$ in place of $m$ and with the vector field $H$ of degree three on $\RR^N$ given by Theorem~\ref{thm:cubicization}; every other code is sent to the code of $x\mapsto|x|^{d-1}x$ on $\RR^n$. This map is total and computable, since validity, dimension and degree of a code can be computed (Section~\ref{subsec:stability}), and $H$ and $\widehat H$ are computed from $F$ by finite procedures. By \eqref{eq:mainGAS}, \eqref{eq:mainLS} and Lemma~\ref{lem:padding}, the origin is GAS, respectively Lyapunov stable, for $F$ if and only if it is for $\widehat H$. The vector field $x\mapsto|x|^{d-1}x$ is neither GAS nor Lyapunov stable, since $v=|x|^2$ satisfies $\dot v=2v^{(d+1)/2}$ with $(d+1)/2\ge2$, so that nonzero solutions blow up in finite time. Hence the map is a many-one reduction from $\HGAS(n_*,d_*)$ to $\HGAS(n,d)$, and from $\HLS(n_*,d_*)$ to $\HLS(n,d)$. Undecidability and many-one hardness pass from the source to the target of a many-one reduction by \ref{C:trans}. Hence, if $\HGAS(n_*,d_*)$ is many-one complete, then $\HGAS(n,d)$ is many-one hard, and since it is r.e.\ by Proposition~\ref{prop:re}, it is many-one complete. In the same way, if $\HGAS(n_*,d_*)$ is undecidable, then so is $\HGAS(n,d)$, and if $\HLS(n_*,d_*)$ is many-one hard, respectively undecidable, then so is $\HLS(n,d)$.
\end{proof}

\paperpart{Part II. Existence of a single \texorpdfstring{$(n_*,d_*)$}{(n*, d*)}}
\section{From the halting problem to a Fourier coefficient on a torus}\label{sec:arith}

In Part II we prove Theorem~\ref{thm:fixedpair}. Since $\HGAS(n_*,d_*)$ is r.e.\ for every pair $(n_*,d_*)$ by Proposition~\ref{prop:re}, we must construct one fixed pair $(n_*,d_*)$ and a computable map $e\mapsto F_e$, from $\NN$ to rational-coefficient homogeneous polynomial vector fields on $\RR^{n_*}$ of degree $d_*$, such that $e\in K$ if and only if the origin of $\dot x=F_e(x)$ is GAS, where $K$ is the halting set of \ref{C:K}. Indeed, such a map shows $K\lem\HGAS(n_*,d_*)$, and since every r.e.\ set many-one reduces to $K$ by \ref{C:K}, transitivity \ref{C:trans} then shows that $\HGAS(n_*,d_*)$ is many-one hard. As explained in Section~\ref{subsec:overview2}, we obtain the equivalence \eqref{eq:overview2} from a chain of four equivalences, in which every object is computable from $e$, and we prove one of them in each of Sections~\ref{sec:arith}--\ref{sec:single} (Lemma~\ref{lem:constterm}, Corollary~\ref{cor:yes}, Proposition~\ref{prop:signgap} and Theorem~\ref{thm:wrapper}). In this section we prove the first one: $e\in K$ if and only if an explicit function $\mathfrak D_e$ on a torus has a nonzero constant Fourier coefficient. In Section~\ref{subsec:diophantine} we reduce membership in $K$ to the solvability of a fixed Diophantine system, and in Section~\ref{subsec:theta_constant_coefficient} we encode this solvability in the constant Fourier coefficient of $\mathfrak D_e$ (Lemma~\ref{lem:constterm}).

\subsection{A fixed Diophantine equation}\label{subsec:diophantine}
We first replace halting by the solvability of a Diophantine equation, using the theorem of Matiyasevich, Robinson, Davis and Putnam.

\begin{theorem}[MRDP]\label{thm:mrdp}
For every r.e.\ set $S\subseteq\NN$ there are $k\ge1$ and a polynomial $P\in\ZZ[t,y_1,\dots,y_k]$ such that for every $e\in\NN$, $e\in S$ if and only if $P(e,y)=0$ for some $y\in\NN^k$.
\end{theorem}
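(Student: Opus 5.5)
This is the Matiyasevich--Robinson--Davis--Putnam theorem, and I would not reprove it. Instead I will cite it in the form stated, namely Matiyasevich's theorem~\cite{Matiyasevich70} combined with the exponential Diophantine representation of Davis, Putnam and Robinson~\cite{DPR61}. I will then record the small bookkeeping needed to match the formulation above: a single parameter $t=e$, unknowns ranging over $\NN$, and integer coefficients.

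The standard proof runs in two stages, and I would outline it in this order. First, every r.e.\ set $S$ is exponential Diophantine. That is, there are exponential polynomials $E_1,E_2$ built from $t$, unknowns $z$, $+$, $\cdot$ and $(a,b)\mapsto a^b$, such that
\[
e\in S\iff\exists z\in\NN^l:\ E_1(e,z)=E_2(e,z).
\]
This is the Davis--Putnam--Robinson theorem. Its proof arithmetizes the halting computation of a Turing machine (or register machine) accepting $S$. The bounded universal quantifiers that arise are eliminated using binomial coefficients and the Chinese remainder theorem.

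Second, exponentiation is Diophantine. The relation $a=b^c$ on $\NN$ has a Diophantine definition, obtained through the Pell equation $x^2-(g^2-1)y^2=1$ and the exponential growth of its solutions. This step is Matiyasevich's contribution, and it is the genuine obstacle: one must produce a Diophantine relation of exponential growth, a ``Julia Robinson predicate''. I would take it from~\cite{Matiyasevich70} rather than reconstruct it.

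To assemble the result, I replace each occurrence of exponentiation in $E_1=E_2$ by new unknowns constrained by the Diophantine definition. This yields a finite system of polynomial equations $P_i(e,y)=0$ over $\NN$. I then combine the system into the single equation $P=\sum_iP_i^2$, which has integer coefficients and satisfies $P(e,y)=0$ exactly when every $P_i(e,y)=0$.

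If one prefers unknowns ranging over $\ZZ$ in some intermediate formulation, Lagrange's four-square theorem converts between the $\NN$ and $\ZZ$ versions. However, since the statement asks for $y\in\NN^k$, no conversion is needed here. Finally, I pad with dummy variables if necessary so that $k\ge1$.
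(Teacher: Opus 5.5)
Your proposal matches the paper, which likewise does not reprove MRDP but cites it, taking the exact form needed (Turing-semidecidable sets, unknowns in $\NN$) from \cite[Section~5.5]{Matiyasevich93}, with \cite{DPR61,Matiyasevich70} as the original sources. Your outline is a reasonable gloss, with one small historical caveat: Matiyasevich's 1970 paper obtained its exponentially growing Diophantine relation from Fibonacci numbers ($v=F_{2u}$), and the Pell-equation route you sketch comes from later standard expositions; this does not affect the citation.
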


In \cite[Section~5.5]{Matiyasevich93} it is proved that every set which is semidecidable by a Turing machine (that is, r.e.) is Diophantine in the sense of \cite[(1.4.2)]{Matiyasevich93}, with unknowns ranging over $\NN$; this is Theorem~\ref{thm:mrdp}. See also the original papers \cite{DPR61,Matiyasevich70}. The function $\mathfrak D_e$ which we construct in Section~\ref{subsec:theta_constant_coefficient} sees each unknown only through its value and its square, so we bring the equation into the following form, without products of different unknowns or higher powers. For integer matrices $A,B\in\ZZ^{r\times q}$ and $c\in\ZZ^r$, write $n^{\circ2}=(n_1^2,\dots,n_q^2)$ and consider systems of equations of the form
\begin{equation}\label{eq:dql}
An^{\circ2}+Bn=c,\qquad n\in\ZZ^q,
\end{equation}
in which every equation is an integer linear combination of the unknowns and of their squares. We write $A_j,B_j\in\ZZ^r$ for the $j$-th columns of $A,B$. In the following lemma the input $e$ enters only through the right-hand side, and affinely; this is what will keep the dimension and the degree of the final vector field independent of $e$. We use the nonzero columns of $B$ in Section~\ref{sec:clocks} (Corollary~\ref{cor:irrational}).

\begin{lemma}[A fixed Diophantine equation]\label{lem:template}
There are integers $q,r\ge1$, matrices $A_*,B_*\in\ZZ^{r\times q}$ and vectors $c_0,c_1\in\ZZ^r$ such that every column of $B_*$ is nonzero and, with $c(e)=c_0+e\,c_1$,
\begin{equation}\label{eq:template}
e\in K\iff\exists n\in\ZZ^q:\ A_*n^{\circ2}+B_*n=c(e),\qquad e\in\NN.
\end{equation}
\end{lemma}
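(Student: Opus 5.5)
Apply Theorem~\ref{thm:mrdp} to $S=K$ to obtain $k\ge1$ and $P\in\ZZ[t,y_1,\dots,y_k]$ with $e\in K$ iff $P(e,y)=0$ for some $y\in\NN^k$. We then rewrite this single equation, by introducing auxiliary integer unknowns, into a system of the form \eqref{eq:dql}. In that system every equation is linear in unknowns and their squares, and $e$ appears only affinely on the right-hand side.

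\emph{Step 1: natural to integer unknowns.} By Lagrange's four-square theorem, $y_i\in\NN$ iff $y_i=a_{i1}^2+a_{i2}^2+a_{i3}^2+a_{i4}^2$ for some $a_{i\cdot}\in\ZZ$. We therefore add these integer unknowns together with the equations $y_i-\sum_l a_{il}^2=0$, and let every $y_i$ range over $\ZZ$. Each such equation already has the required form.

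\emph{Step 2: isolating $e$.} We introduce an unknown $u$ with the equation $u=e$. This equation has the form $B$-row $\cdot\,n=c(e)$ with $c_0=0$ and $c_1$ a unit vector, so $e$ enters only through this right-hand side. We then replace $t$ by $u$ inside $P$.

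\emph{Step 3: flattening $P(u,y)=0$ to quadratic equations.} Introduce a new unknown for each intermediate monomial, building it up one variable at a time: $w=v\cdot v'$, where $v,v'$ are earlier unknowns. Products are expressed through squares via $2vv'=(v+v')^2-v^2-v'^2$. To make this linear in squares of single unknowns, add an unknown $s=v+v'$ (a linear equation) and the equation $2w=s^2-v^2-v'^2$. Each monomial of $P$ is then a single unknown, and $P(u,y)=0$ becomes one linear equation in these unknowns, with the constant term of $P$ moved into $c_0$. Every equation is now an integer combination of unknowns and their squares. Conversely, any integer solution of the system forces the auxiliary unknowns to equal the intended products. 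Hence the system is solvable iff some $y\in\NN^k$ satisfies $P(e,y)=0$, which gives \eqref{eq:template} with fixed $q,r,A_*,B_*$ independent of $e$.

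\emph{Step 4: nonzero columns of $B_*$.} This is the only slightly delicate point; the rest is bookkeeping. Some unknowns, such as the $a_{il}$, appear only squared, so their $B$-columns vanish. We fix this uniformly: for each unknown $n_j$ add a fresh unknown $m_j$ and the equation $m_j-n_j=0$. Now $n_j$ has a nonzero coefficient in the linear part, and $m_j$ does as well, while solvability is unchanged. After this step every column of $B_*$ is nonzero, $q$ and $r$ are integers $\ge1$ determined by $P$ alone, and $c(e)=c_0+e\,c_1$ as required.
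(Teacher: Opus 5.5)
Your proposal is correct and follows essentially the same route as the paper: MRDP for $K$, Lagrange's four-square theorem to let the unknowns range over $\ZZ$, a separate equation $t=e$ so that $e$ enters only affinely on the right-hand side, the polarization identity with an auxiliary sum unknown to turn each product into equations linear in unknowns and their squares, and duplicated unknowns with $x_j-x_j'=0$ to make every column of $B_*$ nonzero. The only difference is in how $P$ is compiled. You flatten its monomials and collect them in one integer linear equation. The paper instead computes $P^{\pm}+1$ by straight-line programs of additions and multiplications with nonnegative coefficients and equates the two outputs. Your version avoids the $P^{\pm}$ split, but the two are interchangeable.
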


In the proof we compute $P$ by additions and multiplications, express each multiplication through squares by the polarization identity $2g'g''=(g'+g'')^2-g'^2-g''^2$, and use Lagrange's four-square theorem to let the unknowns range over $\ZZ$ instead of $\NN$.

\begin{proof}
Apply Theorem~\ref{thm:mrdp} to $S=K$ and fix the resulting polynomial $P(t,y)$. Write $P=P^+-P^-$, where $P^\pm$ have nonnegative coefficients, and fix two finite lists of instructions which compute $P^+(t,y)+1$ and $P^-(t,y)+1$, respectively, where each instruction produces either the constant $1$ or the sum or the product of two inputs or earlier results. Such lists exist because $P^\pm+1$ are nonzero polynomials with nonnegative integer coefficients; the summand $1$ is added because the zero polynomial cannot be produced by such instructions. Introduce an integer variable for $t$, for each $y_i$, and for the output of each instruction, and write down the following equations: $t=e$; $g=1$ for each instruction producing the constant $1$; $g=g'+g''$ for each addition; and for each multiplication $g=g'g''$ a new variable $w$ with the two equations
\begin{equation}\label{eq:polarization}
w=g'+g'',\qquad w^2-g'^2-g''^2-2g=0;
\end{equation}
finally, the equation equating the two outputs, and, for each $i$, four new variables $a_{i1},\dots,a_{i4}$ and the equation $y_i=a_{i1}^2+a_{i2}^2+a_{i3}^2+a_{i4}^2$. Every equation is an integer linear combination of the variables and their squares, and only the right-hand side of the equation $t=e$ depends on $e$, affinely. Thus the system has the form \eqref{eq:dql} with fixed $A,B$ and right-hand side $c_0+ec_1$.

If $P(e,y)=0$ with $y\in\NN^k$, assign to every variable its value in the computation, choose the $a_{i\cdot}$ by Lagrange's four-square theorem \cite[Theorem~369]{HardyWright} (and $a_{i\cdot}=0$ if $y_i=0$), and put $w=g'+g''$; this solves the system. Conversely, in an integer solution every $y_i$ is a sum of four squares, hence in $\NN$; the two equations \eqref{eq:polarization} give $2g=(g'+g'')^2-g'^2-g''^2=2g'g''$, so every instruction is computed correctly, and $P^+(e,y)+1=P^-(e,y)+1$. Hence the system is solvable if and only if $e\in K$.

Finally, to make every column of $B$ nonzero, add for every variable $x_j$ a duplicate variable $x_j'$ and the equation $x_j-x_j'=0$, leaving the old equations unchanged. If $x$ solves the old system, $(x,x)$ solves the new one, and conversely the first half of a solution of the new system solves the old one. In the new equation for $x_j$ the column of $x_j$ has entry $+1$ and that of $x_j'$ has entry $-1$, so every column of the new linear coefficient matrix is nonzero. The new system still has the form \eqref{eq:dql}, with the right-hand side $c_0+ec_1$ extended by zeros, and it is solvable exactly when the old one is; this proves \eqref{eq:template}.
\end{proof}

We fix $q,r,A_*,B_*,c_0,c_1$ once and for all, and write $A=A_*$, $B=B_*$ from now on. These are fixed integers, integer matrices and integer vectors. We obtain them from a fixed universal machine and a fixed MRDP polynomial, and a computer program may contain them; we do not need to know their numerical values.

\subsection{Theta series and the constant Fourier coefficient}\label{subsec:theta_constant_coefficient}

We now encode the solvability of \eqref{eq:template} in one function $\mathfrak D_e$ on a torus. It is a product of series over all candidate values of the unknowns, in which each term is a positive weight times a character recording the residual $An^{\circ2}+Bn-c(e)$; averaging over the torus then keeps only the solutions, and their weights cannot cancel (Lemma~\ref{lem:constterm}). The idea of representing a recursively enumerable set through the Fourier coefficients of a function built from theta functions, in such a way that all these coefficients are nonnegative and a coefficient is positive exactly when the corresponding statement holds, goes back to Buser and Scarpellini~\cite[Definition~1 and Proposition~3.4]{BuserScarpellini} (see also~\cite{BuserScarpellini16}). We follow the same idea; our function $\mathfrak D_e$ is adapted to the form \eqref{eq:template}, in which the input $e$ enters only through the right-hand side, and to its realization by a polynomial system in Section~\ref{sec:orbits}. Fix
\begin{equation}\label{eq:lambda}
r_0=\tfrac14,\qquad\lambda=r_0^4=\tfrac1{256},\qquad\eta=\tfrac12,
\end{equation}
and for complex numbers $Q,V,W$ of modulus one define the theta series $\Theta_\lambda$, which runs over all integers $n$ and records $n^2$ in the power of $Q$ and $n$ in the power of $V$, and the rational function $P_\eta$, which runs over an auxiliary integer $s$ that averaging will force to equal $n^2$:
\begin{equation}\label{eq:ThetaP}
\Theta_\lambda(Q,V)=\sum_{n\in\ZZ}\lambda^{n^2}Q^{n^2}V^n,\qquad P_\eta(W)=\sum_{s\in\ZZ}\eta^{|s|}W^s=\frac{1-\eta^2}{(1-\eta W)(1-\eta W^{-1})}.
\end{equation}
Both series converge absolutely and uniformly. We use a torus $\TT^{q+r}=(\RR/2\pi\ZZ)^{q+r}$ with coordinates $\varphi=(\varphi_1,\dots,\varphi_{q+r})$; we call $\varphi_1,\dots,\varphi_q$ the ``private phases'' (one for each variable of \eqref{eq:template}) and $\varphi_{q+1},\dots,\varphi_{q+r}$ the ``equation phases'' (one for each equation); each of them is ``responsible'' for one of the conditions which a solution must satisfy, as in Section~\ref{subsec:overview2}. Put
\begin{equation}\label{eq:characters}
Q_j=e^{4i\varphi_j},\qquad\chi_l=e^{4i\varphi_{q+l}},\qquad V_j=\prod_{l=1}^r\chi_l^{B_{lj}},\qquad S_j=\prod_{l=1}^r\chi_l^{A_{lj}},
\end{equation}
for $1\le j\le q$ and $1\le l\le r$. In the function
\begin{equation}\label{eq:detector}
\mathfrak D_e(\varphi)=\prod_{l=1}^r\chi_l^{-c_l(e)}\ \prod_{j=1}^q\Theta_\lambda(Q_j,V_j)\,P_\eta(Q_j^{-1}S_j)
\end{equation}
the factor $P_\eta(Q_j^{-1}S_j)$ transfers the square $n_j^2$ from the private phase of the $j$-th unknown to the equation phases, and $\prod_l\chi_l^{-c_l(e)}$ subtracts the right-hand side.
The fourth powers in \eqref{eq:characters} will allow us to take fourth roots of $Q_j$ and square roots of $V_j$ as monomials in $e^{i\varphi}$ (Section~\ref{sec:jacobi}); they do not affect the following computation.

\begin{lemma}[Positive constant Fourier coefficient]\label{lem:constterm}
The Fourier series of $\mathfrak D_e$ converges absolutely, and its constant Fourier coefficient is
\begin{equation}\label{eq:constterm}
(\hat{\mathfrak D}_e)_{(0,\dots,0)}=\frac1{(2\pi)^{q+r}}\int_{\TT^{q+r}}\mathfrak D_e(\varphi)\,d\varphi=\sum_{\substack{n\in\ZZ^q\\An^{\circ2}+Bn=c(e)}}(\lambda\eta)^{|n|^2}.
\end{equation}
In particular $(\hat{\mathfrak D}_e)_{(0,\dots,0)}$ is a positive real number if $e\in K$, and it is zero if $e\notin K$.
\end{lemma}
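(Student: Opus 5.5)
The plan is to expand $\mathfrak D_e$ as an absolutely convergent multiple series of characters $e^{ik\cdot\varphi}$ with nonnegative weights, and read off the constant coefficient by orthogonality. First, substitute the series \eqref{eq:ThetaP} into \eqref{eq:detector}. For each $j$, with an index $n_j\in\ZZ$ for $\Theta_\lambda$ and $s_j\in\ZZ$ for $P_\eta$, the product $\Theta_\lambda(Q_j,V_j)P_\eta(Q_j^{-1}S_j)$ becomes $\sum_{n_j,s_j}\lambda^{n_j^2}\eta^{|s_j|}Q_j^{n_j^2-s_j}V_j^{n_j}S_j^{s_j}$. Multiplying over $j$ and by $\prod_l\chi_l^{-c_l(e)}$, and inserting \eqref{eq:characters}, gives
\[
\mathfrak D_e(\varphi)=\sum_{(n,s)\in\ZZ^{q}\times\ZZ^q}w_{n,s}\prod_{j=1}^qe^{4i(n_j^2-s_j)\varphi_j}\prod_{l=1}^re^{4i\left(\sum_j(A_{lj}s_j+B_{lj}n_j)-c_l(e)\right)\varphi_{q+l}},
\qquad w_{n,s}=\prod_{j=1}^q\lambda^{n_j^2}\eta^{|s_j|},
\]
which is \eqref{eq:detector_convenient_series}.

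Absolute convergence follows from $\sum_{n,s}w_{n,s}=\bigl(\sum_{n\in\ZZ}\lambda^{n^2}\bigr)^q\bigl(\sum_{s}\eta^{|s|}\bigr)^q<\infty$, since every character has modulus one. This justifies all the rearrangements (products of absolutely convergent series) and, via uniform convergence, term-by-term integration over $\TT^{q+r}$. The map $(n,s)\mapsto k(n,s)\in\ZZ^{q+r}$ sending each index to its frequency vector need not be injective. So I would define the Fourier coefficient $\hat{(\mathfrak D_e)}_k$ as the sum of the $w_{n,s}$ over the fibre above $k$. The coefficients are then summable, with total at most $\sum w_{n,s}$, so the Fourier series converges absolutely. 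Its coefficients agree with the integrals $(2\pi)^{-(q+r)}\int\mathfrak D_e e^{-ik\cdot\varphi}\,d\varphi$ by term-by-term integration and $\frac1{2\pi}\int_0^{2\pi}e^{im\xi}d\xi=\delta_{m0}$.

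For $k=0$, the fibre consists of the $(n,s)$ with $s_j=n_j^2$ for all $j$ and $\sum_j(A_{lj}s_j+B_{lj}n_j)=c_l(e)$ for all $l$. These are exactly the pairs $(n,n^{\circ2})$ with $An^{\circ2}+Bn=c(e)$. The weight of such a pair is $\prod_j\lambda^{n_j^2}\eta^{n_j^2}=(\lambda\eta)^{|n|^2}$, which gives \eqref{eq:constterm}. Every term is positive, so the sum is positive if and only if a solution exists, and it is zero otherwise (empty sum). By Lemma~\ref{lem:template}, a solution exists if and only if $e\in K$.

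There is no real obstacle. The only point needing care is bookkeeping: the coefficient of each character is a sum over a possibly infinite fibre, and absolute summability of $w_{n,s}$ is what makes this well defined and positive.
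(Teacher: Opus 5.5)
Your proposal is correct and follows the paper's proof essentially step by step. Both expand $\mathfrak D_e$ into the $(n,s)$-indexed character series with summable positive weights, integrate term by term, and observe that only the terms with $s=n^{\circ2}$ and $An^{\circ2}+Bn=c(e)$ survive, each contributing $(\lambda\eta)^{|n|^2}$ with no cancellation. Your explicit treatment of the possibly non-injective map $(n,s)\mapsto k$ when collecting Fourier coefficients spells out a point the paper mentions only in passing.
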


\begin{proof}
Expanding the $j$-th factor gives $\sum_{n_j,s_j\in\ZZ}\lambda^{n_j^2}\eta^{|s_j|}Q_j^{n_j^2-s_j}V_j^{n_j}S_j^{s_j}$, a series of characters with absolutely summable coefficients. Multiplying these $q$ series and $\prod_l\chi_l^{-c_l(e)}$ writes $\mathfrak D_e$ as a series over $(n,s)\in\ZZ^q\times\ZZ^q$ of characters with coefficients of modulus $\prod_j\lambda^{n_j^2}\eta^{|s_j|}$, which are summable. Hence this series, and the Fourier series of $\mathfrak D_e$ obtained from it by collecting equal characters, converge absolutely and uniformly on $\TT^{q+r}$, and the series may be integrated term by term. The character $Q_j^{n_j^2-s_j}$ involves only $\varphi_j$, so a term survives integration over the private phases only if $s_j=n_j^2$ for all $j$. The remaining character is $\prod_l\chi_l^{\sum_j(A_{lj}n_j^2+B_{lj}n_j)-c_l(e)}$, which survives integration over the equation phases only if $An^{\circ2}+Bn=c(e)$. The surviving terms are $\prod_j\lambda^{n_j^2}\eta^{n_j^2}=(\lambda\eta)^{|n|^2}$. They are positive, so there is no cancellation, and the sum is nonempty exactly when \eqref{eq:template} has a solution, that is, when $e\in K$ by Lemma~\ref{lem:template}.
\end{proof}

\begin{example}\label{ex:detector}
For the single equation $n^2+n=2$, with one private phase and one equation phase, the function $\mathfrak D_e$ is
\[
\chi^{-2}\Theta_\lambda(Q,\chi)P_\eta(Q^{-1}\chi)=\sum_{n,s\in\ZZ}\lambda^{n^2}\eta^{|s|}Q^{n^2-s}\chi^{n+s-2}.
\]
Integration over the private phase keeps only the terms with $s=n^2$, and integration over the equation phase then keeps only those with $n^2+n=2$. The two solutions $n=1$ and $n=-2$ contribute $(\lambda\eta)+(\lambda\eta)^4>0$.
\end{example}

In Section~\ref{sec:orbits} we realize every factor of $\mathfrak D_e$ as a coordinate, or a polynomial in coordinates, of a polynomial differential system with rational coefficients, driven by a linear flow on the torus. The rational factors $P_\eta$ are easy (Section~\ref{sec:source}); we treat the theta factors in Sections~\ref{sec:modular} and~\ref{sec:jacobi}. The factor $\prod_l\chi_l^{-c_l(e)}$ has a degree depending on $e$, and we replace it by one oscillator whose coefficients, but not its degree, depend on $e$ (Section~\ref{subsec:oscillator}).

\section{From a Fourier coefficient to trajectories of a polynomial system}\label{sec:orbits}

In this section we prove the second equivalence in the chain of Section~\ref{subsec:overview2}. We realize the function $\mathfrak D_e$ of Section~\ref{sec:arith} by a polynomial system $\dot x=X_e(x)$ with rational coefficients, together with a polynomial map $h_e$ whose coordinates are the ``tests'' of Section~\ref{subsec:overview2}, and we show that $e\in K$ if and only if no trajectory of $\dot x=X_e(x)$ which is recurrent and does not ``grow too fast'' passes all the tests (Corollary~\ref{cor:yes}). Along a linear flow on the torus whose frequencies are linearly independent over $\QQ$, the time average of $\mathfrak D_e$ equals its constant Fourier coefficient (Lemma~\ref{lem:avg}). In Section~\ref{sec:clocks} we produce such frequencies, square roots of distinct primes, as values of state variables at equilibria of a polynomial system. The factors of $\mathfrak D_e$ are theta series and simple rational functions; in Sections~\ref{sec:modular} and~\ref{sec:jacobi} we produce the theta series as outputs of polynomial systems for the theta constants and for the Jacobi theta functions. In Section~\ref{sec:source} we assemble these blocks into the system $\dot x=X_e(x)$ and define $h_e$ (Section~\ref{sec:the_source_system}), we show that every trajectory which is recurrent, does not grow too fast and passes the tests follows a linear flow on the torus along which the polynomial $\mathfrak D^{\mathrm{obs}}_e$ reproduces $\mathfrak D_e$ (Theorem~\ref{thm:selector} in Section~\ref{subsec:pass_tests}), and we describe the intended trajectories (Section~\ref{subsec:intended}). Consequently no such trajectory exists if $e\in K$, while the intended trajectories are such trajectories if $e\notin K$ (Corollary~\ref{cor:yes}). Figure~\ref{fig:circuit} shows how the blocks are connected.

Polynomial differential equations are known to simulate Turing machines~\cite{GCB}, and solutions of polynomial initial value problems of polynomial length even characterize polynomial-time computation~\cite{BGP17}. Buser and Scarpellini~\cite[Section~4]{BuserScarpellini} obtain Jacobi theta functions from algebraic differential equations with suitable initial data, and deduce undecidable statements about the Fourier coefficients of functions built from such solutions. In all of these results the relevant solutions start from specified initial states. Since GAS concerns every initial state, the main work in this section is to show that every trajectory which is recurrent, does not grow too fast and passes the tests behaves like an intended trajectory (Theorem~\ref{thm:selector}).
\begin{figure}[!t]
\centering
\definecolor{cB}{HTML}{E69F00}\definecolor{cC}{HTML}{009E73}%
\definecolor{cD}{HTML}{D55E00}\definecolor{cE}{HTML}{CC79A7}%
\begin{tikzpicture}[
  >={Stealth[length=2.2mm]},
  box/.style={draw=black!70, rounded corners=2pt, align=center, font=\footnotesize,
    inner sep=3pt, line width=0.5pt},
  clock/.style={box, fill=cB!22},
  gen/.style={box, fill=cC!18},
  poly/.style={box, fill=cE!22},
  drive/.style={->, line width=0.8pt},
  read/.style={->, line width=0.7pt, black!45},
  check/.style={->, line width=0.8pt, densely dashed},
  lab/.style={font=\scriptsize, inner sep=1.5pt, align=center},
  sw/.style={minimum width=0.45cm, minimum height=0.28cm, inner sep=0pt},
]
  \node[clock, text width=2.3cm] (clocks) at (1.25,1.95)
    {\textbf{clocks}\\ (Section~\ref{sec:clocks})\\[2pt]
     {\scriptsize irrational speeds;\\ one per unknown\\ and one per equation}};

  \node[gen, text width=4.6cm] (osc) at (9.6,4.75)
    {\textbf{oscillator} (Section~\ref{subsec:oscillator})\\[2pt]
     {\scriptsize makes the right-hand-side factor}\\
     {\scriptsize\textcolor{cD}{the only part which depends on $e$}}};

  \draw[black!55, dashed, rounded corners=4pt, line width=0.5pt] (4.0,-0.1) rectangle (11.4,3.85);
  \node[font=\footnotesize, black!70, anchor=south east, inner sep=3pt] at (11.4,-0.1) {repeated for each unknown};
  \node[gen, text width=3.0cm] (mod) at (5.8,2.98)
    {\textbf{modular block}\\ (Section~\ref{sec:modular})\\[2pt] {\scriptsize makes theta constants}};
  \node[gen, text width=3.0cm] (jac) at (5.8,0.9)
    {\textbf{Jacobi block}\\ (Section~\ref{sec:jacobi})\\[2pt] {\scriptsize makes a theta series}};
  \node[gen, text width=3.0cm] (rec) at (9.6,2.98)
    {\textbf{reciprocals}\\ (Section~\ref{sec:the_source_system})\\[2pt] {\scriptsize make a rational factor}};
  \draw[check] (mod.south) -- node[lab, right] {theta constants\\ as coefficients} (jac.north);

  \draw[drive] (clocks.north) |- node[lab, above, pos=0.7] {speeds of the equation phases} (osc.west);
  \draw[drive] (clocks.east) -- node[lab, above] {speeds and} node[lab, below] {phases} (4.0,1.95);

  \node[poly, text width=3.0cm] (out) at (14.4,1.5)
    {\textbf{output}\\[2pt] {\scriptsize product of all factors;\\ equals $\mathfrak D_e$ at the\\ phases of the clocks}};
  \draw[read] (osc.east) -| node[lab, above, pos=0.3, black!60] {right-hand-side factor} (15.1,0 |- out.north);
  \draw[read] (rec.east) -| node[lab, above, pos=0.35, black!60] {rational factor} (13.7,0 |- out.north);
  \draw[read] (jac.east) -- node[lab, above, pos=0.55, black!60] {theta series} (out.west |- jac.east);
  \node[poly, text width=15.6cm] (tests) at (8.05,-1.3)
    {\textbf{tests}: the algebraic relations and prescribed time averages of all coordinates, and the output;\\
     a trajectory passes the tests if every test has time average $0$ and the algebraic relations hold at all times};
  \draw[read] (clocks.south) -- (clocks.south |- tests.north);
  \draw[read] (7.6,-0.1) -- (7.6,0 |- tests.north);
  \draw[read] (out.south) -- (out.south |- tests.north);

  \begin{scope}[yshift=-2.55cm, font=\scriptsize, leg/.style={anchor=base west, inner sep=0pt}]
    \node[clock, sw] at (0.25,0) {};   \node[leg] at (0.55,-0.08) {clocks};
    \node[gen, sw] at (2.45,0) {};     \node[leg] at (2.75,-0.08) {blocks (both: groups of coordinates of $x$)};
    \node[poly, sw] at (9.15,0) {};    \node[leg] at (9.45,-0.08) {polynomials in $x$, evaluated along a trajectory};
    \draw[drive] (0,-0.45) -- (0.5,-0.45);  \node[leg] at (0.6,-0.53) {appears in the differential equations of};
    \draw[read] (5.95,-0.45) -- (6.45,-0.45); \node[leg] at (6.55,-0.53) {is used in};
    \draw[check] (8.9,-0.45) -- (9.4,-0.45); \node[leg] at (9.5,-0.53) {must agree with (checked by the tests)};
  \end{scope}
\end{tikzpicture}
\caption{The polynomial system $\dot x=X_e(x)$ of Section~\ref{sec:source} as a ``circuit'' (Section~\ref{subsec:overview2}), which computes the function $\mathfrak D_e$ of Section~\ref{sec:arith} along the phases of the clocks. The orange and green boxes are groups of coordinates of $x$; the blocks in the dashed frame are repeated for each of the $q$ unknowns (the Jacobi block includes the auxiliary coordinate $G_j$, and the reciprocals are $G_j^+$ and $G_j^-$). The purple boxes are the polynomials $\mathfrak D^{\mathrm{obs}}_e$ and $h_e$ in $x$. The clocks set the speeds of all other blocks, and their phases also enter the equations of the reciprocals; an arrow which ends at the dashed frame applies to every block inside it. Hence, along the intended trajectories, every block moves in step with the phases $\varphi(0)+\omega t$, and the output equals $\mathfrak D_e(\varphi(0)+\omega t)$ (Lemma~\ref{lem:graph}). The dashed arrow is not built into the differential equations: that the coefficients of the Jacobi block are the theta constants made by the modular block is one of the tests, which are listed in Table~\ref{tab:source}. In Section~\ref{sec:compact} the running total of the tests becomes a new coordinate.}
\label{fig:circuit}
\end{figure}

\subsection{Algebraic clocks}\label{sec:clocks}

We will drive the phases $\varphi$ of Section~\ref{sec:arith} by a linear flow $\dot\varphi=\omega$ whose frequencies are square roots of distinct primes. These square roots are irrational, so they cannot be coefficients of the vector field; we will instead obtain them as values of state variables at equilibria of a polynomial system with rational coefficients.

\subsubsection{Controllers and phase circles}
Each phase $\varphi_\ell$ of the torus of Section~\ref{sec:arith} must rotate with the irrational frequency $\sqrt{p_\ell}$, for a prime $p_\ell$. We produce this frequency with a controller, a planar system with rational coefficients whose only recurrent states on the unit circle are equilibria at which the polynomial $p_\ell r_\ell$ equals $\pm\sqrt{p_\ell}$ (Lemma~\ref{lem:controller}), and a phase circle, which then rotates with this frequency.

Let $p_1<p_2<\dots<p_{q+r}$ be the first $q+r$ prime numbers. For each $\ell\in\{1,\dots,q+r\}$ we use a \emph{controller} $(r_\ell,s_\ell)\in\RR^2$ and a \emph{phase circle} $\zeta_\ell\in\CC\cong\RR^2$, with the equations
\begin{equation}\label{eq:controller}
\dot r_\ell=-s_\ell(p_\ell r_\ell^2-1),\qquad\dot s_\ell=r_\ell(p_\ell r_\ell^2-1),\qquad\dot\zeta_\ell=i\,p_\ell r_\ell\,\zeta_\ell.
\end{equation}
We write $\omega_\ell=p_\ell r_\ell$; this is a polynomial in the state.

\begin{lemma}\label{lem:controller}
The functions $r_\ell^2+s_\ell^2$ and $|\zeta_\ell|^2$ are constant along solutions of \eqref{eq:controller}. Let $(r_\ell(t),s_\ell(t))$ be a solution with $r_\ell^2+s_\ell^2=1$ which is forward recurrent, that is, which returns arbitrarily close to $(r_\ell(0),s_\ell(0))$ at arbitrarily large times. Then it is one of the four equilibria $r_\ell=\epsilon_\ell p_\ell^{-1/2}$, $s_\ell=\pm(1-p_\ell^{-1})^{1/2}$, $\epsilon_\ell\in\{-1,1\}$, and then $\omega_\ell=\epsilon_\ell\sqrt{p_\ell}$ and $\zeta_\ell(t)=\zeta_\ell(0)e^{i\omega_\ell t}$.
\end{lemma}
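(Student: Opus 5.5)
The plan is to verify the conserved quantities directly, then use the fact that on the unit circle the controller is a reparametrized rotation which stops only at the zeros of $p_\ell r_\ell^2-1$.

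First the conservation laws. For the controller, $\frac{d}{dt}(r_\ell^2+s_\ell^2)=2r_\ell\dot r_\ell+2s_\ell\dot s_\ell=-2r_\ell s_\ell(p_\ell r_\ell^2-1)+2s_\ell r_\ell(p_\ell r_\ell^2-1)=0$. For the phase circle, $\frac{d}{dt}|\zeta_\ell|^2=2\re(\bar\zeta_\ell\dot\zeta_\ell)=2\re(i\,p_\ell r_\ell|\zeta_\ell|^2)=0$, because $p_\ell r_\ell$ is real.

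Next the recurrence statement. On the circle $r_\ell^2+s_\ell^2=1$, write $(r_\ell,s_\ell)=(\cos\theta,\sin\theta)$ along the solution, with $\theta$ a continuous lift. Then the system becomes $\dot\theta=g(\theta):=p_\ell\cos^2\theta-1$. This is justified since $(\dot r_\ell,\dot s_\ell)=g\cdot(-s_\ell,r_\ell)$, and the lift can be chosen $C^1$ by the standard covering argument.

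The zeros of $g$ are exactly the four points with $\cos\theta=\pm p_\ell^{-1/2}$, which are equilibria. These points split the circle into four open arcs, and on each arc $g$ has a constant nonzero sign. If the solution starts at a non-equilibrium point, it stays in its open arc by uniqueness, and $\theta(t)$ is strictly monotone and bounded. Hence $\theta(t)$ converges as $t\to\infty$ to an endpoint of the arc, and that endpoint is an equilibrium different from $\theta(0)$.

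This contradicts forward recurrence. The limit is at positive distance from the starting point on the circle, so the orbit cannot return arbitrarily close to $(r_\ell(0),s_\ell(0))$ at large times. Therefore the solution is one of the four equilibria $r_\ell=\epsilon_\ell p_\ell^{-1/2}$, $s_\ell=\pm(1-p_\ell^{-1})^{1/2}$.

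Finally the consequences at an equilibrium. Here $\omega_\ell=p_\ell r_\ell=\epsilon_\ell\sqrt{p_\ell}$ is constant. The equation $\dot\zeta_\ell=i\omega_\ell\zeta_\ell$ is then linear with a constant coefficient, so $\zeta_\ell(t)=\zeta_\ell(0)e^{i\omega_\ell t}$.

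The only mildly delicate step is the monotone-convergence argument on the arc. Instead of the angle lift, one can argue without it: a bounded monotone $r_\ell(t)$ (or $s_\ell(t)$) on each arc converges, and the limit must be a zero of $g$.
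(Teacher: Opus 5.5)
Your proof is correct and follows the paper's argument. Both show that $r_\ell^2+s_\ell^2$ and $|\zeta_\ell|^2$ are conserved, then use the angle equation $\dot\theta=p_\ell\cos^2\theta-1$ with its four equilibria, strict monotone convergence to an endpoint on each open arc (contradicting recurrence), and finally $\dot\zeta_\ell=i\omega_\ell\zeta_\ell$ with constant $\omega_\ell$. The paper states explicitly one step you leave implicit: $p_\ell\ge2$ is what makes the four zeros $\cos\theta=\pm p_\ell^{-1/2}$ distinct.
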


\begin{proof}
The first two components of \eqref{eq:controller} are $(p_\ell r_\ell^2-1)$ times the rotation vector field $(-s_\ell,r_\ell)$, so $r_\ell^2+s_\ell^2$ is constant; and $\frac d{dt}|\zeta_\ell|^2=2\re(\bar\zeta_\ell\dot\zeta_\ell)=0$. On the unit circle write $(r_\ell,s_\ell)=(\cos\theta,\sin\theta)$; then $\dot\theta=p_\ell\cos^2\theta-1$. Since $p_\ell\ge2$, the right side has exactly four zeros on the circle, and on each open arc between consecutive zeros it has a constant sign. A solution starting on such an arc is therefore strictly monotone in $\theta$ and converges to an endpoint of the arc, so it does not return near its initial point. Hence a forward recurrent solution is an equilibrium, $p_\ell r_\ell^2=1$, and the last assertion follows from $\dot\zeta_\ell=i\omega_\ell\zeta_\ell$.
\end{proof}

\subsubsection{Independence of the frequencies}\label{subsec:freqindep}

We use square roots of distinct primes because no nontrivial integer combination of them vanishes \cite[Theorem~2]{Besicovitch}. Along the linear flow $\dot\varphi=\omega$ this makes the time average of every function with an absolutely convergent Fourier series equal to its constant Fourier coefficient (Lemma~\ref{lem:avg} below), which is how we will detect the constant Fourier coefficient \eqref{eq:constterm}. The following lemma also bounds these combinations from below; we use this bound in Section~\ref{sec:compact}.

\begin{lemma}\label{lem:radicals}
Let $p_1,\dots,p_d$ be distinct primes, $P=\max_ap_a$, $\epsilon\in\{-1,1\}^d$ and $k\in\ZZ^d\setminus\{0\}$. Then
\begin{equation}\label{eq:smalldiv}
\Bigl|\sum_{a=1}^dk_a\epsilon_a\sqrt{p_a}\Bigr|\ge\bigl(\sqrt P\,|k|_1\bigr)^{1-2^d}>0,\qquad|k|_1=\sum_a|k_a|.
\end{equation}
\end{lemma}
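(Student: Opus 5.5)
The plan is the standard norm (Liouville-type) argument in the multiquadratic field $L=\QQ(\sqrt{p_1},\dots,\sqrt{p_d})$. Put $\alpha=\sum_a k_a\epsilon_a\sqrt{p_a}$. First, $\alpha\ne0$: this is exactly the linear independence over $\QQ$ of square roots of distinct primes, \cite[Theorem~2]{Besicovitch}, which also gives $[L:\QQ]=2^d$. The Galois group of $L/\QQ$ consists of the $2^d$ sign changes $\sqrt{p_a}\mapsto\delta_a\sqrt{p_a}$, $\delta\in\{-1,1\}^d$. The conjugates of $\alpha$ are therefore the numbers $\alpha_\delta=\sum_ak_a\delta_a\epsilon_a\sqrt{p_a}$.

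Next I would consider the norm $N(\alpha)=\prod_{\delta}\alpha_\delta$. It is fixed by the Galois group, so it lies in $\QQ$. It is also a product of algebraic integers, hence an algebraic integer, so $N(\alpha)\in\ZZ$. Each factor is nonzero, because $\alpha\ne0$ and the automorphisms are injective. Hence $|N(\alpha)|\ge1$.

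Finally I would bound the other $2^d-1$ conjugates trivially: $|\alpha_\delta|\le\sum_a|k_a|\sqrt{p_a}\le\sqrt P\,|k|_1$. This gives
\[
|\alpha|\ \ge\ \frac{1}{\prod_{\delta\ne\mathbf 1}|\alpha_\delta|}\ \ge\ \bigl(\sqrt P\,|k|_1\bigr)^{-(2^d-1)}=\bigl(\sqrt P\,|k|_1\bigr)^{1-2^d},
\]
which is \eqref{eq:smalldiv}. Since $|k|_1\ge1$ and $P\ge2$, the right-hand side is positive.

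If we prefer not to invoke that the Galois group is the full group of sign changes, we can avoid it: the polynomial $\prod_\delta\bigl(X-\sum_ak_a\delta_a\epsilon_a\sqrt{p_a}\bigr)$ is invariant under each sign flip of a single $\sqrt{p_a}$, so it has integer coefficients. Its constant term is then a nonzero integer, with nonzero-ness again coming from Besicovitch.

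The only real obstacle is justifying that every conjugate $\alpha_\delta$ is nonzero, equivalently that the norm is a nonzero integer. Both follow from Besicovitch's independence result, applied to the coefficient vector $(k_a\delta_a\epsilon_a)_a$, which is again nonzero. The rest is routine.
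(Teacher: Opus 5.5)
Your proof is correct and is essentially the paper's argument. The paper also shows that the product of all $2^d$ sign-conjugates $\sum_a k_a\delta_a\sqrt{p_a}$ is a nonzero integer and divides by the $2^d-1$ other factors, each bounded by $\sqrt P\,|k|_1$; it does this by your fallback route, with indeterminates $u_a$ in place of $\sqrt{p_a}$ so that invariance under $u_a\mapsto -u_a$ makes the product an integer polynomial in $u_a^2=p_a$. That is also the rigorous way to phrase your ``sign flip'' remark, and it avoids needing $[L:\QQ]=2^d$ or the structure of the Galois group.
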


\begin{proof}
By \cite[Theorem~2]{Besicovitch}, applied to the positive square roots of the distinct primes $p_1,\dots,p_d$ and to polynomials of degree one, the numbers $\sqrt{p_1},\dots,\sqrt{p_d}$ are linearly independent over $\QQ$, so $L(\delta):=\sum_ak_a\delta_a\sqrt{p_a}\ne0$ for every $\delta\in\{-1,1\}^d$. Consider $\Pi=\prod_{\delta\in\{-1,1\}^d}L(\delta)$ as a polynomial with integer coefficients in the indeterminates $u_a$, after replacing $\sqrt{p_a}$ by $u_a$. Changing the sign of one $u_a$ permutes the factors, so $\Pi$ is unchanged; hence every monomial of $\Pi$ has even degree in every $u_a$, and $\Pi$ is a polynomial with integer coefficients in $u_a^2=p_a$. Thus $\Pi$ is a nonzero integer, and $|\Pi|\ge1$. Every factor satisfies $|L(\delta)|\le\sqrt P|k|_1$, and \eqref{eq:smalldiv} follows by dividing $|\Pi|\ge1$ by the $2^d-1$ factors with $\delta\ne\epsilon$.
\end{proof}

For the matrix $B$ of \eqref{eq:template} we define the frequencies
\begin{equation}\label{eq:freqs}
\Omega_j=4\omega_j,\qquad N_j=4\sum_{l=1}^rB_{lj}\,\omega_{q+l},\qquad j=1,\dots,q,
\end{equation}
which are polynomials in the controller coordinates, since $\omega_\ell=p_\ell r_\ell$. Fix signs $\epsilon\in\{-1,1\}^{q+r}$. When every controller is at an equilibrium of Lemma~\ref{lem:controller} with $\omega_\ell=\epsilon_\ell\sqrt{p_\ell}$, these polynomials take constant values; in the rest of Section~\ref{subsec:freqindep}, $\omega=(\epsilon_\ell\sqrt{p_\ell})_{\ell}$, $\Omega_j$ and $N_j$ denote these values. Along the linear flow $\varphi(t)=\varphi(0)+\omega t$ the characters \eqref{eq:characters} satisfy $Q_j(t)=Q_j(0)e^{i\Omega_jt}$ and $V_j(t)=V_j(0)e^{iN_jt}$.

\begin{corollary}\label{cor:irrational}
For every $j$, $\Omega_j\ne0$ and $N_j/\Omega_j\notin\QQ$. Moreover $k\cdot\omega\ne0$ for every $k\in\ZZ^{q+r}\setminus\{0\}$.
\end{corollary}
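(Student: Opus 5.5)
All three assertions follow from the $\QQ$-linear independence of $\sqrt{p_1},\dots,\sqrt{p_{q+r}}$, which Lemma~\ref{lem:radicals} gives in the quantitative form $\sum_a k_a\epsilon_a\sqrt{p_a}\ne0$ for $k\ne0$. Each claim will be written as a nonzero integer vector paired with $\omega=(\epsilon_\ell\sqrt{p_\ell})_\ell$.

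The last claim is immediate. For $k\in\ZZ^{q+r}\setminus\{0\}$ we have $k\cdot\omega=\sum_\ell k_\ell\epsilon_\ell\sqrt{p_\ell}$, and this is nonzero by Lemma~\ref{lem:radicals} with $d=q+r$. Next, $\Omega_j=4\omega_j=4\epsilon_j\sqrt{p_j}\ne0$ trivially, or equivalently by taking $k=e_j$.

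For irrationality, suppose $N_j/\Omega_j=a/b$ with $a,b\in\ZZ$ and $b\ne0$. Then $bN_j-a\Omega_j=0$. Dividing by $4$, this reads
\[
\sum_{l=1}^r bB_{lj}\,\omega_{q+l}-a\,\omega_j=0,
\]
that is, $k\cdot\omega=0$, where $k\in\ZZ^{q+r}$ has entry $-a$ in position $j$ and entry $bB_{lj}$ in position $q+l$. The key point is that $k\ne0$. The $j$-th column of $B$ is nonzero by Lemma~\ref{lem:template}, so some $B_{lj}\ne0$, and $b\ne0$ gives $bB_{lj}\ne0$. The private index $j\le q$ and the equation indices $q+l$ are distinct, so $a$ cannot cancel these entries. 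Hence $k\ne0$, which contradicts the previous paragraph.

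There is no real obstacle in this argument. The only point needing care is that $k\ne0$ in the irrationality step, and this is exactly where the nonzero columns of $B$ from Lemma~\ref{lem:template} are used.
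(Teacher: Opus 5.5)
Your proof is correct and follows the paper's argument. Like the paper, you write a rational ratio $N_j/\Omega_j=a/b$ as $k\cdot\omega=0$ for an integer vector supported on positions $j$ and $q+l$, and conclude by Lemma~\ref{lem:radicals} together with the nonzero columns of $B$ from Lemma~\ref{lem:template}. The only cosmetic differences are that you divide out the factor $4$ and show $k\ne0$ directly (via $bB_{lj}\ne0$), whereas the paper deduces $k=0$ and then derives $a=0$ and $b=0$.
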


\begin{proof}
The last assertion is Lemma~\ref{lem:radicals}, and $\Omega_j=4\epsilon_j\sqrt{p_j}\ne0$. If $N_j/\Omega_j=a/b$ with integers $a,b$, $b\ne0$, then $0=bN_j-a\Omega_j=k\cdot\omega$ with $k_j=-4a$, $k_{q+l}=4bB_{lj}$, and all other entries zero. By Lemma~\ref{lem:radicals} this $k$ is zero; so $a=0$, and then $bB_j=0$ with $B_j\ne0$ (Lemma~\ref{lem:template}) gives $b=0$, a contradiction.
\end{proof}

The nonzero columns of $B$ are used through Corollary~\ref{cor:irrational} and in Lemma~\ref{lem:graph}(c); see Remark~\ref{rem:zeroratio}.

We use the following elementary averaging fact, a form of the Kronecker--Weyl equidistribution theorem, repeatedly. When the entries of $\omega$ below are rationally independent, it is a special case of the theorem on averages for conditionally periodic motions \cite[\S51]{ArnoldMMCM}, which holds for every continuous function on the torus; we need it under the weaker nonresonance condition of the lemma. A function $g$ on $\TT^{d}$ has an \emph{absolutely convergent Fourier series} if $g(\varphi)=\sum_{k\in\ZZ^d}\hat g_ke^{ik\cdot\varphi}$ with $\sum_k|\hat g_k|<\infty$.

\begin{lemma}\label{lem:avg}
Let $g$ have an absolutely convergent Fourier series on $\TT^d$, let $\varphi_0\in\TT^d$, and let $\omega\in\RR^d$ satisfy $k\cdot\omega\ne0$ whenever $\hat g_k\ne0$ and $k\ne0$. Then $\frac1T\int_0^Tg(\varphi_0+\omega t)\,dt\to\hat g_0$ as $T\to\infty$.
\end{lemma}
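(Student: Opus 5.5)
The approach is to split $g$ into a finite trigonometric polynomial plus a uniformly small tail, handle the finite part term by term, and let the tail's contribution go to zero. Fix $\varepsilon>0$. Since $\sum_k|\hat g_k|<\infty$, there is a finite set $F\subseteq\ZZ^d$ containing $0$ with $\sum_{k\notin F}|\hat g_k|<\varepsilon$. Write $g=g_F+g_R$ with $g_F(\varphi)=\sum_{k\in F}\hat g_ke^{ik\cdot\varphi}$. The series converges absolutely, so it converges uniformly, and hence $|g_R(\varphi)|\le\varepsilon$ for every $\varphi\in\TT^d$. This gives
\[
\Bigl|\frac1T\int_0^Tg_R(\varphi_0+\omega t)\,dt\Bigr|\le\varepsilon
\]
for every $T>0$.

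For the finite part, I would compute each term exactly. Fix $k\in F\setminus\{0\}$ with $\hat g_k\neq0$. By hypothesis $k\cdot\omega\ne0$, so
\[
\frac1T\int_0^T e^{ik\cdot(\varphi_0+\omega t)}\,dt=e^{ik\cdot\varphi_0}\,\frac{e^{i(k\cdot\omega)T}-1}{i(k\cdot\omega)T},
\]
which has modulus at most $2/(|k\cdot\omega|T)\to0$. Terms with $\hat g_k=0$ contribute nothing, and the $k=0$ term contributes exactly $\hat g_0$. Because $F$ is finite, it follows that $\frac1T\int_0^Tg_F(\varphi_0+\omega t)\,dt\to\hat g_0$.

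Putting the two parts together gives
\[
\limsup_{T\to\infty}\Bigl|\frac1T\int_0^Tg(\varphi_0+\omega t)\,dt-\hat g_0\Bigr|\le\varepsilon,
\]
and since $\varepsilon>0$ was arbitrary, the lemma follows.

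No step is a real obstacle. The only point needing care is that the hypothesis gives nonresonance only for frequencies $k$ that actually occur in $g$, which is why the terms with $\hat g_k=0$ are discarded before the hypothesis is used. The convergence is not uniform over the infinitely many $k$, and the $\varepsilon$-truncation is what handles this.
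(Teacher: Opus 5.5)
Your proof is correct and is essentially the paper's argument. The paper integrates the uniformly convergent series term by term, uses the same Weyl computation for each character, bounds each term's average by $|\hat g_k|$, and concludes by dominated convergence for series; your $\varepsilon$-truncation into a finite part and a uniformly small tail is exactly that dominated-convergence step written out by hand.
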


\begin{proof}
As in the first step of H.~Weyl's proof of the theorem on averages, as presented in \cite[\S51, Lemma~1]{ArnoldMMCM}, for each $k$ the average of $\hat g_ke^{ik\cdot(\varphi_0+\omega t)}$ over $[0,T]$ is $\hat g_k$ if $k=0$, and is bounded by $2|\hat g_k|/(T|k\cdot\omega|)\to0$ otherwise. Since $\sum_k|\hat g_k|<\infty$ and every character has modulus one, the Fourier series converges uniformly, so the average of $g$ over $[0,T]$ is the sum of the averages of the individual terms. These are bounded by $|\hat g_k|$ for every $T$, so dominated convergence for series gives the claim.
\end{proof}

\subsubsection{The right-hand-side oscillator}\label{subsec:oscillator}
The phase circles carry the characters of Section~\ref{sec:arith}: when $|\zeta_\ell|=1$ we write $\zeta_\ell=e^{i\varphi_\ell}$, so that \eqref{eq:characters} becomes $Q_j=\zeta_j^4$ and $\chi_l=\zeta_{q+l}^4$, with $\zeta_\ell^{-1}=\bar\zeta_\ell$ for negative powers (we use these identifications in Section~\ref{sec:source}). The factor $\prod_l\chi_l^{-c_l(e)}$ in \eqref{eq:detector} is then a monomial of degree $4|c(e)|_1$ in the phase circles and their conjugates, and this degree grows with $e$. We replace it by one additional state $R\in\CC$ with the equation
\begin{equation}\label{eq:oscillator}
\dot R=-4i\Bigl(\sum_{l=1}^rc_l(e)\,\omega_{q+l}\Bigr)R,\qquad\omega_{q+l}=p_{q+l}r_{q+l},
\end{equation}
which is bilinear in the state; only its coefficients depend on $e$. If the controllers are at equilibria and $|\zeta_\ell|=1$, then by Lemma~\ref{lem:controller} each $\omega_{q+l}=\epsilon_{q+l}\sqrt{p_{q+l}}$ is constant and $\dot\chi_l=4i\omega_{q+l}\chi_l$, so $\chi_l(t)=\chi_l(0)e^{4i\omega_{q+l}t}$, and the product rule gives
\[
\frac d{dt}\Bigl(R\prod_l\chi_l^{c_l(e)}\Bigr)=\Bigl(-4i\sum_lc_l(e)\,\omega_{q+l}+4i\sum_lc_l(e)\,\omega_{q+l}\Bigr)R\prod_l\chi_l^{c_l(e)}=0.
\]
Hence
\begin{equation}\label{eq:oscgraph}
R(t)=R_0\prod_{l=1}^r\chi_l(t)^{-c_l(e)},\qquad R_0=R(0)\prod_l\chi_l(0)^{c_l(e)}.
\end{equation}

\subsection{The theta-constant system}\label{sec:modular}

The theta series $\Theta_\lambda(Q,V)=\sum_n(\lambda Q)^{n^2}V^n$ of \eqref{eq:ThetaP} depends on $Q$ only through $\lambda Q$. In Section~\ref{sec:jacobi} we generate it, as a function of the phases of $Q$ and $V$, by a polynomial system whose coefficients are the theta constants $\vartheta_2,\vartheta_3,\vartheta_4$ and the series $E_2$ (recalled below) at the point $\tau$ of the upper half-plane with $e^{\pi i\tau}=\lambda Q$. In this subsection we generate these coefficients by a polynomial system with rational coefficients, the modular block. Along its intended solution $Q=e^{i\alpha}$ rotates, $\tau$ moves along a horizontal line, and each of its coordinates is an absolutely convergent Fourier series in the phase $\alpha$. Since GAS concerns every initial state, we also determine all other solutions, and we show that the only ones which are forward recurrent, satisfy a bound on time averages of a power of their norm, and have two prescribed time averages are the intended ones (Proposition~\ref{prop:modselect}).

\subsubsection{Theta constants}
In Section~\ref{sec:jacobi} we will generate the theta series $\Theta_\lambda(Q,V)$ by a differential system whose coefficients are theta constants $\vartheta_k(\tau)$ at the point $\tau\in\HH$ with $e^{\pi i\tau}=\lambda Q$. Here we collect what we need about theta constants: the polynomial differential equations \ref{th:ode}, which define the modular block in the next subsection, and the transformation laws and the modular function $\Lambda$, which describe all of its other solutions.

For $\tau$ in the upper half-plane $\HH$ put
\[
\vartheta_2(\tau)=\sum_{n\in\ZZ}e^{\pi i(n+\frac12)^2\tau},\qquad\vartheta_3(\tau)=\sum_{n\in\ZZ}e^{\pi in^2\tau},\qquad\vartheta_4(\tau)=\sum_{n\in\ZZ}(-1)^ne^{\pi in^2\tau},
\]
\[
E_2(\tau)=1-24\sum_{n\ge1}\sigma_1(n)e^{2\pi in\tau},
\]
where $\sigma_1(n)$ is the sum of the divisors of $n$. We use the following classical facts. For $\gamma=\bigl(\begin{smallmatrix}a&b\\c&d\end{smallmatrix}\bigr)$ in $SL_2(\RR)$ or $SL_2(\CC)$ we write $\gamma\tau=(a\tau+b)/(c\tau+d)$ and $j(\gamma,\tau)=c\tau+d$; recall $j(\gamma_1\gamma_2,\tau)=j(\gamma_1,\gamma_2\tau)j(\gamma_2,\tau)$ and $\im(\gamma\tau)=\im\tau/|j(\gamma,\tau)|^2$ for $\gamma\in SL_2(\RR)$.
\begin{enumerate}[label=($\vartheta$\arabic*)]
\item\label{th:nozero} $\vartheta_2,\vartheta_3,\vartheta_4$ and $E_2$ are holomorphic on $\HH$, since their series converge locally uniformly (note $\sigma_1(n)\le n^2$), and $\vartheta_2,\vartheta_3,\vartheta_4$ have no zeros in $\HH$. Indeed $\vartheta_k(\tau)$ is the value at $z=0$ of the theta function $\vartheta_k(z|\tau)$ of \cite{WW}, and the zeros of $\vartheta_2(z|\tau)$, $\vartheta_3(z|\tau)$, $\vartheta_4(z|\tau)$ are the points congruent modulo $\pi$ and $\pi\tau$ to $\frac12\pi$, $\frac12\pi+\frac12\pi\tau$, $\frac12\pi\tau$, respectively \cite[\S21.12]{WW}; none of these is congruent to $0$.
\item\label{th:jacobi} $\vartheta_3^4=\vartheta_2^4+\vartheta_4^4$ on $\HH$ \cite[Corollary~21.2.1]{WW}.
\item\label{th:transf} $(\vartheta_2^4,\vartheta_3^4,\vartheta_4^4)(\tau+1)=(-\vartheta_2^4,\vartheta_4^4,\vartheta_3^4)(\tau)$ and $(\vartheta_2^4,\vartheta_3^4,\vartheta_4^4)(-1/\tau)=-\tau^2(\vartheta_4^4,\vartheta_3^4,\vartheta_2^4)(\tau)$. These are the fourth powers of $\vartheta_2(\tau+1)=e^{\pi i/4}\vartheta_2(\tau)$, $\vartheta_3(\tau+1)=\vartheta_4(\tau)$, $\vartheta_4(\tau+1)=\vartheta_3(\tau)$ \cite[(9.3.2) at $z=0$]{Lawden} and of $(-i\tau)^{1/2}(\vartheta_2,\vartheta_3,\vartheta_4)(\tau)=(\vartheta_4,\vartheta_3,\vartheta_2)(-1/\tau)$, where $|\arg(-i\tau)|<\frac12\pi$ \cite[\S21.51]{WW}; note $((-i\tau)^{1/2})^4=-\tau^2$. The group $SL_2(\ZZ)$ is generated by $\bigl(\begin{smallmatrix}1&1\\0&1\end{smallmatrix}\bigr)$ and $\bigl(\begin{smallmatrix}0&-1\\1&0\end{smallmatrix}\bigr)$ \cite[\S1.2, Remark~2]{Zagier123}.
\item\label{th:E2} $E_2(\gamma\tau)=j(\gamma,\tau)^2E_2(\tau)-\frac{6ic}{\pi}j(\gamma,\tau)$ for $\gamma\in SL_2(\ZZ)$. This is \cite[Proposition~6, (19)]{Zagier123} multiplied by $6/\pi^2$: there $E_2=\frac6{\pi^2}G_2=1-24\sum_{n\ge1}\sigma_1(n)e^{2\pi in\tau}$ \cite[(17)]{Zagier123}, which is our $E_2$.
\item\label{th:lambda} Let $\Gamma(2)=\{\gamma\in SL_2(\ZZ):\gamma\equiv I\bmod 2\}$, and let $\bar\Gamma(2)$ be its image in $PSL_2(\RR)$. The function $\Lambda=\vartheta_2^4/\vartheta_3^4$ is holomorphic on $\HH$ with values in $\CC\setminus\{0,1\}$, it takes every value in $\CC\setminus\{0,1\}$, and $\Lambda(\tau)=\Lambda(\tau')$ holds if and only if $\tau'=\gamma\tau$ for some $\gamma\in\Gamma(2)$ \cite[Chapter~7, \S3.4, (24), and Theorems~7 and~8; Chapter~8, \S3.1]{Ahlfors}. Here Ahlfors's modular function $\lambda(\tau)=(e_3-e_2)/(e_1-e_2)$, formed with the periods $1$ and $\tau$, equals $\Lambda$ by \cite[(6.9.2) and (6.9.3)]{Lawden}, since Ahlfors's $e_1,e_2,e_3$ are Lawden's $e_1,e_3,e_2$. Consequently $\Lambda$ is a covering map from $\HH$ onto $\CC\setminus\{0,1\}$, whose group of deck transformations is $\bar\Gamma(2)$, and $\bar\Gamma(2)$ acts freely on $\HH$ (see below). Moreover $\Gamma(2)$ is a \emph{lattice} in $SL_2(\RR)$, that is, a discrete subgroup such that $\Gamma(2)\backslash SL_2(\RR)$ carries a finite measure invariant under right multiplication by $SL_2(\RR)$: indeed $SL_2(\ZZ)$ is a lattice \cite[Chapter~9, Proposition~9.18, \pg{307}; Chapter~11, \S11.2, pp.~357--358]{EW}, $\Gamma(2)$ has index $6$ in $SL_2(\ZZ)$ \cite[Chapter~11, Example~11.10, \pg{358}]{EW}, and a subgroup of finite index in a lattice is a lattice \cite[Chapter~11, \S11.2.1, \pg{358}]{EW}. This is used only in Step~2 of the proof of Proposition~\ref{prop:modselect}.
\item\label{th:ode} With $D=\frac1{\pi i}\frac{d}{d\tau}$,
\[
\begin{gathered}
D\vartheta_2=\tfrac1{12}(E_2+\vartheta_3^4+\vartheta_4^4)\vartheta_2,\qquad D\vartheta_3=\tfrac1{12}(E_2+\vartheta_2^4-\vartheta_4^4)\vartheta_3,\\
D\vartheta_4=\tfrac1{12}(E_2-\vartheta_2^4-\vartheta_3^4)\vartheta_4,
\end{gathered}
\]
and $DE_2=\frac1{12}(2E_2^2-\vartheta_2^8-\vartheta_3^8-\vartheta_4^8)$ \cite[Proposition~1, (8)]{BLS}.
\end{enumerate}

The covering property in \ref{th:lambda} follows from the other assertions there. If $\gamma=\bigl(\begin{smallmatrix}a&b\\c&d\end{smallmatrix}\bigr)\in\Gamma(2)$, $\gamma\ne\pm I$, fixes a point of $\HH$, then $c\ne0$ and $(a+d)^2<4$, because the fixed points are the roots of $cz^2+(d-a)z-b$, whose discriminant is $(a+d)^2-4$. But $a$ and $d$ are odd and $4\mid bc$, so $ad\equiv1\bmod4$, hence $a\equiv d\bmod 4$ and $a+d\equiv2\bmod4$, a contradiction. So $\bar\Gamma(2)$ acts freely on $\HH$. Since $\bar\Gamma(2)$ is discrete, for every compact $P\subseteq\HH$ only finitely many $\gamma\in\bar\Gamma(2)$ satisfy $\gamma P\cap P\ne\emptyset$ \cite[Chapter~11, Lemma~11.2, \pg{348}]{EW}. Taking for $P$ a closed disc around $\tau\in\HH$ and shrinking it, we find a neighbourhood $U$ of $\tau$ with $\gamma U\cap U=\emptyset$ for all $\gamma\in\bar\Gamma(2)\setminus\{1\}$. Hence the quotient map $p:\HH\to\bar\Gamma(2)\backslash\HH$ is a covering map whose group of deck transformations is $\bar\Gamma(2)$ \cite[Proposition~1.40]{Hatcher}. By \ref{th:lambda}, $\Lambda=\phi\circ p$ for a continuous bijection $\phi:\bar\Gamma(2)\backslash\HH\to\CC\setminus\{0,1\}$, and $\phi$ is open because the nonconstant holomorphic function $\Lambda$ is open. So $\phi$ is a homeomorphism, and $\Lambda$ is a covering map with the same deck transformations as $p$.

In \ref{th:ode} we have rewritten \cite[(8)]{BLS} in terms of $E_2=12\eta/\pi^2$ (their $\eta$ is the Weierstrass series, \cite[(6)]{BLS}). Brezhnev, Lyakhovich and Sharapov also explain that the classical Darboux--Halphen, Chazy and Ramanujan equations are differential consequences or reductions of their systems; for instance, the logarithmic derivatives of the theta constants satisfy the Darboux--Halphen system~\cite[(1)--(2)]{BLS}. The next lemma says that every $\gamma\in SL_2(\ZZ)$ acts on the fourth powers of the theta constants as the two generators do in \ref{th:transf}: up to the factor $j(\gamma,\tau)^2$, they are permuted and their signs may change. It is used in Step~3 of the proof of Proposition~\ref{prop:modselect}, and its consequence, the invariance of the function $m$, in Step~2 of that proof.

\begin{lemma}\label{lem:thetagamma}
For every $\gamma\in SL_2(\ZZ)$ there is a $3\times3$ matrix $\sigma_\gamma$, with exactly one entry $\pm1$ in every row and every column and zeros elsewhere, such that
\[
(\vartheta_2^4,\vartheta_3^4,\vartheta_4^4)^T(\gamma\tau)=j(\gamma,\tau)^2\,\sigma_\gamma\,(\vartheta_2^4,\vartheta_3^4,\vartheta_4^4)^T(\tau),\qquad\tau\in\HH.
\]
Consequently the function $m(\tau)=\im(\tau)\max_{k=2,3,4}|\vartheta_k(\tau)|^4$ satisfies $m(\gamma\tau)=m(\tau)$ for $\gamma\in SL_2(\ZZ)$.
\end{lemma}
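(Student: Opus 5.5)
The plan is to reduce the first assertion to the two generators of $SL_2(\ZZ)$ listed in \ref{th:transf}, $T=\bigl(\begin{smallmatrix}1&1\\0&1\end{smallmatrix}\bigr)$ and $S=\bigl(\begin{smallmatrix}0&-1\\1&0\end{smallmatrix}\bigr)$, by means of the cocycle relation $j(\gamma_1\gamma_2,\tau)=j(\gamma_1,\gamma_2\tau)j(\gamma_2,\tau)$. Write $\Theta(\tau)=(\vartheta_2^4,\vartheta_3^4,\vartheta_4^4)^T(\tau)$ and let $\mathcal G$ be the set of $\gamma\in SL_2(\ZZ)$ for which some signed permutation matrix $\sigma_\gamma$ satisfies $\Theta(\gamma\tau)=j(\gamma,\tau)^2\sigma_\gamma\Theta(\tau)$ for all $\tau\in\HH$.

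\begin{enumerate}
\item \textbf{The generators and $-I$ lie in $\mathcal G$.} For $T$, we have $j(T,\tau)=1$, and \ref{th:transf} gives $\sigma_T$ as the matrix sending $(a,b,c)\mapsto(-a,c,b)$. For $S$, we have $j(S,\tau)=\tau$, and \ref{th:transf} gives $\Theta(-1/\tau)=\tau^2\cdot(-1)(\vartheta_4^4,\vartheta_3^4,\vartheta_2^4)^T$. So $\sigma_S$ is minus the permutation swapping the first and third entries. For $-I$, the point $\gamma\tau=\tau$ is unchanged and $j=-1$, so $j^2=1$ and $\sigma_{-I}=I$.
\item \textbf{$\mathcal G$ is closed under products.} If $\gamma_1,\gamma_2\in\mathcal G$, then
\[
\Theta(\gamma_1\gamma_2\tau)=j(\gamma_1,\gamma_2\tau)^2\sigma_{\gamma_1}\Theta(\gamma_2\tau)=j(\gamma_1,\gamma_2\tau)^2j(\gamma_2,\tau)^2\sigma_{\gamma_1}\sigma_{\gamma_2}\Theta(\tau).
\]
By the cocycle relation this equals $j(\gamma_1\gamma_2,\tau)^2\sigma_{\gamma_1}\sigma_{\gamma_2}\Theta(\tau)$, and a product of signed permutation matrices is again one.
\item \textbf{$\mathcal G$ is closed under inverses.} Apply the identity for $\gamma$ at the point $\gamma^{-1}\tau$ and invert $\sigma_\gamma$, which is again a signed permutation matrix. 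Then use $j(\gamma,\gamma^{-1}\tau)j(\gamma^{-1},\tau)=j(I,\tau)=1$.
\item \textbf{Conclusion for the first assertion.} Steps 2 and 3 make $\mathcal G$ a subgroup of $SL_2(\ZZ)$. Since it contains the generators $T$ and $S$, it is all of $SL_2(\ZZ)$. Formally, one inducts on the length of a word in $T^{\pm1}$ and $S^{\pm1}$.
\item \textbf{Invariance of $m$.} Take absolute values entrywise. A signed permutation only permutes the absolute values, so the identity gives
\[
\max_k|\vartheta_k(\gamma\tau)|^4=|j(\gamma,\tau)|^2\max_k|\vartheta_k(\tau)|^4.
\]
Multiplying by $\im(\gamma\tau)=\im\tau/|j(\gamma,\tau)|^2$ yields $m(\gamma\tau)=m(\tau)$.
\end{enumerate}

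There is no real obstacle here. The only point needing care is keeping the order of the factors consistent in the cocycle computation, and checking that the formula for $S$ in \ref{th:transf} is indeed of the form $j^2\sigma$. That check holds because $-\tau^2=\tau^2\cdot(-1)$ and the minus sign can be absorbed into $\sigma_S$.
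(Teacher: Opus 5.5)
Your proposal is correct and follows the paper's proof essentially step for step. You check the two generators via \ref{th:transf}, propagate the identity through products and inverses with the cocycle relation, induct on word length, and then take absolute values and use $\im(\gamma\tau)=\im\tau/|j(\gamma,\tau)|^2$; the separate check for $-I$ is harmless but redundant, since $-I=S^2$ already lies in the group generated by $S$ and $T$.
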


\begin{proof}
Write $\vec\vartheta=(\vartheta_2^4,\vartheta_3^4,\vartheta_4^4)^T$. For the two generators of $SL_2(\ZZ)$ the claim is \ref{th:transf}, with $j=1$ and $j=\tau$, respectively. If it holds for $\gamma_1$ and $\gamma_2$, then $\vec\vartheta(\gamma_1\gamma_2\tau)=j(\gamma_1,\gamma_2\tau)^2\sigma_{\gamma_1}\vec\vartheta(\gamma_2\tau)=j(\gamma_1\gamma_2,\tau)^2\sigma_{\gamma_1}\sigma_{\gamma_2}\vec\vartheta(\tau)$ by the identity $j(\gamma_1\gamma_2,\tau)=j(\gamma_1,\gamma_2\tau)j(\gamma_2,\tau)$. If it holds for $\gamma$, then applying it at $\gamma^{-1}\tau$ and using $j(\gamma^{-1},\tau)=j(\gamma,\gamma^{-1}\tau)^{-1}$ (the same identity with $\gamma_1=\gamma$, $\gamma_2=\gamma^{-1}$) gives it for $\gamma^{-1}$, with $\sigma_{\gamma^{-1}}=\sigma_\gamma^{-1}$. Products and inverses of signed permutation matrices are signed permutation matrices, so induction on the length of a word in the generators and their inverses proves the first claim. For the second, the first claim gives $\{|\vartheta_k(\gamma\tau)|^4\}_{k=2,3,4}=\{|j(\gamma,\tau)|^2|\vartheta_k(\tau)|^4\}_{k=2,3,4}$, and $\im(\gamma\tau)=\im\tau/|j(\gamma,\tau)|^2$.
\end{proof}

\subsubsection{The modular block}
We now turn \ref{th:ode} into a polynomial system, the \emph{modular block}. Its intended solution is $(\vartheta_2^4,\vartheta_4^4,E_2,\vartheta_3)$ at a point $\tau(t)$ which moves parallel to the real axis with speed $\Omega/\pi$ (Lemma~\ref{lem:modcanon}). Since GAS concerns every initial state, we also determine all other solutions on the invariant set $\{Y^4=L+C\}$ (see after Lemma~\ref{lem:modcanon}). Those with $LC(L+C)\ne0$ are given by the formula \eqref{eq:modfamily}, which evaluates the intended solution at $M\tau$ for some $M\in SL_2(\CC)$, multiplied by powers of $j(M,\tau)$ and with an additional term in the third component (Lemma~\ref{lem:modgeneral} and Proposition~\ref{prop:modcomplete}); those with $LC(L+C)=0$ are constant if they are recurrent (Lemma~\ref{lem:moddiv}). For a solution defined for all real times, $M$ maps the real line into $\HH$, and Lemma~\ref{lem:modshape} describes the two possible shapes of its image.

Define the polynomial map $\cD:\CC^4\to\CC^4$, $\cD(L,C,E,Y)=(\cD_L,\cD_C,\cD_E,\cD_Y)$, by
\begin{equation}\label{eq:modsys}
\begin{gathered}
\cD_L=\tfrac13L(E+L+2C),\qquad\cD_C=\tfrac13C(E-2L-C),\\
\cD_E=\tfrac16(E^2-L^2-LC-C^2),\qquad\cD_Y=\tfrac1{12}(E+L-C)Y.
\end{gathered}
\end{equation}
For a real number $\Omega\ne0$ the \emph{modular block} with frequency $\Omega$ is the system
\begin{equation}\label{eq:modblock}
\dot{\mathbf m}=i\Omega\,\cD(\mathbf m),\qquad\mathbf m=(L,C,E,Y)\in\CC^4,
\end{equation}
which after separating real and imaginary parts is a polynomial system on $\RR^8$ with rational coefficients, multiplied by $\Omega$. In the full construction $\Omega$ will be the polynomial $4\omega_j=4p_jr_j$ in a controller coordinate.

\begin{lemma}\label{lem:modcanon}
The map $\mathbf m^{\mathrm{can}}=(\vartheta_2^4,\vartheta_4^4,E_2,\vartheta_3)$ satisfies $D\mathbf m^{\mathrm{can}}=\cD(\mathbf m^{\mathrm{can}})$ and $Y^4=L+C$ on $\HH$. Consequently, for every $\tau_0\in\HH$, $t\mapsto\mathbf m^{\mathrm{can}}(\tau_0+\Omega t/\pi)$ solves \eqref{eq:modblock}.
\end{lemma}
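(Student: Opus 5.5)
The plan is a direct substitution of the classical identities \ref{th:ode} and \ref{th:jacobi} into \eqref{eq:modsys}. Write $L=\vartheta_2^4$, $C=\vartheta_4^4$, $E=E_2$, $Y=\vartheta_3$. First, the algebraic identity $Y^4=L+C$ is exactly Jacobi's identity \ref{th:jacobi}, $\vartheta_3^4=\vartheta_2^4+\vartheta_4^4$. We will use it to eliminate $\vartheta_3^4$ wherever it appears in the differential equations.

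\emph{Derivatives of fourth powers.} Since $D$ is a derivation, $D(\vartheta_k^4)=4\vartheta_k^3D\vartheta_k$. Applying \ref{th:ode} gives
\[
D\vartheta_2^4=\tfrac13(E_2+\vartheta_3^4+\vartheta_4^4)\vartheta_2^4,\qquad D\vartheta_4^4=\tfrac13(E_2-\vartheta_2^4-\vartheta_3^4)\vartheta_4^4.
\]
Substituting $\vartheta_3^4=L+C$ turns these into $\tfrac13L(E+L+2C)=\cD_L$ and $\tfrac13C(E-2L-C)=\cD_C$. For $Y$, \ref{th:ode} gives $D\vartheta_3=\tfrac1{12}(E_2+L-C)\vartheta_3=\cD_Y$ directly. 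For $E$, we compute
\[
\vartheta_2^8+\vartheta_3^8+\vartheta_4^8=L^2+(L+C)^2+C^2=2(L^2+LC+C^2),
\]
so $DE_2=\tfrac1{12}(2E^2-2(L^2+LC+C^2))=\cD_E$. This establishes $D\mathbf m^{\mathrm{can}}=\cD(\mathbf m^{\mathrm{can}})$ on $\HH$, and all functions involved are holomorphic there by \ref{th:nozero}.

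\emph{Time parametrization.} Set $\tau(t)=\tau_0+\Omega t/\pi$. This stays in $\HH$ for all real $t$, because $\Omega$ is real and so $\im\tau(t)=\im\tau_0>0$. By the chain rule for holomorphic functions,
\[
\frac{d}{dt}\mathbf m^{\mathrm{can}}(\tau(t))=\frac{\Omega}{\pi}\,\frac{d\mathbf m^{\mathrm{can}}}{d\tau}=\frac{\Omega}{\pi}\,\pi i\,D\mathbf m^{\mathrm{can}}=i\Omega\,\cD(\mathbf m^{\mathrm{can}}(\tau(t))),
\]
which is \eqref{eq:modblock}.

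There is no real obstacle in this argument. The only step requiring care is the bookkeeping in the fourth-power computation, where Jacobi's identity must be used to replace $\vartheta_3^4$ before comparing with $\cD_L$, $\cD_C$ and $\cD_E$.
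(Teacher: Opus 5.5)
Your proposal is correct and follows essentially the same route as the paper: substitute \ref{th:ode} into the derivatives of the fourth powers, eliminate $\vartheta_3^4$ with Jacobi's identity \ref{th:jacobi} (including in $\vartheta_2^8+\vartheta_3^8+\vartheta_4^8=2(L^2+LC+C^2)$), and finish with the chain rule $\frac{d}{dt}=\frac{\Omega}{\pi}\cdot\pi i\,D$. Where the paper writes $4\vartheta_2^4 D\log\vartheta_2$, you write $D(\vartheta_k^4)=4\vartheta_k^3D\vartheta_k$; this is the same computation and avoids logarithms altogether.
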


\begin{proof}
By \ref{th:ode} and \ref{th:jacobi}, $D\vartheta_2^4=4\vartheta_2^4D\log\vartheta_2=\frac13\vartheta_2^4(E_2+\vartheta_2^4+2\vartheta_4^4)$, and similarly for the other components; for $\cD_E$ use $\vartheta_2^8+\vartheta_3^8+\vartheta_4^8=2(\vartheta_2^8+\vartheta_2^4\vartheta_4^4+\vartheta_4^8)$, which follows from \ref{th:jacobi}. The identity $Y^4=L+C$ is \ref{th:jacobi}. For the last assertion note $\frac{d}{dt}\mathbf m^{\mathrm{can}}(\tau_0+\Omega t/\pi)=\frac\Omega\pi\cdot\pi i\,D\mathbf m^{\mathrm{can}}$.
\end{proof}

We also record that $\{Y^4=L+C\}$ is invariant: $\frac{d}{dt}(Y^4-L-C)=\frac{i\Omega}3(E+L-C)(Y^4-L-C)$, by a direct computation. All solutions considered below lie on this set.

Part (a) of the next lemma says that composing the intended solution with any Möbius transformation $M\in SL_2(\CC)$, and correcting by powers of $j(M,\tau)=c\tau+d$ and by a term in $E$, gives again a solution of \eqref{eq:modblock}. It is the analogue for our system of the classical invariance of the Darboux--Halphen system under Möbius transformations of the independent variable~\cite[\pg{3165}]{ACH}. Part (b) says that every initial state with $LC(L+C)\ne0$ is the value of such a solution; there the ratio $L/(L+C)$ determines the point $M\tau^*$ through the modular function $\Lambda$, and the remaining freedom in $M$ and $\rho$ fits $L+C$, $E$ and $Y$.

\begin{lemma}\label{lem:modgeneral}
\begin{enumerate}[label=(\alph*)]
\item Let $M\in SL_2(\CC)$, let $U\subseteq\CC$ be open with $M(U)\subseteq\HH$, write $j(\tau)=j(M,\tau)$, and let $\rho$ be holomorphic on $U$ with $\rho^4=j^{-2}$. Then
\begin{equation}\label{eq:modfamily}
\mathbf m_{M,\rho}(\tau)=\Bigl(\frac{\vartheta_2^4(M\tau)}{j(\tau)^2},\ \frac{\vartheta_4^4(M\tau)}{j(\tau)^2},\ \frac{E_2(M\tau)}{j(\tau)^2}+\frac{6ic}{\pi j(\tau)},\ \rho(\tau)\vartheta_3(M\tau)\Bigr),
\end{equation}
where $c$ is the lower left entry of $M$, satisfies $D\mathbf m_{M,\rho}=\cD(\mathbf m_{M,\rho})$ and $Y^4=L+C$ on $U$.
\item Let $\mathbf m^*=(L^*,C^*,E^*,Y^*)$ satisfy $Y^{*4}=L^*+C^*$ and $L^*C^*(L^*+C^*)\ne0$, let $\tau^*\in\CC$, and let $T^*\in\HH$ with $\Lambda(T^*)=L^*/(L^*+C^*)$. Then there are $M,U,\rho$ as in (a), with $\tau^*\in U$, such that $M\tau^*=T^*$ and $\mathbf m_{M,\rho}(\tau^*)=\mathbf m^*$.
\end{enumerate}
\end{lemma}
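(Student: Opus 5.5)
For part (a) I would verify the differential equations directly, component by component, from Lemma~\ref{lem:modcanon} and the chain rule for Möbius transformations. For $M=\bigl(\begin{smallmatrix}a&b\\c&d\end{smallmatrix}\bigr)\in SL_2(\CC)$ we have $\frac{d}{d\tau}M\tau=j(\tau)^{-2}$. Hence, for any component $f$ of $\mathbf m^{\mathrm{can}}$, $D\bigl(f(M\tau)\bigr)=j^{-2}(Df)(M\tau)$. We also have
\[
D\bigl(j^{-k}\bigr)=-\tfrac{k c}{\pi i}j^{-k-1}=\tfrac{ikc}{\pi}j^{-k-1}.
\]

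For the $L$-component, $D(\vartheta_2^4(M\tau)j^{-2})$ equals $j^{-4}\cD_L(\mathbf m^{\mathrm{can}})(M\tau)+\frac{2ic}{\pi}j^{-3}\vartheta_2^4(M\tau)$. On the other side, substituting \eqref{eq:modfamily} into $\cD_L=\frac13L(E+L+2C)$ produces the same two terms. The extra summand $\frac{6ic}{\pi j}$ in $E$ contributes exactly $\frac13\cdot\frac{6ic}{\pi j}\cdot\frac{\vartheta_2^4}{j^2}$. The $C$-component is identical. For $Y$, the relation $\rho^4=j^{-2}$ gives $\rho'/\rho=-\frac{c}{2j}$, so $D\rho=\frac{ic}{2\pi j}\rho$, which matches $\frac1{12}\cdot\frac{6ic}{\pi j}$.

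The $E$-component is the only nonlinear check. Here $D(E_2(M\tau)j^{-2}+\frac{6ic}{\pi}j^{-1})$ must be compared with $\frac16(E^2-L^2-LC-C^2)$. Expanding $E^2$ gives the cross term $\frac{12ic}{\pi}E_2 j^{-3}$, which matches $\frac{2ic}{\pi}E_2j^{-3}\cdot 6\cdot\frac16$. It also gives the square $-\frac{36c^2}{\pi^2}j^{-2}$, which matches $D(\frac{6ic}{\pi}j^{-1})=\frac{6ic}{\pi}\cdot\frac{ic}{\pi}j^{-2}$. I expect this bookkeeping to be the main (though routine) obstacle. Finally, $Y^4=\rho^4\vartheta_3^4(M\tau)=j^{-2}(\vartheta_2^4+\vartheta_4^4)(M\tau)=L+C$ by \ref{th:jacobi}.

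For part (b) I would solve for $M$ by prescribing, in turn, the value $j_*:=j(\tau^*)$, the entry $c$, and then the remaining entries.

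\textbf{Choice of $j_*$.} Since $L+C$ must equal $\vartheta_3^4(T^*)/j_*^2$, choose $j_*$ with $j_*^2=\vartheta_3^4(T^*)/(L^*+C^*)$. This is possible and gives $j_*\ne0$, because $\vartheta_3$ has no zeros (\ref{th:nozero}) and $L^*+C^*\ne0$. Then $\Lambda(T^*)=L^*/(L^*+C^*)$ yields $\vartheta_2^4(T^*)/j_*^2=L^*$, and $C^*$ follows from \ref{th:jacobi}.

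\textbf{Choice of $c$ and $d$.} Choose $c$ from the linear equation $E^*=E_2(T^*)/j_*^2+6ic/(\pi j_*)$, and then set $d=j_*-c\tau^*$.

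\textbf{Choice of $a$ and $b$.} The conditions $a\tau^*+b=T^*j_*$ and $ad-bc=1$ form a linear system in $(a,b)$ with determinant $-(c\tau^*+d)=-j_*\ne0$, so it has a unique solution. This gives $M\in SL_2(\CC)$ with $M\tau^*=T^*$.

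\textbf{Choice of $U$ and $\rho$.} Take $U$ to be a small disc around $\tau^*$ on which $j\ne0$ and $M(U)\subseteq\HH$; this exists by continuity, since $\HH$ is open. Since $(Y^*/\vartheta_3(T^*))^4=(L^*+C^*)/\vartheta_3^4(T^*)=j_*^{-2}$, the function $j^{-2}$ has the prescribed fourth root $Y^*/\vartheta_3(T^*)$ at $\tau^*$. As $j^{-2}$ is nonvanishing on the disc, it has a holomorphic fourth root $\rho$ on $U$ taking this value at $\tau^*$. Then $\mathbf m_{M,\rho}(\tau^*)=\mathbf m^*$ by construction.
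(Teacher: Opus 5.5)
Your proposal is correct and follows the paper's proof essentially step for step. In (a) you use the same chain-rule computation with $dT/d\tau=j^{-2}$, $dj/d\tau=c$ and $\rho'/\rho=-c/(2j)$. In (b) you make the same successive choices: $j(M,\tau^*)$ from $L^*+C^*$, then $c$ from $E^*$, then $d$, then $a,b$, and finally $\rho$ as a branch of the fourth root on a disc. The only cosmetic difference is that you obtain $(a,b)$ by solving a $2\times2$ linear system with determinant $-j_*\ne0$, where the paper writes down explicit formulas.
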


\begin{proof}
(a) Write $T=M\tau$, so $dT/d\tau=j^{-2}$ and $dj/d\tau=c$, and let $\vartheta_k,E_2$ denote values at $T$. Then $D_\tau L=(\pi i)^{-1}\frac{d}{d\tau}(\vartheta_2^4j^{-2})=j^{-4}(D\vartheta_2^4)+\frac{2ic}\pi j^{-3}\vartheta_2^4$, while by Lemma~\ref{lem:modcanon}
\[
\cD_L(\mathbf m_{M,\rho})=\tfrac13\vartheta_2^4j^{-2}\Bigl(\frac{E_2+\vartheta_2^4+2\vartheta_4^4}{j^2}+\frac{6ic}{\pi j}\Bigr)=j^{-4}(D\vartheta_2^4)+\frac{2ic}{\pi}j^{-3}\vartheta_2^4.
\]
The components $C$, $E$ and $Y$ are checked in the same way, using $\rho'/\rho=-c/(2j)$ for $Y$. The identity $Y^4=L+C$ follows from \ref{th:jacobi}.

(b) Put $v^*=(L^*+C^*)/\vartheta_3^4(T^*)\ne0$ and choose $j^*$ with $(j^*)^{-2}=v^*$. Define $c=\pi j^*(E^*-v^*E_2(T^*))/(6i)$, $d=j^*-c\tau^*$, $a=(1+cT^*j^*)/j^*$ and $b=T^*j^*-a\tau^*$. Then $ad-bc=1$, $c\tau^*+d=j^*$ and $a\tau^*+b=T^*j^*$, so $M=\bigl(\begin{smallmatrix}a&b\\c&d\end{smallmatrix}\bigr)$ satisfies $M\tau^*=T^*$ and $j(M,\tau^*)=j^*$. By continuity there is a disc $U\ni\tau^*$ with $M(U)\subseteq\HH$ and $j(M,\cdot)\ne0$ on $U$; let $\rho$ be the holomorphic fourth root of $j(M,\cdot)^{-2}$ on the disc $U$ with $\rho(\tau^*)=Y^*/\vartheta_3(T^*)$, which exists since $j(M,\cdot)$ has no zero on $U$ and $(Y^*/\vartheta_3(T^*))^4=v^*=(j^*)^{-2}$. Now $L=v^*\vartheta_2^4(T^*)=(L^*+C^*)\Lambda(T^*)=L^*$, $C=v^*\vartheta_4^4(T^*)=(L^*+C^*)(1-\Lambda(T^*))=C^*$ by \ref{th:jacobi}, $E=v^*E_2(T^*)+6ic/(\pi j^*)=E^*$, and $Y=Y^*$ at $\tau^*$.
\end{proof}

Lemma~\ref{lem:modgeneral}(b) does not apply when $L$, $C$ or $L+C$ vanishes, because $\Lambda$ does not take the values $0$ and $1$. The next lemma shows that these initial states do not matter for us: on them the system reduces to a scalar equation whose nonconstant solutions decay to zero, so a recurrent solution there is an equilibrium. We call a solution $\mathbf m:\RR\to\CC^4$ \emph{forward recurrent} if $\mathbf m(t_k)\to\mathbf m(0)$ for some sequence $t_k\to\infty$.

\begin{lemma}\label{lem:moddiv}
Each of the sets $\{L=0\}$, $\{C=0\}$, $\{L+C=0\}$ is invariant under \eqref{eq:modblock}. If $\mathbf m:\RR\to\CC^4$ is a solution of \eqref{eq:modblock} on $\{Y^4=L+C\}$ with $L(0)C(0)(L(0)+C(0))=0$ which is forward recurrent, then $\mathbf m$ is constant.
\end{lemma}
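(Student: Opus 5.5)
Invariance follows directly from \eqref{eq:modsys}. The first step is to compute $\cD_L=\tfrac13L(\dots)$, so $\frac{d}{dt}L=\frac{i\Omega}{3}(E+L+2C)L$, and uniqueness of solutions of this linear equation in $L$ (with coefficient continuous in $t$) shows that $L\equiv0$ once $L$ vanishes at one time. The same holds for $C$, since $\cD_C$ contains the factor $C$. For $L+C$, add the two equations:
\[
\cD_L+\cD_C=\tfrac13\bigl(E(L+C)+L^2+2LC-2LC-C^2\bigr)=\tfrac13(E+L-C)(L+C).
\]
This is again linear in $L+C$, so $\{L+C=0\}$ is invariant.

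For the recurrence statement, the plan is to split into the three cases. In each case, invariance together with $Y^4=L+C$ reduces the system to a scalar Riccati-type equation for $E$, followed by linear equations for the rest.

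\emph{Case $L=0$.} Here $Y^4=C$, and the equations become $\dot C=\frac{i\Omega}{3}C(E-C)$, $\dot E=\frac{i\Omega}{6}(E^2-C^2)$ and $\dot Y=\frac{i\Omega}{12}(E-C)Y$. Put $w=E-C$. Then
\[
\dot w=\tfrac{i\Omega}{6}(E^2-C^2)-\tfrac{i\Omega}{3}C(E-C)=\tfrac{i\Omega}{6}(E-C)^2,
\]
so $\dot w=\frac{i\Omega}{6}w^2$, whose solutions are $w(t)=w_0/(1-\frac{i\Omega}{6}w_0t)$. If $w_0\ne0$ the denominator never vanishes for real $t$, because $\Omega$ is real and the product $\frac{i\Omega}{6}w_0t$ is real only when $\Omega w_0$ is imaginary; in that exceptional subcase the solution blows up, which contradicts that $\mathbf m$ is defined on $\RR$. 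So in every admissible subcase $w(t)\to0$ as $t\to\infty$, and forward recurrence forces $w(0)=\lim w(t_k)=0$. Hence $w\equiv0$, so $E=C$, and then $\dot C=0$ and $\dot Y=0$. Thus $\mathbf m$ is constant.

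\emph{Case $C=0$.} Here $Y^4=L$, and the analogous combination $w=E+L$ satisfies
\[
\dot w=\tfrac{i\Omega}{6}(E^2-L^2)+\tfrac{i\Omega}{3}L(E+L)=\tfrac{i\Omega}{6}(E+L)^2,
\]
and the same argument applies.

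\emph{Case $L+C=0$.} Here $C=-L$ and $Y=0$, and we take $w=E$ shifted suitably. The equations are $\dot L=\frac{i\Omega}{3}L(E-L)$ and $\dot E=\frac{i\Omega}{6}(E^2-L^2)$, so $w=E-L$ again satisfies $\dot w=\frac{i\Omega}{6}w^2$; the same conclusion gives $E\equiv L$ and $\dot L=0$.

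The main obstacle is handling the subcase where $w$ blows up in finite real time. It is excluded simply because $\mathbf m$ is assumed defined on all of $\RR$, so only the decay branch $|w(t)|\sim 6/(|\Omega|t)\to0$ remains, and recurrence then pins $w(0)=0$.
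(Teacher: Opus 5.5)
Your proposal is correct and follows the paper's route: the same linear equations $\dot z=g(t)z$ for $z\in\{L,C,L+C\}$ give invariance, and the same combinations $E-C$, $E+L$, $E-L$ satisfy $\dot u=\frac{i\Omega}6u^2$, whose nonzero global solutions decay to $0$; recurrence then forces $u\equiv0$, after which $\dot{\mathbf m}=0$. Your explicit handling of a purely imaginary $u(0)$, where the solution blows up at a finite real time and is excluded because $\mathbf m$ is defined on all of $\RR$, spells out a point the paper leaves implicit.
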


\begin{proof}
We have $\cD_L=L\cdot\frac13(E+L+2C)$, $\cD_C=C\cdot\frac13(E-2L-C)$ and $\cD_L+\cD_C=(L+C)\cdot\frac13(E+L-C)$, so along a solution of \eqref{eq:modblock} each $z\in\{L,C,L+C\}$ satisfies a scalar linear equation $\dot z=g(t)z$ with continuous $g$; hence $z(t)=z(0)\exp\int_0^tg$, and $z$ either vanishes identically or never. On $\{L=0\}$ the function $u=E-C$ satisfies $\dot u=\frac{i\Omega}6u^2$; on $\{C=0\}$ the function $u=E+L$ satisfies the same equation; on $\{L+C=0\}$ we have $Y=0$, $C=-L$, and $u=E-L$ satisfies the same equation. A solution of $\dot u=\frac{i\Omega}6u^2$ defined on all of $\RR$ is $u(t)=u(0)/(1-i\Omega u(0)t/6)$, and if $u(0)\ne0$ then $u(t)\to0$ as $t\to\infty$, so $\mathbf m$ is not forward recurrent. Hence $u\equiv0$, and then \eqref{eq:modsys} shows that $\dot{\mathbf m}=0$ in each of the three cases.
\end{proof}

The next proposition shows that a solution defined for all real times with $LC(L+C)\ne0$ is given by the formula \eqref{eq:modfamily} with a single matrix $M$ on the whole real line. Lemma~\ref{lem:modgeneral}(b) gives such a formula near each time, with a matrix that might depend on the time; lifting the path $L/(L+C)$ through the covering map $\Lambda$ shows that it does not.

\begin{proposition}\label{prop:modcomplete}
Let $\mathbf m:\RR\to\CC^4$ be a solution of \eqref{eq:modblock} with $Y^4=L+C$ and $L(0)C(0)(L(0)+C(0))\ne0$. Then there is $M\in SL_2(\CC)$ with $M(\RR)\subseteq\HH$ such that, for every $t\in\RR$, $\mathbf m(t)=\mathbf m_{M,\rho}(\Omega t/\pi)$, where $\rho$ is the continuous function on $\RR$ given by $\rho(\Omega t/\pi)=Y(t)/\vartheta_3(M(\Omega t/\pi))$, which satisfies $\rho^4=j(M,\cdot)^{-2}$; here $\mathbf m_{M,\rho}$ denotes the right-hand side of \eqref{eq:modfamily} on $\RR$, with this continuous $\rho$.
\end{proposition}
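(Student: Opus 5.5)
The plan is to reparametrize by $\tau=\Omega t/\pi$, so that $\mathbf m$ becomes a solution of $D\mathbf m=\cD(\mathbf m)$ along the real line in the $\tau$-variable. Then we lift the path $\ell(\tau)=L/(L+C)$ through the covering map $\Lambda$ of \ref{th:lambda}. First I would check that $L$, $C$ and $L+C$ never vanish on $\RR$. This follows from the scalar linear equations used in the proof of Lemma~\ref{lem:moddiv}, since each of these quantities either vanishes identically or never. Hence $\ell$ is a continuous path in $\CC\setminus\{0,1\}$. Fix $T_0\in\HH$ with $\Lambda(T_0)=\ell(0)$ and let $T:\RR\to\HH$ be the unique continuous lift with $\Lambda\circ T=\ell$ and $T(0)=T_0$.

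Near each time $\tau^*$, Lemma~\ref{lem:modgeneral}(b) with $T^*=T(\tau^*)$ provides a matrix $M_{\tau^*}$ and a disc $U_{\tau^*}$ with $M_{\tau^*}\tau^*=T(\tau^*)$ and $\mathbf m_{M_{\tau^*},\rho}(\tau^*)=\mathbf m(\tau^*)$. By Lemma~\ref{lem:modgeneral}(a), $\mathbf m_{M_{\tau^*},\rho}$ solves the same holomorphic ODE, so uniqueness gives $\mathbf m=\mathbf m_{M_{\tau^*},\rho}$ on $U_{\tau^*}\cap\RR$, and in particular $\Lambda(M_{\tau^*}\tau)=\ell(\tau)$ there. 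Both $\tau\mapsto M_{\tau^*}\tau$ and $T$ are then lifts of $\ell$ on the interval $U_{\tau^*}\cap\RR$ that agree at $\tau^*$. By uniqueness of lifts they agree on that interval, so $T(\tau)=M_{\tau^*}\tau$ locally.

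The key step is to show that $M_{\tau^*}$ does not depend on $\tau^*$. On an overlap of two discs we have $M_1\tau=M_2\tau$ on a real interval, so the Möbius maps coincide and $M_1=\pm M_2$ in $SL_2(\CC)$. The $E$-component of \eqref{eq:modfamily}, together with $L=\vartheta_2^4(M\tau)/j^2$, determines $j(M,\tau)^2$ and the lower-left entry $c$ from the data. Here I use that $\vartheta_3$ has no zeros by \ref{th:nozero}, so $j^{-2}=(L+C)/\vartheta_3^4(M\tau)$. This fixes the sign up to the global $\pm$, which changes neither $M\tau$ nor the formula once $\rho$ is rechosen. Connectedness of $\RR$ then gives a single $M$ (up to sign) with $M\tau=T(\tau)\in\HH$ for all real $\tau$, so $M(\RR)\subseteq\HH$.

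Finally, $\rho$ is defined by $\rho=Y/\vartheta_3(M\tau)$, which is continuous, again because $\vartheta_3$ has no zeros. From $Y^4=L+C$ and the first two components we get $\rho^4=(L+C)/\vartheta_3^4(M\tau)=j^{-2}$ by \ref{th:jacobi}, and the identity $\mathbf m=\mathbf m_{M,\rho}$ holds everywhere. The main obstacle is the sign ambiguity in gluing: I need to make sure that $j^2$ and $c$ really pin down $M$ up to $\pm I$, so that the local choices glue consistently.
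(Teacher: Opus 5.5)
Your proposal is correct and follows the paper's proof essentially step for step. It has the same ingredients: nonvanishing of $L$, $C$ and $L+C$ via Lemma~\ref{lem:moddiv}; a global lift $T$ of $\ell$ through the covering map $\Lambda$; local representations from Lemma~\ref{lem:modgeneral}(b), matched to $\mathbf m$ by uniqueness of solutions and to $T$ by uniqueness of lifts; and local constancy of the M\"obius map together with connectedness of $\RR$.

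The sign ambiguity you flag as the main obstacle is in fact immediate. Under $M\mapsto-M$ the quantities $j(M,\cdot)^2$ and $c/j(M,\cdot)$ are unchanged, hence so are the components $L,C,E$ of \eqref{eq:modfamily}. The function $\rho$ is simply defined as $Y/\vartheta_3(M\cdot)$. So your detour through recovering $j^2$ and $c$ from the data is unnecessary, and in any case the data only fix $c$ up to the sign of $j$.
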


\begin{proof}
By Lemma~\ref{lem:moddiv}, $L$, $C$ and $L+C$ never vanish, so $\ell(t)=L(t)/(L(t)+C(t))$ is a continuous path in $\CC\setminus\{0,1\}$. By \ref{th:lambda} and the lifting criterion for covering maps \cite[Proposition~1.33]{Hatcher}, which applies since $\RR$ is simply connected and locally path-connected, there is a continuous $T:\RR\to\HH$ with $\Lambda\circ T=\ell$. Fix $t^*\in\RR$ and apply Lemma~\ref{lem:modgeneral}(b) with $\tau^*=\Omega t^*/\pi$, $\mathbf m^*=\mathbf m(t^*)$ and $T^*=T(t^*)$; this gives $(M_{t^*},\rho_{t^*})$. By Lemma~\ref{lem:modgeneral}(a), $t\mapsto\mathbf m_{M_{t^*},\rho_{t^*}}(\Omega t/\pi)$ solves \eqref{eq:modblock} near $t^*$, and it agrees with $\mathbf m$ at $t^*$; by uniqueness the two agree near $t^*$. Hence $\Lambda(M_{t^*}(\Omega t/\pi))=\ell(t)$ near $t^*$, so $t\mapsto M_{t^*}(\Omega t/\pi)$ and $T$ are continuous lifts of $\ell$ which agree at $t^*$, and therefore agree near $t^*$ by uniqueness of lifts \cite[Proposition~1.34]{Hatcher}. Two Möbius maps which agree on an interval are equal as maps (their matrices agree up to sign), so $t^*\mapsto M_{t^*}$, as a Möbius map, is locally constant; since $\RR$ is connected, all $M_{t^*}$ define one Möbius map $M$, and $T(t)=M(\Omega t/\pi)$ for all $t$. The components $L,C,E$ of \eqref{eq:modfamily} do not change if the matrix of $M$ is replaced by its negative, and $\rho(\Omega t/\pi):=Y(t)/\vartheta_3(T(t))$ is continuous by \ref{th:nozero}, with $\rho^4=j(M,\cdot)^{-2}$ along $\Omega t/\pi$.
\end{proof}

By Proposition~\ref{prop:modcomplete}, a solution defined for all times is determined by a Möbius map $M$ with $M(\RR)\subseteq\HH$. The next lemma distinguishes two cases. If $M(\infty)\in\HH$, the solution tends to zero (case (a)), so it is not recurrent. Otherwise $M(\RR)$ is a \emph{horocycle}: a circle in $\HH$ tangent to the real axis at $M(\infty)$ (without this point), or a horizontal line if $M(\infty)=\infty$ (case (b)); we then write the solution in terms of the matrices $h_\tau=g\,u_{s\tau}\in SL_2(\RR)$, the form used in Step~2 of the proof of Proposition~\ref{prop:modselect}.

\begin{lemma}\label{lem:modshape}
Let $M\in SL_2(\CC)$ with $M(\RR)\subseteq\HH$. Then exactly one of the following holds.
\begin{enumerate}[label=(\alph*)]
\item $M(\infty)\in\HH$. Then $\mathbf m_{M,\rho}(\tau)\to0$ as $\tau\to\pm\infty$ in $\RR$, for every continuous $\rho$ with $\rho^4=j(M,\cdot)^{-2}$.
\item There are $g\in SL_2(\RR)$ and $s\in\RR\setminus\{0\}$ with $M\tau=g(i+s\tau)$ for all $\tau\in\RR$; then $M(\infty)=g(\infty)\in\RR\cup\{\infty\}$.
\end{enumerate}
In case (b), put $u_x=\bigl(\begin{smallmatrix}1&x\\0&1\end{smallmatrix}\bigr)$ and $h_\tau=g\,u_{s\tau}\in SL_2(\RR)$. Then $M\tau=h_\tau i$, and the components of $\mathbf m_{M,\rho}(\tau)$ satisfy
\begin{equation}\label{eq:modframe}
L=s\,\frac{\vartheta_2^4(h_\tau i)}{j(h_\tau,i)^2},\qquad C=s\,\frac{\vartheta_4^4(h_\tau i)}{j(h_\tau,i)^2},\qquad\max\{|L|,|C|,|L+C|\}=|s|\,m(h_\tau i).
\end{equation}
\end{lemma}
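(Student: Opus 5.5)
The plan is to read off the geometry of the generalized circle $M(\RR\cup\{\infty\})$ and treat the two cases separately. Since $M(\RR)\subseteq\HH$ and $M(\infty)=\lim_{\tau\to\pm\infty}M\tau$, the point $M(\infty)$ lies in the closure $\HH\cup\RR\cup\{\infty\}$ of $\HH$ in the Riemann sphere. So exactly one of $M(\infty)\in\HH$ or $M(\infty)\in\RR\cup\{\infty\}$ holds. Once (b) is established in the second case, (a) and (b) are mutually exclusive because $g(\infty)\notin\HH$.

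\emph{Case (a).} Here $c\ne0$, since otherwise $M(\infty)=\infty$. Hence $|j(M,\tau)|=|c\tau+d|\to\infty$ as $\tau\to\pm\infty$. Meanwhile $M\tau\to M(\infty)\in\HH$, so by continuity (\ref{th:nozero}) the values $\vartheta_k(M\tau)$ and $E_2(M\tau)$ stay bounded. Reading off \eqref{eq:modfamily}:
\begin{itemize}
\item the first two components are $O(|j|^{-2})$;
\item the third is $O(|j|^{-2})+O(|j|^{-1})$;
\item the fourth is $\rho\,\vartheta_3(M\tau)$ with $|\rho|=|j|^{-1/2}$.
\end{itemize}
All four tend to $0$.

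\emph{Case (b), construction of $g$ and $s$.} Choose $g_0\in SL_2(\RR)$ with $g_0(\infty)=M(\infty)$; this is possible because $SL_2(\RR)$ acts transitively on $\RR\cup\{\infty\}$. Put $N=g_0^{-1}M$, so that $N(\infty)=\infty$ and $N(\RR)=g_0^{-1}M(\RR)\subseteq\HH$. A Möbius map fixing $\infty$ is affine, $N\tau=\alpha\tau+\beta$ with $\alpha\ne0$.
\begin{itemize}
\item The image $N(\RR)$ is a straight line contained in $\HH$, so it is parallel to $\RR$. This forces $\alpha\in\RR\setminus\{0\}$, and also $y:=\im\beta>0$.
\item Writing $\beta=x_0+iy$ and $a=\mathrm{diag}(y^{1/2},y^{-1/2})$, we have $a(z)=yz$. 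Hence $N\tau=u_{x_0}a\,(i+s\tau)$ with $s=\alpha/y$.
\item Set $g=g_0u_{x_0}a\in SL_2(\RR)$. Then $M\tau=g(i+s\tau)$ and $M(\infty)=g(\infty)$.
\end{itemize}
This classification, in particular the observation that the image line must be horizontal, is the only conceptual step.

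\emph{Case (b), the formulas \eqref{eq:modframe}.} The identity $M\tau=gu_{s\tau}i=h_\tau i$ is immediate. The remaining work is the bookkeeping of automorphy factors, and I expect the sign and square-root ambiguities to be the main place needing care.
\begin{itemize}
\item As Möbius maps, $M=g\circ A$ with $A\tau=s\tau+i$, realized by $A=\bigl(\begin{smallmatrix}s^{1/2}&is^{-1/2}\\0&s^{-1/2}\end{smallmatrix}\bigr)\in SL_2(\CC)$ for any choice of square root. Thus $M=\pm gA$.
\item Only $j^2$ enters $L,C,E$'s leading terms, so the cocycle identity gives $j(M,\tau)^2=j(g,A\tau)^2\,j(A,\tau)^2=j(g,i+s\tau)^2/s$.
\item Again by the cocycle identity and $j(u_x,\cdot)=1$, we get $j(h_\tau,i)=j(g,u_{s\tau}i)=j(g,i+s\tau)$. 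Hence $j(M,\tau)^{-2}=s\,j(h_\tau,i)^{-2}$, which yields the expressions for $L$ and $C$.
\item By \ref{th:jacobi}, $L+C=s\,\vartheta_3^4(h_\tau i)/j(h_\tau,i)^2$.
\item Since $h_\tau\in SL_2(\RR)$, we have $|j(h_\tau,i)|^{-2}=\im(h_\tau i)/\im i=\im(h_\tau i)$.
\end{itemize}
Combining these gives $\max\{|L|,|C|,|L+C|\}=|s|\,\im(h_\tau i)\max_k|\vartheta_k(h_\tau i)|^4=|s|\,m(h_\tau i)$, which is the last identity in \eqref{eq:modframe}.
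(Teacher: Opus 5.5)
Your proposal is correct and follows the paper's proof essentially step for step. Both handle (a) by $c\neq0$, boundedness of the theta constants and $E_2$ near $M(\infty)\in\HH$, and $|j|\to\infty$. Both handle (b) by conjugating $M(\infty)$ to $\infty$ with some $g_0\in SL_2(\RR)$ and noting that the affine image line must be horizontal. Your matrix $u_{x_0}a$ is exactly the paper's $h_1$.

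The only cosmetic difference is that you obtain $j(M,\tau)^{-2}=s\,j(h_\tau,i)^{-2}$ from the cocycle identity, while the paper reads it off directly from the lower row $(c,\,cs\tau+d)$ of $h_\tau$.
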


\begin{proof}
(a) Here the lower left entry $c$ of $M$ is nonzero, since otherwise $M(\infty)=\infty$. As $\tau\to\pm\infty$, $M\tau\to M(\infty)\in\HH$, so the theta constants and $E_2$ at $M\tau$ stay bounded, since they are continuous on $\HH$, while $|j(M,\tau)|=|c\tau+d|\to\infty$ and $|\rho(\tau)|=|j(M,\tau)|^{-1/2}$; hence every component of \eqref{eq:modfamily} tends to zero.

(b) If $M(\infty)\notin\HH$, then $x^*=M(\infty)$, a limit of points of $M(\RR)\subseteq\HH$, lies in $\RR\cup\{\infty\}$. Choose $g_0\in SL_2(\RR)$ with $g_0(\infty)=x^*$. Then $N=g_0^{-1}M$ fixes $\infty$, so $N\tau=\kappa\tau+b$ with $\kappa\ne0$, and $N(\RR)=g_0^{-1}(M(\RR))\subseteq\HH$. A line $\{\kappa\tau+b:\tau\in\RR\}$ lies in $\HH$ only if $\kappa\in\RR$ and $\im b>0$. Writing $b=x_1+iy_1$ and $h_1=\bigl(\begin{smallmatrix}\sqrt{y_1}&x_1/\sqrt{y_1}\\0&1/\sqrt{y_1}\end{smallmatrix}\bigr)$, we get $N\tau=h_1(i+s\tau)$ with $s=\kappa/y_1$, and $g=g_0h_1$ works.

For \eqref{eq:modframe}, $M$ is the composition of $g$ with $\tau\mapsto i+s\tau$, whose matrix is $\bigl(\begin{smallmatrix}s^{1/2}&is^{-1/2}\\0&s^{-1/2}\end{smallmatrix}\bigr)$ for a choice of $s^{1/2}\in\CC$. Hence, if $g=\bigl(\begin{smallmatrix}a&b\\c&d\end{smallmatrix}\bigr)$, then $j(M,\tau)=s^{-1/2}(c(i+s\tau)+d)=s^{-1/2}j(h_\tau,i)$, because $h_\tau$ has lower row $(c,cs\tau+d)$. This gives the formulas for $L$ and $C$, and $L+C=Y^4=s\vartheta_3^4(h_\tau i)/j(h_\tau,i)^2$. Since $|j(h_\tau,i)|^{-2}=\im(h_\tau i)$, the last identity follows.
\end{proof}

\subsubsection{Selection by two averages}
Along the intended solution, $\avg{Y}=1$ and $\avg{Ye^{-i\alpha}}=2\lambda$; these are the first two Fourier coefficients of $\vartheta_3$ in the phase $\alpha$. Proposition~\ref{prop:modselect} shows that these two averages, together with forward recurrence and a bound on the time averages of $|\mathbf m|^\mu$, single out the intended solution among all solutions of the modular block on $\{Y^4=L+C\}$. In Section~\ref{sec:the_source_system} they become two of the ``moments'', that is, two of the tests encoded in $h_e$.

For a function $f$ defined along a trajectory, we write $\avg{f}=\lim_{T\to\infty}\frac1T\int_0^Tf(t)\,dt$ when the limit exists. Let $\tau_\lambda=i\,\frac{\log256}{\pi}$, so that $e^{\pi i\tau_\lambda}=\lambda$.

\begin{proposition}\label{prop:modselect}
Let $\Omega\ne0$, let $\alpha(t)=\alpha_0+\Omega t$ with $\alpha_0\in\RR$, and let $\mathbf m:\RR\to\CC^4$ be a solution of \eqref{eq:modblock} with $Y^4=L+C$ such that
\begin{enumerate}[label=(\roman*)]
\item $\mathbf m$ is forward recurrent;
\item $\frac1T\int_0^T|\mathbf m(t)|^\mu\,dt$ is bounded for $T\ge1$, for some $\mu\ge1$;
\item $\avg{Y}=1$ and $\avg{Y e^{-i\alpha}}=2\lambda$.
\end{enumerate}
Put $\tau(t)=\tau_\lambda+\alpha(t)/\pi$. Then for all $t\in\RR$,
\begin{equation}\label{eq:modselect}
\mathbf m(t)=\mathbf m^{\mathrm{can}}(\tau(t)),\quad\text{that is,}\quad(L,C,E,Y)(t)=(\vartheta_2^4,\vartheta_4^4,E_2,\vartheta_3)(\tau(t)).
\end{equation}
\end{proposition}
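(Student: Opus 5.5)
The plan is to reduce step by step to the intended horizontal-line solution. Suppose first that $L(0)C(0)(L(0)+C(0))=0$. Then Lemma~\ref{lem:moddiv} and (i) make $\mathbf m$ constant. With $\avg{Y}=1$, this gives $\avg{Ye^{-i\alpha}}=Y\avg{e^{-i\alpha}}=0\ne2\lambda$, contradicting (iii). Hence $LC(L+C)\ne0$, and Proposition~\ref{prop:modcomplete} provides $M\in SL_2(\CC)$ with $M(\RR)\subseteq\HH$ and $\mathbf m(t)=\mathbf m_{M,\rho}(\Omega t/\pi)$.

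Case (a) of Lemma~\ref{lem:modshape} is impossible. There $\mathbf m(t)\to0$, so forward recurrence would force $\mathbf m(0)=0$, contradicting $L(0)\ne0$. So we are in case (b): $M\tau=h_\tau i$ with $h_\tau=g\,u_{s\tau}$, and by \eqref{eq:modframe} $|\mathbf m(t)|\ge|s|\,m(h_{\Omega t/\pi}i)$.

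\emph{Step 2 (the main obstacle): $x^*=g(\infty)$ is a cusp of $SL_2(\ZZ)$, i.e.\ $x^*\in\QQ\cup\{\infty\}$.} By Lemma~\ref{lem:thetagamma}, $m$ is $SL_2(\ZZ)$-invariant. So $t\mapsto m(h_{\Omega t/\pi}i)$ is a function of the point $\Gamma(2)h_{\Omega t/\pi}$ in $\Gamma(2)\backslash SL_2(\RR)$, and this point moves along a horocycle orbit. Here the lattice property \ref{th:lambda} enters. If $x^*$ is not a cusp, the horocycle orbit is not periodic, and by Dani's theorem it equidistributes with respect to the finite invariant (Haar) measure; we would cite the Dani--Smillie form. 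In a cusp neighbourhood, in Siegel coordinates $y=\im$, the function $m$ is comparable to $y$, since the maximal $|\vartheta_k|^4$ tends to a nonzero constant there. Moreover $\int y^\mu\,dx\,dy/y^2=\infty$ for $\mu\ge1$. Applying equidistribution to the bounded continuous truncations $\min(m^\mu,R)$ and letting $R\to\infty$, we get that $\frac1T\int_0^T|\mathbf m|^\mu$ is unbounded, contradicting (ii). I expect the delicate points to be the precise citation of equidistribution for non-closed horocycles and checking $m\asymp y$ uniformly in each cusp of $\Gamma(2)$. One can also use $SL_2(\ZZ)$ alone, since $m$ is $SL_2(\ZZ)$-invariant.

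\emph{Step 3: normalize to a horizontal line.} Now pick $\gamma\in SL_2(\ZZ)$ with $\gamma x^*=\infty$. Replacing $M$ by $\gamma M$ and applying Lemma~\ref{lem:thetagamma} (together with \ref{th:E2} for $E$) rewrites $(L,C,Y^4)$ through a signed permutation of $(\vartheta_2^4,\vartheta_3^4,\vartheta_4^4)$ at a point $T(t)=\kappa\,\Omega t/\pi+b$. Here $\kappa\in\RR\setminus\{0\}$ and $\im b>0$, the multiplier $j$ is constant, and the extra $E$-term vanishes. Thus $Y(t)=\zeta\,\vartheta_k(T(t))$ for a constant $\zeta$ and some $k\in\{2,3,4\}$. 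This is an absolutely convergent trigonometric series in $\kappa\Omega t$ whose frequencies are multiples of $\kappa\Omega$ (quarter-multiples for $\vartheta_2$).

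\emph{Step 4: apply the averages.} Apply Lemma~\ref{lem:avg} to these series. For $\vartheta_2$ the mean is $0$, contradicting $\avg{Y}=1$. For $\vartheta_3$ and $\vartheta_4$ the mean is $\zeta$, so $\zeta=1$. Next, $\avg{Ye^{-i\alpha}}\ne0$ forces the frequency $\Omega$ of $e^{i\alpha}$ to occur in the series of $Y$. Comparing the coefficient $\pm2e^{\pi i b}$ of $e^{\pi iT}$ with $2\lambda e^{i\alpha_0}$ then gives $\kappa=1$ and $e^{\pi ib}=\pm\lambda e^{i\alpha_0}$. A sign $-$ corresponds to $\vartheta_4$ and is absorbed by the shift $T\mapsto T+1$, using \ref{th:transf}. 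So $T(t)=\tau_\lambda+\alpha(t)/\pi$ modulo the stabilizer.

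Finally, Jacobi's identity and \ref{th:transf} show that $(L,C)$ are then $(\vartheta_2^4,\vartheta_4^4)(\tau(t))$ rather than a swapped or sign-changed version. This uses that $L+C=Y^4=\vartheta_3^4$ and that $L/(L+C)$ is fixed by $\Lambda$ up to $\Gamma(2)$. It also uses that $E$ is determined by the equation for $\cD_Y$, since $E+L-C=12\dot Y/(i\Omega Y)$. This gives \eqref{eq:modselect}.
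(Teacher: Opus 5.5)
Your route is the paper's: Lemma~\ref{lem:moddiv} for the degenerate case, then Proposition~\ref{prop:modcomplete} and Lemma~\ref{lem:modshape} to reach a horocycle. Dani--Smillie plus the divergence of $\int y^{\mu}\,dx\,dy/y^2$ in a cusp forces $x^*\in\QQ\cup\{\infty\}$, and an element of $SL_2(\ZZ)$ sending the cusp to $\infty$ leads to term-by-term averaging. Steps~1--3 are fine up to routine points:
\begin{itemize}
\item When $s\Omega>0$ the orbit runs along the horocycle flow backwards; the paper handles this with the involution induced by $\mathrm{diag}(1,-1)$.
\item Your truncations $\min(m^\mu,R)$ are bounded but not compactly supported. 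You need either the standard upgrade of equidistribution to bounded continuous functions, or compactly supported minorants as in the paper.
\end{itemize}
The gaps are in Step~4. First, the coefficient comparison does not give $\kappa=1$. The condition $\avg{Ye^{-i\alpha}}\ne0$ only says $n^2\kappa=1$ for some $n\ge1$. Nothing you wrote rules out, for example, $\kappa=\frac14$ with the $n=2$ term $2e^{4\pi ib}$ matching $2\lambda e^{i\alpha_0}$. What is missing is the link between $\zeta$ and $\kappa$. Since the multiplier is the constant $\kappa^{-1/2}$, Lemma~\ref{lem:thetagamma} gives $L+C=\epsilon\kappa\,\vartheta_k^4(T(t))$ for a sign $\epsilon$, so $Y^4=L+C$ forces $\zeta^4=\epsilon\kappa$. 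With $\zeta=1$ this gives $\kappa=\epsilon=\pm1$. Only then does the frequency condition force $n=1$ and $\kappa=\epsilon=1$, and you need $\epsilon=1$ below as well.

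Second, your final paragraph does not prove $(L,C)=(\vartheta_2^4,\vartheta_4^4)(\tau(t))$. After the integer shift, \ref{th:transf} only tells you that $L$ and $C$ are, in some order, $\pm\vartheta_2^4(\tau)$ and $\pm\vartheta_4^4(\tau)$. Jacobi's identity with \ref{th:nozero} does eliminate the wrong signs pointwise, a bit more simply than the paper's averaging. But it cannot eliminate the swap $(L,C)=(\vartheta_4^4,\vartheta_2^4)(\tau)$, which has the same sum $\vartheta_3^4(\tau)=Y^4$. Nor does ``$L/(L+C)$ is determined by $\Lambda$ up to $\Gamma(2)$'' decide it, since $1-\Lambda(\tau)=\Lambda(-1/\tau)$ is also a value of $\Lambda$. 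The paper excludes the swap using the dynamics. You have $E=E_2(\tau)$ from \ref{th:E2}. Then the $L$-equation of \eqref{eq:modblock} for $L=\vartheta_4^4$, compared with the $C$-equation satisfied by the canonical solution, forces $2\vartheta_2^4+\vartheta_4^4=0$. This is impossible, because that function has average $1$ along $\tau(t)$. Alternatively, compute from \ref{th:transf} the image of $SL_2(\ZZ)$ in signed permutations of $(\vartheta_2^4,\vartheta_3^4,\vartheta_4^4)$; it has six elements and is trivial on $\Gamma(2)$. Then check that the identity is the only element whose middle entry is $+\vartheta_3^4$. Once $L,C,Y$ are known to be canonical, your recovery of $E$ from the $Y$-equation is fine.
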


\begin{proof}
The proof has three steps. In Step~1 we use the results of the previous subsection to write $\mathbf m$ in the form \eqref{eq:modfamily}, with $M$ as in case (b) of Lemma~\ref{lem:modshape}, so that $M$ maps $\RR$ into a horocycle tangent to the real axis at $x^*=M(\infty)$. In Step~2 we show that $x^*$ is rational or $\infty$, using hypothesis (ii) and the theorem of Dani and Smillie. In Step~3 we apply an element of $SL_2(\ZZ)$ which moves $x^*$ to $\infty$; afterwards $\mathbf m$ is given by theta constants along a horizontal line, and the averages (iii) determine the line and the order of the components.

\emph{Step 1 (reduction to case (b) of Lemma~\ref{lem:modshape}).} If $L(0)C(0)(L(0)+C(0))=0$, then $\mathbf m$ is constant by Lemma~\ref{lem:moddiv}, and $\avg{Ye^{-i\alpha}}=Y\avg{e^{-i\alpha}}=0\ne2\lambda$. Hence Proposition~\ref{prop:modcomplete} applies and gives $M$ with $\mathbf m(t)=\mathbf m_{M,\rho}(\Omega t/\pi)$. In case (a) of Lemma~\ref{lem:modshape}, $\mathbf m(t)\to0$ as $t\to\infty$ while $\mathbf m(0)\ne0$, contradicting (i). So we are in case (b), with $g\in SL_2(\RR)$, $s\ne0$, and base point $x^*=g(\infty)\in\RR\cup\{\infty\}$.

\emph{Step 2: $x^*$ is rational or $\infty$.} We first explain the idea. By Lemma~\ref{lem:modshape}, $M(\Omega t/\pi)=g_ti$ with $g_t=g\,u_{s\Omega t/\pi}$, so, modulo $\Gamma(2)$, the matrices $g_t$ run along an orbit of the horocycle flow defined below. By the theorem of Dani and Smillie, such an orbit is either periodic, or its time averages converge to integrals over the quotient $\Gamma(2)\backslash SL_2(\RR)$. If $x^*$ were irrational the orbit would not be periodic, and we show that the function on the quotient which bounds $|\mathbf m|^\mu$ from below has infinite integral, because $m(\tau)\ge\im\tau/2$ high up in $\HH$ (there $\vartheta_3$ is close to $1$, while the factor $\im\tau$ in $m$ grows); so the averages in (ii) would be unbounded.

Suppose now that $x^*$ is irrational. Let $X=\Gamma(2)\backslash SL_2(\RR)$ be the space of cosets $\Gamma(2)h$, with its Haar probability measure $m_X$: the probability measure induced by Haar measure on $SL_2(\RR)$, which exists because $\Gamma(2)$ is a lattice and is invariant under right multiplication by $SL_2(\RR)$ \cite[Chapter~9, Proposition~9.20, \pg{312}]{EW}. Let $h(r)x=x\,u_{-r}$ ($r\in\RR$, $x\in X$) be the horocycle flow of \cite[Chapter~11, \S11.5, \pg{378}]{EW}; the matrix $\bigl(\begin{smallmatrix}1&-r\\0&1\end{smallmatrix}\bigr)$ used there is $u_{-r}$. Since $\Gamma(2)$ is a lattice in $SL_2(\RR)$ by \ref{th:lambda}, the theorem of Dani and Smillie \cite{DaniSmillie}, in the form \cite[Chapter~11, Theorem~11.35, \pg{399}]{EW} (see also \cite{HubbardMiller}), gives: if $x\in X$ is not periodic, that is, $h(r)x\ne x$ for all $r>0$, then $\frac1T\int_0^T\phi(h(r)x)\,dr\to\int\phi\,dm_X$ as $T\to\infty$, for every compactly supported continuous $\phi$ on $X$. Let $J=\mathrm{diag}(1,-1)$. The map $\iota:X\to X$, $\iota(\Gamma(2)h)=\Gamma(2)JhJ^{-1}$, is well defined because $J\Gamma(2)J^{-1}=\Gamma(2)$; moreover $\iota\circ\iota$ is the identity, and $\iota(x\,u_r)=\iota(x)\,u_{-r}$ because $Ju_rJ^{-1}=u_{-r}$. By Lemma~\ref{lem:modshape}, the points $g_t=g\,u_{s\Omega t/\pi}$ satisfy $|s|\,m(g_ti)\le2|\mathbf m(t)|$. Put $\nu=|s\Omega|/\pi>0$, put $g'=g$ if $s\Omega<0$ and $g'=JgJ^{-1}$ if $s\Omega>0$, and let $x_0=\Gamma(2)g'$. Then $\Gamma(2)g_t=h(\nu t)x_0$ if $s\Omega<0$, and $\Gamma(2)g_t=\iota(h(\nu t)x_0)$ if $s\Omega>0$. The point $x_0$ is not periodic: otherwise $\gamma g'=g'u_{-r}$ for some $\gamma\in\Gamma(2)$ and $r>0$, so $\gamma=g'u_{-r}g'^{-1}\ne\pm I$ is parabolic with fixed point $g'(\infty)$, which is $x^*$ or $-x^*$ and hence irrational, but the fixed point of a parabolic $\bigl(\begin{smallmatrix}a&b\\c&d\end{smallmatrix}\bigr)\in SL_2(\ZZ)$ is $\infty$ or $(a-d)/(2c)\in\QQ$. The function $f(h)=m(hi)^\mu$ is continuous and, by Lemma~\ref{lem:thetagamma}, invariant under $SL_2(\ZZ)$ on the left, so it defines a continuous function on $X$. It satisfies $f\circ\iota=f$, because $JhJ^{-1}i=-\overline{hi}$ and $|\vartheta_k(-\bar\tau)|=|\vartheta_k(\tau)|$; hence $f(g_t)=f(h(\nu t)x_0)$ in both cases. Choose compactly supported continuous $0\le f_n\le f$ increasing to $f$. Then for every $n$
\[
\Bigl(\frac2{|s|}\Bigr)^\mu\liminf_{T\to\infty}\frac1T\int_0^T|\mathbf m(t)|^\mu\,dt\ge\liminf_{T\to\infty}\frac1T\int_0^Tf(h(\nu t)x_0)\,dt\ge\int f_n\,dm_X,
\]
where the first inequality holds because $f(h(\nu t)x_0)=f(g_t)=m(g_ti)^\mu\le(2/|s|)^\mu|\mathbf m(t)|^\mu$, and the second because $f\ge f_n$ and, after the change of variables $r=\nu t$, $\frac1T\int_0^Tf_n(h(\nu t)x_0)\,dt=\frac1{\nu T}\int_0^{\nu T}f_n(h(r)x_0)\,dr\to\int f_n\,dm_X$ by the theorem of Dani and Smillie. By monotone convergence $\int f_n\,dm_X\to\int f\,dm_X$, so the averages in (ii) have $\liminf$ at least $(|s|/2)^\mu\int f\,dm_X$. We claim that $\int f\,dm_X=\infty$, which contradicts (ii). Since $-I\in\Gamma(2)$, $X$ is also the quotient of $PSL_2(\RR)$ by $\bar\Gamma(2)$; let $\varpi:PSL_2(\RR)\to X$ be the quotient map. In the coordinates $hi=x+iy$ and the angle $\theta$ of the image under $h$ of the upward unit tangent vector at $i$, Haar measure on $PSL_2(\RR)$ is $dx\,dy\,d\theta/y^2$, and $m_X(\varpi(B'))$ is a constant multiple of $\int_{B'}dx\,dy\,d\theta/y^2$ for every Borel set $B'\subseteq PSL_2(\RR)$ on which $\varpi$ is injective \cite[Chapter~9, Lemma~9.16, \pg{306}, and Propositions~9.19 and~9.20, pp.~311--313; Chapter~11, \pg{357}]{EW}. Let $\mathcal C=\{x+iy:|x|<1,\ y>2\}$ and $B=\{h\in PSL_2(\RR):hi\in\mathcal C\}$. Then $\varpi$ is injective on $B$: if $\gamma\in\Gamma(2)$ has lower left entry $c\ne0$, then $|c|\ge2$ and $\im(\gamma\tau)\le1/(c^2\im\tau)<2$ for $\tau\in\mathcal C$, while the elements with $c=0$ are $\pm u_{2k}$, $k\in\ZZ$, and $u_{2k}(\mathcal C)$ meets $\mathcal C$ only for $k=0$. On $\mathcal C$ we have $|\vartheta_3(\tau)-1|\le2\sum_{n\ge1}e^{-2\pi n^2}<\frac1{10}$, because $|e^{\pi in^2\tau}|=e^{-\pi n^2y}$, so $m(\tau)\ge y|\vartheta_3(\tau)|^4\ge(\frac9{10})^4y\ge\frac y2$, and $f(h)\ge(y/2)^\mu$ for $h\in B$. Hence, with constants $\delta_1,\delta_2>0$ ($\delta_2$ comes from the integration over $x\in(-1,1)$ and over $\theta$),
\[
\int_Xf\,dm_X\ge\int_{\varpi(B)}f\,dm_X=\delta_1\int_Bf\,\frac{dx\,dy\,d\theta}{y^2}\ge\delta_2\int_2^\infty\Bigl(\frac y2\Bigr)^\mu\frac{dy}{y^2}=\infty,
\]
since $\mu\ge1$.

\emph{Step 3: the case $x^*\in\QQ\cup\{\infty\}$.} Now $x^*\in\QQ\cup\{\infty\}$, so there is $\gamma\in SL_2(\ZZ)$ with $\gamma(x^*)=\infty$. Then $\tilde M=\gamma M$ fixes $\infty$ and $\tilde M(\RR)\subseteq\HH$, so, as in the proof of Lemma~\ref{lem:modshape}, $\tilde M\tau=\kappa\tau+b$ with $\kappa\in\RR\setminus\{0\}$ and $\im b>0$. Put $\tilde T(t)=\tilde M(\Omega t/\pi)=\kappa\Omega t/\pi+b$. The matrix of $\tilde M$ is $\bigl(\begin{smallmatrix}\kappa^{1/2}&b\kappa^{-1/2}\\0&\kappa^{-1/2}\end{smallmatrix}\bigr)$, so $j(M,\tau)=\pm\kappa^{-1/2}j(\gamma^{-1},\tilde M\tau)$, and the lower left entry of $M$ is $\pm\kappa^{1/2}c'$, where $c'$ is that of $\gamma^{-1}$. Lemma~\ref{lem:thetagamma} and \ref{th:E2} applied to $\gamma^{-1}$ then turn \eqref{eq:modfamily} into
\[
(L,C,L+C)(t)=\kappa\,(\Xi_2,\Xi_4,\Xi_3)(\tilde T(t)),\qquad E(t)=\kappa E_2(\tilde T(t)),
\]
where $(\Xi_2,\Xi_3,\Xi_4)^T=\sigma_{\gamma^{-1}}(\vartheta_2^4,\vartheta_3^4,\vartheta_4^4)^T$; here the terms of $E$ involving $c'$ cancel: by \ref{th:E2} for $\gamma^{-1}$ at $\tilde T=\tilde T(t)$, the first term of $E$ in \eqref{eq:modfamily} at $\Omega t/\pi$ is $E_2(\gamma^{-1}\tilde T)/j(M,\Omega t/\pi)^2=\kappa E_2(\tilde T)-6ic'\kappa/(\pi j(\gamma^{-1},\tilde T))$, and the second is $6ic_M/(\pi j(M,\Omega t/\pi))=6ic'\kappa/(\pi j(\gamma^{-1},\tilde T))$, where $c_M=\pm\kappa^{1/2}c'$ is the lower left entry of $M$ and the two signs $\pm$ above agree. Moreover $\Xi_3=\Xi_2+\Xi_4$ by \ref{th:jacobi} at the points $\gamma^{-1}\tilde T(t)$. Since $\sigma_{\gamma^{-1}}$ is a signed permutation, $\Xi_3=\epsilon\vartheta_y^4$ for some $y\in\{2,3,4\}$ and $\epsilon=\pm1$. As $Y^4=L+C$, the continuous function $Y(t)/\vartheta_y(\tilde T(t))$ has constant fourth power $\epsilon\kappa$, so it is a constant $\tilde\rho$, and $Y(t)=\tilde\rho\,\vartheta_y(\tilde T(t))$.

Write $e^{\pi i\tilde T(t)}=e^{\pi ib}e^{i\kappa\Omega t}$, with $|e^{\pi ib}|<1$. The series for $\vartheta_y(\tilde T(t))$ converges absolutely and uniformly in $t$, so the averages of $Y$ and of $Ye^{-i\alpha}$ may be taken term by term, as in the proof of Lemma~\ref{lem:avg}: a term with nonzero frequency has average zero, and the averages of the terms are dominated by their absolutely summable coefficients. If $y=2$, every term $e^{\pi i(n+1/2)^2\tilde T(t)}$ has nonzero frequency $(n+\frac12)^2\kappa\Omega$, so $\avg{Y}=0$, contradicting (iii). Hence $y\in\{3,4\}$, $Y(t)=\tilde\rho\bigl(1+2\sum_{n\ge1}(\pm1)^ne^{\pi in^2b}e^{in^2\kappa\Omega t}\bigr)$ with the sign $+$ for $y=3$ and $-$ for $y=4$, and $\avg{Y}=\tilde\rho$. Thus $\tilde\rho=1$ and $\epsilon\kappa=1$, so $\kappa=\pm1$. Next, $Y(t)e^{-i\alpha(t)}$ has the terms $e^{-i\alpha_0}e^{-i\Omega t}$ and $2(\pm1)^ne^{\pi in^2b}e^{-i\alpha_0}e^{i(n^2\kappa-1)\Omega t}$, $n\ge1$, and only frequencies zero survive averaging. Since $\avg{Ye^{-i\alpha}}=2\lambda\ne0$, some $n\ge1$ has $n^2\kappa=1$; so $\kappa=1$, $n=1$, and $2(\pm1)e^{\pi ib}e^{-i\alpha_0}=2\lambda$. Hence $|e^{\pi ib}|=\lambda$, that is $\im b=\im\tau_\lambda$, and $e^{\pi i\tilde T(t)}=\pm\lambda e^{i\alpha(t)}=\pm e^{\pi i\tau(t)}$. Thus $\tilde T(t)-\tau(t)$ is a constant integer, even if $y=3$ and odd if $y=4$. In both cases $\vartheta_y(\tilde T(t))=\vartheta_3(\tau(t))$ and $E_2(\tilde T(t))=E_2(\tau(t))$, because $\vartheta_3(\tau+2)=\vartheta_3(\tau)$, $\vartheta_4(\tau+1)=\vartheta_3(\tau)$ and $E_2(\tau+1)=E_2(\tau)$. By \ref{th:transf}, $\Xi_2(\tilde T(t))$ and $\Xi_4(\tilde T(t))$ are, in some order, $\epsilon_2\vartheta_2^4(\tau(t))$ and $\epsilon_4\vartheta_4^4(\tau(t))$ with $\epsilon_2,\epsilon_4\in\{\pm1\}$. Since $L+C=Y^4=\vartheta_2^4(\tau)+\vartheta_4^4(\tau)$, we get $(\epsilon_2-1)\vartheta_2^4(\tau(t))=(1-\epsilon_4)\vartheta_4^4(\tau(t))$; the left side has average zero (all its frequencies are nonzero) and the right side has average $1-\epsilon_4$, so $\epsilon_4=1$, and then $\epsilon_2=1$ by \ref{th:nozero}. If $(L,C)(t)=(\vartheta_4^4,\vartheta_2^4)(\tau(t))$, then \eqref{eq:modblock} and Lemma~\ref{lem:modcanon} would give $\vartheta_4^4(E_2+\vartheta_4^4+2\vartheta_2^4)=\vartheta_4^4(E_2-2\vartheta_2^4-\vartheta_4^4)$ along $\tau(t)$, that is, after division by $2\vartheta_4^4\ne0$ (\ref{th:nozero}), $2\vartheta_2^4+\vartheta_4^4=0$, which is false since the left side has average one along $\tau(t)$. Hence $(L,C)(t)=(\vartheta_2^4,\vartheta_4^4)(\tau(t))$. Together with $E(t)=E_2(\tau(t))$ and $Y(t)=\vartheta_3(\tau(t))$, shown above, this proves \eqref{eq:modselect}.
\end{proof}

Proposition~\ref{prop:modselect} determines the theta constants, but the next subsection also needs square roots of $L$ and $C$: $\vartheta_3^2=Y^2$ is determined, while $\vartheta_2^2$ and $\vartheta_4^2$ are determined by $L$ and $C$ only up to sign, and the sign of $\vartheta_2^2$ also depends on $\tau$ modulo $4$, not only on $e^{\pi i\tau}$. We fix these signs by two further averages in Section~\ref{sec:source}.

\subsection{The Jacobi theta system}\label{sec:jacobi}

In this subsection we realize $\Theta_\lambda(Q,V)$, as a function of the phases of $Q$ and $V$, as a polynomial function, the \emph{output}, of the solution of a polynomial system with rational coefficients, the Jacobi block. Its coefficients are the squares of the theta constants and $E_2/12$, which in the full construction are supplied by the modular block of Section~\ref{sec:modular}. Along the intended solution the coordinates are Jacobi theta functions evaluated at a point that moves linearly in time. As in Section~\ref{sec:modular}, GAS concerns every initial state, so we determine all solutions with the intended coefficients (Proposition~\ref{prop:jaccont}), and we show that a forward recurrent one whose output has three prescribed time averages has output exactly $\Theta_\lambda$ (Proposition~\ref{prop:threemoments}).

\subsubsection{Jacobi theta functions}
The theta series $\Theta_\lambda(Q,V)$ is the Jacobi theta function $\theta_3(z|\tau)$ with $e^{\pi i\tau}=\lambda Q$ and $e^{2\pi iz}=V$, and by \ref{J:half} it is the exponential factor $e^{\pi iz+\pi i\tau/4}$ times $\theta_1$ at the shifted point $z+\frac{1+\tau}2$. Here we fix the normalization of the Jacobi theta functions and collect the classical facts we use; the identities \ref{J:brezhnev} will define the Jacobi block in the next subsection.

For $z\in\CC$ and $\tau\in\HH$ we use Jacobi's theta functions in the normalization with period one in $z$:
\[
\theta_1(z|\tau)=-i\sum_{n\in\ZZ}(-1)^ne^{\pi i(n+\frac12)^2\tau+(2n+1)\pi iz},\qquad\theta_2(z|\tau)=\sum_{n\in\ZZ}e^{\pi i(n+\frac12)^2\tau+(2n+1)\pi iz},
\]
\[
\theta_3(z|\tau)=\sum_{n\in\ZZ}e^{\pi in^2\tau+2\pi inz},\qquad\theta_4(z|\tau)=\sum_{n\in\ZZ}(-1)^ne^{\pi in^2\tau+2\pi inz}.
\]
These are entire in $z$ and holomorphic in $(z,\tau)$, $\theta_k(0|\tau)=\vartheta_k(\tau)$ for $k=2,3,4$, and $\theta_1(0|\tau)=0$. In the notation of \cite[\S21.11]{WW} and of \cite[(1.2.11)--(1.2.14)]{Lawden}, which agree, our $\theta_k(z|\tau)$ is their $\vartheta_k(\pi z|\tau)$; the facts below are the cited ones with $z$ replaced by $\pi z$. We write $\theta_k'=\partial_z\theta_k$, and put $\psi_k=\vartheta_k(\tau)^2$ ($k=2,3,4$) and $\varkappa =E_2(\tau)/12+(\psi_3^2+\psi_4^2)/12$. We use the following facts.
\begin{enumerate}[label=(J\arabic*)]
\item\label{J:period} $\theta_1(z+1)=-\theta_1(z)$, $\theta_2(z+1)=-\theta_2(z)$, $\theta_3(z+1)=\theta_3(z)$, $\theta_4(z+1)=\theta_4(z)$, and $\theta_k(z+\tau)=\epsilon_ke^{-\pi i\tau-2\pi iz}\theta_k(z)$ with $(\epsilon_1,\epsilon_2,\epsilon_3,\epsilon_4)=(-1,1,1,-1)$ \cite[(1.3.2)--(1.3.5)]{Lawden}. Moreover $|\theta_k(z|\tau+2)|=|\theta_k(z|\tau)|$, since replacing $\tau$ by $\tau+1$ multiplies $\theta_1$ and $\theta_2$ by $e^{\pi i/4}$ and interchanges $\theta_3$ and $\theta_4$ \cite[(9.3.2)]{Lawden}.
\item\label{J:zeros} The zeros of $\theta_1(\cdot|\tau)$ are the points of $\ZZ+\tau\ZZ$, those of $\theta_2$, $\theta_3$, $\theta_4$ are the points of $\frac12+\ZZ+\tau\ZZ$, $\frac12+\frac\tau2+\ZZ+\tau\ZZ$, $\frac\tau2+\ZZ+\tau\ZZ$, respectively, and all are simple \cite[\S21.12]{WW}. In particular the four functions have no common zero.
\item\label{J:parity} $\theta_1$ is odd in $z$, and $\theta_2,\theta_3,\theta_4$ are even \cite[\S21.11]{WW}.
\item\label{J:heat} $4\pi i\,\partial_\tau\theta_k=\partial_z^2\theta_k$. This is the equation $\frac{\pi i}4\partial_z^2y+\partial_\tau y=0$ satisfied by $y=\vartheta_k(z|\tau)$ \cite[\S21.4]{WW}, with $z$ replaced by $\pi z$.
\item\label{J:half} $\theta_3(z|\tau)=e^{\pi iz+\pi i\tau/4}\,\theta_1\bigl(z+\tfrac{1+\tau}2\big|\tau\bigr)$, as follows directly from the series; see \cite[(1.3.8)]{Lawden}.
\item\label{J:quad} $\psi_2\theta_4(z)^2-\psi_4\theta_2(z)^2-\psi_3\theta_1(z)^2=0$ and $\psi_2\theta_3(z)^2-\psi_3\theta_2(z)^2-\psi_4\theta_1(z)^2=0$; these are the first and the third of the four relations between the squares of the theta functions in \cite[\S21.2]{WW}.
\item\label{J:brezhnev} With $(k,\nu,\mu)$ running over $(2,3,4),(3,4,2),(4,2,3)$,
\[
\frac1\pi\Bigl(\frac{\theta_k}{\theta_1}\Bigr)'=-\psi_k\frac{\theta_\nu\theta_\mu}{\theta_1^2},\qquad\frac1{\pi^2}\theta_1''=\frac{(\theta_1')^2}{\pi^2\theta_1}-\psi_3\psi_4\frac{\theta_2^2}{\theta_1}-4\varkappa \theta_1
\]
\cite[Theorem~4.1, (27)]{Brezhnev}; in \cite{Brezhnev} the constant $\eta$ equals $\pi^2E_2/12$.
\end{enumerate}

\subsubsection{The Jacobi block}
We now turn \ref{J:brezhnev} and the heat equation \ref{J:heat} into a polynomial system, the \emph{Jacobi block}, whose coefficients $\psi_k$ and $H$ are the theta constants produced by the modular block. Its intended solution consists of $\theta_1,\dots,\theta_4$, $\theta_1'/\pi$ and $1/\theta_1$ at $(w(t)|\tau(t))$, where $\tau(t)$ moves with the phase of $Q$ and $w(t)$ also with the phase of $V$, as in \eqref{eq:tauw}; the coordinates $\Delta$ and $U$ stand for $\theta_1'/\pi$ and $1/\theta_1$, and they make the equations \ref{J:brezhnev}, which involve $\theta_1'$ and division by $\theta_1$, polynomial. As for the modular block, we determine all other solutions with the intended coefficients on which the relations \eqref{eq:R12} and $UT_1=1$ hold: they are the solutions of Lemma~\ref{lem:bloch}, in which the argument $w$ of the theta functions is shifted to $w+a_0\tau+b_0$ and the result is multiplied by an explicit exponential factor (Proposition~\ref{prop:jaccont}).

The Jacobi block has the ten complex coordinates
\[
\mathbf J=(\psi_2,\psi_3,\psi_4,H,T_1,T_2,T_3,T_4,\Delta,U)\in\CC^{10}.
\]
Put $\varkappa =H+(\psi_3^2+\psi_4^2)/12$. We use the polynomial vector fields $\mathcal Z$ and $\mathcal J$ on $\CC^{10}$ given by $\mathcal Z\psi_k=\mathcal ZH=0$,
\begin{equation}\label{eq:Zfield}
\begin{gathered}
\mathcal ZT_1=\Delta,\qquad\mathcal ZT_k=U(\Delta T_k-\psi_kT_\nu T_\mu),\\
\mathcal Z\Delta=U(\Delta^2-\psi_3\psi_4T_2^2)-4\varkappa T_1,\qquad\mathcal ZU=-U^2\Delta,
\end{gathered}
\end{equation}
and
\begin{equation}\label{eq:Phicoef}
\begin{gathered}
\mathcal J \psi_2=2\Bigl(H+\frac{\psi_3^2+\psi_4^2}{12}\Bigr)\psi_2,\qquad\mathcal J \psi_3=2\Bigl(H+\frac{\psi_2^2-\psi_4^2}{12}\Bigr)\psi_3,\\
\mathcal J \psi_4=2\Bigl(H-\frac{\psi_2^2+\psi_3^2}{12}\Bigr)\psi_4,\qquad\mathcal J H=2H^2-\frac{\psi_2^4+\psi_3^4+\psi_4^4}{144},
\end{gathered}
\end{equation}
\begin{equation}\label{eq:Phijac}
\begin{aligned}
\mathcal J T_1&=-\tfrac14\Delta^2U+\tfrac14\psi_3\psi_4T_2^2U+\varkappa T_1,\\
\mathcal J T_k&=-\tfrac14\Delta^2U^2T_k+\tfrac12\psi_k\Delta T_\nu T_\mu U^2+\tfrac14\bigl(\psi_3\psi_4T_2^2-\psi_k\psi_\mu T_\nu^2-\psi_k\psi_\nu T_\mu^2\bigr)U^2T_k+\varkappa T_k,\\
\mathcal J\Delta&=-\tfrac14\Delta^3U^2+3\bigl(\tfrac14\psi_3\psi_4T_2^2U^2+\varkappa \bigr)\Delta-\tfrac12\psi_2\psi_3\psi_4T_2T_3T_4U^2,\qquad\mathcal J U=-U^2\,\mathcal J T_1,
\end{aligned}
\end{equation}
with $(k,\nu,\mu)$ as in \ref{J:brezhnev}. All coefficients are rational. These are the equations of \cite[Theorem~4.1 and Theorem~4.2]{Brezhnev} with the powers of $\pi$ removed; the relations below are all that we use. For real numbers $\Omega\ne0$ and $N$, the \emph{Jacobi block} with frequencies $(\Omega,N)$ is the system
\begin{equation}\label{eq:jacblock}
\dot{\mathbf J}=i\Omega\,\mathcal J(\mathbf J)+\tfrac12(\Omega+N)\,\mathcal Z(\mathbf J).
\end{equation}
In the full construction $\Omega=\Omega_j$ and $N=N_j$ are the polynomials \eqref{eq:freqs} in the controller coordinates.

Let $\alpha(t)=\alpha_0+\Omega t$ and $\beta(t)=\beta_0+Nt$ with $\alpha_0,\beta_0\in\RR$, and put
\begin{equation}\label{eq:tauw}
\tau(t)=\tau_\lambda+\frac{\alpha(t)}\pi,\qquad z(t)=\frac{\beta(t)}{2\pi},\qquad w(t)=z(t)+\frac{1+\tau(t)}2.
\end{equation}
For a function $f(w,\tau)$ we have $\frac{d}{dt}f(w(t),\tau(t))=i\Omega\cdot\frac1{\pi i}\partial_\tau f+\frac12(\Omega+N)\cdot\frac1\pi\partial_wf$. This explains the form of \eqref{eq:jacblock}: $\mathcal Z$ is $\pi^{-1}\partial_w$ and $\mathcal J$ is $(\pi i)^{-1}\partial_\tau$ on the theta functions below.

\begin{lemma}\label{lem:bloch}
Let $a_0,b_0,C_0\in\CC$, $C_0\ne0$, and define
\begin{equation}\label{eq:blochfamily}
T_k(w,\tau)=C_0\,e^{\pi ia_0(2w+a_0\tau)}\,\theta_k(w+a_0\tau+b_0|\tau),\qquad k=1,\dots,4,
\end{equation}
$\Delta=\pi^{-1}\partial_wT_1$, $U=1/T_1$, $\psi_k=\vartheta_k(\tau)^2$ and $H=E_2(\tau)/12$. Then, wherever $T_1\ne0$, the vector $\mathbf J(w,\tau)$ satisfies $\pi^{-1}\partial_w\mathbf J=\mathcal Z(\mathbf J)$ and $(\pi i)^{-1}\partial_\tau\mathbf J=\mathcal J(\mathbf J)$, and
\begin{equation}\label{eq:R12}
R_1:=\psi_2T_4^2-\psi_4T_2^2-\psi_3T_1^2=0,\qquad R_2:=\psi_2T_3^2-\psi_3T_2^2-\psi_4T_1^2=0.
\end{equation}
Consequently, along \eqref{eq:tauw}, $t\mapsto\mathbf J(w(t),\tau(t))$ solves \eqref{eq:jacblock} wherever $T_1\ne0$.
\end{lemma}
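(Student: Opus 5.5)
The plan is to reduce everything to the classical facts (J1)--(J7) for the unshifted theta functions at the point $w'=w+a_0\tau+b_0$, and to check that the prefactor $C_0e^{\pi ia_0(2w+a_0\tau)}$ is compatible with each equation. First I would treat the relations \eqref{eq:R12}. Every term of $R_1$ and $R_2$ is quadratic in the $T_k$, so the common prefactor $C_0^2e^{2\pi ia_0(2w+a_0\tau)}$ factors out. What remains is exactly the pair of relations (J6) evaluated at $w'$, so both relations vanish identically.

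Next I would verify the $\mathcal Z$-equations, that is, $\pi^{-1}\partial_w\mathbf J=\mathcal Z(\mathbf J)$. The coefficients $\psi_k$ and $H$ do not depend on $w$, which matches $\mathcal Z\psi_k=\mathcal ZH=0$. The equation $\mathcal ZT_1=\Delta$ holds by the definition of $\Delta$. The equation for $U=1/T_1$ follows from the quotient rule, since $\pi^{-1}\partial_wU=-U^2\Delta$. For $T_k$ with $k\ge2$, the quotient $T_k/T_1=\theta_k(w')/\theta_1(w')$ has no prefactor, and $\partial_ww'=1$. So (J7) gives $\pi^{-1}\partial_w(T_k/T_1)=-\psi_kT_\nu T_\mu/T_1^2$, because the prefactors again cancel in $T_\nu T_\mu/T_1^2$. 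Multiplying by $T_1$ and adding $T_k\Delta U$ gives $\mathcal ZT_k$.

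For $\Delta$, I would write $T_1=g\,\theta_1(w')$ with $g=C_0e^{\pi ia_0(2w+a_0\tau)}$, so that $\pi^{-1}\partial_wg=2ia_0g$. Then I would check that the combination $\pi^{-2}T_1''-(\pi^{-1}T_1')^2/T_1$ is unchanged when $\theta_1$ is replaced by $g\theta_1$. This holds because $\partial_w^2\log g=0$, and the combination equals $T_1\,\pi^{-2}\partial_w^2\log T_1$. The second identity of (J7) at $w'$ therefore transfers directly and gives $\mathcal Z\Delta$, with $\varkappa=H+(\psi_3^2+\psi_4^2)/12$ once $H=E_2/12$.

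The $\tau$-equations are the main obstacle. For the coefficients, \eqref{eq:Phicoef} follows from ($\vartheta$6) by the chain rule, since $D\psi_k=2\psi_kD\log\vartheta_k$ and $DH=DE_2/12$. The denominators $12$ and $144$ and the identity ($\vartheta$2) have to be used to rewrite $\vartheta_k^8$ as $\psi_k^4$. For $T_k$, I would use the heat equation (J4) and Galilean covariance. The operator $\partial_\tau-(4\pi i)^{-1}\partial_w^2$ annihilates $\theta_k(w'|\tau)$ only up to the shift, because $\partial_\tau w'=a_0$. However, the choice of exponent $\pi ia_0(2w+a_0\tau)$ is exactly the Galilean boost that makes $T_k$ itself satisfy $4\pi i\,\partial_\tau T_k=\partial_w^2T_k$; this is a short computation with $\partial_\tau g=\pi ia_0^2g$ and $\partial_wg=2\pi ia_0g$.

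Hence $(\pi i)^{-1}\partial_\tau T_k=-\tfrac14\pi^{-2}\partial_w^2T_k$, and it remains to express $\pi^{-2}\partial_w^2T_k$ polynomially by applying $\mathcal Z$ twice. For $T_1$ this gives $\mathcal Z\Delta$, which matches $\mathcal JT_1$ directly. For $k\ge2$, I would compute $\mathcal Z(\mathcal ZT_k)$ from \eqref{eq:Zfield}, and if needed simplify with \eqref{eq:R12}, in order to match the displayed formula for $\mathcal JT_k$. Similarly, $\mathcal J\Delta=-\tfrac14\mathcal Z^3T_1$ and $\mathcal JU=-U^2\mathcal JT_1$ follow once $\mathcal JT_1$ is known. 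This symbolic matching is the tedious step, and I expect it to rely on the relations $R_1=R_2=0$, which is why they are stated first.

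Finally, the consequence for \eqref{eq:jacblock} is the chain rule recorded just before the lemma. Along \eqref{eq:tauw}, $\frac{d}{dt}f=i\Omega(\pi i)^{-1}\partial_\tau f+\frac12(\Omega+N)\pi^{-1}\partial_wf$, so substituting the two partial-derivative identities gives \eqref{eq:jacblock} wherever $T_1\ne0$.
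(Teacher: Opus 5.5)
Your proposal is correct and follows essentially the same route as the paper. You factor the prefactor out of \eqref{eq:R12} and the $\mathcal Z$-equations, use the Galilean-boost form of the heat equation to get $(\pi i)^{-1}\partial_\tau T_k=-\tfrac14\pi^{-2}\partial_w^2T_k$ (and likewise for $\Delta$), and then compute $\mathcal Z\mathcal ZT_k$ and $\mathcal Z\mathcal Z\Delta$. One correction to your expectation: that last symbolic matching needs only $UT_1=1$, not $R_1=R_2=0$; for example, the term $-4\varkappa UT_1T_k$ in $\mathcal Z\mathcal ZT_k$ simply becomes $-4\varkappa T_k$, and the rest matches \eqref{eq:Phijac} term by term.
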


\begin{proof}
Write $\xi=w+a_0\tau+b_0$ and $S(w,\tau)=C_0e^{\pi ia_0(2w+a_0\tau)}$, so $T_k=S\,\theta_k(\xi)$ and $\pi^{-1}\partial_wS=2ia_0S$. The relations \eqref{eq:R12} are \ref{J:quad} at $\xi$, multiplied by $S^2$. Since $T_\nu T_\mu/T_1^2=\theta_\nu\theta_\mu/\theta_1^2$ at $\xi$, \ref{J:brezhnev} gives $\pi^{-1}\partial_w(T_k/T_1)=-\psi_kT_\nu T_\mu/T_1^2$, which is equivalent to the second equation in \eqref{eq:Zfield}. Next, $\pi^{-1}\partial_wT_1=S(2ia_0\theta_1+\pi^{-1}\theta_1')$ and $\pi^{-2}\partial_w^2T_1=S(-4a_0^2\theta_1+4ia_0\pi^{-1}\theta_1'+\pi^{-2}\theta_1'')$, so $\pi^{-2}\partial_w^2T_1-(\pi^{-1}\partial_wT_1)^2/T_1=S(\pi^{-2}\theta_1''-\pi^{-2}\theta_1'^2/\theta_1)=-\psi_3\psi_4T_2^2/T_1-4\varkappa T_1$ by \ref{J:brezhnev}; this is the equation for $\mathcal Z\Delta$. The equations for $\mathcal ZT_1$ and $\mathcal ZU$ are definitions, and $\mathcal Z\psi_k=\mathcal ZH=0$ is clear.

For $\mathcal J$: the coefficient equations \eqref{eq:Phicoef} are the identities $D\vartheta_k^2=2\vartheta_k^2D\log\vartheta_k$ and \ref{th:ode}, rewritten with \ref{th:jacobi}. The function $f(w,\tau)=e^{2\pi iaw+\pi ia^2\tau}g(w+a\tau+b,\tau)$ satisfies $4\pi i\partial_\tau f=\partial_w^2f$ whenever $g$ does: with $\partial_1,\partial_2$ the partial derivatives of $g$, evaluated at $(w+a\tau+b,\tau)$, the two sides are $e^{2\pi iaw+\pi ia^2\tau}$ times $-4\pi^2a^2g+4\pi ia\,\partial_1g+4\pi i\,\partial_2g$ and $-4\pi^2a^2g+4\pi ia\,\partial_1g+\partial_1^2g$, respectively. Hence, by \ref{J:heat} with $a=a_0$, each $T_k$ satisfies $4\pi i\partial_\tau T_k=\partial_w^2T_k$, that is, $(\pi i)^{-1}\partial_\tau T_k=-\frac14\pi^{-2}\partial_w^2T_k$, and so does $\Delta=\pi^{-1}\partial_wT_1$, since $\partial_w$ commutes with $\partial_\tau$ and $\partial_w^2$. Since $\pi^{-1}\partial_w\mathbf J=\mathcal Z(\mathbf J)$, we have $\pi^{-1}\partial_w(P(\mathbf J))=(\mathcal ZP)(\mathbf J)$ for every polynomial $P$, so $\pi^{-2}\partial_w^2T_k=(\mathcal Z\mathcal ZT_k)(\mathbf J)$ and $\pi^{-2}\partial_w^2\Delta=(\mathcal Z\mathcal Z\Delta)(\mathbf J)$. Computing these with \eqref{eq:Zfield} and $UT_1=1$ (for instance $\mathcal J T_1=-\frac14\mathcal Z\Delta$) gives exactly \eqref{eq:Phijac}. The last assertion is the chain rule.
\end{proof}

To show that every solution with the intended coefficients is one of the solutions of Lemma~\ref{lem:bloch}, we first match its value at one time. The next lemma does this for $(T_1,\dots,T_4)$: every solution of the two relations \eqref{eq:R12} with $T_1\ne0$ is a multiple of $(\theta_1,\dots,\theta_4)$ at some point.

\begin{lemma}\label{lem:thetafiber}
Let $\tau\in\HH$, $\psi_k=\vartheta_k(\tau)^2$, and let $X_1,\dots,X_4\in\CC$ with $X_1\ne0$ satisfy
\[
\psi_2X_4^2-\psi_4X_2^2-\psi_3X_1^2=0,\qquad \psi_2X_3^2-\psi_3X_2^2-\psi_4X_1^2=0.
\]
Then there are $\xi\in\CC$ and $S\in\CC\setminus\{0\}$ with $\theta_1(\xi|\tau)\ne0$ and $X_k=S\,\theta_k(\xi|\tau)$ for $k=1,\dots,4$.
\end{lemma}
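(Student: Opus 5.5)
The plan is to pass to projective space and use a surjectivity argument for holomorphic maps between compact curves. Consider the two quadrics
\[
q_a=\psi_2X_4^2-\psi_4X_2^2-\psi_3X_1^2,\qquad q_b=\psi_2X_3^2-\psi_3X_2^2-\psi_4X_1^2
\]
in $\mathbb P^3$ with homogeneous coordinates $[X_1:X_2:X_3:X_4]$, and let $\bar C=\{q_a=q_b=0\}$. Both conditions in the statement are homogeneous, so they only concern the point $[X]\in\bar C$, and $X_1\ne0$ is also a projective condition. It therefore suffices to show that every point of $\bar C$ is of the form $[\theta_1(\xi|\tau):\dots:\theta_4(\xi|\tau)]$ for some $\xi$. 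Given this, for our $X$ we get $X=S\,\theta(\xi|\tau)$ with $S\ne0$, and then $\theta_1(\xi|\tau)=X_1/S\ne0$.

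\emph{Step 1: the theta map.} Define $\Theta(\xi)=[\theta_1(\xi|\tau):\theta_2(\xi|\tau):\theta_3(\xi|\tau):\theta_4(\xi|\tau)]$. This is well defined and holomorphic $\CC\to\mathbb P^3$ because the four functions have no common zero \ref{J:zeros}. Its image lies in $\bar C$ by \ref{J:quad}. By \ref{J:period}, translating $\xi$ by $1$ or by $\tau$ multiplies all four functions by a common factor up to the signs $\epsilon_k$. Hence $\Theta$ is invariant under the lattice $2\ZZ+2\tau\ZZ$ and descends to the compact Riemann surface $E'=\CC/(2\ZZ+2\tau\ZZ)$. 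The map is nonconstant: for example $\theta_1$ vanishes at $0$ while $\theta_2(0)=\vartheta_2\ne0$, so the image contains a point with $X_1=0$ and a point with $X_1\ne0$.

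\emph{Step 2: $\bar C$ is a smooth irreducible curve.} For an intersection of two diagonal quadrics $\sum a_iX_i^2=\sum b_iX_i^2=0$ in $\mathbb P^3$, the standard Jacobian computation shows smoothness exactly when all $2\times2$ minors $a_ib_j-a_jb_i$ are nonzero. Here $a=(-\psi_3,-\psi_4,0,\psi_2)$ and $b=(-\psi_4,-\psi_3,\psi_2,0)$. The minors are, up to sign, $\psi_3^2-\psi_4^2$, $\psi_2\psi_3$, $\psi_2\psi_4$ and $\psi_2^2$. The products are nonzero by \ref{th:nozero}, and $\psi_3^2-\psi_4^2=\vartheta_3^4-\vartheta_4^4=\vartheta_2^4\ne0$ by \ref{th:jacobi} and \ref{th:nozero}.

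A complete intersection curve in $\mathbb P^3$ is connected. Being smooth, its connected components are its irreducible components, so $\bar C$ is a compact connected Riemann surface. (If one prefers to avoid the general connectedness theorem: projecting from a coordinate exhibits $\bar C$ as a double cover of a smooth conic branched at four points, hence connected.)

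\emph{Step 3: surjectivity.} The map $\Theta:E'\to\bar C$ is a nonconstant holomorphic map between compact connected Riemann surfaces. Hence it is open with compact, thus closed, image, and so it is onto $\bar C$. This gives the required $\xi$, and $S=X_1/\theta_1(\xi|\tau)$ finishes the proof.

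The main obstacle is Step 2, specifically irreducibility and connectedness of $\bar C$. If $\bar C$ had another component missed by $\Theta$, surjectivity would fail. I would first try the smoothness check above together with the connectedness of complete intersections, falling back on the explicit double-cover description if a self-contained argument is wanted.
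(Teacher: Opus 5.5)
Your proof is correct, but it takes a different route from the paper's.

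\textbf{The paper's route.} It stays within one-variable function theory. It sets $f_k=\theta_k/\theta_1$ and $x_k=X_k/X_1$. It uses that the order-two elliptic function $f_4$, with periods $2$ and $\tau$, takes every value, giving $\xi_0$ with $f_4(\xi_0)=x_4$. The two quadratic relations then give $x_2=\pm f_2(\xi_0)$ and $x_3=\pm f_3(\xi_0)$. Finally the explicit symmetries $\xi\mapsto1-\xi$ and $\xi\mapsto\xi+\tau$ realize all four sign choices.

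\textbf{Your route.} You work globally. You show that the theta map descends to $\CC/(2\ZZ+2\tau\ZZ)$. You show that the complete intersection $\bar C$ is smooth; your minors are right, and this is where you need $\vartheta_3^4-\vartheta_4^4=\vartheta_2^4\ne0$ from \ref{th:jacobi}, which the paper's proof never uses. You show $\bar C$ is connected. Surjectivity then follows from the open mapping theorem and compactness. Your double-cover fallback also works: projecting from $[0:0:0:1]\notin\bar C$ gives a degree-two map onto the smooth conic $\psi_2X_3^2=\psi_3X_2^2+\psi_4X_1^2$. It branches exactly at the four points of the conic where $\psi_4X_2^2+\psi_3X_1^2=0$.

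\textbf{What each buys.} Your argument proves the cleaner statement that the theta map is onto all of $\bar C$, including the points with $X_1=0$. It also makes transparent why no solution can be missed. The price is the Jacobian criterion plus either the connectedness theorem for complete intersections or the extra double-cover argument. The paper's argument needs only the facts \ref{J:period}, \ref{J:zeros}, \ref{J:parity}, \ref{J:quad}, together with $\psi_k\ne0$ and the value distribution of elliptic functions. It is nearly explicit in $\xi$. Its sign-flipping symmetries do by hand exactly what connectedness of $\bar C$ does for you: they show that all (up to four) points of $\bar C$ over a given value of $X_4/X_1$ lie in the image.
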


\begin{proof}
Put $f_k=\theta_k/\theta_1$ and $x_k=X_k/X_1$ for $k=2,3,4$. By \ref{J:period}, $f_4$ is a meromorphic function with periods $2$ and $\tau$, and by \ref{J:zeros} its poles in the period parallelogram spanned by $2$ and $\tau$ are the two simple zeros $0$ and $1$ of $\theta_1$. A nonconstant elliptic function takes every complex value as often, counted with multiplicity, as it has poles in a period parallelogram \cite[Chapter~7, Theorem~5 and the remark after it]{Ahlfors}; hence there is $\xi_0$, not a pole, with $f_4(\xi_0)=x_4$. Dividing \ref{J:quad} by $\theta_1^2$ gives $\psi_2f_4^2-\psi_4f_2^2=\psi_3$ and $\psi_2f_3^2-\psi_3f_2^2=\psi_4$; comparing with the hypotheses, and using $\psi_k\ne0$ (\ref{th:nozero}), we get $x_2=\pm f_2(\xi_0)$ and then $x_3=\pm f_3(\xi_0)$. By \ref{J:period} and \ref{J:parity}, the map $\xi\mapsto1-\xi$ changes $(f_2,f_3,f_4)$ into $(-f_2,f_3,f_4)$, and $\xi\mapsto\xi+\tau$ changes it into $(-f_2,-f_3,f_4)$. Hence $(f_2,f_3,f_4)$ takes the values $(\pm f_2,\pm f_3,f_4)(\xi_0)$, with all four choices of signs, at the four points $\xi_0$, $1-\xi_0$, $\xi_0+\tau$ and $1-\xi_0-\tau$, so one of them is a point $\xi$ with $(f_2,f_3,f_4)(\xi)=(x_2,x_3,x_4)$. The point $\xi$ is not a pole of $f_4$, because the set $\ZZ+\tau\ZZ$ of poles is invariant under both maps. Since $\theta_1$ and $\theta_4$ have no common zero, $\theta_1(\xi)\ne0$, and $S=X_1/\theta_1(\xi)$ works.
\end{proof}

The next proposition is the analogue of Proposition~\ref{prop:modcomplete} for the Jacobi block: if the coefficients are the intended ones and the relations hold, a solution defined for all times is a shifted theta solution of Lemma~\ref{lem:bloch}, with one triple $(a_0,b_0,C_0)$ for all times. Lemma~\ref{lem:thetafiber} matches the value at $t=0$, and uniqueness of solutions continues the identity as long as $T_1\ne0$, which holds for all times because $UT_1=1$.

\begin{proposition}\label{prop:jaccont}
Let $\mathbf J:\RR\to\CC^{10}$ be a solution of \eqref{eq:jacblock} such that, for all $t\in\RR$, $\psi_k(t)=\vartheta_k(\tau(t))^2$ for $k=2,3,4$, $H(t)=E_2(\tau(t))/12$, $R_1=R_2=0$ and $UT_1=1$. Then there are $a_0,b_0,C_0\in\CC$, $C_0\ne0$, such that $\mathbf J(t)$ is the vector of Lemma~\ref{lem:bloch} evaluated at $(w(t),\tau(t))$, for all $t\in\RR$.
\end{proposition}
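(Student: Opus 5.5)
The argument matches the value of $\mathbf J$ at $t=0$ to a member of the family of Lemma~\ref{lem:bloch}, and then propagates the identity by uniqueness of solutions.

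\textbf{Step 1: matching at $t=0$.} At $t=0$ put $\tau^*=\tau(0)$ and $w^*=w(0)$. The coordinates $T_1(0),\dots,T_4(0)$ satisfy \eqref{eq:R12} with $\psi_k=\vartheta_k(\tau^*)^2$, and $T_1(0)\ne0$ because $UT_1=1$. Lemma~\ref{lem:thetafiber} therefore gives $\xi$ and $S\ne0$ with $T_k(0)=S\,\theta_k(\xi|\tau^*)$ and $\theta_1(\xi|\tau^*)\ne0$.

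\textbf{Step 2: choosing the parameters.} The shift $\xi$ fixes $a_0\tau^*+b_0=\xi-w^*$. The multiplier $S$ fixes $C_0e^{\pi ia_0(2w^*+a_0\tau^*)}=S$. This leaves one free parameter, which must be chosen to match $\Delta(0)$. In Lemma~\ref{lem:bloch},
\[
\Delta=\pi^{-1}\partial_wT_1=S\bigl(2ia_0\theta_1(\xi)+\pi^{-1}\theta_1'(\xi)\bigr).
\]
Since $\theta_1(\xi)\ne0$, this is affine in $a_0$ with nonzero slope, so $a_0$ is determined uniquely by $\Delta(0)$. Then $b_0=\xi-w^*-a_0\tau^*$ and $C_0=Se^{-\pi ia_0(2w^*+a_0\tau^*)}\ne0$. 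With these choices, $U(0)=1/T_1(0)$ also agrees, and the coefficients $\psi_k,H$ agree by hypothesis. So the two vectors coincide at $t=0$.

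\textbf{Step 3: continuation by uniqueness.} By Lemma~\ref{lem:bloch}, $\tilde{\mathbf J}(t)=\mathbf J^{\mathrm{Bloch}}(w(t),\tau(t))$ solves \eqref{eq:jacblock} wherever its $T_1$ component is nonzero. Uniqueness of solutions of the polynomial system \eqref{eq:jacblock} then gives $\mathbf J=\tilde{\mathbf J}$ on the maximal interval $I\ni0$ on which $\tilde T_1\ne0$.

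It remains to show $I=\RR$, which is the only delicate point. Suppose $I$ has a finite endpoint $t_1$. On $I$ we have $\tilde T_1=T_1$. The function $\tilde T_1$ is entire in $t$, since it is $C_0e^{(\cdots)}\theta_1$ evaluated along an affine path, so it has a finite limit at $t_1$; moreover $\tilde T_1(t_1)=0$ by maximality of $I$. But $T_1$ is continuous on all of $\RR$, so $T_1(t_1)=\lim T_1=\lim\tilde T_1=0$. This contradicts $U(t_1)T_1(t_1)=1$. Hence $I=\RR$, and the identity holds for all $t$, with a single triple $(a_0,b_0,C_0)$.

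The one nontrivial input is Lemma~\ref{lem:thetafiber}. The only other points needing care are that $a_0$ is solvable because $\theta_1(\xi)\ne0$, and that the zero-crossing argument in Step~3 works because $\tilde T_1$ extends continuously past $t_1$.
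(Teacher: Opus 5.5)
Your proof is correct and follows the paper's own argument: you match the value at $t=0$ via Lemma~\ref{lem:thetafiber}, solve for $a_0$ from $\Delta(0)$ using $\theta_1(\xi)\neq0$, and continue by uniqueness, with $UT_1=1$ ruling out a zero of $\tilde T_1$ at a finite endpoint. The paper phrases this last step as extending past the endpoint of the maximal interval of agreement, which is the same idea. One small correction: $\tilde T_1$ is not entire in $t$, since for complex $t$ the point $\tau(t)$ leaves $\HH$. It is, however, continuous (indeed real-analytic) on $\RR$ because $\theta_1$ is holomorphic on $\CC\times\HH$, and that is all your argument uses.
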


\begin{proof}
By Lemma~\ref{lem:thetafiber} at $\tau(0)$ there are $\xi$ and $S\ne0$ with $T_k(0)=S\theta_k(\xi|\tau(0))$. Choose $a_0$ with $2ia_0=\Delta(0)/T_1(0)-\theta_1'(\xi)/(\pi\theta_1(\xi))$, then $b_0=\xi-w(0)-a_0\tau(0)$ and $C_0=Se^{-\pi ia_0(2w(0)+a_0\tau(0))}$. The vector $\tilde{\mathbf J}(t)$ of Lemma~\ref{lem:bloch} along \eqref{eq:tauw} then agrees with $\mathbf J(0)$ at $t=0$: the coordinates $\psi_k$ and $H$ by hypothesis, the $T_k$ by construction, $\Delta$ because, by the computation of $\pi^{-1}\partial_wT_1$ in the proof of Lemma~\ref{lem:bloch} and the choice of $a_0$, $\tilde\Delta(0)=S\theta_1(\xi)\bigl(2ia_0+\theta_1'(\xi)/(\pi\theta_1(\xi))\bigr)=T_1(0)\cdot\Delta(0)/T_1(0)$, and $U$ because $U(0)=1/T_1(0)$. Let $I$ be the largest open interval containing $0$ on which $\tilde T_1\ne0$ and $\tilde{\mathbf J}=\mathbf J$. It is nonempty: $\tilde T_1(0)=T_1(0)\ne0$, so $\tilde T_1\ne0$ near $0$, there $\tilde{\mathbf J}$ solves \eqref{eq:jacblock} by Lemma~\ref{lem:bloch}, and two solutions of the polynomial system \eqref{eq:jacblock} with the same value at $0$ agree near $0$. If $I$ had a finite endpoint $t^*$, then $\tilde T_1,\dots,\tilde T_4,\tilde\Delta$ are continuous at $t^*$ (they are holomorphic on $\CC\times\HH$), so they agree with $\mathbf J$ at $t^*$; in particular $\tilde T_1(t^*)=T_1(t^*)\ne0$ because $U(t^*)T_1(t^*)=1$, hence $\tilde U=1/\tilde T_1$ is also continuous at $t^*$ and agrees with $U$ there. By uniqueness the two solutions agree on a neighbourhood of $t^*$, contradicting maximality. Hence $I=\RR$.
\end{proof}

\subsubsection{The output}
The output of the Jacobi block is $F=GT_1$, where $G$ is a fourth root of $\lambda QV^2$, an auxiliary factor which turns $\theta_1$ into $\theta_3$ by \ref{J:half}; along the intended solution, this gives $F=\Theta_\lambda(Q,V)$. We compute $F$ along every solution of Lemma~\ref{lem:bloch} and show that three time averages of $F$, its Fourier coefficients for $n=0$ and $n=\pm1$, single out the intended solution (Proposition~\ref{prop:threemoments}).

Let $G(t)$ be a continuous function with $G(t)^4=\lambda e^{i\alpha(t)+2i\beta(t)}$. Then $G(t)e^{-i(\alpha(t)/4+\beta(t)/2)}$ is continuous with fourth power $\lambda=r_0^4$, so it is a constant $r_0\zeta_G$ with $\zeta_G^4=1$, and $G(t)=\zeta_Ge^{\pi iz(t)+\pi i\tau(t)/4}$ by \eqref{eq:tauw}. If $\mathbf J$ is as in Proposition~\ref{prop:jaccont}, then \ref{J:half}, applied at $z+a_0\tau+b_0$, gives
\[
\begin{aligned}
G\,T_1&=\zeta_GC_0e^{\pi ia_0(2w+a_0\tau)}e^{\pi iz+\pi i\tau/4}\theta_1\bigl(z+a_0\tau+b_0+\tfrac{1+\tau}2\big|\tau\bigr)\\
&=\zeta_GC_0e^{\pi ia_0(2w+a_0\tau)}e^{-\pi i(a_0\tau+b_0)}\theta_3(z+a_0\tau+b_0|\tau).
\end{aligned}
\]
Expanding $\theta_3$ and using $2w=2z+1+\tau$, $e^{\pi i\tau(t)}=\lambda e^{i\alpha(t)}$ and $e^{2\pi iz(t)}=e^{i\beta(t)}$, we obtain, with $C'=\zeta_GC_0e^{\pi ia_0-\pi ib_0}$ and $\lambda^x:=e^{-\pi x\im\tau_\lambda}$ for $x\in\CC$,
\begin{equation}\label{eq:Fbloch}
F(t):=G(t)T_1(t)=C'\sum_{n\in\ZZ}\lambda^{(n+a_0)^2}e^{2\pi inb_0}\,e^{i(n+a_0)^2\alpha(t)+i(n+a_0)\beta(t)}.
\end{equation}
For $a_0=b_0=0$ and $C'=1$ this is $\Theta_\lambda(e^{i\alpha(t)},e^{i\beta(t)})$.

The output \eqref{eq:Fbloch} is a sum of oscillating exponentials only if $a_0$ is real. The next lemma excludes nonreal $a_0$ for recurrent solutions: then the point $w+a_0\tau+b_0$ at which the theta functions are evaluated moves away from the real axis linearly in time, and the quasi-periodicity \ref{J:period} makes the theta functions grow like $e^{ct^2}$.

\begin{lemma}\label{lem:nonreal}
If $\im a_0\ne0$ in Proposition~\ref{prop:jaccont}, then $|(T_1,\dots,T_4)(t)|\to\infty$ as $t\to\infty$.
\end{lemma}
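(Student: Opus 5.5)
The plan is to use the quasi-periodicity \ref{J:period} to move the evaluation point back into a fixed strip. Along \eqref{eq:tauw} the point is $\xi(t)=w(t)+a_0\tau(t)+b_0$, and $\tau(t)=\tau_\lambda+\alpha(t)/\pi$ has constant imaginary part $y_\lambda=\im\tau_\lambda>0$. Write $a_0=a_1+ia_2$ with $a_2\ne0$. Then
\[
\im\xi(t)=\tfrac12y_\lambda+a_1y_\lambda+a_2\,\Omega t/\pi+\text{const}
\]
is affine in $t$ with nonzero slope $a_2\Omega/\pi$. Put $k(t)=\lfloor\im\xi(t)/y_\lambda\rfloor\in\ZZ$ and $\xi'(t)=\xi(t)-k(t)\tau(t)$, so that $0\le\im\xi'(t)<y_\lambda$ and $|k(t)|\sim c|t|$ with $c=|a_2\Omega|/(\pi y_\lambda)>0$.

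Applying \ref{J:period} $k$ times gives
\[
\theta_j(\xi|\tau)=\pm e^{-\pi ik^2\tau-2\pi ik\xi'}\theta_j(\xi'|\tau),
\]
which one checks by induction from $\theta_j(z+\tau)=\epsilon_je^{-\pi i\tau-2\pi iz}\theta_j(z)$. The modulus of this factor is $\exp\bigl(\pi k^2y_\lambda+2\pi k\im\xi'\bigr)\ge\exp(\pi k^2y_\lambda)$ when $k\ge0$. For $k<0$ the bound $\im\xi'<y_\lambda$ gives $\exp\bigl(\pi k^2y_\lambda-2\pi|k|y_\lambda\bigr)$, which is still at least $e^{c't^2}$ for large $t$. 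The prefactor $|C_0e^{\pi ia_0(2w+a_0\tau)}|$ is $\exp(\text{quadratic in }t)$. Its $t^2$ coefficient comes from $\re\bigl(\pi ia_0(2w+a_0\tau)\bigr)$, and expanding with $w=z+(1+\tau)/2$ and the linear parts in $t$ gives leading term $-\pi\,\im\bigl(a_0(\tau+a_0\tau)\bigr)$ plus lower order. This has to be combined with the exponent $\pi k^2y_\lambda$, so the key computation is to show that the total $t^2$ coefficient is strictly positive. It is cleanest to carry this out directly: the combination of the prefactor and the quasi-periodicity factor equals $\exp\bigl(\pi(\im\xi)^2/y_\lambda\bigr)$ times a factor $e^{O(t)}$, which is the standard norm $|\theta(\xi)|e^{-\pi(\im\xi)^2/\im\tau}$ being bounded. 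One expects this because $|e^{\pi ia(2w+a\tau)}\theta(w+a\tau+b)|$ is the usual Bloch-type expression. I therefore expect the growth to be $\exp(\pi(\im\xi)^2/y_\lambda-\pi(\text{prefactor term}))$ with a net positive $t^2$ coefficient proportional to $a_2^2$. Verifying this sign carefully is the main obstacle; I would do it by writing everything in terms of $\im\xi$ and $\im w$, both of which are affine in $t$.

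It then remains to bound $|\theta_j(\xi'|\tau)|$ from below for at least one $j$. The point $\xi'$ lies in the strip $0\le\im\xi'\le y_\lambda$, and $\tau(t)$ ranges over $\tau_\lambda+\RR$. By \ref{J:period} and $|\theta_k(z|\tau+2)|=|\theta_k(z|\tau)|$, the quantity $\max_j|\theta_j(\xi'|\tau)|$ is invariant under $\xi'\mapsto\xi'+1$ and $\tau\mapsto\tau+2$. So it is a continuous function on a compact set (real parts modulo $1$ and $2$), and it is positive by \ref{J:zeros}, since the four functions have no common zero. Hence it is at least some $\delta>0$. Therefore $\max_j|T_j(t)|\ge\delta e^{c''t^2-O(t)}\to\infty$, which proves $|(T_1,\dots,T_4)(t)|\to\infty$.
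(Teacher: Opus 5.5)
Your strategy matches the paper's. You translate $\xi(t)=w(t)+a_0\tau(t)+b_0$ by $k(t)\tau(t)$ into a fixed horizontal strip and pick up the factor $e^{\pi k^2y_\lambda+2\pi k\im\xi'}$ from \ref{J:period}. You then bound $\max_j|\theta_j(\xi'|\tau)|$ from below by compactness, using invariance under $\xi'\mapsto\xi'+1$ and $\tau\mapsto\tau+2$ together with the absence of common zeros \ref{J:zeros}. Those steps are correct.

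The gap is the step you call the main obstacle. You leave it as an expectation, and it rests on a false claim: the prefactor $C_0e^{\pi ia_0(2w+a_0\tau)}$ is \emph{not} the exponential of a quadratic in $t$. The number $a_0$ is a fixed constant, and $w(t)$, $\tau(t)$ are affine in $t$ by \eqref{eq:tauw}. So the exponent $\pi ia_0\bigl(2w(t)+a_0\tau(t)\bigr)$ is affine in $t$; your own expression $-\pi\im\bigl(a_0(\tau+a_0\tau)\bigr)$ is affine, not quadratic. Therefore $\bigl|\log|C_0e^{\pi ia_0(2w+a_0\tau)}|\bigr|=O(1+|t|)$. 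There is nothing of order $t^2$ to compete with $\pi y_\lambda k(t)^2$, and no sign to verify. The quadratic factor belongs to the time-dependent shift $k(t)$, not to the fixed shift $a_0$.

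With this observation your argument closes, and it is then the paper's proof. The paper uses the symmetric strip $|\im\xi_r|\le y_\lambda/2$ and the bound $e^{\pi y_\lambda(n^2-|n|)}$, which is an inessential difference. For large $t$,
\[
\log\max_j|T_j(t)|\ \ge\ \pi y_\lambda\bigl(k(t)^2-2|k(t)|\bigr)+\log\delta-O(1+|t|)\ \ge\ \pi y_\lambda c^2t^2-O(1+|t|)\to\infty .
\]
The leading coefficient is $\pi y_\lambda c^2=a_2^2\Omega^2/(\pi y_\lambda)$, proportional to $a_2^2$ as you guessed.

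The detour through $|\theta(\xi)|e^{-\pi(\im\xi)^2/\im\tau}$ is not needed. If you want that form, note that $ky_\lambda=\im\xi-\im\xi'$ makes your quasi-periodicity factor exactly $\exp\bigl(\pi((\im\xi)^2-(\im\xi')^2)/y_\lambda\bigr)$.
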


\begin{proof}
Put $y_\lambda=\im\tau_\lambda=\im\tau(t)$ and $\xi(t)=w(t)+a_0\tau(t)+b_0$, an affine function of $t$ with $\im\xi(t)=\im(a_0)\Omega t/\pi+O(1)$. The prefactor in \eqref{eq:blochfamily} is the exponential of an affine function of $t$, so its logarithm has modulus $O(1+|t|)$. Write $\xi(t)=\xi_r(t)+n(t)\tau(t)$ with $n(t)\in\ZZ$ and $|\im\xi_r(t)|\le y_\lambda/2$; then $|n(t)|y_\lambda\ge|\im\xi(t)|-y_\lambda/2$, so $|n(t)|\ge c|t|-C$ with $c=|\im a_0|\,|\Omega|/(\pi y_\lambda)>0$ and a constant $C$. Iterating \ref{J:period} gives $|\theta_k(\xi_r+n\tau|\tau)|=e^{\pi n^2y_\lambda+2\pi n\im\xi_r}|\theta_k(\xi_r|\tau)|\ge e^{\pi y_\lambda(n^2-|n|)}|\theta_k(\xi_r|\tau)|$. By \ref{J:period} the function $(\xi,\tau)\mapsto\max_k|\theta_k(\xi|\tau)|$ is invariant under $\xi\mapsto\xi+1$ and $\tau\mapsto\tau+2$, so on $\{|\im\xi|\le y_\lambda/2,\ \im\tau=y_\lambda\}$ it is bounded below by its minimum over a compact set, which is positive by \ref{J:zeros}. Since each $|T_k(t)|$ is the modulus of the prefactor times $|\theta_k(\xi_r(t)+n(t)\tau(t)|\tau(t))|$, and $x\mapsto x^2-x$ is increasing for $x\ge1$, for large $t$ we get $\log|(T_1,\dots,T_4)(t)|\ge\pi y_\lambda\bigl((c|t|-C)^2-(c|t|-C)\bigr)-O(1+|t|)\ge\pi y_\lambda c^2t^2-O(1+|t|)\to\infty$.
\end{proof}

For real $a_0$, \eqref{eq:Fbloch} writes $F$ as an absolutely convergent sum of exponentials $e^{i\nu_nt}$ with frequencies $\nu_n=\Omega x^2+Nx$, $x=n+a_0$, and the time average of $Fe^{-i\mu t}$ picks out the coefficients with $\nu_n=\mu$. The next proposition shows that three such averages, which for $\Theta_\lambda$ equal $1$, $\lambda$ and $\lambda$, force $F=\Theta_\lambda$; the hypothesis $N/\Omega\notin\QQ$ ensures that each of them picks out a single coefficient.

\begin{proposition}\label{prop:threemoments}
In the situation of Proposition~\ref{prop:jaccont}, suppose that $\mathbf J$ is forward recurrent and that $N/\Omega\notin\QQ$, let $G$ be a continuous function with $G(t)^4=\lambda e^{i\alpha(t)+2i\beta(t)}$, and let $F=GT_1$ as in \eqref{eq:Fbloch}. If
\begin{equation}\label{eq:threemoments}
\avg{F}=1,\qquad\avg{Fe^{-i(\alpha+\beta)}}=\lambda,\qquad\avg{Fe^{-i(\alpha-\beta)}}=\lambda,
\end{equation}
then $F(t)=\Theta_\lambda(e^{i\alpha(t)},e^{i\beta(t)})$ for all $t\in\RR$.
\end{proposition}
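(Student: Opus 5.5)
First, by Lemma~\ref{lem:nonreal}, $a_0$ is real. If $\im a_0\ne0$, then $|(T_1,\dots,T_4)(t)|\to\infty$, which contradicts forward recurrence. With $a_0\in\RR$, formula \eqref{eq:Fbloch} writes $F$ as an absolutely and uniformly convergent sum
\[
F(t)=\sum_{n\in\ZZ}c_n e^{i\nu_n t},\qquad c_n=C'\lambda^{(n+a_0)^2}e^{2\pi inb_0}e^{i(n+a_0)^2\alpha_0+i(n+a_0)\beta_0},
\]
where $\nu_n=\Omega x_n^2+Nx_n$ and $x_n=n+a_0$. Since $\lambda^{x_n^2}$ decays like a Gaussian, the coefficients are absolutely summable. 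Therefore, as in the proof of Lemma~\ref{lem:avg}, $\avg{Fe^{-i\mu t}}=\sum_{n:\nu_n=\mu}c_n$ for every real $\mu$.

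Next I would analyse which frequencies can match the three tests, whose frequencies are $\mu\in\{0,\Omega+N,\Omega-N\}$.

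For $\mu=0$ we need $x(\Omega x+N)=0$, so $x=0$ or $x=-N/\Omega$. The second root is irrational by hypothesis. Since $\avg{F}=1\ne0$, some $x_n$ must equal $0$ or $-N/\Omega$.

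For $\mu=\Omega\pm N$, the two roots of $\Omega x^2+Nx-(\Omega\pm N)=0$ are $x=\pm1$ and $x=\mp1-N/\Omega$. Again the second root is irrational.

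All the $x_n$ differ by integers, so either every $x_n$ is rational or every $x_n$ is irrational. In the rational case $a_0\in\ZZ$, and we may shift $n$ so that $a_0=0$; this changes only the labelling (and $b_0$ enters through $e^{2\pi inb_0}$, so I check that the reindexing is harmless). In the irrational case the $\mu=0$ test forces $x_n=-N/\Omega$ for some $n$, so $a_0\equiv-N/\Omega \pmod 1$. Then the other two tests can only pick up $x=\mp1-N/\Omega$, which also lies in $a_0+\ZZ$.

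Case $a_0=0$. The three tests give $c_0=1$, $c_1=\lambda$ and $c_{-1}=\lambda$:
\[
C'=1,\qquad C'\lambda e^{2\pi ib_0}e^{i(\alpha_0+\beta_0)}=\lambda\,e^{i(\alpha_0+\beta_0)},\qquad C'\lambda e^{-2\pi ib_0}e^{i(\alpha_0-\beta_0)}=\lambda\,e^{i(\alpha_0-\beta_0)}.
\]
Here the right-hand sides are the matching coefficients of $\Theta_\lambda$, where the phase factor $e^{i(\alpha_0\pm\beta_0)}$ survives in the average $\avg{Fe^{-i(\alpha\pm\beta)}}$. So the conditions reduce to $C'=1$ and $e^{2\pi ib_0}=1$. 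I need to get the normalisation of the tests exactly right at this point. With $b_0\in\ZZ$ every $e^{2\pi inb_0}$ equals $1$, and \eqref{eq:Fbloch} becomes exactly $\Theta_\lambda(e^{i\alpha},e^{i\beta})$.

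Case $a_0=-N/\Omega+k$ (irrational). Shifting $n$, write $x_n=n-N/\Omega$. The tests give $c_0=1$, $c_{-1}=\lambda$ and $c_{1}=\lambda$ in this indexing. Their moduli are $|C'|\lambda^{N^2/\Omega^2}$, $|C'|\lambda^{(1+N/\Omega)^2}$ and $|C'|\lambda^{(1-N/\Omega)^2}$. From these,
\[
\lambda^{(1\pm N/\Omega)^2-N^2/\Omega^2}=\lambda^{1\pm2N/\Omega}=\lambda \quad\text{for both signs},
\]
which forces $N/\Omega=0$. But $0$ is rational, contradicting the hypothesis. So this case is impossible.

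The main obstacle is bookkeeping rather than ideas. I have to state the averaging-with-frequencies lemma carefully for a general $\mu$, and I have to track the phase factors $e^{ix\alpha_0}$ and $e^{2\pi inb_0}$ so that the tests genuinely pin down both $C'$ and $e^{2\pi ib_0}$ and leave no residual sign or root of unity. The irrational case must be excluded through the moduli, as above.
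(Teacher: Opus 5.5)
Your reduction to real $a_0$, the frequency bookkeeping and the case $a_0\in\ZZ$ are fine. The treatment of the case $a_0\in-\varrho+\ZZ$ (with $\varrho=N/\Omega$) is wrong.

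The parameter $b_0$ supplied by Proposition~\ref{prop:jaccont} is a complex number. So $|e^{2\pi inb_0}|=e^{-2\pi n\im b_0}$ is not $1$, and your moduli omit this factor. (The same omission appears in your absolute-convergence remark, where it is harmless because the Gaussian still dominates.) With the factor included, and $n$ shifted so that $x_n=n-\varrho$, the conditions $|c_{\pm1}|=\lambda|c_0|$ read $\lambda^{1\mp2\varrho}e^{\mp2\pi\im b_0}=\lambda$. These are one and the same condition, $e^{2\pi\im b_0}=\lambda^{-2\varrho}$, so nothing forces $\varrho=0$ and there is no contradiction.

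This case actually occurs. It is the image of the first case under the reflection $x\mapsto-\varrho-x$, which preserves the frequencies $\Omega x(x+\varrho)$. A suitable complex $b_0$ and $C_0$ produce exactly $F=\Theta_\lambda$, and the resulting $\mathbf J$ is a quasi-periodic, hence forward recurrent, solution. So you cannot dismiss this case; you have to show the conclusion also holds there.

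The repair is short, and it is what the paper does. With $x=n-\varrho$ the frequencies become $\nu_n=\Omega n^2-Nn$. The coefficients take the form $c_n=c_*R^{n^2}S^n$ with $R=\lambda e^{i\alpha_0}$ and constants $c_*,S\ne0$. The factor $e^{2\pi inb_0}$, the cross terms $\lambda^{-2n\varrho}$ and $e^{-2in\varrho\alpha_0}$, and $e^{in\beta_0}$ are all absorbed into $S^n$.

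Among integers, only $n=0$ has $\nu_n=0$ and only $n=-1$ has $\nu_n=\Omega+N$. So the first two averages give $c_*=1$ and $RS^{-1}=\lambda e^{i(\alpha_0+\beta_0)}$, that is, $S=e^{-i\beta_0}$. Hence $F(t)=\sum_n\lambda^{n^2}e^{in^2\alpha(t)-in\beta(t)}$, which equals $\Theta_\lambda(e^{i\alpha(t)},e^{i\beta(t)})$ after the substitution $n\mapsto-n$. The third average is then satisfied automatically.
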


\begin{proof}
By Lemma~\ref{lem:nonreal} and recurrence, $a_0$ is real. Then \eqref{eq:Fbloch} is a trigonometric series $F(t)=\sum_nd_ne^{i\nu_nt}$, where, with $x=n+a_0$ and $\varrho=N/\Omega$, the frequency is $\nu_n=\Omega x^2+Nx=\Omega x(x+\varrho)$ and $d_n=C'\lambda^{x^2}e^{2\pi inb_0}e^{ix^2\alpha_0+ix\beta_0}$. It converges absolutely, since $|d_n|=|C'|e^{-\pi y_\lambda x^2-2\pi n\im b_0}$ with $y_\lambda=\im\tau_\lambda>0$. For real $\mu$ at most two $n$ have $\nu_n=\mu$, and averaging term by term as in the proof of Lemma~\ref{lem:avg} (a term with $\nu_n\ne\mu$ contributes $0$ in the limit, and the contributions are dominated by the summable $|d_n|$) gives $\avg{Fe^{-i\mu t}}=\sum_{n:\nu_n=\mu}d_n$. The frequencies relevant for \eqref{eq:threemoments} are $\mu=0$, $\Omega+N$ and $\Omega-N$, and
\[
\nu=0\iff x\in\{0,-\varrho\},\qquad\nu=\Omega+N\iff x\in\{1,-1-\varrho\},\qquad\nu=\Omega-N\iff x\in\{-1,1-\varrho\}.
\]
Since $\varrho\notin\QQ$, the set $a_0+\ZZ$ contains at most one element of each of these pairs, and since $\avg F=1\ne0$ it contains $0$ or $-\varrho$.

We treat the two cases in turn. If $a_0\in\ZZ$, replacing $n$ by $n-a_0$ in \eqref{eq:Fbloch} gives the same formula with $a_0=0$ and with $C'$ multiplied by $e^{-2\pi ia_0b_0}$; so we may assume $a_0=0$. The three averages give $d_0=C'=1$ and $d_1=\lambda e^{2\pi ib_0}e^{i(\alpha_0+\beta_0)}=\lambda e^{i(\alpha_0+\beta_0)}$, so $e^{2\pi ib_0}=1$ and $F=\Theta_\lambda(e^{i\alpha},e^{i\beta})$.

If $a_0\in-\varrho+\ZZ$, reindex so that $a_0=-\varrho$. Writing $x=n-\varrho$, the coefficients take the form $d_n=c_*R^{n^2}S^n$ with $R=\lambda e^{i\alpha_0}$ and constants $c_*,S\ne0$, while the frequencies are $\nu_n=\Omega n^2-Nn$. The averages give $d_0=c_*=1$ and $d_{-1}=RS^{-1}=\lambda e^{i(\alpha_0+\beta_0)}$, so $S=e^{-i\beta_0}$. Hence $d_n=\lambda^{n^2}e^{in^2\alpha_0-in\beta_0}$ and
\[
F(t)=\sum_n\lambda^{n^2}e^{in^2\alpha_0-in\beta_0}e^{i(n^2\Omega-nN)t}=\sum_k\lambda^{k^2}e^{ik^2\alpha(t)+ik\beta(t)}=\Theta_\lambda(e^{i\alpha(t)},e^{i\beta(t)}),
\]
after the substitution $k=-n$. (The second case is the image of the first under the reflection $x\mapsto-\varrho-x$, which does not change the frequency $\Omega x(x+\varrho)$.) In both cases $F(t)=\Theta_\lambda(e^{i\alpha(t)},e^{i\beta(t)})$ for all $t\in\RR$.
\end{proof}

In the proof we use only the first two averages in \eqref{eq:threemoments}; the third is then automatic. We keep it in the construction because the formal verification (Section \ref{subsec:formal}) uses all three.

\begin{remark}\label{rem:zeroratio}
The irrationality of $\varrho=N/\Omega$ ensures that each pair of roots displayed in the proof of Proposition~\ref{prop:threemoments} contains at most one element of $a_0+\ZZ$. If $\varrho$ were an integer, this need not hold, and an average could see a sum of coefficients. If $\varrho=0$, which happens exactly when the $j$-th column of $B$ vanishes, the last two averages in \eqref{eq:threemoments} of the intended function $\Theta_\lambda$ itself are not $\lambda$: they see $d_1+d_{-1}$, and equal $\lambda(1+e^{\mp2i\beta_0})$. The hypothesis $\varrho\notin\QQ$ excludes such coincidences of frequencies. This is why the construction of Lemma~\ref{lem:template} makes every column of $B$ nonzero; then $\varrho\notin\QQ$ by Corollary~\ref{cor:irrational}, and the intended averages are those of Lemma~\ref{lem:graph}(c).
\end{remark}

\subsection{The polynomial system and its intended trajectories}\label{sec:source}

We now assemble the blocks of Sections~\ref{sec:clocks}--\ref{sec:jacobi} into one polynomial system $\dot x=X_e(x)$, together with a polynomial map $h_e$ whose coordinates are the ``tests'' of Section~\ref{subsec:overview2}. Along the intended trajectories of $\dot x=X_e(x)$ (Lemma~\ref{lem:graph}), one polynomial in the state, $\mathfrak D^{\mathrm{obs}}_e$, reproduces the function $\mathfrak D_e$ of Section~\ref{sec:arith}; the coordinates of $h_e$ are the real and imaginary parts of this polynomial, together with polynomials which encode the algebraic relations and the prescribed time averages used in Sections~\ref{sec:clocks}--\ref{sec:jacobi}. In Section~\ref{subsec:pass_tests} we show that along every recurrent solution which satisfies these relations and averages and does not grow too fast, $\mathfrak D^{\mathrm{obs}}_e$ reproduces $\mathfrak D_e$ along a linear flow on the torus (Theorem~\ref{thm:selector}), and in Section~\ref{subsec:intended} we describe the intended trajectories and prove the equivalence of this section (Corollary~\ref{cor:yes}). Throughout, $e\in\NN$ is the input of the reduction, and $A,B,c(e)$ are as in Lemma~\ref{lem:template}.

\subsubsection{\texorpdfstring{The polynomial system $\dot x=X_e(x)$ and the tests $h_e$}{The polynomial system and the tests}}\label{sec:the_source_system}
In this subsection we write down the system $\dot x=X_e(x)$ and the map $h_e$ explicitly. The coordinates of $h_e$ fall into three groups: the ``validity residuals'', which are nonnegative polynomials vanishing exactly where the algebraic relations satisfied by the intended trajectories hold; the real and imaginary parts of the ``moments'', which are polynomials whose time averages vanish exactly when the prescribed averages of Sections~\ref{sec:modular} and~\ref{sec:jacobi} hold; and the real and imaginary parts of the polynomial $\mathfrak D^{\mathrm{obs}}_e$, which equals $\mathfrak D_e$ along the intended trajectories. Table~\ref{tab:source} lists the coordinates of $x$, their values along the intended trajectories and the tests which involve them.

For an integer $a$ and $\zeta\in\CC$ put $\zeta^{[a]}=\zeta^a$ if $a\ge0$ and $\zeta^{[a]}=\bar\zeta^{-a}$ if $a<0$; on $|\zeta|=1$ this is $\zeta^a$, and for every $a$ it is a polynomial in $\re\zeta$ and $\im\zeta$, so that $V_j$ and $S_j$ below are polynomials even though $A$ and $B$ may have negative entries. In the phase-circle coordinates of \eqref{eq:controller} define the polynomials
\[
Q_j=\zeta_j^4,\quad\chi_l=\zeta_{q+l}^4,\quad V_j=\prod_l\chi_l^{[B_{lj}]},\quad S_j=\prod_l\chi_l^{[A_{lj}]},\quad W_j=\bar Q_jS_j,
\]
the frequencies $\omega_\ell=p_\ell r_\ell$, $\Omega_j=4\omega_j$, $N_j=4\sum_lB_{lj}\omega_{q+l}$, and $\nu_j=-\Omega_j+4\sum_lA_{lj}\omega_{q+l}$, which are polynomials in the controller coordinates. Along every solution of \eqref{eq:controller} we have $\dot Q_j=i\Omega_jQ_j$, $\dot V_j=iN_jV_j$ and $\dot W_j=i\nu_jW_j$, because $\omega_\ell$ is real and $\frac{d}{dt}\zeta_\ell^4=4i\omega_\ell\zeta_\ell^4$. The system $\dot x=X_e(x)$ has the following coordinates, for $1\le\ell\le q+r$ and $1\le j\le q$: the controllers $(r_\ell,s_\ell)\in\RR^2$ and phase circles $\zeta_\ell\in\CC$; the oscillator $R\in\CC$; a modular block $\mathbf m_j=(L_j,C_j,E_j,Y_j)\in\CC^4$; a Jacobi block $\mathbf J_j=(\psi_{2,j},\psi_{3,j},\psi_{4,j},H_j,T_{1,j},\dots,T_{4,j},\Delta_j,U_j)\in\CC^{10}$; an auxiliary coordinate $G_j\in\CC$, which will be a fourth root of $\lambda Q_jV_j^2$; and two reciprocals $G^+_j,G^-_j\in\CC$. Its equations are \eqref{eq:controller}, \eqref{eq:oscillator}, and
\begin{equation}\label{eq:source}
\begin{gathered}
\dot{\mathbf m}_j=i\Omega_j\,\cD(\mathbf m_j),\qquad\dot{\mathbf J}_j=i\Omega_j\,\mathcal J(\mathbf J_j)+\tfrac12(\Omega_j+N_j)\,\mathcal Z(\mathbf J_j),\qquad\dot G_j=\tfrac i4(\Omega_j+2N_j)G_j,\\
\dot G^+_j=i\eta\nu_jW_j(G^+_j)^2,\qquad\dot G^-_j=-i\eta\nu_j\bar W_j(G^-_j)^2.
\end{gathered}
\end{equation}
Separating real and imaginary parts, these equations form a polynomial system $\dot x=X_e(x)$ on $\RR^{n_s}$ with rational coefficients, for a number $n_s$ which does not depend on $e$; the input $e$ enters only through the coefficients $c(e)$ in \eqref{eq:oscillator}.
\begin{table}[t]
\centering
\def\cell#1{\begin{tabular}[t]{@{}l@{}}#1\end{tabular}}%
\caption{The coordinates of the system $\dot x=X_e(x)$, for $1\le\ell\le q+r$ and $1\le j\le q$, with their equations, their values at the point $\Gamma_e(\varphi)$ of \eqref{eq:graph}, which the intended trajectories $t\mapsto\Gamma_e(\varphi+\omega t)$ follow (Lemma~\ref{lem:graph}), and the tests of $h_e$ which involve them. The validity residuals are the squares of the real polynomials and the squared moduli of the complex polynomials listed here. The phase circles also enter the moments and some of the residuals, through $Q_j=\zeta_j^4$, $V_j$, $W_j$ and $\bar\zeta_j$. Besides these tests, $h_e$ contains the real and imaginary parts of $\mathfrak D^{\mathrm{obs}}_e=R\prod_jF_j\,(1-\eta^2)\,G_j^+G_j^-$, which equals $\mathfrak D_e(\varphi)$ at $\Gamma_e(\varphi)$. Figure~\ref{fig:circuit} shows how the groups of coordinates are connected.}
\label{tab:source}
\vspace{10px}
\footnotesize
\renewcommand{\arraystretch}{1.2}
\setlength{\tabcolsep}{4pt}
\begin{tabular}{@{}lllll@{}}
\hline
Coordinates & Real dim. & Value at $\Gamma_e(\varphi)$ & Validity residuals & Moments\\
\hline
\cell{controllers \eqref{eq:controller}\\ $(r_\ell,s_\ell)$} & $2(q+r)$ & $\bigl(p_\ell^{-1/2},(1-p_\ell^{-1})^{1/2}\bigr)$ & $r_\ell^2+s_\ell^2-1$ & ---\\
\cell{phase circles \eqref{eq:controller}\\ $\zeta_\ell$} & $2(q+r)$ & $e^{i\varphi_\ell}$ & $|\zeta_\ell|^2-1$ & ---\\
\cell{oscillator \eqref{eq:oscillator}\\ $R$} & $2$ & $\prod_l\chi_l^{-c_l(e)}$ & $|R|^2-1$ & ---\\
\cell{modular block \eqref{eq:source}\\ $(L_j,C_j,E_j,Y_j)$} & $8q$ & $(\vartheta_2^4,\vartheta_4^4,E_2,\vartheta_3)(\tau_j)$ & $Y_j^4-L_j-C_j$ & \cell{$Y_j-1$,\\ $Y_j\bar Q_j-2\lambda$}\\
\cell{Jacobi block \eqref{eq:source}\\ $(\psi_{2,j},\psi_{3,j},\psi_{4,j},H_j)$} & $8q$ & $\bigl(\vartheta_2^2,\vartheta_3^2,\vartheta_4^2,\tfrac1{12}E_2\bigr)(\tau_j)$ & \cell{$\psi_{2,j}^2-L_j$, $\psi_{3,j}-Y_j^2$,\\ $\psi_{4,j}^2-C_j$, $12H_j-E_j$} & \cell{$\psi_{4,j}-1$,\\ $\psi_{2,j}\bar\zeta_j^2-\frac14$}\\
\cell{Jacobi block \eqref{eq:source}\\ $(T_{1,j},\dots,T_{4,j},\Delta_j,U_j)$} & $12q$ & \cell{$T_{k,j}=\theta_k(w_j|\tau_j)$,\\ $\Delta_j=\frac1\pi\theta_1'(w_j|\tau_j)$,\\ $U_j=1/T_{1,j}$} & \cell{$R_{1,j}$, $R_{2,j}$ (see \eqref{eq:R12}),\\ $U_jT_{1,j}-1$} & \cell{$F_j-1$, $F_j\bar Q_j\bar V_j-\lambda$,\\ $F_j\bar Q_jV_j-\lambda$,\\ where $F_j=G_jT_{1,j}$}\\
\cell{auxiliary \eqref{eq:source}\\ $G_j$} & $2q$ & $e^{\pi iz_j+\pi i\tau_j/4}$ & $G_j^4-\lambda Q_jV_j^2$ & through $F_j$ (above)\\
\cell{reciprocals \eqref{eq:source}\\ $G_j^+,G_j^-$} & $4q$ & \cell{$(1-\eta W_j)^{-1}$,\\ $(1-\eta W_j^{-1})^{-1}$} & \cell{$(1-\eta W_j)G_j^+-1$,\\ $(1-\eta\bar W_j)G_j^--1$} & ---\\
\hline
total & \multicolumn{4}{@{}l}{$n_s=4(q+r)+2+34q=38q+4r+2$}\\
\hline
\end{tabular}
\end{table}

The \emph{validity residuals} are the following nonnegative polynomials: $(r_\ell^2+s_\ell^2-1)^2$, $(|\zeta_\ell|^2-1)^2$, $(|R|^2-1)^2$, and for every $j$ the squared moduli of
\begin{equation}\label{eq:residuals}
\begin{gathered}
Y_j^4-L_j-C_j,\quad \psi_{2,j}^2-L_j,\quad \psi_{3,j}-Y_j^2,\quad \psi_{4,j}^2-C_j,\quad12H_j-E_j,\quad R_{1,j},\quad R_{2,j},\quad U_jT_{1,j}-1,\\
G_j^4-\lambda Q_jV_j^2,\quad(1-\eta W_j)G^+_j-1,\quad(1-\eta\bar W_j)G^-_j-1,
\end{gathered}
\end{equation}
where $R_{1,j},R_{2,j}$ are the polynomials \eqref{eq:R12} in $\mathbf J_j$. With $F_j=G_jT_{1,j}$, the \emph{moments} are the complex polynomials
\begin{equation}\label{eq:moments}
\begin{gathered}
Y_j-1,\qquad Y_j\bar Q_j-2\lambda,\qquad \psi_{4,j}-1,\qquad \psi_{2,j}\bar\zeta_j^2-\tfrac14,\\
F_j-1,\qquad F_j\bar Q_j\bar V_j-\lambda,\qquad F_j\bar Q_jV_j-\lambda,
\end{gathered}
\end{equation}
and we put
\begin{equation}\label{eq:detobs}
\mathfrak D^{\mathrm{obs}}_e=R\prod_{j=1}^qF_j\,(1-\eta^2)G^+_jG^-_j.
\end{equation}
We let $h_e:\RR^{n_s}\to\RR^{n_h}$ be the polynomial map whose coordinates are all validity residuals, the real and imaginary parts of all moments, and the real and imaginary parts of $\mathfrak D^{\mathrm{obs}}_e$; here $n_h$ is the number of these coordinates. Following Section~\ref{subsec:overview2}, we think of each coordinate of $h_e$ as a ``test'' (see Section~\ref{subsec:pass_tests} for what it means for a solution to pass the tests). In Section~\ref{sec:compact}, $h_e$ becomes the velocity of an additional variable $z$, which keeps a ``running total'' of $h_e$. The number $n_h$ and the degree of $h_e$ do not depend on $e$; its coefficients do not depend on $e$ at all, since $e$ enters $\mathfrak D^{\mathrm{obs}}_e$ only through the dynamics of $R$. We keep the coordinates separate so that zero average of $h_e$ means zero average of every single coordinate.

\subsubsection{Trajectories which pass the tests}\label{subsec:pass_tests}
As in Section~\ref{sec:modular}, $\avg f=\lim_{T\to\infty}\frac1T\int_0^Tf(t)\,dt$, and ``$f$ has average zero'' means that this limit exists and equals zero. A solution $x$ is \emph{forward recurrent} if $x(t_k)\to x(0)$ for some sequence $t_k\to\infty$. As in Section~\ref{subsec:overview2}, we say that a solution $x:\RR\to\RR^{n_s}$ of $\dot x=X_e(x)$ \emph{passes the tests} if $\avg{h_e(x)}=0$, that is, every coordinate of $h_e$ has average zero along $x$, and, in addition, every validity residual vanishes at $x(t)$ for all $t\in\RR$. This additional condition is the technical condition mentioned in Section~\ref{subsec:overview2}, and it is condition (i) of Theorem~\ref{thm:selector} below; condition (iii) of that theorem then holds as well, since the real and imaginary parts of the moments are coordinates of $h_e$. Conditions (ii) and (iv) make precise what it means that $x$ is ``recurrent'' and does not ``grow too fast''. Theorem~\ref{thm:selector} shows that along every solution satisfying (i)--(iv), in particular along every forward recurrent solution which satisfies (iv) and passes the tests, the phases rotate with the frequencies $\pm\sqrt{p_\ell}$, and $\mathfrak D^{\mathrm{obs}}_e$ equals $\mathfrak D_e$ along this rotation up to a constant factor of modulus one. Its proof applies the results of Sections~\ref{sec:clocks}--\ref{sec:jacobi} block by block; the validity residuals and the moments provide exactly their hypotheses.

\begin{theorem}[Solutions with vanishing validity residuals and the prescribed averages]\label{thm:selector}
Let $x:\RR\to\RR^{n_s}$ be a solution of $\dot x=X_e(x)$ such that
\begin{enumerate}[label=(\roman*)]
\item every validity residual vanishes at $x(t)$ for all $t\in\RR$;
\item $x$ is forward recurrent;
\item every moment has average zero along $x$;
\item $\frac1T\int_0^T|x(t)|^\mu\,dt$ is bounded for $T\ge1$, for some $\mu\ge1$.
\end{enumerate}
Then there are $\epsilon\in\{-1,1\}^{q+r}$, $\varphi(0)\in\RR^{q+r}$ and $R_0\in\CC$ with $|R_0|=1$ such that, with $\omega=(\epsilon_\ell\sqrt{p_\ell})_\ell$ and $\varphi(t)=\varphi(0)+\omega t$, we have $\zeta_\ell(t)=e^{i\varphi_\ell(t)}$ and
\begin{equation}\label{eq:selected}
F_j(t)=\Theta_\lambda(Q_j(t),V_j(t)),\qquad G^\pm_j(t)=(1-\eta W_j(t)^{\pm1})^{-1},\qquad\mathfrak D^{\mathrm{obs}}_e(x(t))=R_0\,\mathfrak D_e(\varphi(t))
\end{equation}
for all $j$ and all $t\in\RR$, where $\mathfrak D_e$ is the function \eqref{eq:detector}.
\end{theorem}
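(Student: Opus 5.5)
We go block by block through the system, each time taking the hypotheses of the earlier selection results from (i)--(iv). The recurring observation is that a coordinate projection of a forward recurrent solution is again forward recurrent (same sequence $t_k$), and that (iv) passes to any subcollection of coordinates because $|\mathbf m_j(t)|\le|x(t)|$.

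\emph{Clocks and oscillator.} By (i), $r_\ell^2+s_\ell^2=1$ and $|\zeta_\ell|=1$. Projecting (ii) onto $(r_\ell,s_\ell)$ and applying Lemma~\ref{lem:controller}, each controller is an equilibrium. This gives signs $\epsilon_\ell$ with $\omega_\ell=\epsilon_\ell\sqrt{p_\ell}$ constant and $\zeta_\ell(t)=e^{i\varphi_\ell(t)}$, where $\varphi(t)=\varphi(0)+\omega t$. Hence $Q_j,\chi_l,V_j,S_j,W_j$ are the characters \eqref{eq:characters} evaluated at $\varphi(t)$. By \eqref{eq:oscgraph}, $R(t)=R_0\prod_l\chi_l(t)^{-c_l(e)}$, and the residual $|R|^2-1=0$ gives $|R_0|=1$. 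Corollary~\ref{cor:irrational} then gives $\Omega_j\ne0$ and $N_j/\Omega_j\notin\QQ$.

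\emph{Modular and Jacobi blocks.} For each $j$, the block $\mathbf m_j$ solves \eqref{eq:modblock} with the constant frequency $\Omega_j$ and lies on $\{Y^4=L+C\}$ by (i). It is forward recurrent and satisfies (ii) of Proposition~\ref{prop:modselect}. The moments $Y_j-1$ and $Y_j\bar Q_j-2\lambda$ give $\avg{Y}=1$ and $\avg{Ye^{-i\alpha}}=2\lambda$ with $\alpha=4\varphi_j$. So $\mathbf m_j=\mathbf m^{\mathrm{can}}(\tau_j(t))$.

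Next, the residuals give $\psi_{3,j}=\vartheta_3^2$, $H_j=E_2/12$, and $\psi_{2,j}^2=\vartheta_2^4$, $\psi_{4,j}^2=\vartheta_4^4$. By continuity and \ref{th:nozero}, $\psi_{2,j}=\pm\vartheta_2^2$ and $\psi_{4,j}=\pm\vartheta_4^2$ with constant signs. The moment $\psi_{4,j}-1$ fixes the sign of $\psi_{4,j}$, since $\vartheta_4^2(\tau_j(t))$ has mean $1$. The moment $\psi_{2,j}\bar\zeta_j^2-\frac14$ fixes the sign of $\psi_{2,j}$, by comparing it with the leading term $4e^{\pi i\tau/2}=4\lambda^{1/2}e^{2i\varphi_j}$ of $\vartheta_2^2$. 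This point needs care because the residue of $\tau$ modulo $4$ matters. I would check it by expanding $\vartheta_2^2$ as a series in $e^{\pi i\tau/2}$ and using $\lambda^{1/2}=\frac1{16}$, so that $4\lambda^{1/2}=\frac14$.

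Now Proposition~\ref{prop:jaccont} applies, with $R_1=R_2=0$ and $UT_1=1$ taken from (i). The coordinate $G_j$ solves a linear equation, and the residual $G_j^4=\lambda Q_jV_j^2$ holds. Therefore Proposition~\ref{prop:threemoments}, applied with $\alpha=4\varphi_j$, $\beta=\sum_l4B_{lj}\varphi_{q+l}$ and the three $F_j$-moments, gives $F_j=\Theta_\lambda(Q_j,V_j)$. Here recurrence of $\mathbf J_j$ is again a projection of (ii).

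\emph{Reciprocals and assembly.} The residuals $(1-\eta W_j^{\pm1})G_j^\pm=1$, together with $|W_j|=1$ and $\eta<1$, give $G^\pm_j=(1-\eta W_j^{\pm1})^{-1}$ directly, with no dynamics needed; note $\bar W_j=W_j^{-1}$. Multiplying all factors and comparing with \eqref{eq:detector} and \eqref{eq:ThetaP} gives $\mathfrak D^{\mathrm{obs}}_e(x(t))=R_0\mathfrak D_e(\varphi(t))$.

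The main obstacle I expect is the sign bookkeeping for $\psi_{2,j}$ and $\psi_{4,j}$ and for the fourth root $G_j$. In the Jacobi step one must check that the coefficients fed in are exactly $\vartheta_k(\tau(t))^2$ with $\tau(t)=\tau_\lambda+\alpha(t)/\pi$, and that this $\alpha$ agrees with the one in the modular step.
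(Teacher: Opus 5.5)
Your proposal is correct and follows the paper's proof essentially step by step. It treats the clocks and oscillator via Lemma~\ref{lem:controller} and \eqref{eq:oscgraph}, the modular block via Proposition~\ref{prop:modselect}, the signs of $\psi_{2,j},\psi_{4,j}$ via the two extra moments, the Jacobi block via Propositions~\ref{prop:jaccont} and~\ref{prop:threemoments}, and the reciprocals directly from the residuals, exactly as the paper does.

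The check you deferred works as you sketch: the constant term of $\vartheta_2^2(\tau_j)\bar\zeta_j^2$ is $4\lambda^{1/2}=\tfrac14$, and all other terms have nonzero frequencies $4(m^2+m+n^2+n)\omega_j$. The fourth root $G_j$ needs no separate sign bookkeeping, since Proposition~\ref{prop:threemoments} accepts any continuous fourth root.
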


\begin{proof}
Projections of a forward recurrent solution to subsystems are forward recurrent, and every component of $x$ has modulus at most $|x|$; so each block satisfies the hypotheses (ii) and (iv).

\emph{Clocks.} By (i) and Lemma~\ref{lem:controller}, every controller is at an equilibrium with $\omega_\ell=\epsilon_\ell\sqrt{p_\ell}$, and $\zeta_\ell(t)=e^{i\varphi_\ell(t)}$ for a continuous $\varphi_\ell(t)=\varphi_\ell(0)+\omega_\ell t$. Then $Q_j(t)=e^{i\alpha_j(t)}$ and $V_j(t)=e^{i\beta_j(t)}$ with $\alpha_j=4\varphi_j$, $\beta_j=4\sum_lB_{lj}\varphi_{q+l}$, so $\dot\alpha_j=\Omega_j$ and $\dot\beta_j=N_j$, and $\Omega_j\ne0$, $N_j/\Omega_j\notin\QQ$ by Corollary~\ref{cor:irrational}. By \eqref{eq:oscgraph}, $R(t)=R_0\prod_l\chi_l(t)^{-c_l(e)}$ with $|R_0|=|R(0)|=1$.

\emph{Modular block.} Fix $j$ and put $\tau_j(t)=\tau_\lambda+\alpha_j(t)/\pi$. Since the controllers are at equilibria, $\mathbf m_j$ solves the modular block \eqref{eq:modblock} with the constant frequency $\Omega_j\ne0$, and $\alpha_j(t)=\alpha_j(0)+\Omega_jt$ with $\alpha_j(0)\in\RR$; by (i), $Y_j^4=L_j+C_j$. The first two moments in \eqref{eq:moments} have average zero, that is, $\avg{Y_j}=1$ and $\avg{Y_je^{-i\alpha_j}}=2\lambda$, because $\bar Q_j=e^{-i\alpha_j}$. Together with (ii) and (iv) for $\mathbf m_j$, these are the hypotheses of Proposition~\ref{prop:modselect}, which gives $\mathbf m_j(t)=\mathbf m^{\mathrm{can}}(\tau_j(t))$. By the residuals, $\psi_{4,j}^2=C_j=\vartheta_4^4(\tau_j)$. Since $\vartheta_4$ has no zeros in $\HH$ by \ref{th:nozero}, $\psi_{4,j}(t)/\vartheta_4^2(\tau_j(t))$ is a continuous function of $t\in\RR$ with values $\pm1$, hence constant. The series $\vartheta_4^2(\tau_j(t))=\sum_{m,n\in\ZZ}(-1)^{m+n}\lambda^{m^2+n^2}e^{i(m^2+n^2)\alpha_j(t)}$ converges absolutely and uniformly in $t$, and every term with $(m,n)\ne(0,0)$ has the nonzero frequency $(m^2+n^2)\Omega_j$; averaging term by term, as in the proof of Lemma~\ref{lem:avg}, shows that $\vartheta_4^2(\tau_j(t))$ has average one. Since $\avg{\psi_{4,j}}=1$, the constant is $1$, and $\psi_{4,j}=\vartheta_4^2(\tau_j)$. Similarly $\psi_{2,j}=\pm\vartheta_2^2(\tau_j)$ with a constant sign. Now
\[
\vartheta_2^2(\tau)=\sum_{m,n\in\ZZ}e^{\pi i\tau((m+\frac12)^2+(n+\frac12)^2)},
\]
and $e^{\pi i\tau_j(t)\sigma}=\lambda^\sigma e^{4i\sigma\varphi_j(t)}$, where $\lambda^\sigma=e^{\pi i\sigma\tau_\lambda}>0$. The exponent $\sigma=(m+\frac12)^2+(n+\frac12)^2$ takes its smallest value $\frac12$ exactly for the four pairs $(m,n)\in\{0,-1\}^2$, and $\bar\zeta_j^2=e^{-2i\varphi_j}$. Hence $\vartheta_2^2(\tau_j)\bar\zeta_j^2$ is the sum of the constant $4\lambda^{1/2}=\frac14$ and of an absolutely and uniformly convergent series of terms $\lambda^\sigma e^{i(4\sigma-2)\varphi_j(t)}$ with $4\sigma-2=4(m^2+m+n^2+n)>0$, whose frequencies $(4\sigma-2)\omega_j$ are nonzero; so it has average $\frac14$. Since $\psi_{2,j}\bar\zeta_j^2=\pm\vartheta_2^2(\tau_j)\bar\zeta_j^2$ and $\avg{\psi_{2,j}\bar\zeta_j^2}=\frac14$, the sign is $+$, and $\psi_{2,j}=\vartheta_2^2(\tau_j)$. Finally $\psi_{3,j}=Y_j^2=\vartheta_3^2(\tau_j)$ and $H_j=E_j/12=E_2(\tau_j)/12$.

\emph{Jacobi block.} With $\alpha=\alpha_j$, $\beta=\beta_j$, $\tau=\tau_j$ and $w$ as in \eqref{eq:tauw}, the block $\mathbf J_j$ satisfies the hypotheses of Proposition~\ref{prop:jaccont}, by the previous step and (i). The coordinate $G_j$ satisfies $G_j^4=\lambda Q_jV_j^2=\lambda e^{i\alpha+2i\beta}$, so Proposition~\ref{prop:threemoments}, applied with the last three moments in \eqref{eq:moments}, gives $F_j(t)=\Theta_\lambda(e^{i\alpha_j(t)},e^{i\beta_j(t)})=\Theta_\lambda(Q_j(t),V_j(t))$.

\emph{Reciprocals and $\mathfrak D^{\mathrm{obs}}_e$.} Since $|W_j|=1$ and $\eta<1$, $1-\eta W_j^{\pm1}\ne0$, and the residuals give $G^\pm_j=(1-\eta W_j^{\pm1})^{-1}$; thus $(1-\eta^2)G^+_jG^-_j=P_\eta(W_j)=P_\eta(Q_j^{-1}S_j)$ by \eqref{eq:ThetaP}. Substituting everything into \eqref{eq:detobs} and comparing with \eqref{eq:detector} gives the last identity in \eqref{eq:selected}.
\end{proof}

\subsubsection{The intended trajectories}\label{subsec:intended}
For the converse direction, $e\notin K$, we need one explicit family of solutions, the ``intended trajectories'' of Section~\ref{subsec:overview2}, along which the system $\dot x=X_e(x)$ carries out the computation of $\mathfrak D_e$ for which it was designed. For each $\varphi$, the state $\Gamma_e(\varphi)$ defined below has every block at its intended value: the controllers at equilibria, the phase circles at $e^{i\varphi_\ell}$, and the theta constants and theta functions at the points $\tau_j$ and $(w_j|\tau_j)$ determined by $\varphi$. The intended trajectories are the solutions $t\mapsto\Gamma_e(\varphi+\omega t)$ of Lemma~\ref{lem:graph}(a) below. The image of $\Gamma_e$ is the graph of a function of the phase circles, since $\zeta_\ell=e^{i\varphi_\ell}$ determines $\varphi$ modulo $2\pi$. For $\varphi\in\RR^{q+r}$ put $\tau_j=\tau_\lambda+4\varphi_j/\pi$, $z_j=\frac2\pi\sum_lB_{lj}\varphi_{q+l}$ and $w_j=z_j+\frac{1+\tau_j}2$, and define $\Gamma_e(\varphi)\in\RR^{n_s}$ by
\begin{equation}\label{eq:graph}
\begin{gathered}
r_\ell=p_\ell^{-1/2},\quad s_\ell=(1-p_\ell^{-1})^{1/2},\quad\zeta_\ell=e^{i\varphi_\ell},\quad R=\prod_l\chi_l^{-c_l(e)},\quad\mathbf m_j=\mathbf m^{\mathrm{can}}(\tau_j),\\
\psi_{k,j}=\vartheta_k^2(\tau_j),\quad H_j=\tfrac1{12}E_2(\tau_j),\quad T_{k,j}=\theta_k(w_j|\tau_j),\quad\Delta_j=\tfrac1\pi\theta_1'(w_j|\tau_j),\quad U_j=1/T_{1,j},\\
G_j=e^{\pi iz_j+\pi i\tau_j/4}=r_0e^{i\varphi_j+2i\sum_lB_{lj}\varphi_{q+l}},\qquad G_j^\pm=(1-\eta W_j^{\pm1})^{-1}.
\end{gathered}
\end{equation}
By \ref{J:half}, $G_jT_{1,j}=\theta_3(z_j|\tau_j)=\Theta_\lambda(Q_j,V_j)$, which satisfies $|\Theta_\lambda-1|\le2\sum_{n\ge1}\lambda^{n^2}<1$; so $T_{1,j}\ne0$ and $U_j$ is defined.

\begin{lemma}\label{lem:graph}
The map $\Gamma_e$ is $2\pi$-periodic in each $\varphi_\ell$, so it defines a continuous map on $\TT^{q+r}$, and with $\omega_\ell=\sqrt{p_\ell}$:
\begin{enumerate}[label=(\alph*)]
\item $t\mapsto\Gamma_e(\varphi+\omega t)$ is a solution of $\dot x=X_e(x)$ for every $\varphi$;
\item every validity residual vanishes on the image of $\Gamma_e$;
\item every coordinate of $h_e\circ\Gamma_e$ has an absolutely convergent Fourier series on $\TT^{q+r}$; the constant terms of the moments are zero, and the constant term of $\mathfrak D^{\mathrm{obs}}_e\circ\Gamma_e=\mathfrak D_e$ is \eqref{eq:constterm}.
\end{enumerate}
\end{lemma}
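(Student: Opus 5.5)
Periodicity and continuity are checked coordinate by coordinate; then (a), (b), (c) are verified block by block, in the order of Table~\ref{tab:source}.

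\emph{Periodicity.} Under $\varphi_j\mapsto\varphi_j+2\pi$ the point $\tau_j$ moves by $8$ and $w_j$ by $4$. Under $\varphi_{q+l}\mapsto\varphi_{q+l}+2\pi$ the point $z_j$ moves by the even integer $4B_{lj}$. The theta constants, $E_2$, and the $\theta_k(\cdot|\tau)$ are invariant under $\tau\mapsto\tau+8$: \ref{th:transf} and \ref{J:period} applied to $\tau+1$ give period $8$ for $\vartheta_2$ and for $\theta_1,\theta_2$, and period $2$ for the others. By \ref{J:period}, the $\theta_k$ are $2$-periodic in $w$. The exponentials in $G_j$ and $R$ are $2\pi$-periodic, as the second formula for $G_j$ shows. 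Continuity holds because every entry of $\Gamma_e$ is a composition of holomorphic functions.

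\emph{(a) and (b).} The controllers sit at the equilibria of Lemma~\ref{lem:controller} with $\epsilon=1$, so $\zeta_\ell=e^{i(\varphi_\ell+\sqrt{p_\ell}t)}$ solves \eqref{eq:controller}. The identity $R=\prod_l\chi_l^{-c_l(e)}$ solves \eqref{eq:oscillator} by the computation before \eqref{eq:oscgraph}. Along the flow, $\tau_j(t)=\tau_\lambda+\alpha_j(t)/\pi$ with $\alpha_j=4\varphi_j$ and $\dot\alpha_j=\Omega_j$, so the modular block solves \eqref{eq:modblock} by Lemma~\ref{lem:modcanon}. For the Jacobi block, $(w_j,\tau_j)$ is exactly \eqref{eq:tauw} with $\beta_j=4\sum_lB_{lj}\varphi_{q+l}$, so Lemma~\ref{lem:bloch} with $a_0=b_0=0$, $C_0=1$ applies; here $T_{1,j}\ne0$ was noted after \eqref{eq:graph}. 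The coordinate $G_j$ equals $r_0e^{i(\alpha_j+2\beta_j)/4}$, which solves its linear equation. For the reciprocals, $G_j^+=(1-\eta W_j)^{-1}$ gives $\dot G_j^+=\eta\dot W_j(G_j^+)^2=i\eta\nu_jW_j(G_j^+)^2$, and similarly for $G_j^-$, using $\bar W_j=W_j^{-1}$. The residuals vanish by Lemma~\ref{lem:modcanon}, by \eqref{eq:R12}, by $G_j^4=\lambda e^{i(\alpha_j+2\beta_j)}=\lambda Q_jV_j^2$, and by the definitions of $\psi_{k,j}$, $H_j$, $U_j$ and $G_j^\pm$.

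\emph{(c).} This is the main step. Every coordinate is a polynomial in the entries of $\Gamma_e$. Each entry is a series of characters $e^{ik\cdot\varphi}$ with absolutely summable coefficients, since $|e^{\pi i\tau_j}|=\lambda<1$. The relevant expansions are:
\begin{itemize}
\item $\vartheta_k(\tau_j)$ and $E_2(\tau_j)$ in powers of $e^{i\varphi_j}$, with $e^{\pi i\tau_j/4}=r_0e^{i\varphi_j}$ handling the quarter powers in $\vartheta_2$;
\item $\theta_k(w_j|\tau_j)$ via the defining series, whose terms $e^{\pi in^2\tau_j+2\pi inw_j}$ are monomials in $e^{i\varphi}$ with geometrically decaying coefficients;
\item $U_j=G_j/\Theta_\lambda$, where $1/\Theta_\lambda$ is expanded as a geometric series in $1-\Theta_\lambda$ (valid since $|1-\Theta_\lambda|<1$);
\item $G_j^\pm$ as geometric series in $\eta W_j^{\pm1}$;
\item $\Delta_j$ by termwise differentiation, which multiplies the coefficients only by polynomial factors.
\end{itemize}
The absolutely convergent series form a Banach algebra (the Wiener algebra), so every polynomial in these entries again has an absolutely convergent series.

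The constant terms of the moments are read off from the leading coefficients of these expansions. The constant term of $\vartheta_3$ is $1$, and $\vartheta_3\bar Q_j$ has constant term $2\lambda$. The constant term of $\vartheta_4^2$ is $1$, and $\vartheta_2^2\bar\zeta_j^2$ has constant term $\frac14$, as computed in the proof of Theorem~\ref{thm:selector}. The product $F_j=G_jT_{1,j}=\Theta_\lambda(Q_j,V_j)$ has Fourier coefficient $1$ at $0$ and coefficient $\lambda$ at the characters $Q_jV_j$ and $Q_jV_j^{-1}$. These two characters are distinct from each other and from $1$ as characters on $\TT^{q+r}$, because $B_j\ne0$ (Remark~\ref{rem:zeroratio}). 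Each of the residual-free terms of $\Theta_\lambda$ is a distinct character, since $Q_j^{n^2}V_j^n$ determines $n$ whenever $B_j\ne0$.

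Finally, $\mathfrak D^{\mathrm{obs}}_e\circ\Gamma_e=\mathfrak D_e$ follows from \eqref{eq:detector}, \eqref{eq:ThetaP} and the identities above, and Lemma~\ref{lem:constterm} gives its constant term \eqref{eq:constterm}.
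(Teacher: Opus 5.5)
Your proposal is correct and follows the paper's proof essentially step by step. You check (a) and (b) block by block via Lemma~\ref{lem:modcanon} and Lemma~\ref{lem:bloch} with $a_0=b_0=0$, $C_0=1$, and you get (c) from closure of absolutely convergent character series under products and geometric series, with $B_j\neq0$ separating the characters $Q_j^{n^2}V_j^n$. Your periodicity check through $\tau_j\mapsto\tau_j+8$, $w_j\mapsto w_j+4$ is a valid alternative to the paper's remark that all these expansions are in integer characters $e^{ik\cdot\varphi}$.

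One justification should be sharpened, namely the expansion of $1/\Theta_\lambda$. For the geometric series $\sum_m(1-\Theta_\lambda)^m$ to converge in the Wiener algebra, you need the sum of the absolute values of the Fourier coefficients of $1-\Theta_\lambda$ to be below one. It is, since that sum is at most $2\sum_{n\ge1}\lambda^{n^2}<1$, but the pointwise bound $|1-\Theta_\lambda|<1$ alone does not give this.
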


\begin{proof}
Every coordinate is a series in the characters $e^{ik\cdot\varphi}$ with integer vectors $k$: for instance $e^{\pi i\tau_j(n+1/2)^2}=\lambda^{(n+1/2)^2}e^{i(4n^2+4n+1)\varphi_j}$, and $\theta_1(w_j|\tau_j)$ involves $e^{i(4n^2+8n+3)\varphi_j}$ and $e^{2i(2n+1)\sum_lB_{lj}\varphi_{q+l}}$. Hence $\Gamma_e$ is $2\pi$-periodic in each $\varphi_\ell$. (a) The controllers are at equilibria and $\dot\varphi=\omega$; the modular blocks solve \eqref{eq:modblock} by Lemma~\ref{lem:modcanon}; the Jacobi blocks are the case $a_0=b_0=0$, $C_0=1$ of Lemma~\ref{lem:bloch}; the remaining coordinates are explicit characters or rational functions of characters, and satisfy \eqref{eq:oscillator} and \eqref{eq:source} by direct differentiation. (b) The residuals \eqref{eq:residuals} vanish by \ref{th:jacobi}, \ref{J:quad} and the definitions. (c) Every coordinate is a finite product of characters, of absolutely convergent theta series in characters (for $\theta_1'$ the coefficients acquire a polynomial factor, which does not affect absolute convergence), and of the geometric series $(1-\eta W^{\pm1})^{-1}=\sum_{m\ge0}\eta^mW^{\pm m}$ and $1/\Theta_\lambda=\sum_{m\ge0}(1-\Theta_\lambda)^m$ (the latter for $U_j=G_j/\Theta_\lambda(Q_j,V_j)$). Products of absolutely convergent Fourier series converge absolutely, and the sum of the absolute values of the coefficients of a product is at most the product of these sums; for $1-\Theta_\lambda$ this sum is at most $2\sum_{n\ge1}\lambda^{n^2}<1$, so the second geometric series converges absolutely as well. For the moments: the constant term of $Y_j=\vartheta_3(\tau_j)=1+2\sum_{n\ge1}\lambda^{n^2}e^{4in^2\varphi_j}$ is $1$, and that of $Y_j\bar Q_j$ is $2\lambda$; the constant term of $\vartheta_4^2(\tau_j)$ is $1$, and that of $\vartheta_2^2(\tau_j)\bar\zeta_j^2$ is $\frac14$, as in the proof of Theorem~\ref{thm:selector}; and $F_j=\sum_n\lambda^{n^2}Q_j^{n^2}V_j^n$, where $Q_j^{n^2}V_j^n=e^{4in^2\varphi_j+4in\sum_lB_{lj}\varphi_{q+l}}$, so the constant terms of $F_j$, $F_j\bar Q_j\bar V_j$ and $F_j\bar Q_jV_j$ come from $n=0$, $n=1$ and $n=-1$ respectively (here $B_j\ne0$ is used), and equal $1$, $\lambda$ and $\lambda$. The last assertion holds because $\mathfrak D^{\mathrm{obs}}_e\circ\Gamma_e=\mathfrak D_e$.
\end{proof}

We can now prove the equivalence of this section: by Theorem~\ref{thm:selector}, a trajectory which passes the tests reveals the constant Fourier coefficient of $\mathfrak D_e$, and by Lemma~\ref{lem:graph} the intended trajectories pass the tests when this coefficient vanishes.

\begin{corollary}\label{cor:yes}
Let $e\in\NN$. Then $e\in K$ if and only if there is no solution $x:\RR\to\RR^{n_s}$ of $\dot x=X_e(x)$ which satisfies (ii) and (iv) of Theorem~\ref{thm:selector} and passes the tests.
\end{corollary}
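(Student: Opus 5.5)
The plan is to prove the two implications separately, each by combining Theorem~\ref{thm:selector} or Lemma~\ref{lem:graph} with Lemma~\ref{lem:constterm} and Lemma~\ref{lem:avg}.

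\emph{Direction $e\in K\Rightarrow$ no such solution.} Suppose, for contradiction, that $x$ satisfies (ii), (iv) and passes the tests. Passing the tests gives (i) and $\avg{h_e(x)}=0$, and the moments are among the coordinates of $h_e$, so (iii) also holds. Theorem~\ref{thm:selector} then supplies $\epsilon$, $\varphi(0)$ and $R_0$ with $|R_0|=1$ such that $\mathfrak D^{\mathrm{obs}}_e(x(t))=R_0\,\mathfrak D_e(\varphi(0)+\omega t)$ and $\omega=(\epsilon_\ell\sqrt{p_\ell})_\ell$. By Lemma~\ref{lem:constterm}, the Fourier series of $\mathfrak D_e$ converges absolutely. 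By Corollary~\ref{cor:irrational}, $k\cdot\omega\ne0$ for every $k\ne0$. Lemma~\ref{lem:avg} therefore gives
\[
\avg{\mathfrak D^{\mathrm{obs}}_e(x)}=R_0(\hat{\mathfrak D}_e)_{(0,\dots,0)}.
\]
The real and imaginary parts of $\mathfrak D^{\mathrm{obs}}_e$ are coordinates of $h_e$, so this average is $0$. Since $R_0\ne0$, the constant coefficient vanishes. Lemma~\ref{lem:constterm} then gives $e\notin K$, which is the desired contradiction.

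\emph{Direction $e\notin K\Rightarrow$ such a solution exists.} Take an intended trajectory $x(t)=\Gamma_e(\varphi+\omega t)$ with $\omega_\ell=\sqrt{p_\ell}$. By Lemma~\ref{lem:graph}(a) it is a solution defined on all of $\RR$. The remaining properties are checked as follows.
\begin{enumerate}
\item \textbf{Validity residuals.} They vanish identically along $x$ by Lemma~\ref{lem:graph}(b).
\item \textbf{Condition (iv).} $\Gamma_e$ is continuous on the compact torus $\TT^{q+r}$, so $x$ is bounded, and (iv) holds for every $\mu\ge1$.
\item \textbf{Condition (ii).} $x$ is the image of the linear flow on $\TT^{q+r}$ under the continuous map $\Gamma_e$, and this torus flow is forward recurrent (for instance by Kronecker, or Poincar\'e recurrence for Haar measure combined with minimality). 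It suffices that the torus point $\varphi+\omega t_k$ returns close to $\varphi$ modulo $2\pi$. Since $\omega$ has rationally independent entries, the orbit is dense and every point is recurrent.
\item \textbf{Averages of $h_e$.} By Lemma~\ref{lem:graph}(c), every coordinate of $h_e\circ\Gamma_e$ has an absolutely convergent Fourier series. Corollary~\ref{cor:irrational} gives $k\cdot\omega\ne0$ for all $k\ne0$, so Lemma~\ref{lem:avg} makes each time average equal to the corresponding constant term. The residual coordinates are identically zero. The moments have constant term zero by Lemma~\ref{lem:graph}(c). The constant term of $\mathfrak D^{\mathrm{obs}}_e\circ\Gamma_e=\mathfrak D_e$ is \eqref{eq:constterm}, which vanishes because $e\notin K$ (Lemma~\ref{lem:constterm}).
\end{enumerate}
Hence $x$ passes the tests.

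The only point needing care is recurrence in direction two, together with checking that the signs of $\omega$ match Corollary~\ref{cor:irrational} (take all $\epsilon_\ell=1$). Everything else is bookkeeping of results already established.
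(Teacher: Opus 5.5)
Your proposal is correct and follows the paper's proof essentially step by step: for $e\in K$ you use Theorem~\ref{thm:selector} together with Lemmas~\ref{lem:constterm} and~\ref{lem:avg}, and for $e\notin K$ you use an intended trajectory $\Gamma_e(\varphi+\omega t)$ with Lemma~\ref{lem:graph}. The only difference is the recurrence step. The paper uses an elementary compactness argument on the sequence $(n\omega)_{n\in\NN}$ in the torus, which works for any linear flow and needs no rational independence, instead of your appeal to Kronecker density or minimality.
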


\begin{proof}
Suppose first that $e\in K$, and that such a solution $x$ exists. Since $x$ passes the tests, it also satisfies (i) and (iii) of Theorem~\ref{thm:selector}, so by this theorem $\mathfrak D^{\mathrm{obs}}_e(x(t))=R_0\mathfrak D_e(\varphi(0)+\omega t)$ for all $t$, with $|R_0|=1$. The Fourier series of $\mathfrak D_e$ converges absolutely (Lemma~\ref{lem:constterm}) and $k\cdot\omega\ne0$ for $k\ne0$ (Corollary~\ref{cor:irrational}), so Lemma~\ref{lem:avg} gives $\avg{\mathfrak D^{\mathrm{obs}}_e(x)}=R_0(\hat{\mathfrak D}_e)_{(0,\dots,0)}$. Since $e\in K$ and $|R_0|=1$, this average is not zero by Lemma~\ref{lem:constterm}. On the other hand, the real and imaginary parts of $\mathfrak D^{\mathrm{obs}}_e$ are coordinates of $h_e$, so $\avg{\mathfrak D^{\mathrm{obs}}_e(x)}=0$ because $x$ passes the tests, a contradiction.

Conversely, suppose that $e\notin K$, let $\omega_\ell=\sqrt{p_\ell}$ as in Lemma~\ref{lem:graph}, and put $x(t)=\Gamma_e(\omega t)$ for $t\in\RR$. By Lemma~\ref{lem:graph}(a), $x$ is a solution of $\dot x=X_e(x)$ defined on $\RR$. Since $\Gamma_e$ is continuous on the compact torus $\TT^{q+r}$ (Lemma~\ref{lem:graph}), $x$ is bounded, which gives (iv). For (ii), let $\varphi_*\in\TT^{q+r}$ be a limit point of the sequence $(n\omega)_{n\in\NN}$ in $\TT^{q+r}$, which exists by compactness, and choose integers $n_1<n_2<\cdots$ with $n_{k+1}-n_k\ge k$ and $n_k\omega\to\varphi_*$ in $\TT^{q+r}$. Since the distance on $\TT^{q+r}$ is invariant under translations, the numbers $t_k=n_{k+1}-n_k\to\infty$ satisfy $t_k\omega=n_{k+1}\omega-n_k\omega\to\varphi_*-\varphi_*=0$ in $\TT^{q+r}$, and hence $x(t_k)=\Gamma_e(t_k\omega)\to\Gamma_e(0)=x(0)$ by continuity of $\Gamma_e$. Every validity residual vanishes along $x$ by Lemma~\ref{lem:graph}(b). Finally, let $g$ be a coordinate of $h_e$. By Lemma~\ref{lem:graph}(c), $g\circ\Gamma_e$ has an absolutely convergent Fourier series on $\TT^{q+r}$, and $k\cdot\omega\ne0$ for $k\ne0$ by Lemma~\ref{lem:radicals}, so by Lemma~\ref{lem:avg} the average $\avg{g(x)}$ exists and equals the constant Fourier coefficient of $g\circ\Gamma_e$. This coefficient is zero: for the validity residuals because $g\circ\Gamma_e$ vanishes identically by Lemma~\ref{lem:graph}(b); for the real and imaginary parts of the moments by Lemma~\ref{lem:graph}(c); and for the real and imaginary parts of $\mathfrak D^{\mathrm{obs}}_e$ because the constant Fourier coefficient of $\mathfrak D^{\mathrm{obs}}_e\circ\Gamma_e=\mathfrak D_e$ is zero by Lemma~\ref{lem:graph}(c) and Lemma~\ref{lem:constterm}, as $e\notin K$. (The constant Fourier coefficient of the real part of a function with an absolutely convergent Fourier series is the real part of its constant Fourier coefficient, and similarly for the imaginary part.) Hence $x$ passes the tests, and it satisfies (ii) and (iv). This proves the corollary.
\end{proof}

\section{\texorpdfstring{From trajectories to the sign of a quantity $\beta_e$}{From trajectories to the sign of a quantity beta\_e}}\label{sec:compact}

In this section we prove the third equivalence in the chain of Section~\ref{subsec:overview2}: we turn the alternative of Corollary~\ref{cor:yes}, whether or not some trajectory of $\dot x=X_e(x)$ which is recurrent and does not grow too fast passes the tests, into the sign of a quantity $\beta_e$ of the same kind as in Lemma~\ref{lem:gascrit} (Proposition~\ref{prop:signgap}); in Section~\ref{sec:single} we turn this sign into the GAS property of a homogeneous polynomial vector field (Theorem~\ref{thm:wrapper}). The system $\dot x=X_e(x)$ lives on $\RR^{n_s}$, which is not compact, and Corollary~\ref{cor:yes} concerns individual trajectories. We compactify $\RR^{n_s}$ to a sphere by a central projection with a time change (Section~\ref{subsec:central}). We also add a variable $z\in\RR^{n_h}$ with $\dot z=\kappa h_e(x)$, which keeps a ``running total'' of $h_e$: $z(t)-z(0)$ is $\kappa$ times the integral of $h_e$ along the solution. We compactify $\RR^{n_h}$ to a sphere by stereographic projection, which adds one point at infinity (Section~\ref{subsec:compactsystem}). If $h_e$ has a nonzero average along a solution, then $z$ tends to this point; accordingly, an (ergodic) invariant measure with a nonzero average of $h_e$ is concentrated on it. The function whose maximal invariant average $\beta_e$ we compute, which we call the ``reward'', is negative there. In Section~\ref{subsec:bounds} we prove bounds along the intended trajectories, which we use to choose two constants of the construction, and in Section~\ref{subsec:signgap} we prove Proposition~\ref{prop:signgap}. Table~\ref{tab:const9} lists the constants of this section and of Section~\ref{sec:single}, and which of them depend on $e$.

\begin{table}[!ht]
\centering
\caption{The constants of Sections~\ref{sec:compact} and~\ref{sec:single}. ``Fixed'' means independent of the input $e$; the constants which depend on $e$ are computed from $e$ in step~(2) of the proof of Theorem~\ref{thm:fixedpair}. The rationals $M_0$ and $K_0$ are fixed numbers which are only shown to exist: a program for $e\mapsto F_e$ contains them, and their numerical values are not needed (see after Lemma~\ref{lem:bounds}). Constants which appear inside a single proof are not listed.}
\label{tab:const9}
\vspace{10px}
\footnotesize
\renewcommand{\arraystretch}{1.15}
\begin{tabular}{@{}llll@{}}
\hline
Constant & Defined in & Depends on & Role\\
\hline
$\lambda=\frac1{256}$, $\eta=\frac12$ & \eqref{eq:lambda} & fixed & parameters of the factors of $\mathfrak D_e$\\
$n_s$, $n_h$ & Section~\ref{sec:the_source_system} & fixed & numbers of coordinates of $x$ and $h_e$\\
$M$ (even) & Section~\ref{subsec:compactsystem} & fixed & power of $w$ in the central compactification\\
$\rho_*=1+1/m_*$, $D_h$ & \eqref{eq:rhochoice} & fixed & weight of the norm $\|\cdot\|_{\rho_*}$; degree of $h_e$\\
$M_0$, $K_0$ (rational) & Lemma~\ref{lem:bounds} & fixed; exist & $|\Gamma_e|^2\le M_0$ on $\TT^{q+r}$\\
$K_e=K_0\rho_*^{4D_h|c(e)|_1}$ & Lemma~\ref{lem:bounds} & $e$ & $|\mathcal B_e|\le K_e$ for the primitive $\mathcal B_e$\\
$w^*=\frac1{1+M_0}$, $\varepsilon=\frac{w^*}4$ & \eqref{eq:kappaeps} & fixed & $w^2\ge w^*$ on intended trajectories; margin in $\cR_e$\\
$m_e$, $\kappa_e=\frac1{m_e}$ & \eqref{eq:kappaeps} & $e$ & gain of the running total $\dot z=\kappa_eh_e$\\
$D=M+3$, $L=3$, $E=d_*$ & Section~\ref{sec:fixed} & fixed & used in Theorem~\ref{thm:wrapper}, with $k=2$\\
$n_*=n_s+n_h+3$, $d_*=M+9$ & Section~\ref{sec:fixed} & fixed & dimension and degree of $F_e$\\
\hline
\end{tabular}
\end{table}

\subsection{Central compactification}\label{subsec:central}
The invariant-measure arguments of Section~\ref{subsec:erg} need a compact space, but the system $\dot x=X_e(x)$ lives on $\RR^{n_s}$. The central projection below identifies $\RR^n$ with each open hemisphere of a sphere and, after a time change, turns a polynomial vector field into a polynomial vector field tangent to the whole sphere; trajectories which escape to infinity, possibly in finite time, approach the ``equator'' instead (Figure~\ref{fig:compactification}(a)).
This is a variant of the classical Poincar\'e compactification of polynomial vector fields, which Poincar\'e introduced in the plane~\cite{Poincare1881} and which was extended to $\RR^n$ in~\cite{Velasco69,CimaLlibre}; see~\cite[Chapter~5]{DLA} for the planar case. Since we use a power $w^M$ with $M$ even and at least the degree of the vector field, rather than the power determined by the degree, we include the short proof of the properties we need.

Let $X$ be a polynomial vector field on $\RR^n$ and $M$ an even integer at least the degree of $X$. On $S^n=\{(y,w)\in\RR^n\times\RR:|y|^2+w^2=1\}$ define $\widehat X=(\widehat X_y,\widehat X_w)$ by
\begin{equation}\label{eq:central}
\widehat X_y=w^{M+1}\Bigl(X(y/w)-\bigl(y\cdot X(y/w)\bigr)y\Bigr),\qquad\widehat X_w=-w^{M+2}\,y\cdot X(y/w),
\end{equation}
expanded as polynomials. For a polynomial $g$ on $\RR^n$ of degree at most $M$ put $\hat g(y,w)=w^Mg(y/w)$.

\begin{lemma}\label{lem:central}
$\widehat X$ and $\hat g$ are polynomials with rational coefficients if $X$ and $g$ have rational coefficients, and $\widehat X$ is tangent to $S^n$. The sphere $S^n$, the equator $\{w=0\}$ and the two open hemispheres $\{\pm w>0\}$ are invariant, and $\widehat X$ generates a continuous flow on $S^n$. On each open hemisphere $\pm w>0$, the map $(y,w)\mapsto x=y/w$ is a diffeomorphism onto $\RR^n$ which sends $\widehat X$ to $(1+|x|^2)^{-M/2}X(x)$, and $\hat g=(1+|x|^2)^{-M/2}g(x)$ and $w^2=(1+|x|^2)^{-1}$ there.
\end{lemma}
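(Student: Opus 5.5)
The plan is to verify each claim by direct computation, in the order stated.

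\emph{Polynomiality and rationality.} Write $X=\sum_{|\alpha|\le M}c_\alpha x^\alpha$ with rational $c_\alpha$. Each monomial gives $w^{M+1}(y/w)^\alpha=w^{M+1-|\alpha|}y^\alpha$, which is a polynomial because $|\alpha|\le M$. In the same way, $w^{M+2}\,y\cdot X(y/w)$ is a sum of terms $w^{M+2-|\alpha|}y_iy^\alpha$, and $\hat g=\sum w^{M-|\alpha|}y^\alpha$. All coefficients are the original rationals.

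\emph{Tangency and invariance.} Put $s=y\cdot X(y/w)$, so that $w^{M+1}s$ is a polynomial. Then
\[
y\cdot\widehat X_y+w\widehat X_w=w^{M+1}s\,(1-|y|^2-w^2).
\]
Hence $\Psi=|y|^2+w^2-1$ satisfies $\dot\Psi=-2w^{M+1}s\,\Psi$ along solutions. This is the same linear-ODE argument used in Section~\ref{subsec:polar}, and uniqueness gives invariance of $S^n$. On $S^n$ we have $\dot w=\widehat X_w=w\cdot(\text{polynomial})$, so $w$ satisfies a scalar linear equation. Therefore $w$ either vanishes identically or never vanishes, and its sign is preserved. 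This gives invariance of the equator and of both open hemispheres. Since $S^n$ is compact and invariant, Section~\ref{subsec:ode} yields a continuous flow defined for all $t$.

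\emph{Hemisphere chart.} On $\{\pm w>0\}\cap S^n$ the map $x=y/w$ has inverse
\[
(y,w)=\pm\bigl(x,1\bigr)/(1+|x|^2)^{1/2}.
\]
Both maps are smooth, so this is a diffeomorphism, and $w^2=(1+|x|^2)^{-1}$. Write $X=X(x)$. Then
\[
\dot x=\frac{\widehat X_y}{w}-\frac{y\widehat X_w}{w^2}
=w^M\bigl(X-sy\bigr)+w^M s\,y=w^MX.
\]
Because $M$ is even, $w^M=(w^2)^{M/2}=(1+|x|^2)^{-M/2}$ in both hemispheres. The same identity gives $\hat g=w^Mg(x)=(1+|x|^2)^{-M/2}g(x)$.

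\emph{Main point of care.} There is no real obstacle. The only thing to check is that the evenness of $M$ makes the sign of $w$ irrelevant, so the formulas hold on the southern hemisphere as well.
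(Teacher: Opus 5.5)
Your proposal is correct and follows the paper's proof essentially step for step. Both arguments use monomial-by-monomial polynomiality, the identity $y\cdot\widehat X_y+w\widehat X_w=w^{M+1}\bigl(y\cdot X(y/w)\bigr)(1-|y|^2-w^2)$, divisibility of $\widehat X_w$ by $w$, and the chart computation $\dot x=w^MX(x)$ with $w^M=(1+|x|^2)^{-M/2}$ by evenness of $M$. Your linear equation for $|y|^2+w^2-1$ spells out the invariance of $S^n$, which the paper states directly after the tangency identity.
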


\begin{proof}
A monomial of degree $d\le M$ in $y/w$ multiplied by $w^{M+1}$ or $w^{M+2}$ is a polynomial, and so is $w^M$ times it. On $S^n$, $\frac{d}{d\sigma}(|y|^2+w^2)=2w^{M+1}(y\cdot X)(1-|y|^2-w^2)=0$. The component $\widehat X_w$ is divisible by $w$. On a hemisphere, $x=y/w$ satisfies $\frac{dx}{d\sigma}=\widehat X_y/w-y\widehat X_w/w^2=w^MX(x)$, and $|y|^2+w^2=1$ gives $w^2=(1+|x|^2)^{-1}$; since $M$ is even, $w^M=(1+|x|^2)^{-M/2}$.
\end{proof}

Thus on each hemisphere the trajectories of $\widehat X$ are the trajectories of $X$, traversed with the time change $dt=w^M\,d\sigma$; trajectories of $X$ which escape to infinity in finite time become trajectories which approach the equator in infinite time.

\input{fig_compactification}

\subsection{\texorpdfstring{The system on $S^{n_s}\times S^{n_h}$ and the reward}{The system on the product of two spheres and the reward}}\label{subsec:compactsystem}
Let $X_e$ and $h_e$ be the vector field and the map of Section~\ref{sec:the_source_system}, and let $n_h$ be the number of coordinates of $h_e$. Fix an even integer $M\ge\max\{\deg X_e,\deg h_e\}$; it does not depend on $e$. For a rational gain $\kappa>0$ and a rational margin $\varepsilon>0$, to be chosen in \eqref{eq:kappaeps}, let $\cK=S^{n_s}\times S^{n_h}$, with points $(y,w,u,u_0)$, $|u|^2+u_0^2=1$, and define on $\cK$ the polynomial vector field
\begin{equation}\label{eq:compactsource}
(\dot y,\dot w)=\widehat X_e(y,w),\qquad\dot u=(1+u_0)\,\kappa\hat h_e-\kappa(u\cdot\hat h_e)\,u,\qquad\dot u_0=-(1+u_0)\,\kappa(u\cdot\hat h_e),
\end{equation}
with $\hat h_e=\hat h_e(y,w)$, together with the polynomial \emph{reward}
\begin{equation}\label{eq:reward}
\cR_e(y,w,u,u_0)=u_0+2w^2-1-\varepsilon.
\end{equation}
The vector field \eqref{eq:compactsource} is tangent to $\cK$, and $\cK$ is invariant: the equation for $(y,w)$ does not involve $(u,u_0)$, so $S^{n_s}$ is invariant by Lemma~\ref{lem:central}, and along solutions $V=|u|^2+u_0^2-1$ satisfies the linear equation $\dot V=-2\kappa(u\cdot\hat h_e)V$, so $V\equiv0$ if $V(0)=0$. Since $\cK$ is compact, \eqref{eq:compactsource} generates a continuous flow on $\cK$ (Section~\ref{subsec:ode}), which we denote by $\Psi_e$. The set where $(u,u_0)=(0,-1)$ is invariant, since $\dot u=0$ and $\dot u_0=0$ there. On $u_0\ne-1$ put $z=u/(1+u_0)\in\RR^{n_h}$ (the stereographic projection from the south pole $(0,-1)$; Figure~\ref{fig:compactification}(b)); the inverse map is $u=2z/(1+|z|^2)$, $u_0=(1-|z|^2)/(1+|z|^2)$, and a direct computation gives $\dot z=\kappa\hat h_e$. Hence, on $\{w\ne0,\ u_0\ne-1\}$, in the time $t$ of the original system, given by $dt=w^Md\sigma$, and the coordinates $x=y/w$ and $z$, the vector field \eqref{eq:compactsource} is the skew product
\begin{equation}\label{eq:skew}
\dot x=X_e(x),\qquad\dot z=\kappa\,h_e(x).
\end{equation}

Thus $z(t)-z(0)=\kappa\int_0^th_e(x(s))\,ds$ along a solution, that is, $z$ keeps a ``running total'' of $h_e$. The reward \eqref{eq:reward} is at most $-\varepsilon$ on the equator $w=0$, where $x$ is at infinity, because $u_0\le1$, and on the set $u_0=-1$, where $z$ is at infinity, because $w^2\le1$; where $z=0$ it equals $2w^2-\varepsilon$, which is positive if $x$ is bounded and $\varepsilon$ is small. Lemmas~\ref{lem:selectedorbit} and~\ref{lem:accumulator} concern an ergodic invariant measure $\nu_0$ of the flow of $\widehat X_e$ on $S^{n_s}$ which gives the equator measure zero: if $\int\hat h_e\,d\nu_0=0$, then $\nu_0$ yields a solution of $\dot x=X_e(x)$ which is forward recurrent, does not grow too fast and passes the tests; otherwise every invariant measure of $\Psi_e$ whose projection to $S^{n_s}$ is $\nu_0$ is concentrated on the set $u_0=-1$.

\begin{lemma}\label{lem:selectedorbit}
Let $\nu_0$ be an ergodic invariant probability measure of the flow of $\widehat X_e$ on $S^{n_s}$ with $\nu_0(\{w\ne0\})=1$ and $\int\hat h_e\,d\nu_0=0$. Then there is a solution $x:\RR\to\RR^{n_s}$ of $\dot x=X_e(x)$ which satisfies (i)--(iv) of Theorem~\ref{thm:selector} with $\mu=M$, and along which $\avg{h_e(x)}=0$; in particular, $x$ satisfies (ii) and (iv) and passes the tests (Section~\ref{subsec:pass_tests}).
\end{lemma}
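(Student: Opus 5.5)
We will pick a $\nu_0$-typical point and pull back its trajectory through the central projection. Since $\nu_0$ is ergodic with $\nu_0(\{w\ne0\})=1$, and the two open hemispheres are invariant (Lemma~\ref{lem:central}), $\nu_0$ is concentrated on one hemisphere, say $\{w>0\}$ (the other case is symmetric). We intersect countably many full-measure sets. First, by \ref{E:poincare}, $\nu_0$-a.e. point is forward recurrent. Second, by \ref{E:birkhoff}, applied to the continuous functions $\hat h_e$, $w^M$, and to $w^M\cdot(\text{polynomial of degree}\le M\text{ in }x)$ such as $w^M|x|^M=|y|^M$, the $\sigma$-averages converge. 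Third, since each validity residual $g$ is nonnegative, $\hat g=w^Mg\ge0$; we will show $\int\hat g\,d\nu_0=0$ (each $\hat g$ is a coordinate of $\hat h_e$), so $\hat g=0$ $\nu_0$-a.e. and hence on the support. Fix a point $(y_0,w_0)$ in the support with all these properties. Its full $\sigma$-orbit stays in $\{w>0\}$, so by invariance the residuals vanish along the whole orbit, which gives (i).

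Next we convert to original time. On the hemisphere, $x=y/w$ solves $dx/d\sigma=w^MX_e(x)$, i.e. $\dot x=X_e(x)$ in time $t(\sigma)=\int_0^\sigma w^M$. The key point is that $t$ must cover all of $\RR$. For this we use $\bar w:=\int w^M\,d\nu_0>0$, which holds since $w>0$ a.e. By Birkhoff forward and backward, $t(\sigma)/\sigma\to\bar w$ as $\sigma\to\pm\infty$, so $t:\RR\to\RR$ is an increasing bijection and $x$ is a solution on all of $\RR$.

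Averages then transform by the quotient rule: for $g$ of degree $\le M$,
\[
\frac1T\int_0^Tg(x(t))\,dt=\frac{\int_0^{\sigma}\hat g\,d\sigma'}{\int_0^\sigma w^M\,d\sigma'}\to\frac{\int\hat g\,d\nu_0}{\bar w}.
\]
Applying this to the coordinates of $h_e$ gives $\avg{h_e(x)}=\int\hat h_e\,d\nu_0/\bar w=0$, which yields (iii) and the test condition. Applying it to $g=|x|^M$ (degree $M$, nonnegative, with $\hat g=|y|^M\le1$) shows that the averages of $|x|^M$ converge to a finite limit, and hence are bounded for $T\ge1$; this is (iv) with $\mu=M$.

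For recurrence (ii), take $\sigma_k\to\infty$ with $(y,w)(\sigma_k)\to(y_0,w_0)$. Since $w_0>0$ and $y/w$ is continuous there, $x(t(\sigma_k))\to x(0)$, and $t(\sigma_k)\to\infty$.

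The main obstacle is the degree bookkeeping: we must verify that every residual and moment coordinate has degree $\le M$ so that $\hat g$ is the right weighted function, and that $\bar w>0$ gives a global time change. Both follow from the choice of $M$ and from $\nu_0(\{w\ne0\})=1$.
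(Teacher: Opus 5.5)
Your proposal is correct and follows essentially the same route as the paper. You pick a $\nu_0$-typical forward-recurrent point that has Birkhoff limits for $w^M$ (in both time directions) and for $\hat h_e$, undo the time change $dt=w^M\,d\sigma$, and read off (i)--(iv) from the quotient of averages. The differences from the paper are minor: you get vanishing of the residuals along the whole orbit from the flow-invariance of the support of $\nu_0$ rather than from the paper's Fubini argument, and you check (iv) with $|x|^M$ (whose hat is $|y|^M\le1$) instead of $(1+|x|^2)^{M/2}$ (whose hat is identically $1$).
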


\begin{proof}
Write $f=w^M$ and $a=\int f\,d\nu_0>0$. Each validity residual $v$ is a coordinate of $h_e$ and $v\ge0$, so $\hat v=fv(y/w)\ge0$ and $\int\hat v\,d\nu_0=0$ give $\hat v=0$ $\nu_0$-almost everywhere. By Fubini's theorem, for $\nu_0$-almost every $p$ the open set $\{\sigma:\hat v(\Phi_{\widehat X_e}^\sigma p)>0\}$ has Lebesgue measure zero, hence is empty. By \ref{E:birkhoff} and \ref{E:poincare} we may therefore choose $p_0$ with $w(p_0)\ne0$ such that: $p_0$ is forward recurrent; $\hat v(\Phi_{\widehat X_e}^\sigma p_0)=0$ for all $\sigma$ and all validity residuals $v$; $\frac1\sigma\int_0^\sigma f(\Phi_{\widehat X_e}^sp_0)ds\to a$ as $\sigma\to\pm\infty$; and $\frac1\sigma\int_0^\sigma\hat h_e(\Phi_{\widehat X_e}^sp_0)ds\to0$ as $\sigma\to+\infty$. The trajectory of $p_0$ stays in the open hemisphere containing $p_0$, which is invariant by Lemma~\ref{lem:central}, so $f(\Phi_{\widehat X_e}^\sigma p_0)>0$ for all $\sigma$. Hence $t(\sigma)=\int_0^\sigma f(\Phi_{\widehat X_e}^sp_0)\,ds$ is $C^1$ with $t'(\sigma)=f(\Phi_{\widehat X_e}^\sigma p_0)>0$, and $t(\sigma)/\sigma\to a>0$ as $\sigma\to\pm\infty$; so $t$ is an increasing bijection of $\RR$ onto $\RR$ with a $C^1$ inverse $\sigma(t)$. Define $x(t)=y(\Phi_{\widehat X_e}^{\sigma(t)}p_0)/w(\Phi_{\widehat X_e}^{\sigma(t)}p_0)$ for $t\in\RR$. By Lemma~\ref{lem:central}, $\frac{d}{d\sigma}(y/w)(\Phi_{\widehat X_e}^\sigma p_0)=f\,X_e(y/w)$ at $\Phi_{\widehat X_e}^\sigma p_0$, so the chain rule and $\sigma'(t)=1/f(\Phi_{\widehat X_e}^{\sigma(t)}p_0)$ give $\dot x(t)=X_e(x(t))$ for all $t\in\RR$. Property (i) holds because $v(x)=\hat v/f$. If $\Phi_{\widehat X_e}^{\sigma_k}p_0\to p_0$ with $\sigma_k\to\infty$, then $x(t(\sigma_k))\to x(0)$, which is (ii). By the change of variables $dt=f\,d\sigma$ and $\hat h_e=fh_e(x)$,
\[
\frac1{t(\sigma)}\int_0^{t(\sigma)}h_e(x)\,dt=\frac{\sigma}{t(\sigma)}\cdot\frac1\sigma\int_0^\sigma\hat h_e(\Phi_{\widehat X_e}^sp_0)\,ds\to0,\qquad\frac1{t(\sigma)}\int_0^{t(\sigma)}(1+|x|^2)^{M/2}dt=\frac{\sigma}{t(\sigma)}\to\frac1a,
\]
which gives (iii), $\avg{h_e(x)}=0$, and (iv).
\end{proof}

\begin{lemma}\label{lem:accumulator}
Let $\Phi$ be a continuous flow on a compact metric space $\cK_0$, $g:\cK_0\to\RR^m$ continuous, and consider on $\cK_0\times S^m$ the skew product over $\Phi$ given by $\dot u=(1+u_0)g-(u\cdot g)u$, $\dot u_0=-(1+u_0)(u\cdot g)$; that is, a point $(b,u,u_0)$ moves to $(\Phi^\sigma b,u(\sigma),u_0(\sigma))$, where $(u,u_0)$ solves these equations with $g=g(\Phi^\sigma b)$. Assume that this defines a continuous flow on $\cK_0\times S^m$ (in our application it is the flow $\Psi_e$ of \eqref{eq:compactsource}). If $\nu_0\in\cM(\Phi)$ is ergodic and $\int g\,d\nu_0\ne0$, then every invariant probability measure $\nu$ of the skew product whose projection to $\cK_0$ is $\nu_0$ satisfies $\nu(\{u_0=-1\})=1$.
\end{lemma}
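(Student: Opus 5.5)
The plan is to use the stereographic coordinate $z$ to solve the fibre dynamics explicitly. Over a base point with a nonzero Birkhoff average of $g$, the running total $z$ escapes to infinity. Invariance of $\nu$ then forces the whole mass onto the south pole $\{u_0=-1\}$.

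\emph{Step 1 (explicit fibre dynamics).} On $\{u_0\ne-1\}$ put $z=u/(1+u_0)$, with inverse $u=2z/(1+|z|^2)$ and $u_0=(1-|z|^2)/(1+|z|^2)$. The same direct computation as after \eqref{eq:compactsource} gives $\frac{d}{d\sigma}z=g(\Phi^\sigma b)$. The set $\{u_0=-1\}$ is invariant, since $\dot u=0$ and $\dot u_0=0$ there. Its complement is therefore invariant as well, and on it
\[
z(\sigma)=z(0)+\int_0^\sigma g(\Phi^s b)\,ds,\qquad 1+u_0(\sigma)=\frac{2}{1+|z(\sigma)|^2}.
\]
I would verify the derivative formula once (differentiate $u/(1+u_0)$ using the two equations). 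This is the only computational step.

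\emph{Step 2 (escape over good base points).} Let $v=\int g\,d\nu_0\ne0$. By \ref{E:birkhoff}, applied coordinatewise to $g$ with the ergodic measure $\nu_0$, there is a Borel set $G\subseteq\cK_0$ with $\nu_0(G)=1$ such that $\frac1\sigma\int_0^\sigma g(\Phi^sb)\,ds\to v$ as $\sigma\to+\infty$ for every $b\in G$. For $b\in G$ and any $(u,u_0)$ with $u_0\ne-1$, Step~1 gives $|z(\sigma)|\ge\sigma|v|/2-|z(0)|\to\infty$. Hence $1+u_0(\Psi^\sigma p)\to0$ as $\sigma\to+\infty$. If $u_0=-1$, then $1+u_0\equiv0$ along the orbit. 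So the convergence $1+u_0(\Psi^\sigma p)\to0$ holds for every $p$ in the preimage $\pi^{-1}(G)$, where $\pi$ is the projection to $\cK_0$. Since $\pi_*\nu=\nu_0$, this preimage has $\nu$-measure one.

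\emph{Step 3 (invariance).} Let $f=1+u_0$, a continuous function on $\cK_0\times S^m$ with $0\le f\le2$. Invariance of $\nu$ gives $\int f\,d\nu=\int f\circ\Psi^\sigma\,d\nu$ for every $\sigma$. By Step~2 the integrand tends to $0$ $\nu$-almost everywhere as $\sigma\to+\infty$, and it is bounded by $2$. Dominated convergence therefore gives $\int f\,d\nu=0$. Since $f\ge0$, we get $f=0$ $\nu$-almost everywhere, that is, $\nu(\{u_0=-1\})=1$.

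I expect no serious obstacle. The points to get right are that only the forward Birkhoff limit is needed, and that the measurability of $G$ and of its preimage is supplied by \ref{E:birkhoff} and the continuity of $\pi$.
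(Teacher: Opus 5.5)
Your proof is correct. Its first two steps match the paper's: the stereographic coordinate $z=u/(1+u_0)$ with $\dot z=g(\Phi^\sigma b)$, and the linear growth of $|z|$ over base points whose forward Birkhoff average of $g$ is nonzero, which drives $(u,u_0)\to(0,-1)$.

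The two arguments differ only in how this pointwise escape becomes a statement about $\nu$.

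The paper argues by contradiction. If $\nu(\{u_0\ne-1\})>0$, it normalizes the restriction of $\nu$ to this invariant set. It notes that the projection of this measure is dominated by a multiple of $\nu_0$, so Birkhoff-good base points still have full measure. It then invokes Poincar\'e recurrence \ref{E:poincare} to produce a forward recurrent point off the south pole over a good base point. This is impossible, because that point's orbit converges to the south pole.

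You instead apply the invariance identity $\int f\,d\nu=\int f\circ\Psi^\sigma\,d\nu$ to the bounded continuous function $f=1+u_0$, and pass to the limit by dominated convergence. For this you only need $f\circ\Psi^\sigma\to0$ on all of $\pi^{-1}(G)$, including the south-pole fibre, where $f$ vanishes identically.

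Your route is direct rather than by contradiction. It needs neither the conditioning step nor the recurrence theorem, and it handles both invariant pieces at once. The paper's version fits the recurrence-based reasoning it also uses in Lemma~\ref{lem:selectedorbit}. For this lemma, though, yours is slightly more economical.
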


\begin{proof}
The set $E=\{u_0\ne-1\}$ is invariant, since its complement $\cK_0\times\{(0,-1)\}$ is. Suppose $\nu(E)>0$, and let $\nu'=\nu(\cdot\cap E)/\nu(E)$, an invariant probability measure. Its projection to $\cK_0$ is at most $\nu_0/\nu(E)$, so the set $\mathcal G$ of points $b$ with $\frac1\sigma\int_0^\sigma g(\Phi^sb)\,ds\to\bar g:=\int g\,d\nu_0$, which has full $\nu_0$-measure by \ref{E:birkhoff}, has full measure for this projection. By \ref{E:poincare}, applied to $\nu'$, there is a forward recurrent point $p\in E$ of the skew product whose projection $b$ to $\cK_0$ lies in $\mathcal G$. Its trajectory stays in the invariant set $E$, and along it $z=u/(1+u_0)$ satisfies $\dot z=g(\Phi^\sigma b)$, by the computation before \eqref{eq:skew} with $\kappa\hat h_e$ replaced by $g$; so $z(\sigma)-z(0)=\int_0^\sigma g(\Phi^sb)\,ds$, so $|z(\sigma)|/\sigma\to|\bar g|>0$, hence $(u,u_0)(\sigma)\to(0,-1)$ as $\sigma\to\infty$, contradicting recurrence to $p\in E$.
\end{proof}

\subsection{Bounds along the intended trajectories}\label{subsec:bounds}
To choose $\kappa$ and $\varepsilon$ we need computable bounds on the map $\Gamma_e$ of Lemma~\ref{lem:graph}, which parametrizes the intended trajectories, and on an integral of $h_e$ along them. Let $d=q+r$. For $\rho_*\ge1$ and a function $f$ on $\TT^d$ with Fourier coefficients $\hat f_k$ put $\|f\|_{\rho_*}=\sum_k|\hat f_k|\rho_*^{|k|_1}\in[0,\infty]$. Then $\|f+g\|_{\rho_*}\le\|f\|_{\rho_*}+\|g\|_{\rho_*}$, $\|fg\|_{\rho_*}\le\|f\|_{\rho_*}\|g\|_{\rho_*}$, $\|\bar f\|_{\rho_*}=\|f\|_{\rho_*}$, $\|e^{ik\cdot\varphi}\|_{\rho_*}=\rho_*^{|k|_1}$, $\sup|f|\le\|f\|_1\le\|f\|_{\rho_*}$, and $\|1/f\|_{\rho_*}\le(1-\delta)^{-1}$ whenever $\|1-f\|_{\rho_*}\le\delta<1$, by the geometric series.

Fix a rational $\rho_*=1+1/m_*>1$, with $m_*$ the least positive integer such that
\begin{equation}\label{eq:rhochoice}
\lambda\rho_*^{4+4|B|_1}\le\tfrac18\qquad\text{and}\qquad\eta\rho_*^{4+4|A|_1}\le\tfrac34,
\end{equation}
where $|A|_1$ and $|B|_1$ are the sums of the absolute values of all entries of $A$ and $B$. Since $\lambda=\frac1{256}$ and $\eta=\frac12$ by \eqref{eq:lambda}, both inequalities hold strictly at $\rho_*=1$; as $1+1/m\to1$, the integer $m_*$ exists, and it is found by testing finitely many inequalities between rational numbers. Let $D_h$ be the degree of the map $h_e$. Neither $\rho_*$ nor $D_h$ depends on $e$.

\begin{lemma}\label{lem:bounds}
There are rational numbers $M_0\ge1$ and $K_0>0$, which do not depend on $e$, with the following properties. Put $K_e=K_0\rho_*^{4D_h|c(e)|_1}$. First, $|\Gamma_e(\varphi)|^2\le M_0$ for all $\varphi$. Second, the function
\begin{equation}\label{eq:primitive}
\mathcal B_e(\varphi)=\sum_{k\ne0}\frac{\hat h_k}{i\,k\cdot\omega}\,e^{ik\cdot\varphi},\qquad\text{where }h_e\circ\Gamma_e=\sum_k\hat h_ke^{ik\cdot\varphi},
\end{equation}
is continuous, takes values in $\RR^{n_h}$, satisfies $|\mathcal B_e|\le K_e$ for the Euclidean norm, and $\frac{d}{dt}\mathcal B_e(\varphi+\omega t)=h_e(\Gamma_e(\varphi+\omega t))-\hat h_0$. If $e\notin K$, then $\hat h_0=0$.
\end{lemma}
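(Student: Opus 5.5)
The plan is to bound everything in the weighted Wiener norm $\|\cdot\|_{\rho_*}$. Every coordinate of $\Gamma_e$ and of $h_e\circ\Gamma_e$ is built from finitely many absolutely convergent series by sums, products, conjugation and reciprocals. The estimates then split into a part that is independent of $e$ and a single factor carrying the $e$-dependence, namely the oscillator $R=\prod_l\chi_l^{-c_l(e)}$, whose norm is exactly $\rho_*^{4|c(e)|_1}$.

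\emph{Step 1: norms of the building blocks.} For each coordinate of $\Gamma_e$ other than $R$, I will show $\|\cdot\|_{\rho_*}\le C$ for a real constant $C$ depending only on $q,r,A,B$. The required estimates are as follows.
\begin{itemize}
\item $\zeta_\ell$, $Q_j$, $\chi_l$, $V_j$, $S_j$ and $W_j$ are characters, with norms $\rho_*$, $\rho_*^4$, $\rho_*^4$, $\rho_*^{4|B_j|_1}$, $\rho_*^{4|A_j|_1}$ and at most $\rho_*^{4+4|A|_1}$, respectively.
\item The theta constants at $\tau_j$ are series $\sum\lambda^{\sigma}e^{4i\sigma\varphi_j}$ with norm $\sum\lambda^\sigma\rho_*^{4\sigma}$. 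This is finite because $\lambda\rho_*^4\le\frac18$, and \eqref{eq:rhochoice} makes it a geometric-type bound.
\item $E_2(\tau_j)$ is treated similarly, using $\sigma_1(n)\le n^2$.
\item The Jacobi functions $\theta_k(w_j|\tau_j)$ are sums over $n$ of $\lambda^{(\dots)}$ times characters whose $|k|_1$ is at most $4n^2+8n+3+2(2|n|+1)|B|_1$. These sums converge by the first inequality of \eqref{eq:rhochoice}, since $\lambda\rho_*^{4+4|B|_1}\le\frac18$ dominates the quadratic growth. $\theta_1'$ only adds a polynomial factor.
\item For $G_j^\pm=(1-\eta W_j^{\pm1})^{-1}$, I use $\|\eta W_j\|_{\rho_*}\le\frac34$ and the geometric-series bound, which gives a norm of at most $4$.
\item For $U_j=G_j/\Theta_\lambda(Q_j,V_j)$, I need $\|1-\Theta_\lambda(Q_j,V_j)\|_{\rho_*}\le2\sum_{n\ge1}(\lambda\rho_*^{4+4|B|_1})^{n^2}\le\frac14<1$.
\end{itemize}
Each of these constants is a convergent numerical series, so a rational upper bound exists. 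This gives $|\Gamma_e|^2\le\sum(\text{sup of coordinates})^2\le M_0$, since $|R|=1$ and all other bounds are independent of $e$.

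\emph{Step 2: bound on $h_e\circ\Gamma_e$.} Each coordinate of $h_e$ is a polynomial of degree at most $D_h$ in the real and imaginary parts of coordinates of $x$. Here $\|\re f\|_{\rho_*}\le\|f\|_{\rho_*}$, and $R$ has norm $\rho_*^{4|c(e)|_1}$, so each coordinate satisfies $\|h_{e,i}\circ\Gamma_e\|_{\rho_*}\le K_0'\rho_*^{4D_h|c(e)|_1}$. The constant $K_0'$ depends only on the fixed coefficients of $h_e$, which do not depend on $e$, and on the Step 1 constants.

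\emph{Step 3: the primitive $\mathcal B_e$.} By Lemma~\ref{lem:radicals} with $d=q+r$, $|k\cdot\omega|\ge(\sqrt P|k|_1)^{1-2^d}$. Hence
\[
\sum_{k\ne0}\frac{|\hat h_k|}{|k\cdot\omega|}\le\sum_k|\hat h_k|(\sqrt P|k|_1)^{2^d-1}\le C'\sum_k|\hat h_k|\rho_*^{|k|_1},
\]
using $\sup_{m\ge1}m^{2^d-1}\rho_*^{-m}<\infty$. This shows that the series \eqref{eq:primitive} converges absolutely and uniformly, so $\mathcal B_e$ is continuous, and $|\mathcal B_e|\le\sqrt{n_h}\,C'K_0'\rho_*^{4D_h|c(e)|_1}$. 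I take $K_0$ to be a rational upper bound for $\sqrt{n_h}\,C'K_0'$.
\begin{itemize}
\item Real values: $h_e$ is real, so $\hat h_{-k}=\overline{\hat h_k}$, and the terms for $k$ and $-k$ are complex conjugates.
\item Derivative formula: differentiate term by term. This is justified because the differentiated series $\sum_{k\ne0}\hat h_ke^{ik\cdot(\varphi+\omega t)}$ converges uniformly, and it equals $h_e\circ\Gamma_e-\hat h_0$.
\item If $e\notin K$, then $\hat h_0=0$ by Lemma~\ref{lem:graph}(c) and Lemma~\ref{lem:constterm}, as in the proof of Corollary~\ref{cor:yes}.
\end{itemize}

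\emph{Main obstacle.} I expect the hardest part to be the bookkeeping in Step 1 for the Jacobi functions and for $U_j$: one has to check that the exponents of $\rho_*$ grow at most like $(4+4|B|_1)n^2$, so that \eqref{eq:rhochoice} suffices. I would first verify this with the crude bound $|k|_1\le(4+4|B|_1)(n^2+2|n|+1)$ and absorb the linear terms into the constants.
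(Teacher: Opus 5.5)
Your proposal is correct and follows the paper's proof essentially step for step: you use the weighted norm $\|\cdot\|_{\rho_*}$, isolate the $e$-dependence in the single factor $R$ with $\|R\|_{\rho_*}=\rho_*^{4|c(e)|_1}$, and bound the primitive via Lemma~\ref{lem:radicals} together with the boundedness of $m^{2^d-1}\rho_*^{-m}$. One trivial numerical slip: $2\sum_{n\ge1}8^{-n^2}\approx0.2505$ is slightly larger than $\frac14$, so bound it instead by $2\sum_{n\ge1}8^{-n}=\frac27$ as the paper does (only the inequality $<1$ is needed anyway).
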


\begin{proof}
\emph{Step 1: the coordinates of $\Gamma_e$.} All coordinates of $\Gamma_e$ except $R$ are independent of $e$; we show that they have finite $\|\cdot\|_{\rho_*}$. The controllers are constants, and $\zeta_\ell$ and $G_j$ are constant multiples of characters. Since $e^{\pi i\tau_j\sigma}=\lambda^\sigma e^{4i\sigma\varphi_j}$, the theta constants and $E_2$ at $\tau_j$ are series $\sum_nc_n\lambda^{\sigma_n}e^{4i\sigma_n\varphi_j}$ in which $|c_n|$ grows at most polynomially and $\sigma_n\ge0$ grows at least linearly in $|n|$; as $\lambda\rho_*^4<1$, their norms are finite, and so are those of the coordinates of $\mathbf m_j$, of the $\psi_{k,j}$ and of $H_j$, which are products of them. Since $w_j=z_j+\frac{1+\tau_j}2$ and $\pi z_j=2\sum_lB_{lj}\varphi_{q+l}$, the $n$-th term of each of $T_{1,j},\dots,T_{4,j},\Delta_j$ has the form $c\,\lambda^{\sigma}e^{ik\cdot\varphi}$ with $|c|\le4(2|n|+1)$, $|k|_1\le4|\sigma|+2(2|n|+1)|B|_1$, and $\sigma$ a polynomial of degree two in $n$ with leading coefficient one; for instance $\sigma=(n+1)^2-\frac14$ for $T_{1,j}$, $T_{2,j}$ and $\Delta_j$, which is negative for $n=-1$. Only finitely many terms have $\sigma<0$, and for the others the norm of the term is at most $4(2|n|+1)(\lambda\rho_*^4)^\sigma\rho_*^{2(2|n|+1)|B|_1}$; since $\lambda\rho_*^4<1$ and $\sigma$ grows like $n^2$, the norms are again finite. Since $|n|\le n^2$, \eqref{eq:rhochoice} gives
\[
\|1-\Theta_\lambda(Q_j,V_j)\|_{\rho_*}\le\sum_{n\ne0}\lambda^{n^2}\rho_*^{4n^2+4|n||B|_1}\le2\sum_{n\ge1}8^{-n}=\tfrac27,
\]
so $\|1/\Theta_\lambda(Q_j,V_j)\|_{\rho_*}\le\frac75$, and $U_j=G_j/\Theta_\lambda(Q_j,V_j)$ (by \ref{J:half}) has finite norm. Finally $G^\pm_j=\sum_{m\ge0}\eta^mW_j^{\pm m}$ with $W_j=e^{-4i\varphi_j+4i\sum_lA_{lj}\varphi_{q+l}}$, so $\|G^\pm_j\|_{\rho_*}\le\sum_m(\eta\rho_*^{4+4|A|_1})^m\le\sum_m(\tfrac34)^m=4$. On the other hand $R=e^{-4i\sum_lc_l(e)\varphi_{q+l}}$ has $|R|=1$ and $\|R\|_{\rho_*}=\rho_*^{4|c(e)|_1}$. Since $\|\re f\|_{\rho_*}$, $\|\im f\|_{\rho_*}\le\|f\|_{\rho_*}$, there is a real $C_\Gamma\ge1$, independent of $e$, such that every real coordinate of $\Gamma_e$ has norm at most $C_\Gamma$, except the real and imaginary parts of $R$, which have norm at most $\rho_*^{4|c(e)|_1}$. As $\sup|f|\le\|f\|_{\rho_*}$ and $|R|=1$, we get $|\Gamma_e|^2\le(n_s-2)C_\Gamma^2+1\le n_sC_\Gamma^2$, so any rational $M_0\ge n_sC_\Gamma^2$ satisfies $|\Gamma_e|^2\le M_0$.

\emph{Step 2: the map $h_e$.} The coordinates $h_{e,i}$ of $h_e$ are polynomials of degree at most $D_h$ whose coefficients do not depend on $e$. Let $c_h$ be the sum of the absolute values of all their coefficients. By Step~1 and the properties of $\|\cdot\|_{\rho_*}$, every monomial of degree at most $D_h$ in the coordinates of $\Gamma_e$ has norm at most $C_\Gamma^{D_h}\rho_*^{4D_h|c(e)|_1}$, so
\[
\sum_i\|h_{e,i}\circ\Gamma_e\|_{\rho_*}\le c_hC_\Gamma^{D_h}\rho_*^{4D_h|c(e)|_1}.
\]

\emph{Step 3: the primitive.} Let $\hat h_{k,i}$ be the Fourier coefficients of $h_{e,i}\circ\Gamma_e$. By Lemma~\ref{lem:radicals} with $s=2^d-1$ and $P=p_d$, $|k\cdot\omega|^{-1}\le P^{s/2}|k|_1^s$ for $k\ne0$, and for $n\ge1$ we have $n^s\rho_*^{-n}\le(sm_*)^s$ (for $n<s$ because $n^s\rho_*^{-n}\le s^s$, and for $n\ge s$ because $\rho_*^n=(1+1/m_*)^n\ge\binom ns m_*^{-s}\ge(n/s)^sm_*^{-s}$). Hence, using that the Euclidean norm of a vector is at most the sum of the absolute values of its coordinates,
\[
|\mathcal B_e(\varphi)|\le\sum_i\sum_{k\ne0}\frac{|\hat h_{k,i}|}{|k\cdot\omega|}\le P^{s/2}(sm_*)^s\sum_i\sum_k|\hat h_{k,i}|\rho_*^{|k|_1}\le P^{s/2}(sm_*)^sc_hC_\Gamma^{D_h}\rho_*^{4D_h|c(e)|_1},
\]
and any rational $K_0\ge P^{s/2}(sm_*)^sc_hC_\Gamma^{D_h}$ works. In particular \eqref{eq:primitive} converges absolutely and uniformly, so $\mathcal B_e$ is continuous, and termwise differentiation, justified by the absolute convergence of $\sum_k|\hat h_k|$, gives the derivative along the flow. Since $h_e$ is real, $\hat h_{-k}=\overline{\hat h_k}$, so $\mathcal B_e$ is real-valued. If $e\notin K$, then $\hat h_0=0$ by Lemma~\ref{lem:graph}(b),(c) and Lemma~\ref{lem:constterm}.
\end{proof}

The numbers $\rho_*$, $D_h$, $M_0$ and $K_0$ are fixed constants of the construction. As for the matrices $A,B$ of Lemma~\ref{lem:template}, a computer program may contain $M_0$ and $K_0$, and we do not need their numerical values. Then we compute $K_e$ from $e$ by the formula in Lemma~\ref{lem:bounds}, without testing whether $e\in K$. Put
\begin{equation}\label{eq:kappaeps}
w^*=\frac1{1+M_0},\qquad\kappa_e=\frac1{m_e},\quad m_e\text{ the least positive integer with }m_e^2w^*\ge4K_e^2,\qquad\varepsilon=\frac{w^*}4,
\end{equation}
and use $\kappa=\kappa_e$ in \eqref{eq:compactsource} and this $\varepsilon$ in \eqref{eq:reward}. Of these numbers, only $m_e$ and $\kappa_e$ depend on $e$.

\subsection{\texorpdfstring{The sign of $\beta_e$}{The sign of beta\_e}}\label{subsec:signgap}
\input{fig_signgap}
We now show that the sign of the maximal invariant average of the reward decides whether $e\in K$ (Figure~\ref{fig:signgap}). If $e\in K$, then by Lemma~\ref{lem:selectedorbit} and Corollary~\ref{cor:yes}, every ergodic invariant measure which gives the equator measure zero has a nonzero average of $\hat h_e$, so by Lemma~\ref{lem:accumulator} it is concentrated on the set $u_0=-1$, where $z$ is at infinity; the reward is at most $-\varepsilon$ there and on the equator. If $e\notin K$, the intended trajectories, along which $z$ stays bounded, carry an invariant measure with positive average reward. In Section~\ref{sec:fixed} we combine this with Theorem~\ref{thm:wrapper} to prove Theorem~\ref{thm:fixedpair}.

\begin{proposition}[Sign of the maximal average of the reward]\label{prop:signgap}
Let $\beta_e=\beta(\Psi_e,\cR_e)$ be the maximal invariant average of the reward $\cR_e$ for the flow $\Psi_e$ of \eqref{eq:compactsource} on $\cK$. If $e\in K$, then $\beta_e\le-\varepsilon<0$. If $e\notin K$, then $\beta_e\ge\frac54w^*>0$.
\end{proposition}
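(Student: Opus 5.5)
The two cases use different tools: for $e\in K$ we bound every ergodic invariant average of $\cR_e$ from above, and for $e\notin K$ we exhibit one invariant measure with large average.

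\emph{Case $e\in K$.} By \ref{E:decomp} it suffices to show $\int\cR_e\,d\nu\le-\varepsilon$ for every ergodic $\nu\in\cM(\Psi_e)$. Let $\nu_0$ be the projection of $\nu$ to $S^{n_s}$. By \ref{E:factor}, $\nu_0$ is an ergodic invariant measure of the flow of $\widehat X_e$. The equator $\{w=0\}$ is invariant (Lemma~\ref{lem:central}), so by ergodicity $\nu_0(\{w=0\})\in\{0,1\}$.

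If $\nu_0(\{w=0\})=1$, then $\nu$ is concentrated on $\{w=0\}$. There $\cR_e=u_0-1-\varepsilon\le-\varepsilon$, since $u_0\le1$.

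Otherwise $\nu_0(\{w\ne0\})=1$. If $\int\hat h_e\,d\nu_0=0$, Lemma~\ref{lem:selectedorbit} produces a solution satisfying (ii) and (iv) that passes the tests, which contradicts Corollary~\ref{cor:yes}. Hence $\int\hat h_e\,d\nu_0\ne0$. We apply Lemma~\ref{lem:accumulator} with $\cK_0=S^{n_s}$, $\Phi$ the flow of $\widehat X_e$, and $g=\kappa_e\hat h_e$, which is nonzero on average. It gives $\nu(\{u_0=-1\})=1$. On that set $\cR_e=2w^2-2-\varepsilon\le-\varepsilon$.

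Averaging over the ergodic decomposition then gives $\beta_e\le-\varepsilon$.

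\emph{Case $e\notin K$.} We construct an invariant measure from an intended trajectory with a bounded running total. Take $\omega_\ell=\sqrt{p_\ell}$ and $\varphi_0\in\TT^{q+r}$, and let $x(t)=\Gamma_e(\varphi_0+\omega t)$, a solution by Lemma~\ref{lem:graph}(a). Set
\[
z(t)=z_0+\kappa_e\bigl(\mathcal B_e(\varphi_0+\omega t)-\mathcal B_e(\varphi_0)\bigr),
\]
choosing $z_0=\kappa_e\mathcal B_e(\varphi_0)$ so that $z(t)=\kappa_e\mathcal B_e(\varphi_0+\omega t)$. By Lemma~\ref{lem:bounds}, since $\hat h_0=0$, this $z$ solves $\dot z=\kappa_e h_e(x)$, so $(x,z)$ solves the skew product \eqref{eq:skew}. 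The same lemma gives $|z|\le\kappa_eK_e$, and the choice \eqref{eq:kappaeps} gives $\kappa_e^2K_e^2\le w^*/4$. Hence $u_0=(1-|z|^2)/(1+|z|^2)\ge1-2|z|^2\ge1-w^*/2$. Also $w^2=(1+|x|^2)^{-1}\ge(1+M_0)^{-1}=w^*$. Therefore, along this orbit,
\[
\cR_e\ge 1-\tfrac{w^*}{2}+2w^*-1-\tfrac{w^*}{4}=\tfrac54w^*.
\]

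The orbit of the corresponding point of $\cK$ lies in the compact set $\{w^2\ge w^*,\ u_0\ge 1-w^*/2\}$, on which $\cR_e\ge\frac54w^*$. This set is not invariant, but its closed subset $\overline{\text{orbit}}$ is. To pass to $\Psi_e$-time $\sigma$, we take a weak-star limit of the empirical measures $\frac1S\int_0^S\delta_{\Psi_e^\sigma p}\,d\sigma$. As in the proof of Lemma~\ref{lem:blocks}, such a limit is $\Psi_e$-invariant, and it is supported in the orbit closure, where $\cR_e\ge\frac54w^*$ by continuity. So its reward integral is at least $\frac54w^*$, and $\beta_e\ge\frac54w^*$.

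The main obstacle is this last step: we must make sure that the time change $dt=w^Md\sigma$ does not matter. It does not, because the pointwise lower bound on $\cR_e$ holds on the whole orbit closure, so the reward estimate survives any choice of time parametrization.
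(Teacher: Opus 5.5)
Your proof is correct and follows the paper's argument. The $e\in K$ case is identical. For $e\notin K$ the paper uses the same intended trajectories with $z=\kappa_e\mathcal B_e$ and the same bound $\cR_e\ge\frac54w^*$; the only difference is that it takes the invariant compact image $\widehat\Gamma_e$ of the whole torus and applies \ref{E:compact} there, rather than a Krylov--Bogoliubov limit along a single orbit.

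One point should be stated explicitly. The time change is needed to show that the $\Psi_e$-orbit of your point is the reparametrized curve $(x,z)$ for all $\sigma\ge0$, not to justify the choice of time in the empirical averages. This follows from uniqueness of solutions, since $\sigma'(t)=(1+|x(t)|^2)^{M/2}\ge1$.
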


\begin{proof}
We treat the cases $e\in K$ and $e\notin K$ in turn. Let $e\in K$. By \ref{E:decomp} it suffices to show $\int\cR_e\,d\nu\le-\varepsilon$ for every ergodic $\nu\in\cM(\Psi_e)$; fix such a $\nu$. The projection $\nu_0$ of $\nu$ to $S^{n_s}$ is an ergodic invariant measure for $\widehat X_e$ by \ref{E:factor}, and the equator $\{w=0\}$ is invariant, so $\nu_0(\{w=0\})\in\{0,1\}$. If it is $1$, then $w^2=0$ and $u_0\le1$ give $\cR_e\le-\varepsilon$ $\nu$-almost everywhere. Otherwise $\nu_0(\{w\ne0\})=1$. If $\int\hat h_e\,d\nu_0=0$, Lemma~\ref{lem:selectedorbit} gives a solution of $\dot x=X_e(x)$ which satisfies (ii) and (iv) of Theorem~\ref{thm:selector} and passes the tests; since $e\in K$, this contradicts Corollary~\ref{cor:yes}. Hence $\int\kappa_e\hat h_e\,d\nu_0\ne0$, and Lemma~\ref{lem:accumulator}, applied with $\cK_0=S^{n_s}$, the flow of $\widehat X_e$ and $g=\kappa_e\hat h_e$, gives $u_0=-1$ $\nu$-almost everywhere, where $\cR_e=2w^2-2-\varepsilon\le-\varepsilon$. Thus $\int\cR_e\,d\nu\le-\varepsilon$ for every ergodic $\nu\in\cM(\Psi_e)$, and $\beta_e\le-\varepsilon$.

Let $e\notin K$. Consider the set $\widehat\Gamma_e\subseteq\cK$ of points with $w>0$, $y/w=\Gamma_e(\varphi)$ and $u/(1+u_0)=\kappa_e\mathcal B_e(\varphi)$ for some $\varphi\in\TT^d$. It is the image of $\TT^d$ under the continuous map $\varphi\mapsto p_\varphi$, where $p_\varphi\in\cK$ has $(y,w)=(\Gamma_e(\varphi),1)(1+|\Gamma_e(\varphi)|^2)^{-1/2}$ and $(u,u_0)$ is the inverse stereographic image of $z=\kappa_e\mathcal B_e(\varphi)$; so it is compact. By Lemma~\ref{lem:bounds}, $w^2=(1+|\Gamma_e|^2)^{-1}\ge w^*$ on $\widehat\Gamma_e$. Fix $\varphi$. By Lemma~\ref{lem:graph}(a) and Lemma~\ref{lem:bounds} (with $\hat h_0=0$), $(x,z)(t)=(\Gamma_e(\varphi+\omega t),\kappa_e\mathcal B_e(\varphi+\omega t))$, $t\in\RR$, solves the skew product \eqref{eq:skew}. We undo the time change. The function $\sigma(t)=\int_0^t(1+|x(t')|^2)^{M/2}\,dt'$ satisfies $1\le\sigma'(t)\le(w^*)^{-M/2}$, so it is a $C^1$ bijection of $\RR$ onto $\RR$ with a $C^1$ inverse $t(\sigma)$. In the coordinates $(x,z)$ on $\{w>0,\ u_0\ne-1\}$, the vector field \eqref{eq:compactsource} is $(1+|x|^2)^{-M/2}\bigl(X_e(x),\kappa_eh_e(x)\bigr)$, by Lemma~\ref{lem:central} and the computation of $\dot z$ before \eqref{eq:skew}. By the chain rule, $\sigma\mapsto p_{\varphi+\omega t(\sigma)}$ is therefore a solution of \eqref{eq:compactsource}, defined for all $\sigma\in\RR$, which starts at $p_\varphi$; by uniqueness of solutions it is the trajectory of $p_\varphi$ under $\Psi_e$. Hence $\widehat\Gamma_e$ is invariant. On $\widehat\Gamma_e$, with $|z|=\kappa_e|\mathcal B_e|\le\kappa_eK_e$ and $\kappa_e^2K_e^2\le w^*/4$,
\[
u_0=\frac{1-|z|^2}{1+|z|^2}\ge1-2|z|^2\ge1-\frac{w^*}2,\qquad\text{so}\qquad\cR_e\ge1-\frac{w^*}2+2w^*-1-\frac{w^*}4=\frac54w^*.
\]
By \ref{E:compact} applied to the restriction of $\Psi_e$ to $\widehat\Gamma_e$, there is a probability measure on $\widehat\Gamma_e$ which is invariant under this restriction; extended by zero to $\cK$, it lies in $\cM(\Psi_e)$, and its reward average is at least $\frac54w^*$. Hence $\beta_e\ge\frac54w^*$.
\end{proof}

Together with Corollary~\ref{cor:yes}, Proposition~\ref{prop:signgap} shows that no trajectory of $\dot x=X_e(x)$ which satisfies (ii) and (iv) of Theorem~\ref{thm:selector} passes the tests if and only if $\beta_e<0$; this is the third equivalence in the chain of Section~\ref{subsec:overview2}. The set $\{u_0=-1\}$, where $z$ is at infinity, is invariant and carries negative reward also when $e\notin K$; what matters for $e\notin K$ is that some invariant measure has positive average reward.

\section{\texorpdfstring{From the sign of $\beta_e$ to a single $(n_*,d_*)$}{From the sign of beta\_e to a single (n*, d*)}}\label{sec:single}

In this section we prove the last equivalence in the chain of Section~\ref{subsec:overview2}: we turn the sign of the quantity $\beta_e=\beta(\Psi_e,\cR_e)$ of Section~\ref{sec:compact} into the GAS property of a homogeneous polynomial vector field $F_e$, and we check that the dimension and the degree of $F_e$ do not depend on $e$, which proves Theorem~\ref{thm:fixedpair}. In Section~\ref{sec:wrapper} we prove Theorem~\ref{thm:wrapper}, which converts a polynomial vector field $X$ tangent to a product $\cK$ of spheres, together with a polynomial $a$, into a homogeneous polynomial vector field whose origin is GAS exactly when the maximal invariant average \eqref{eq:beta} of $a$ for the flow of $X$ on $\cK$ is negative. In Section~\ref{sec:fixed} we apply it to the vector field \eqref{eq:compactsource}, whose flow is $\Psi_e$, and to the reward \eqref{eq:reward}, and we combine Theorem~\ref{thm:fixedpair} with Theorem~\ref{thm:transfer} to prove Theorem~\ref{thm:main}. The constants used in Section~\ref{sec:fixed} are listed in Table~\ref{tab:const9}.

\subsection{From a flow on a product of spheres to a homogeneous vector field}\label{sec:wrapper}

As explained in Section~\ref{subsec:overview2}, the idea is to reverse the polar decomposition of Section~\ref{subsec:polar}: if $Y$ is a homogeneous polynomial vector field of odd degree $E$ which is tangent to a sphere and $\mathcal A$ is a homogeneous polynomial of degree $E-1$, then on this sphere the vector field $F(v)=Y(v)+\mathcal A(v)\,v$ has angular vector field $Y$ and radial rate $\mathcal A$. Since $X$ lives on a product of spheres, we first embed two copies of $\cK$ into a single sphere by adding a coordinate $v_0$, and we extend the flow and the function from these copies to the whole sphere in such a way that all other invariant measures give average at most $-1$; then we make everything homogeneous. The idea of prescribing the angular flow and the radial rate of a homogeneous polynomial vector field is also used by Korda~\cite[Section~2]{Korda}, who constructs, from a polynomial vector field $P$ on $\RR^d$ and a polynomial $b$, a homogeneous polynomial vector field of odd degree whose angular vector field is a positive multiple of the Poincar\'e compactification of $P$ and whose radial rate on the sphere is given by $b$ \cite[(2.4), (2.5) and Remark~2.3]{Korda}, and who uses it for Lyapunov stability \cite[Proposition~2.4]{Korda}. In Theorem~\ref{thm:wrapper} we start instead from a flow on a product of spheres, and the additional work consists in placing two copies of it in one sphere so that the other invariant measures do not affect the sign of the maximal invariant average \eqref{eq:wrapperavg}.

\begin{theorem}[From a maximal invariant average to GAS]\label{thm:wrapper}
Let $k\ge1$, $n_1,\dots,n_k\ge1$, $\cK=S^{n_1-1}\times\dots\times S^{n_k-1}\subseteq\RR^{n_1}\times\dots\times\RR^{n_k}$, let $X=(X_1,\dots,X_k)$ be a polynomial vector field with rational coefficients which is tangent to $\cK$ at every point of $\cK$, and let $a$ be a polynomial with rational coefficients. Let $D\ge3$ be odd with $D\ge\deg X$ and $D-1\ge\deg a$, and put $L=k+1$ and $E=D+2L$. Then one can compute from $(X,a,D)$ a homogeneous polynomial vector field $F$ on $\RR^{1+n_1+\dots+n_k}$ of degree $E$ with rational coefficients such that
\begin{equation}\label{eq:wrapper}
0\text{ is GAS for }\dot v=F(v)\iff\beta(\Phi_X|_\cK,a)<0.
\end{equation}
\end{theorem}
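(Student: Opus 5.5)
We plan to build $F(v)=Y(v)+\mathcal A(v)\,v$ on $\RR^{1+n}$, $n=n_1+\dots+n_k$, with coordinates $v=(v_0,v_1,\dots,v_k)$. Here $Y$ is a homogeneous polynomial vector field of degree $E$ tangent to spheres ($v\cdot Y(v)=0$), and $\mathcal A$ is homogeneous of degree $E-1$. On the sphere $S_\rho=\{|v|^2=\rho\}$ with $\rho=k+1$, the angular flow of $F$ (taken with respect to the Euclidean form scaled so that $S_\rho$ plays the role of $S_Q$) is then the flow of $Y$, and the radial rate is a positive multiple of $\mathcal A$. Lemma~\ref{lem:gascrit}, applied with $Q(v)=|v|^2/\rho$, therefore gives: $0$ is GAS iff $\beta(\Phi_Y|_{S_\rho},\mathcal A)<0$. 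The task thus reduces to constructing a flow on $S_\rho$ and a function on it with the same sign of maximal invariant average as $(\Phi_X|_\cK,a)$.

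\emph{Step 1 (construction on $S_\rho$).} The set $Z=\{|v_i|^2=1\ (i=1,\dots,k),\ v_0=\pm1\}$ consists of two copies of $\cK$ inside $S_\rho$. The penalty functions $P_i=|v_i|^2-1$ and $P_0=v_0^2-1$ vanish on $Z$. Define a vector field on $\RR^{1+n}$ by
\[
\tilde Y_i=X_i(v_1,\dots,v_k)-P_i\,v_i+(\text{radial correction}),\qquad \tilde Y_0=-P_0 v_0+(\cdots),
\]
then project $Q$-orthogonally to the tangent space of $S_\rho$, as in \eqref{eq:aY}. The gradient-type terms $-P_iv_i$ make the quantity $\sum_i P_i^2$ decrease off $Z$ near $Z$, in the same way as the penalty term in Section~\ref{sec:lift}. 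For the function we take $\tilde a=a(v_1,\dots,v_k)-C\sum_{i\ge0}P_i^2$ with a computable rational $C$. On $Z$ the projected field equals $(0,X)$, because $X$ is tangent to $\cK$ and both $v_0v_0'$ and $v_i\cdot X_i$ vanish there, so the flow on each copy of $\cK$ is $\Phi_X$ and $\tilde a=a$ there.

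\emph{Step 2 (dichotomy for invariant measures).} We argue as in Lemmas~\ref{lem:dichotomy} and \ref{lem:coneavg}. Take an ergodic measure $\mu$ of the flow on $S_\rho$. Applying \ref{E:deriv} to the Lyapunov-type functions $P_i^2$ (or to $\log$ of suitable positive quantities) should show that either $\mu(Z)=1$, or $\mu$ gives $\tilde a$ average at most $-1$, provided $C$ exceeds a computable bound on $|a|$ and on the coupling terms. Obtaining this requires a uniform attraction estimate near $Z$ and a uniformly negative $\tilde a$ away from $Z$, both with constants computable via Tarski searches, exactly as in Lemmas~\ref{lem:gradineq}--\ref{lem:tube}. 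Measures on $Z$ push forward to $\Phi_X$-invariant measures on $\cK$ by \ref{E:factor}, and conversely. This yields $\beta(\Phi_X|_\cK,a)\le\beta\le\max\{\beta(\Phi_X|_\cK,a),-1\}$, which is the analogue of \eqref{eq:sandwich} and \eqref{eq:wrapperavg}, and hence the sign equivalence.

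\emph{Step 3 (homogenization).} Each monomial of degree $j$ in the construction is multiplied by $(|v|^2/\rho)^{(E-j)/2}$ (for $Y$) or $(|v|^2/\rho)^{(E-1-j)/2}$ (for $\mathcal A$). Odd/even parity mismatches are handled by using $v_0^2$ or by multiplying by $v_0^2$-type factors, which equal $1$ on $Z$ but not elsewhere. The resulting fields agree with the originals on $S_\rho$, so $\beta$ is unchanged, and the degree bookkeeping $E=D+2L$ with $L=k+1$ leaves exactly enough room for the penalty terms, which have degree up to $D+2k+2$. After the homogenization, $F$ is computable with rational coefficients.

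\emph{Main obstacle.} The main obstacle is Step 2. The penalties must make $Z$ attracting, or make $\tilde a$ very negative off $Z$, \emph{uniformly} on the compact sphere, including near points where some $v_i=0$ and the product structure degenerates. If that is hard, a fallback is to drop the requirement of attraction and simply choose $C$ so large that $\tilde a\le-1$ wherever $\sum_i P_i^2\ge\delta$; near $Z$ one would then use a {\L}ojasiewicz-type inequality for $\sum_iP_i^2$ on $S_\rho$, obtained by Tarski search as in Lemma~\ref{lem:gradineq}.
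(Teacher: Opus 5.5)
Your overall architecture matches the paper's: reverse polar decomposition with $Q=|v|^2/\rho$, two sheets $v_0=\pm1$, the bound $\beta=\max\{\beta(\Phi_X|_\cK,a),-1\}$, then homogenization. However, Step~3 fails as stated, and the failure is structural rather than cosmetic.

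The target degrees $E$ and $E-1$ are odd and even, respectively, while $X_i$ and $a$ generally contain monomials of both parities. Factors $(|v|^2/\rho)^m$ or $v_0^{2m}$ add even degree and cannot repair this. No other choice of factors can either. A homogeneous $Y$ of odd degree is odd and $Z=-Z$, so if $Y=(0,X(x))$ at $v=(1,x)$, then $Y=(0,-X(x))$ at $-v=(-1,-x)$; this equals $(0,X(-x))$ only if $X$ is odd on $\cK$. Likewise the even $\mathcal A$ can equal $a$ on both sheets only if $a$ is even. So your claims that the field is $(0,X)$ and $\tilde a=a$ on all of $Z$, and that homogenization leaves everything unchanged on $S_\rho$, cannot both hold.

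The missing idea is to build an odd factor into the construction from the start. The paper does this by using $\Pi_i(v)=v_0^DX_i(x/v_0)$ and $\alpha(v)=v_0^{D-1}a(x/v_0)$ in place of $X_i(v_1,\dots,v_k)$ and $a$. On the sheet $v_0=\pm1$, in the coordinate $y=x/v_0\in\cK$, this gives $\dot y=v_0^{D-1}X(y)=X(y)$ and the function $a(y)$, because $D-1$ is even. The second sheet is therefore a copy of $(\Phi_X|_\cK,a)$ only up to the conjugacy $x\mapsto-x$, which is harmless because $-\cK=\cK$.

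Step~2 is also underdetermined. Tangency of $X$ holds only on $\cK$, so off $\cK$ the $X$-part can push $|v_i|^2$ away from $1$ at a rate proportional to the distance from $Z$, with an arbitrary $X$-dependent constant. For example, take $X(y)=(-y_2,y_1)+c\,(|y|^2-1)\,y$ on $S^1$ with $c$ large. Your unit-weight penalty $-P_iv_i$ then need not attract, and your constant $C$ cannot help, since it enters only $\tilde a$ and not the flow. You need a weight $K$ on the penalty, computed from $X$ as in \eqref{eq:Kchoice}; with it, the argument of Lemmas~\ref{lem:tube} and~\ref{lem:dichotomy} goes through.

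Your fallback does not work. A {\L}ojasiewicz inequality for $\sum_iP_i^2$ is useful only together with a sufficiently strong gradient penalty. By itself it rules out neither invariant measures near $Z$ but off it, nor ergodic measures that charge both the tube and its complement.

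The paper avoids all of this by projecting blockwise: $T_i=s_i\Pi_i-v_i(v_i\cdot\Pi_i)$ with $s_i=|v_i|^2$, so $v_i\cdot T_i\equiv0$ and the $X$-part never changes any $s_i$. The squared norms then satisfy the closed system $\dot s_i=2s_i(1-s_i-\bar f(s))$, with Lyapunov function $\mathcal L(s)=\sum_i(s_i-\frac12s_i^2)$. By \ref{E:deriv}, every invariant measure lives on the finitely many sets $Z_I$. The reward $b=\mathcal H\alpha-(1-\mathcal H)$, with $\mathcal H=\prod_is_i$, equals $-1$ on every $Z_I$ except the two sheets. This needs no estimates or Tarski searches, and the degenerate points with some $v_i=0$ that you flag as the main obstacle are simply among the $Z_I$.
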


\begin{proof}
\emph{Step 1 (a vector field on one sphere).} Write $v=(v_0,v_1,\dots,v_k)$ with $v_0\in\RR$ and $v_i\in\RR^{n_i}$, write $x_i=v_i$ for $1\le i\le k$ and $x=(x_1,\dots,x_k)$, and put $s_i=|v_i|^2$ and $|v|^2=\sum_{i=0}^ks_i$. Define the homogeneous polynomials
\[
\Pi_0=0,\qquad\Pi_i(v)=v_0^DX_i(x/v_0),\ i=1,\dots,k,\qquad\alpha(v)=v_0^{D-1}a(x/v_0),
\]
of degrees $D$ and $D-1$. Now, put
\[
T_i=s_i\Pi_i-v_i(v_i\cdot\Pi_i),\qquad\bar f(s)=\frac1L\sum_{j=0}^ks_j(1-s_j),\qquad N_i=(1-s_i-\bar f(s))v_i,\qquad\mathcal B=T+N,
\]
and $\mathcal H=\prod_{i=0}^ks_i$, $b=\mathcal H\alpha-(1-\mathcal H)$. We have $v_i\cdot T_i=0$, so $\frac d{dt}s_i=2v_i\cdot N_i=2s_i(1-s_i-\bar f)$ along $\dot v=\mathcal B(v)$, and $v\cdot\mathcal B(v)=\sum_is_i(1-s_i)-\bar f|v|^2$, which vanishes on the sphere $S_L=\{|v|^2=L\}$. Thus $\mathcal B$ is tangent to $S_L$.

\begin{figure}[t]
\centering
\begin{tikzpicture}[
  >={Stealth[length=2.2mm]},
  sheet/.style={line width=1.7pt, black},
  sheetback/.style={line width=0.9pt, black, dashed},
  minus/.style={line width=1.3pt, black!45},
  minusback/.style={line width=0.9pt, black!45, densely dotted},
  push/.style={->, line width=0.7pt, black},
  lab/.style={font=\footnotesize, align=right, anchor=east, inner sep=1pt},
  leader/.style={line width=0.3pt, black!60},
]
  \def\R{2.4}
  \draw[line width=0.6pt] (0,0) circle (\R);
  \draw[->, line width=0.4pt, black!60] (0,-2.95) -- (0,3.1)
    node[above, font=\footnotesize, black] {$v_0$};
  \draw[minusback] (\R,0) arc[start angle=0, end angle=180, x radius=\R, y radius=0.62];
  \draw[minus] (-\R,0) arc[start angle=180, end angle=360, x radius=\R, y radius=0.62];
  \foreach \cy in {1.64,-1.64} {
    \draw[sheetback] (1.70,\cy) arc[start angle=0, end angle=180, x radius=1.70, y radius=0.44];
    \draw[sheet] (-1.70,\cy) arc[start angle=180, end angle=360, x radius=1.70, y radius=0.44];
    \draw[sheet, -{Stealth[length=3.2mm, width=2.6mm]}]
      ([shift={(250:1.70 and 0.44)}]0,\cy)
      arc[start angle=250, end angle=292, x radius=1.70, y radius=0.44];
  }
  \foreach \py in {2.32,-2.32}
    \filldraw[fill=black!45, draw=black, line width=0.5pt] (0,\py) circle (2.6pt);
  \def\Rp{2.72}
  \draw[push] (73:\Rp) arc[start angle=73, end angle=50, radius=\Rp];
  \draw[push] (7:\Rp) arc[start angle=7, end angle=36, radius=\Rp];
  \draw[push] (-7:\Rp) arc[start angle=-7, end angle=-36, radius=\Rp];
  \draw[push] (-73:\Rp) arc[start angle=-73, end angle=-50, radius=\Rp];
  \node[font=\footnotesize, align=left, anchor=west] at (3.0,0)
    {term $N$ of $\mathcal B$\\ ($\mathcal B$ generates $\widehat{\Psi}_e$):\\
     moves points along\\ meridians, towards\\ $Z_{\{0,1\}}$ \;($Z$)};
  \draw[leader] (-1.72,1.64) -- (-3.0,1.64);
  \node[lab] at (-3.05,1.64)
    {$\{v_0=1\}\times \cK\subseteq Z_{\{0,1\}}$ \;($v_0=1$ part of $Z$)\\
     copy of $(X,a)$ via $y=x/v_0$ \;(copy of $(\Psi_e,\cR_e)$)};
  \draw[leader] (-\R,0) -- (-3.0,0);
  \node[lab] at (-3.05,0)
    {$Z_{\{1\}}$, the equator $v_0=0$\\
     $b=-1$ \;($\widehat{\cR}_e=-1$)};
  \draw[leader] (-1.72,-1.64) -- (-3.0,-1.64);
  \node[lab] at (-3.05,-1.64)
    {$\{v_0=-1\}\times \cK\subseteq Z_{\{0,1\}}$ \;($v_0=-1$ part of $Z$)\\
     copy of $(X,a)$ via $y=x/v_0$ \;(copy of $(\Psi_e,\cR_e)$)};
  \draw[leader] (-0.1,2.36) -- (-0.45,2.85);
  \node[lab] at (-0.5,2.9)
    {$Z_{\{0\}}$, the two poles: $b=-1$ \;($\widehat{\cR}_e=-1$)};
  \draw[leader] (-0.1,-2.36) -- (-0.45,-2.85);
  \node[lab] at (-0.5,-2.9)
    {$Z_{\{0\}}$: $b=-1$ \;($\widehat{\cR}_e=-1$)};
  \node[font=\footnotesize] at (0,-3.55)
    {$S_L=\{|v|^2=2\}\subseteq\RR^3$ \;($\sqrt{2}\,S^{2}\subseteq\RR^3$)};
\end{tikzpicture}
\caption{The construction in the proof of Theorem~\ref{thm:wrapper} in the
smallest case $k=1$, $n_1=2$, so that $\cK=S^1$ (a single circle in place of
$S^{n_s}\times S^{n_h}$ in Section~\ref{sec:single}). Adding the
coordinate $v_0$ gives the sphere $S_L=\{|v|^2=L\}\subseteq\RR^3$ with $L=2$
(the analogue of $\sqrt{3}\,S^{n_s+n_h+2}$). The two sheets
$\{v_0=\pm1\}\times \cK$, which form $Z_{\{0,1\}}$ (the set $Z$), are copies of
$\cK$: on each of them, the flow of $\mathcal B$ and the function $b$ are conjugate, via
$y=x/v_0$, to the flow of $X$ on $\cK$ and the function $a$ (so $\widehat{\Psi}_e$
and $\widehat{\cR}_e$ restrict to copies of $\Psi_e$ and $\cR_e$). The poles
$Z_{\{0\}}$ and the equator $Z_{\{1\}}$ are the other invariant sets $Z_I$, and
$b=-1$ on them ($\widehat{\cR}_e=-1$). Along the meridians, the term $N$ of $\mathcal B$
moves every other point towards the sheets, so every invariant measure of the
flow of $\mathcal B$ on $S_L$ is supported on the sheets, the poles and the equator,
which gives \eqref{eq:wrapperavg}. Thick black lines show the sheets, grey
lines and dots the sets on which $b=-1$, and dashed or dotted lines lie on the
far side of the sphere.}
\label{fig:wrapper}
\end{figure}

\emph{Step 2 (invariant measures on $S_L$).} Put $\mathcal L(s)=\sum_{i=0}^k(s_i-\frac12s_i^2)$. On $S_L$, using $\sum_is_i(1-s_i-\bar f)=0$,
\[
\frac d{dt}\mathcal L=\sum_i2s_i(1-s_i)(1-s_i-\bar f)=2\sum_is_i(1-s_i-\bar f)^2\ge0,
\]
with equality exactly when there is a nonempty $I\subseteq\{0,\dots,k\}$ with $s_i=L/|I|$ for $i\in I$ and $s_i=0$ for $i\notin I$. Each such set $Z_I\subseteq S_L$ is compact and invariant, since $\dot s=0$ on it. For $\nu\in\cM(\Phi_{\mathcal B}|_{S_L})$, \ref{E:deriv} gives $\int\dot{\mathcal L}\,d\nu=0$, so $\nu$ is supported on the finite union of the $Z_I$, and $\nu$ is a convex combination of invariant probability measures on the individual $Z_I$. If $I\ne\{0,\dots,k\}$, then $\mathcal H=0$ and $b=-1$ on $Z_I$. On $Z_{\{0,\dots,k\}}=\{v_0=\pm1\}\times\cK$ we have $\bar f=0$, so $\dot v_0=0$, and for $y=x/v_0\in\cK$ tangency gives $x_i\cdot\Pi_i=v_0^{D+1}y_i\cdot X_i(y)=0$, so $\dot x_i=\Pi_i=v_0^DX_i(y)$ and $\dot y=v_0^{D-1}X(y)=X(y)$, because $D-1$ is even. Likewise $b=\alpha=a(y)$ there. So on each of the two sheets $v_0=\pm1$ the flow of $\mathcal B$ with the function $b$ is conjugate to the flow of $X$ on $\cK$ with the function $a$. Consequently
\begin{equation}\label{eq:wrapperavg}
\beta(\Phi_{\mathcal B}|_{S_L},b)=\max\{\beta(\Phi_X|_\cK,a),-1\},
\end{equation}
the value $-1$ being attained, for instance, at the equilibria $(\pm\sqrt L,0,\dots,0)\in Z_{\{0\}}$. In particular $\beta(\Phi_{\mathcal B}|_{S_L},b)<0$ if and only if $\beta(\Phi_X|_\cK,a)<0$.

\emph{Step 3 (homogenization and the reverse polar decomposition).} The components of $T$ are homogeneous of degree $D+2$ and those of $N$ are sums of homogeneous pieces of degrees one, three and five; the pieces of $b$ have degrees zero, $2L$ and $2L+D-1$. Since $D\ge3$ is odd, all degrees in $\mathcal B$ are odd and at most $D+2\le E$, and all degrees in $b$ are even and at most $2L+D-1=E-1$. For a polynomial $G=\sum_\ell G_\ell$, with $G_\ell$ homogeneous of degree $\ell$, $\ell\le e$ and $\ell\equiv e\pmod2$, put $\mathrm{Hom}_e(G)=\sum_\ell(|v|^2/L)^{(e-\ell)/2}G_\ell$; it is homogeneous of degree $e$, has rational coefficients, and equals $G$ on $S_L$. Define
\[
Y=\mathrm{Hom}_E(\mathcal B),\qquad\mathcal A=\mathrm{Hom}_{E-1}(b),\qquad F(v)=Y(v)+\mathcal A(v)\,v.
\]
Then $F$ is homogeneous of degree $E$ with rational coefficients. The polynomial $v\cdot Y(v)$ is homogeneous and vanishes on $S_L$, hence vanishes identically. Apply Lemma~\ref{lem:gascrit} with $Q(v)=|v|^2/L$, so that $S_Q=S_L$: on $S_L$ we get $a_F=v\cdot F/L=\mathcal A=b$ and $Y_F=F-a_Fv=Y=\mathcal B$. Hence the origin is GAS for $F$ if and only if $\beta(\Phi_{\mathcal B}|_{S_L},b)<0$, which by \eqref{eq:wrapperavg} proves \eqref{eq:wrapper}. All steps are finite operations on rational polynomials.
\end{proof}

In the application in Section~\ref{sec:fixed} we have $k=2$, $\cK=S^{n_s}\times S^{n_h}$, $L=3$, $X$ the vector field \eqref{eq:compactsource} and $a=\cR_e$; in the notation of the overview in Section~\ref{subsec:overview-single}, the coordinates $(v_1,v_2,v_0)$ are $(v_s,v_h,v_0)$, the sphere $S_L$ is $\sqrt3\,S^{n_s+n_h+2}$, the set $Z_{\{0,1,2\}}=\{v_0=\pm1\}\times\cK$ is $Z$, and the flow $\Phi_{\mathcal B}|_{S_L}$ and the function $b$ are $\widehat\Psi_e$ and $\widehat\cR_e$.

\subsection{\texorpdfstring{Proofs of Theorems~\ref*{thm:fixedpair} and~\ref*{thm:main}}{Proofs of the existence of a single pair and of the main theorem}}\label{sec:fixed}

\begin{proof}[Proof of Theorem~\ref{thm:fixedpair}]
The reduction $e\mapsto F_e$ is the composition of the following finite steps. They use only rational arithmetic, the fixed integer matrices $A,B$ and vectors $c_0,c_1$ of Lemma~\ref{lem:template}, and the fixed numbers $\rho_*$, $D_h$, $M_0$, $K_0$ and $M$ of Section~\ref{sec:compact}.
\begin{enumerate}[label=(\arabic*)]
\item Compute $c(e)=c_0+ec_1$ and write down the vector field $X_e$ on $\RR^{n_s}$ and the map $h_e$ of Section~\ref{sec:the_source_system}. Only the coefficients of the oscillator equation \eqref{eq:oscillator} depend on $e$.
\item Compute the rational number $K_e=K_0\rho_*^{4D_h|c(e)|_1}$ of Lemma~\ref{lem:bounds}, and $\kappa_e$ by \eqref{eq:kappaeps}; the rationals $w^*$ and $\varepsilon$ of \eqref{eq:kappaeps} do not depend on $e$. We find the integer $m_e$ in \eqref{eq:kappaeps} by testing $m=1,2,\dots$; the search stops because $m^2w^*\to\infty$.
\item Write down the vector field \eqref{eq:compactsource} on $\cK=S^{n_s}\times S^{n_h}$ and the reward \eqref{eq:reward}.
\item Apply Theorem~\ref{thm:wrapper} with $X$ the vector field \eqref{eq:compactsource}, $a=\cR_e$, $k=2$, $n_1=n_s+1$, $n_2=n_h+1$ and $D=M+3$, and let $F_e$ be the resulting vector field.
\end{enumerate}
The numbers $n_s$, $n_h$ and $M$ do not depend on $e$ (Sections~\ref{sec:source} and~\ref{sec:compact}). The vector field \eqref{eq:compactsource} has rational coefficients by Lemma~\ref{lem:central}, since $\kappa_e$ is rational, and it is tangent to $\cK$ (Section~\ref{sec:compact}). Since $\deg X_e,\deg h_e\le M$, the terms of \eqref{eq:central} have degree at most $M+3$ and the remaining terms of \eqref{eq:compactsource} have degree at most $M+2$, so \eqref{eq:compactsource} has degree at most $M+3=D$. The polynomial $\cR_e$ of \eqref{eq:reward} has rational coefficients and degree two. Since $M$ is even, $D=M+3$ is odd and at least three, and $D-1=M+2\ge2$, so the hypotheses of Theorem~\ref{thm:wrapper} hold. Hence $F_e$ is a homogeneous polynomial vector field with rational coefficients on $\RR^{n_*}$, $n_*=1+(n_s+1)+(n_h+1)$, of the odd degree $d_*=D+2L=M+9$, where $L=k+1=3$. Neither number depends on $e$. By Proposition~\ref{prop:signgap} and Theorem~\ref{thm:wrapper},
\[
e\in K\iff\beta_e<0\iff0\text{ is GAS for }\dot x=F_e(x).
\]
Each of the steps (1)--(4) terminates for every $e\in\NN$, so the map $e\mapsto F_e$ is total and computable. Whether $e\in K$ is never used in computing $F_e$, only in the proof of the equivalences above. Hence the map is a many-one reduction from $K$ to $\HGAS(n_*,d_*)$. Since $K$ is many-one hard by \ref{C:K}, \ref{C:trans} shows that $\HGAS(n_*,d_*)$ is many-one hard, and it is r.e.\ by Proposition~\ref{prop:re}. This proves that $\HGAS(n_*,d_*)$ is many-one complete.
\end{proof}

The constants $n_*$ and $d_*$ are determined by the fixed matrices $A,B$ and the fixed construction, but we do not evaluate them numerically. As explained after Lemma~\ref{lem:template}, a program for $e\mapsto F_e$ contains these fixed matrices and vectors, and this is all that a many-one reduction requires.

\phantomsection\label{subsec:proofmain}%
We now combine Part~II with Part~I: Theorem~\ref{thm:fixedpair} gives one pair $(n_*,d_*)$, and Theorem~\ref{thm:transfer} transfers it to all large dimensions and all odd degrees.

\begin{proof}[Proof of Theorem~\ref{thm:main}]
Let $n_*$ and $d_*=2k_*+1$ be given by Theorem~\ref{thm:fixedpair}, so that $\HGAS(n_*,d_*)$ is many-one complete. By Theorem~\ref{thm:transfer}, the set $\HGAS(n,d)$ is many-one complete for every $n\ge N:=\binom{n_*+k_*-1}{k_*}$ and every odd $d\ge3$, and hence undecidable by Section~\ref{subsec:computability}.
\end{proof}

\paperpart{Part III. Consequences and discussion}
\section{Consequences and discussion}\label{sec:main}

Having finished the proof of Theorem~\ref{thm:main} at the end of Section~\ref{subsec:proofmain}, in this section we prove the consequences mentioned in Sections~\ref{sec:intro} and~\ref{sec:overview}. This includes Corollaries~\ref{cor:variable} and~\ref{cor:nonhomog} (Section~\ref{subsec:consequences}) and the results on Lyapunov stability, including Corollary~\ref{cor:ls-cubic} (Section~\ref{sec:transfer}). We end with open problems (Section~\ref{subsec:open}) and the description of a Lean formalization of our results (Section~\ref{subsec:formal}).

\subsection{Further consequences for GAS}\label{subsec:consequences}

Here we discuss two important consequences of Theorem~\ref{thm:main}. The first one concerns the decision problems in which the dimension, and possibly the degree, are part of the input.

\begin{corollary}[Variable dimension and degree]\label{cor:variable}
The sets $\HGAS$, $\HGAS(\cdot,3)$ and $\HGAS(\cdot,d)$, $d\ge3$ odd, are many-one complete. Their complements are not recursively enumerable.
\end{corollary}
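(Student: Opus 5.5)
The plan is to obtain all three statements by composing many-one reductions from a set already known to be complete, Theorem~\ref{thm:main}, together with the recursive enumerability of Proposition~\ref{prop:re}. First, membership in r.e.: $\HGAS$ is r.e.\ by Proposition~\ref{prop:re}. The sets $\HGAS(\cdot,3)$ and $\HGAS(\cdot,d)$ are r.e.\ by the same proposition, since they are preimages of $\HGAS$ under the computable map that fixes codes of degree $d$ and sends every other code to an invalid code.

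Second, hardness. Fix an odd $d\ge3$ and let $N$ be as in Theorem~\ref{thm:main}. Then $\HGAS(N,d)$ is many-one complete. I will reduce $\HGAS(N,d)$ to $\HGAS(\cdot,d)$ using the map that leaves codes of dimension $N$ and degree $d$ unchanged and sends every other code to a fixed invalid code. Validity, dimension and degree are computable (Section~\ref{subsec:stability}), so this map is total computable. A code $e$ lies in $\HGAS(N,d)$ exactly when it has dimension $N$, degree $d$ and belongs to $\HGAS$, and that is exactly when its image lies in $\HGAS(\cdot,d)$. The same map, with the invalid code as default, also reduces $\HGAS(N,d)$ to $\HGAS$. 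By \ref{C:trans}, hardness passes to both targets. The case $d=3$ gives $\HGAS(\cdot,3)$. Combining hardness with recursive enumerability gives completeness.

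Third, the complements. Section~\ref{subsec:computability} already shows, right after \ref{C:trans}, that the complement of any many-one hard set is not r.e. That argument reduces $\NN\setminus K$ to $\NN\setminus S$ and then uses \ref{C:post}. Applying it to each of the three sets finishes the proof.

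I do not expect a real obstacle. The only point needing care is that the default image of off-dimension codes must be a negative instance. The paper's convention that invalid codes belong to no decision problem handles this.
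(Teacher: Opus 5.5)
Your proposal is correct and follows the paper's proof essentially step for step. The paper uses the same reduction, which fixes codes of dimension $N$ and degree $d$ and sends every other code to an invalid code, to get hardness of $\HGAS(\cdot,d)$ and $\HGAS$ from Theorem~\ref{thm:main}. It then cites Proposition~\ref{prop:re} for recursive enumerability and Section~\ref{subsec:computability} for the non-enumerability of the complements.
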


\begin{proof}
Let $N$ be given by Theorem~\ref{thm:main} and let $d\ge3$ be odd, so that $\HGAS(N,d)$ is many-one complete. The map which sends every code of dimension $N$ and degree $d$ to itself, and every other code to a fixed invalid code, is total computable (Section~\ref{subsec:stability}), and it reduces $\HGAS(N,d)$ both to $\HGAS(\cdot,d)$ and to $\HGAS$. Hence these sets are many-one hard by \ref{C:trans}; the case $d=3$ gives $\HGAS(\cdot,3)$. They are r.e.\ by Proposition~\ref{prop:re}, so they are many-one complete, and their complements are not r.e.\ by Section~\ref{subsec:computability}.
\end{proof}

The second consequence concerns polynomial vector fields which are not homogeneous. A polynomial vector field $f=(f_1,\dots,f_n)$ has \emph{degree exactly $d$} if each of its coordinate functions $f_{i}$ has degree at most $d$ and at least one of them has degree $d$, after duplicate monomials have been combined. For integers $n,d\ge1$, let $\mathrm{PGAS}(n,d)$ (respectively, $\mathrm{PLAS}(n,d)$) be the set of codes corresponding to rational-coefficient polynomial vector fields $f:\RR^n\to\RR^n$ of degree exactly $d$ with $f(0)=0$, such that the origin of \eqref{eq:intro-ode} is GAS (respectively, locally asymptotically stable; see Section~\ref{subsec:stability}). Just like for $\HGAS$, we consider codes which do not describe such a vector field to be ``negative instances''.

\begin{corollary}[Polynomial vector fields which are not homogeneous]\label{cor:nonhomog}
There is an integer $N\ge1$ such that for every $n\ge N$ and \textit{every} $d\ge3$ (including even degrees), the sets $\mathrm{PGAS}(n,d)$ and $\mathrm{PLAS}(n,d)$ are many-one hard (and hence undecidable).
\end{corollary}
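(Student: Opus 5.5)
The plan is to reduce $\HGAS(n,3)$ (many-one complete for $n\ge N$ by Theorem~\ref{thm:main}) to $\mathrm{PGAS}(n,d)$ and $\mathrm{PLAS}(n,d)$ for every $d\ge3$ by a computable map $F\mapsto f$ with $f$ of degree exactly $d$. The reduction adds to $F$ a higher-order perturbation that does not change local asymptotic stability, and arranges that global behaviour is governed by $F$ itself. For the GAS part we will probably need a lower-order perturbation instead. Recall from Section~\ref{subsec:homog} that for homogeneous $F$, GAS and local asymptotic stability are equivalent. Codes that are invalid or have the wrong dimension or degree are sent to a fixed negative instance, for instance $f(x)=x+x_1^d e_1$, whose linearization is unstable.

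For $\mathrm{PLAS}$ we take $f=F+g$ with $g$ containing only monomials of degree between $4$ and $d$, including a degree-$d$ monomial (e.g.\ $x_1^d e_1$), so that $f$ has degree exactly $d$; when $d=3$ we take $g=0$. The homogeneous Lyapunov criterion applies if $F$ is GAS. By Rosier's theorem (used in the proof of Lemma~\ref{lem:gascrit}) there is a $C^1$ homogeneous $V$ of degree $\ell$ with $\nabla V\cdot F\le -c|x|^{\ell+2}$. Since $|\nabla V\cdot g|\le C|x|^{\ell+3}$ near $0$, the origin is locally asymptotically stable for $f$. Conversely, suppose $F$ is not GAS. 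Then $\beta(\Phi_{Y_F},a_F)\ge0$ or $F$ is not Lyapunov stable, and we must show the origin of $f$ is not locally asymptotically stable. This is the main obstacle, because a higher-order perturbation can in principle stabilize a marginally unstable homogeneous system (for example when $\beta=0$). To avoid the issue, we first compose with the padding of Lemma~\ref{lem:padding} and consider the lower-degree problem differently. The perturbation is placed only in an extra decoupled coordinate $z$: set $f(x,z)=(F(x),\,-z^3+z^d\cdot 0)$. That choice does not work, since the coordinates must stay coupled. So we use $f(x,z)=(F(x),-z^3-z^d)$ for odd $d$, and $f(x,z)=(F(x),-z^3+\epsilon\,z^{d})$ adjusted for even $d$. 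The product structure makes stability of $f$ equivalent to stability of $F$, provided the scalar equation $\dot z=-z^3\pm z^d$ is locally asymptotically stable at $0$, which holds for $d\ge4$. This uses the dimension $n\ge N+1$, so we take $N$ one larger. For degree $3$ no $z$ term is needed.

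For $\mathrm{PGAS}$ we use the same product. For odd $d$, $\dot z=-z^3-z^d$ is GAS on $\RR$. For even $d$, we choose a scalar polynomial $p(z)=-z^3-z^{d+1}$. That has degree $d+1$, which is too high, so instead we use a two-dimensional decoupled block $(z_1,z_2)$. We need a polynomial of exact degree $d$ that is GAS and whose linear part vanishes. We take $\dot z_1=-z_1^3+z_2^{d}$ and $\dot z_2=-z_2^3$. The triangular structure gives GAS: $z_2\to0$ at rate $t^{-1/2}$, and $z_1$ is driven by a decaying input, with boundedness following from comparison. Products of GAS systems are GAS, and each factor's instability transfers to the product through the invariant subspace $z=0$. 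Hence $f$ is GAS iff $F$ is GAS iff the code is in $\HGAS(n-2,3)$. Taking $N$ two larger than in Theorem~\ref{thm:main}, we obtain many-one hardness of both sets by \ref{C:trans}. The step needing most care is verifying GAS and local asymptotic stability of the triangular $z$-block, together with Lyapunov stability of the product. We will handle this by Lemma~\ref{lem:scaling}-type estimates on each factor, using the estimate $|z_1(t)|\le\max\{|z_1(0)|,\sup|z_2|^{d/3}\}$.
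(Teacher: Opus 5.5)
Your final construction is correct and is essentially the paper's argument: you form a decoupled product of $F$ with a GAS block containing a degree-$d$ term, and recover $F$'s behaviour from the product through the invariant subspace $z=0$. Your even-degree block $\dot z_1=-z_1^3+z_2^d$, $\dot z_2=-z_2^3$ is exactly the paper's $g_d$; the paper uses it uniformly for every $d\ge3$ and for both $\mathrm{PGAS}$ and $\mathrm{PLAS}$, reducing $\HGAS(n-2,3)$ throughout, so your parity case split and one-dimensional blocks are unnecessary.

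In a write-up, drop the abandoned perturbation $F+g$ and the remark that ``the coordinates must stay coupled'': the discarded choice fails only because it has no degree-$d$ term. Also, Lemma~\ref{lem:scaling} does not apply to the nonhomogeneous block, but its Lyapunov stability follows directly from your estimate $|z_1(t)|\le\max\{|z_1(0)|,|z_2(0)|^{d/3}\}$ when $|z_2(0)|\le1$.
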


A homogeneous polynomial vector field of degree three has degree exactly $d$ only for $d=3$. For $d>3$ we therefore add two variables whose differential equations contain a term of degree $d$, and which do not affect the stability of the origin.

\begin{proof}
Let $N_0$ be given by Theorem~\ref{thm:main}, put $N=N_0+2$, and let $n\ge N$ and $d\ge3$.

\emph{Step 1.} First, we consider the polynomial vector field $g_{d} : \RR^{2} \to \RR^{2}$ defined by $g_d(w,z)=(-w^3,\,-z^3+w^d)$, which has degree exactly $d$. We claim that for this vector field, the origin is GAS. Indeed, $|w|$ does not increase along solutions, since $\dot w=-w^3$, and $w(t)\to0$ as $t\to\infty$. Put $V=w^2+z^2$. Where $|w|\le1$ we have $|w|^d\le|w|^3$, and Young's inequality $ab\le\frac14a^4+\frac34b^{4/3}$ with $a=|z|$ and $b=|w|^3$ gives $2|z||w|^d\le\frac12z^4+\frac32w^4$, so that
\[
\dot V=-2w^4-2z^4+2zw^d\le-\tfrac12\bigl(w^4+z^4\bigr)\le-\tfrac14V^2\qquad\text{where }|w|\le1.
\]
If $0<\varepsilon\le1$ and $V(0)<\varepsilon^2$, then $|w(t)|\le|w(0)|<1$ as long as the solution exists, so $V$ does not increase; hence the solution is bounded, it exists for all $t\ge0$ (Section~\ref{subsec:ode}), and $|(w,z)(t)|<\varepsilon$. Thus the origin is Lyapunov stable. For an arbitrary initial state, $|w(t)|\le|w(0)|$, and $|z|$ decreases whenever $|z|^3>|w(0)|^d$, so $|z(t)|\le\max\{|z(0)|,|w(0)|^{d/3}\}$ and the solution exists for all $t\ge0$. Since $w(t)\to0$, there is $t_1\ge0$ with $|w(t)|\le1$ for $t\ge t_1$, and then $\dot V\le-\frac14V^2$ gives $V(t)\to0$ as $t\to\infty$.

\emph{Step 2.} Next, we use $g_d$ to turn polynomial vector fields whose coordinates have degree at most three into polynomial vector fields of degree exactly $d$. Write the points of $\RR^n$ as $(x,w,z)$ with $x\in\RR^{n-2}$ and $w,z\in\RR$. For a polynomial vector field $F:\RR^{n-2}\to\RR^{n-2}$ with $F(0)=0$ whose coordinates have degree at most three, we define the polynomial vector field $f_F:\RR^n\to\RR^n$ by $f_F(x,w,z)=(F(x),g_d(w,z))$, which has degree exactly $d$ since $d\ge3$, and which satisfies $f_F(0)=0$. We claim that the origin is GAS, respectively locally asymptotically stable, respectively Lyapunov stable, for $f_F$ if and only if it is for $F$. Indeed, the system $\dot x=F(x)$, $(\dot w,\dot z)=g_d(w,z)$ is the product of two independent systems, and the origin of the second one is GAS by Step~1. If one of the three properties holds for $F$, then it holds for $f_F$, with the norm $|x|+|(w,z)|$, because the solutions of $f_F$ are pairs of solutions of the two factors. Conversely, the solutions of $f_F$ which start in the invariant subspace $\{w=z=0\}$ are the solutions of $F$, so each of the three properties for $f_F$ implies the same property for $F$.

Finally, consider the map which sends every valid code of dimension $n-2$ and degree three, describing a vector field $F$, to the code of $f_F$, and every other code to the code of $f_{\mathrm{id}}$, where $\mathrm{id}(x)=x$. Note that the origin of $f_{\mathrm{id}}$ is not Lyapunov stable, since its solutions with $w=z=0$ are $x(t)=e^tx(0)$. This map is total computable (Section~\ref{subsec:stability}). For a homogeneous polynomial vector field of degree three, local asymptotic stability and GAS are equivalent (Section~\ref{subsec:homog}), so by the above work this map reduces $\HGAS(n-2,3)$ both to $\mathrm{PGAS}(n,d)$ and to $\mathrm{PLAS}(n,d)$. Since $n-2\ge N_0$, the set $\HGAS(n-2,3)$ is many-one complete by Theorem~\ref{thm:main}, so both sets are many-one hard by \ref{C:trans}, and hence undecidable by Section~\ref{subsec:computability}.
\end{proof}

The same total computable map described in the above proof, with Corollary~\ref{cor:ls-cubic} in place of Theorem~\ref{thm:main}, shows that Lyapunov stability of polynomial vector fields of degree exactly $d$ is many-one hard as well, for every $d\ge3$ and every $n\ge N'+2$, where $N'$ is the integer of Corollary~\ref{cor:ls-cubic}: by Step~2 of the proof, the origin is Lyapunov stable for $f_F$ if and only if it is for $F$. We note that only many-one hardness can be concluded from Corollary~\ref{cor:nonhomog}; whether $\mathrm{PGAS}(n,d)$ and $\mathrm{PLAS}(n,d)$ are r.e.\ is an open problem (Section~\ref{subsec:open}).

\subsection{Lyapunov stability}\label{sec:transfer}

We now combine Theorem~\ref{thm:transfer} with the undecidability results of \cite{Korda,KordaSupp}. We use the statements of these results and, in two places, their proofs. In particular, in the proof of Corollary~\ref{cor:ls-cubic} we apply the construction of \cite[Sections~3--6]{Korda} to a particular register machine, and in Remark~\ref{rem:dim5} we use the reduction constructed in the proof of \cite[Theorem~3.1]{KordaSupp}.

\begin{proof}[Proof of Corollary~\ref{cor:ls-cubic}]
The construction of \cite[Sections~3--6]{Korda} applies to any register machine $\mathsf R$ in the sense of \cite[Section~3]{Korda}, that is, a program whose instructions increment a register, or test a register for zero and otherwise decrement it (this model is used because a step changes one of finitely many natural numbers by $\pm1$, which is easy to imitate by a vector field). The construction produces, computably in $m\in\NN$, a homogeneous polynomial vector field $F_m:\RR^{\tilde N}\to\RR^{\tilde N}$ with rational coefficients, of an odd degree $\tilde D=2\tilde k+1\ge3$ which does not depend on $m$, such that $\mathsf R$ halts on input $m$ if and only if the origin is Lyapunov stable for $F_m$ \cite[(1.1) and Section~6]{Korda}. Let $\mathsf R$ be the register machine obtained by applying the construction of \cite[Remark~3.2]{Korda} to the universal Turing machine $\mathsf U$ of \ref{C:K}. For $e\in\NN$ let $m(e)$ be the input word of $\mathsf U$ that represents $e$, read as a numeral in the base used there; then $e\mapsto m(e)$ is total computable, and $\mathsf R$ halts on $m(e)$ if and only if $\mathsf U$ halts on $e$. Hence the total computable map which sends $e$ to the code of $F_{m(e)}$ satisfies $e\in K$ if and only if this code lies in $\HLS(\tilde N,\tilde D)$, that is, $K\lem\HLS(\tilde N,\tilde D)$. Since $K$ is many-one complete by \ref{C:K}, the set $\HLS(\tilde N,\tilde D)$ is many-one hard by \ref{C:trans}, and Theorem~\ref{thm:transfer}, with $n_*=\tilde N$ and $k_*=\tilde k$, shows that $\HLS(n,d)$ is many-one hard for every $n\ge N=\binom{\tilde N+\tilde k-1}{\tilde k}$ and every odd $d\ge3$. Undecidability follows from Section~\ref{subsec:computability}.
\end{proof}

\begin{remark}\label{rem:contrast}
Interestingly, Proposition~\ref{prop:re} has no known analogue for Lyapunov stability. The reduction used in \cite{Korda} is carried out via a computable map $m\mapsto F_m$, which corresponds to ``$\mathsf R$ halts on $m$ $\iff$ the origin is Lyapunov stable for $F_m$'' \cite[(1.1)]{Korda}. When the halting set of $\mathsf R$ is many-one complete, as is the case for the register machine $\mathsf R$ in the proof of Corollary~\ref{cor:ls-cubic}, the Lyapunov stable codes form a many-one hard set in the dimension $\tilde N$ and the degree $\tilde D$ of that proof, so the codes corresponding to systems whose origin is \emph{not} Lyapunov stable cannot be enumerated (Section~\ref{subsec:computability} and Corollary~\ref{cor:ls-cubic}). For homogeneous polynomial vector fields, positive instances of GAS can therefore be certified, whereas negative instances of Lyapunov stability cannot. The converse theorem behind Proposition~\ref{prop:re} uses asymptotic stability and homogeneity in an essential way \cite{AEK}.
\end{remark}

\begin{remark}\label{rem:dim5}
Let $D_5=2k_5+1$ be the odd degree of \cite[Theorem~3.1]{KordaSupp}. By \cite[Theorem~5.1, Lemma~4.3(c)]{KordaSupp} and the proof of \cite[Theorem~3.1]{KordaSupp}, there is a computable map $m\mapsto F_m$ into rational-coefficient homogeneous polynomial vector fields on $\RR^5$ of degree $D_5$ such that the origin is Lyapunov stable for $F_m$ if and only if a fixed reversible two-counter machine $\mathsf V$ (a register machine with two registers whose steps can be undone uniquely \cite[Section~6]{KordaSupp}) does not halt from the configuration $(1,(2^m,0))$, and the set of $m$ for which $\mathsf V$ halts from this configuration is undecidable \cite[Theorem~6.2(a)]{KordaSupp}. Thus $m\mapsto F_m$ many-one reduces the complement of this set, which is undecidable as well, to $\HLS(5,D_5)$, so that $\HLS(5,D_5)$ is undecidable by \ref{C:trans}. Our Theorem~\ref{thm:transfer}, with $n_*=5$ and $k_*=k_5$, shows that $\HLS(n,d)$ is undecidable for every $n\ge\binom{5+k_5-1}{k_5}=\binom{k_5+4}{4}$ and every odd $d\ge3$.
\end{remark}

\begin{remark}\label{rem:bothsides}
The two reductions in \cite{Korda} and \cite{KordaSupp} ``run in opposite directions''. Specifically, in \cite{Korda} we have that halting corresponds to stability, while in \cite[Part~I]{KordaSupp} we have that halting corresponds to instability. If the halting set in \cite[Theorem~6.2(a)]{KordaSupp} is many-one complete, as one would generally expect from the universality of the machines on which it is based (but which is not stated there), then the argument of Remark~\ref{rem:dim5} together with the reduction $\HLS(5,D_5) \lem \HLS(n,d)$ constructed in the proof of Theorem \ref{thm:transfer} (Sections~\ref{subsec:proofcubic} and~\ref{sec:normalform}) shows that every complement of an r.e. set many-one reduces to $\HLS(n,d)$ for $n\ge\binom{k_5+4}{4}$ and odd $d\ge3$. For $n\ge\max\{N,\binom{k_5+4}{4}\}$, with $N$ as in Corollary~\ref{cor:ls-cubic}, the set $\HLS(n,d)$ would then be neither r.e.\ nor the complement of an r.e.\ set. That is, in contrast with GAS (Proposition~\ref{prop:re}), neither the stable nor the unstable homogeneous polynomial vector fields of degree three could be enumerated.
\end{remark}

Our ``single $(n_*,d_*)$ implies many $(n,d)$'' result, Theorem~\ref{thm:transfer}, concerns one fixed dimension and one fixed degree at a time. When the dimension is part of the input, the reduction to degree three gives the following more precise statement for Lyapunov stability: whatever the exact difficulty of Lyapunov stability for homogeneous polynomial vector fields of arbitrary dimension and degree, it is already attained in degree three. For GAS the corresponding statement follows directly from Corollary~\ref{cor:variable}, since any two many-one complete sets many-one reduce to each other. For Lyapunov stability there is no such shortcut, because $\HLS$ is not known to be recursively enumerable (Remark~\ref{rem:contrast}).

\begin{proposition}[Lyapunov stability with the dimension as part of the input]\label{prop:hls-variable}
The problems $\HLS$, $\HLS(\cdot,3)$ and, for each fixed odd $d\ge3$, $\HLS(\cdot,d)$ are pairwise many-one equivalent. Moreover, $\HLS(\cdot,d)\lem\HLS(\cdot,3)$ for every $d\ge1$.
\end{proposition}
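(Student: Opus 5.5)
The plan is to build a cycle of total computable reductions
\[
\HLS\lem\HLS(\cdot,3)\lem\HLS(\cdot,d)\lem\HLS,
\]
for each fixed odd $d\ge3$, and to prove the extra claim $\HLS(\cdot,d)\lem\HLS(\cdot,3)$ for every $d\ge1$ separately. Throughout, invalid codes are sent to a fixed code which is not Lyapunov stable. A good choice is $x\mapsto|x|^2x$ on $\RR$: its solutions blow up, as in the proof of Theorem~\ref{thm:transfer}.

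\emph{The easy links.} The link $\HLS(\cdot,d)\lem\HLS$ is the identity on valid codes of degree $d$, with every other code sent to the fixed negative instance. The link $\HLS(\cdot,3)\lem\HLS(\cdot,d)$ for odd $d=3+2j$ uses Lemma~\ref{lem:padding} with $n=m$ and no padded variables. It sends $H$ to $\widehat H=|x|^{2j}H$, which is computable from the code and preserves Lyapunov stability.

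\emph{The main link $\HLS\lem\HLS(\cdot,3)$, and more generally $\HLS(\cdot,d)\lem\HLS(\cdot,3)$.} Given a valid code, compute its dimension $n$ and degree $d$ and split into cases.
\begin{itemize}
\item \emph{Odd $d\ge3$.} Write $d=2k+1$ and apply Theorem~\ref{thm:cubicization}. By Proposition~\ref{prop:uniform} the map is uniform in $(n,k)$, so a single program handles all dimensions and odd degrees. The result is a degree-three field on $\RR^{\binom{n+k-1}{k}}$ with the same Lyapunov stability, by \eqref{eq:mainLS}.
\item \emph{Degree $d=1$.} Lyapunov stability of a linear system is decidable (Lemma~\ref{lem:linear}). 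Decide it, then output a fixed stable degree-three code, such as $x\mapsto -x^3$ on $\RR$, or the fixed unstable one.
\item \emph{Even $d$.} This case is the obstacle. Remark~\ref{rem:even} rules out asymptotic stability for even degree, but not Lyapunov stability, and I know of no decision procedure for even degrees in general. I would therefore try to turn an even-degree field $F$ into an odd-degree field with the same Lyapunov stability, and then apply the odd case.
\end{itemize}

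\emph{Plan for the even case.} The construction is an odd-degree ``companion'' built from the angular data of $F$. I would use Lemma~\ref{lem:lyap}, which shows that Lyapunov stability depends only on the pair $(Y_F,a_F)$ on the sphere and is insensitive to positive time changes of the angular flow, as long as the integrals of $a_F$ change accordingly. The key observation is that $M^*$ is invariant under multiplying the whole field by a positive function that is homogeneous of degree $0$. For instance, replacing $F$ by $|x|F(x)$ keeps the angular part and radial rate on $S^{n-1}$, but it is not polynomial.

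The polynomial fix is to add a variable $v_0$ and work on a cone. Set
\[
\widetilde F(x,v_0)=\bigl(v_0F(x),\,0\bigr)
\]
on the half-space $v_0>0$, and handle $v_0<0$ by the oddness of $\widetilde F$, which has degree $d+1$ and is odd. This field leaves each hyperplane $v_0=c$ invariant. Its solutions there are solutions of $\dot x=cF(x)$, a time-reversal when $c<0$. Since $F$ has even degree, $F(-x)=F(x)$, so time reversal corresponds to $x\mapsto-x$. This conjugates $\dot x=-F(x)$ to $\dot x=F(x)$. So every slice is conjugate to $F$ up to a time scaling.

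The unbounded factor $c$ must then be controlled so that a uniform bound $\|x(t)\|\le C\|x(0)\|$ for $F$ transfers to the slices. Homogeneity gives this through the scaling identity \eqref{eq:scaling} together with Lemma~\ref{lem:scaling}, because time rescaling does not affect such a uniform bound. If this works, $\widetilde F$ is Lyapunov stable if and only if $F$ is, and the odd case applies.

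I expect verifying this equivalence to be the main difficulty, particularly the slice $v_0=0$, where all points are equilibria, and the uniformity of the bound across slices. If it fails, a fallback is a padding by a factor $(x,v_0)\mapsto(x\,v_0,\dots)$ of suitable parity, checked with Lemma~\ref{lem:lyap} directly.
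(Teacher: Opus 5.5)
Your cycle of reductions and your case split (invalid codes, $d=1$ via Lemma~\ref{lem:linear}, odd $d\ge3$ via Proposition~\ref{prop:uniform}, and the easy links via Lemma~\ref{lem:padding} and the identity) are the paper's. Your even-degree companion $\widetilde F(x,v_0)=(v_0F(x),0)$ is exactly the field of Lemma~\ref{lem:even}, so no fallback is needed.

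The gap is that you never establish the equivalence for this companion. You conclude with ``if this works'', and you sketch only the direction ``$F$ Lyapunov stable $\Rightarrow$ $\widetilde F$ Lyapunov stable''. That direction is correct, and the difficulties you anticipate there are not real. The slice $v_0=0$ consists of equilibria. Uniformity in the slice value $c$ is automatic: the solution is $\Phi_F^{ct}(x_0)$ for $c>0$ and $-\Phi_F^{|c|t}(-x_0)$ for $c<0$. Hence $\sup_{t\ge0}|x(t)|\le C|x_0|$, with the constant $C$ of Lemma~\ref{lem:scaling} for $F$, whatever $c$ is, and the scaling identity is not needed. Lemma~\ref{lem:scaling} for $\widetilde F$, with the norm $|x|+|v_0|$, then finishes this direction.

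The converse is where an argument is actually required, because a single slice such as $v_0=1$ only yields $|\Phi_F^s(x_0)|\le C(|x_0|+1)$. Suppose $\widetilde F$ is Lyapunov stable with constant $C$ for $|x|+|v_0|$, and start at $(x_0,\varepsilon)$ with $\varepsilon>0$. This solution is global, so $\Phi_F^{s}(x_0)$ exists for every $s\ge0$, and $|\Phi_F^{s}(x_0)|\le C(|x_0|+\varepsilon)$. Letting $\varepsilon\to0$ gives $|\Phi_F^s(x_0)|\le C|x_0|$, and Lemma~\ref{lem:scaling} shows that $F$ is Lyapunov stable. Equivalently, you can apply the scaling identity \eqref{eq:scaling} on the slice $v_0=1$. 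With this step supplied, your proof is the paper's.

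One small bookkeeping point: the reductions out of $\HLS(\cdot,3)$ and out of $\HLS(\cdot,d)$ must also send valid codes of the wrong degree to the negative instance, not only invalid codes. For $\HLS(\cdot,d)\lem\HLS(\cdot,3)$ the cleanest route is to compose $\HLS(\cdot,d)\lem\HLS\lem\HLS(\cdot,3)$, as the paper does.
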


The proof of Proposition \ref{prop:hls-variable}, which is given at the end of Section \ref{sec:transfer}, has to handle every valid input. For odd degrees $2k+1\ge3$ we will use Theorem~\ref{thm:cubicization}, now with a single algorithm for all pairs $(n,k)$ (see Proposition~\ref{prop:uniform} below). Linear inputs are decided directly (Lemma~\ref{lem:linear}), and inputs of even degree, which can be Lyapunov stable, are turned into inputs of odd degree with the same answer (Lemma~\ref{lem:even}).

\begin{lemma}\label{lem:even}
Let $f:\RR^n\to\RR^n$ be homogeneous of even degree $2k\ge2$, and define $\tilde f(x,w)=(w\,f(x),0)$ on $\RR^n\times\RR$. Then $\tilde f$ is homogeneous of degree $2k+1$, and the origin is Lyapunov stable for $\tilde f$ if and only if it is Lyapunov stable for $f$.
\end{lemma}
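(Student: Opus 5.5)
The plan is to analyse the product structure of the lifted system directly. Homogeneity of degree $2k+1$ is immediate, since each component $w f_i(x)$ is a product of a linear form and a form of degree $2k$. Along any solution of $(\dot x,\dot w)=\tilde f(x,w)$ the coordinate $w$ is constant, $w(t)=w_0$, and $x$ solves $\dot x=w_0 f(x)$.

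For $w_0>0$, the substitution $s=w_0t$ turns this into a time-rescaled solution of $\dot x=f(x)$. For $w_0<0$, I would use that $f$ has even degree, so $f(-x)=f(x)$. Then $\tilde x(s)=-x(s/|w_0|)$ satisfies
\[
\frac{d\tilde x}{ds}=-\frac{1}{|w_0|}\,w_0 f(x)=f(x)=f(-\tilde x),
\]
and hence $\tilde x$ solves $\dot{\tilde x}=f(\tilde x)$. In both cases the forward orbit of $x$ under $\tilde f$, including its maximal forward existence, is up to sign the forward orbit of $\pm x_0$ under $f$, reparametrized by the linear time change $s=|w_0|t$. For $w_0=0$ the point is an equilibrium.

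The two implications then follow from Lemma~\ref{lem:scaling}. If $f$ is Lyapunov stable, every solution of $f$ is global with $|x(s)|\le C|x(0)|$. By the correspondence, every solution of $\tilde f$ is global and satisfies
\[
|x(t)|+|w(t)|\le C|x_0|+|w_0|\le C(|x_0|+|w_0|),
\]
so $\tilde f$ is Lyapunov stable with respect to the norm $|x|+|w|$. Conversely, if $\tilde f$ is Lyapunov stable, I would restrict to the invariant hyperplane $\{w=1\}$, where $\tilde f$ reduces exactly to $\dot x=f(x)$. Lemma~\ref{lem:scaling} gives a constant $C$ with $|x(t)|+1\le C(|x_0|+1)$ for every solution starting there. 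This alone is not a homogeneous bound, so I would then apply the scaling identity \eqref{eq:scaling} for $f$: scaling $x_0$ down to a small norm and using $|x(\cdot)|\le C(|x_0|+1)$ shows that solutions of $f$ from small initial data remain small. That is precisely Lyapunov stability of $f$. Alternatively, one can apply Lemma~\ref{lem:scaling} to $\tilde f$ started at $(x_0,\delta)$ with $\delta\to0$: the time change $s=\delta t$ preserves the whole orbit, so $|x(s)|\le C(|x_0|+\delta)$ and letting $\delta\to0$ gives the homogeneous bound directly.

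I expect the only delicate point to be the orbit correspondence, in particular handling the sign $w_0<0$ through the evenness of $f$ and checking that maximal existence intervals (finite-time blow-up) transfer under the linear time change. Both should be routine.
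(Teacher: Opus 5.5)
Your proposal is correct and follows the paper's proof. It uses the same orbit correspondence $x(t)=\pm\Phi_f^{|w_0|t}(\pm x_0)$ via evenness of $f$ and the same forward bound in the norm $|x|+|w|$ (using $C\ge1$), and your second alternative for the converse, starting at $(x_0,\delta)$ and letting $\delta\to0$, is exactly the paper's argument. Your first alternative also works: on $\{w=1\}$, Lemma~\ref{lem:scaling} gives $|\Phi_f^t(u)|\le 2C-1$ for $|u|=1$, and \eqref{eq:scaling} turns this into $|\Phi_f^s(y)|=|y|\,|\Phi_f^{|y|^{2k-1}s}(y/|y|)|\le(2C-1)|y|$.
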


\begin{proof}
Along solutions of $\dot z=\tilde f(z)$, $z=(x,w)$, the coordinate $w(t)=w_0$ is constant and $\dot x=w_0f(x)$. If $w_0\ge0$, then $x(t)=\Phi_f^{w_0t}(x_0)$. If $w_0<0$, then $\zeta=-x$ satisfies $\dot\zeta=-w_0f(x)=|w_0|f(-\zeta)=|w_0|f(\zeta)$, because $f$ is even, so $x(t)=-\Phi_f^{|w_0|t}(-x_0)$.

If $f$ is Lyapunov stable, Lemma~\ref{lem:scaling} gives $C\ge1$ such that $\Phi_f^{t}(x)$ exists and $|\Phi_f^{t}(x)|\le C|x|$ for all $t\ge0$. By the formulas above, every solution of $\dot z=\tilde f(z)$ is global and satisfies $|x(t)|+|w(t)|\le C(|x_0|+|w_0|)$, so $\tilde f$ is Lyapunov stable by Lemma~\ref{lem:scaling}. Conversely, if $\tilde f$ is Lyapunov stable with constant $C$ for the norm $|x|+|w|$, then for $x_0\in\RR^n$ and $\varepsilon>0$ the solution from $(x_0,\varepsilon)$ is global and $|\Phi_f^{\varepsilon t}(x_0)|\le C(|x_0|+\varepsilon)$ for all $t\ge0$. Hence $\Phi_f^{s}(x_0)$ exists for all $s\ge0$, and letting $\varepsilon\to0$ gives $|\Phi_f^{s}(x_0)|\le C|x_0|$. Lemma~\ref{lem:scaling} shows that $f$ is Lyapunov stable.
\end{proof}

\begin{lemma}\label{lem:linear}
For $A\in\RR^{n\times n}$, the origin of $\dot x=Ax$ is Lyapunov stable if and only if there is a symmetric positive definite matrix $P$ with $A^TP+PA\preceq0$. Consequently Lyapunov stability of rational-coefficient linear vector fields is decidable, uniformly in $n$. GAS of rational linear vector fields is decidable as well, uniformly in $n$.
\end{lemma}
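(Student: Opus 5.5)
The plan is to use the Jordan normal form together with the classical Lyapunov equation. There are three parts to prove: the equivalence, decidability of Lyapunov stability, and decidability of GAS.

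\emph{The equivalence, ``if'' direction.} Suppose $P\succ0$ satisfies $A^TP+PA\preceq0$, and set $V(x)=x^TPx$. Along solutions, $\dot V=x^T(A^TP+PA)x\le0$. Hence $\lambda_{\min}(P)|x(t)|^2\le\lambda_{\max}(P)|x(0)|^2$, which gives the bound $|x(t)|\le C|x(0)|$, and Lemma~\ref{lem:scaling} (the linear case, with $d=1$) yields Lyapunov stability.

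\emph{The equivalence, ``only if'' direction.} Lyapunov stability means $\|e^{At}\|\le C$ for all $t\ge0$. This forces every eigenvalue of $A$ to have nonpositive real part, and every eigenvalue on the imaginary axis to be semisimple, since otherwise the Jordan form produces growth $t^je^{\re\mu t}$. We then build $P$ in either of two ways.
\begin{itemize}
\item Directly: take $P=\int_0^\infty e^{A^Tt}e^{At}e^{-\epsilon t}\,dt$ for small $\epsilon>0$. Then $A^TP+PA=-I+\epsilon P$, which is not quite $\preceq0$, so this needs adjusting.
\item Cleaner: take $P=\sup$-type norm, i.e.\ $V(x)=\sup_{t\ge0}|e^{At}x|^2$. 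This is nonincreasing but not quadratic.
\end{itemize}
The cleanest route is the real Jordan form. Write $A=S\,\mathrm{diag}(A_s,A_c)S^{-1}$, where $A_s$ is Hurwitz and $A_c$ is block diagonal with blocks $\bigl(\begin{smallmatrix}0&-b\\ b&0\end{smallmatrix}\bigr)$ and zeros, so that $A_c$ is skew-symmetric. Let $P_s$ solve $A_s^TP_s+P_sA_s=-I$. Then $P=S^{-T}\mathrm{diag}(P_s,I)S^{-1}$ gives $A^TP+PA=S^{-T}\mathrm{diag}(-I,0)S^{-1}\preceq0$.

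\emph{Decidability.} The condition ``$\exists P$ symmetric with $P\succ0$ and $A^TP+PA\preceq0$'' is a first-order sentence over $\RR$. Positive definiteness is the sentence $\forall x\,(x\ne0\Rightarrow x^TPx>0)$, and negative semidefiniteness is the analogous universal sentence. The whole sentence has rational coefficients and is computable from the code of $A$ uniformly in $n$, so Theorem~\ref{thm:tarski} decides it.

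For GAS, the origin of $\dot x=Ax$ is GAS if and only if $A$ is Hurwitz. This is equivalent to $\exists P\succ0$ with $A^TP+PA\prec0$, again a Tarski sentence. Alternatively, the Routh–Hurwitz test applied to the characteristic polynomial decides it by rational arithmetic.

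The main obstacle is the ``only if'' direction: we must extract semisimplicity of the imaginary-axis eigenvalues from boundedness of $e^{At}$. We would verify this by the standard Jordan-block computation, noting that a nontrivial block for an eigenvalue with $\re\mu=0$ contains an entry $te^{\mu t}$, which is unbounded.
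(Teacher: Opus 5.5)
Your proposal is correct and follows the paper's proof essentially step for step. You use the quadratic function $x^TPx$ for the ``if'' direction; for the ``only if'' direction you use semisimplicity of the imaginary-axis eigenvalues, the real Jordan form with a skew-symmetric central block, and the Lyapunov equation on the Hurwitz block; and you decide Lyapunov stability with a Tarski sentence. The only (inessential) difference is for GAS: the paper writes $\det((\xi+i\zeta)I-A)=U+iV$ and decides $\forall\xi\,\forall\zeta\,(U=0\wedge V=0\Rightarrow\xi<0)$, whereas you use the strict Lyapunov inequality (or Routh--Hurwitz), which is equally valid. The two abandoned bullet points in your ``only if'' paragraph can simply be dropped.
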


\begin{proof}
If such $P$ exists, $x^TPx$ is nonincreasing along solutions, so $|x(t)|^2\le c\,|x(0)|^2$ with $c=\lambda_{\max}(P)/\lambda_{\min}(P)$. Conversely, if the origin is Lyapunov stable, then every eigenvalue of $A$ has nonpositive real part, and every eigenvalue $\lambda$ on the imaginary axis, of algebraic multiplicity $q$, satisfies $\operatorname{rank}(A-\lambda I)=n-q$ \cite[Theorem~4.5]{Khalil}; that is, the Jordan blocks of the eigenvalues on the imaginary axis have size one. By the real Jordan form \cite[Theorem~3.4.1.5]{HornJohnson} there is an invertible $T$ with $T^{-1}AT=\mathrm{diag}(A_-,A_0)$, where every eigenvalue of $A_-$ has negative real part and $A_0=SJS^{-1}$ with $J$ block diagonal with blocks $0$ and $\bigl(\begin{smallmatrix}0&-\omega\\\omega&0\end{smallmatrix}\bigr)$, so that $J^T=-J$. Let $P_-$ be the positive definite solution of $A_-^TP_-+P_-A_-=-I$ \cite[Theorem~4.6]{Khalil}, and put $P=T^{-T}\mathrm{diag}(P_-,S^{-T}S^{-1})T^{-1}$. Then $A^TP+PA=T^{-T}\mathrm{diag}\bigl(-I,\,S^{-T}(J^T+J)S^{-1}\bigr)T^{-1}=T^{-T}\mathrm{diag}(-I,0)T^{-1}\preceq0$. For rational $A$, the sentence
\[
\exists P\ \ \Bigl(P=P^T\wedge\forall x\,\bigl(x\ne0\Rightarrow x^TPx>0\bigr)\wedge\forall x\ x^T(A^TP+PA)x\le0\Bigr)
\]
has rational data and is decidable by Theorem~\ref{thm:tarski}. The origin of $\dot x=Ax$ is GAS if and only if every eigenvalue of $A$ has negative real part \cite[Theorem~4.5]{Khalil}; writing $\det\bigl((\xi+i\zeta)I-A\bigr)=U(\xi,\zeta)+iV(\xi,\zeta)$ with $U,V\in\QQ[\xi,\zeta]$ computable from $A$, this is the sentence $\forall\xi\,\forall\zeta\,\bigl(U=0\wedge V=0\Rightarrow\xi<0\bigr)$, which is decidable by Theorem~\ref{thm:tarski}.
\end{proof}

Theorem~\ref{thm:cubicization} is stated for fixed $(n,k)$. In Proposition~\ref{prop:hls-variable} the dimension and the degree are part of the input, so we require one algorithm for all $(n,k)$.

\begin{proposition}[Uniformity of the reduction to degree three]\label{prop:uniform}
There is a single algorithm which, given $n\ge1$, $k\ge1$ and the rational coefficients of a homogeneous polynomial vector field $F:\RR^n\to\RR^n$ of degree $2k+1$, outputs the rational coefficients of a homogeneous polynomial vector field $H:\RR^m\to\RR^m$ of degree three, $m=\binom{n+k-1}{k}$, satisfying \eqref{eq:mainGAS} and \eqref{eq:mainLS}.
\end{proposition}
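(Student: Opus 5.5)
The plan is to go back through the proof of Theorem~\ref{thm:cubicization} in Section~\ref{subsec:proofcubic} and confirm that every step of the construction is an effective procedure taking $(n,k)$ as a parameter. Nothing in that proof depends on choices made by hand for a particular pair. The algorithm first reads $n$ and $k$ from its input. If $k=1$, it returns $H=F$. If $k\ge2$, it enumerates $A_k$, which also gives $m=\binom{n+k-1}{k}$. It then writes down the form $Q$ of \eqref{eq:Q}, the finite list of binomials \eqref{eq:binom}, the polynomial $W$, and the $Q$-gradient $\gradQ W=M^{-1}\nabla W$. After that it expands $D\nu_k(x)F(x)$ and splits each monomial of degree $3k$ into three blocks of length $k$, which yields $G$ as in \eqref{eq:Gnu}. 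Each of these is a plain finite computation on rational polynomials, and its description does not change with $(n,k)$.

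The constants $\eta,c,C_G,B_G,L_G$ are where effectivity needs care. In the proofs of Lemmas~\ref{lem:gradineq}, \ref{lem:tangency} and \ref{lem:lipschitz} they are found by enumerating candidate rationals and deciding first-order sentences with Theorem~\ref{thm:tarski}: \eqref{eq:QEgrad}, \eqref{eq:QEtan}, the bound for $a_G$, and the Lipschitz sentence. The algorithm of Theorem~\ref{thm:tarski} is a single machine that accepts any sentence as input. The sentences themselves are computed from $(n,k,F)$ and the candidate rational in a uniform way. So the searches are a single algorithm, and they halt for every input because the earlier lemmas show that suitable constants exist for every $(n,k)$ and $F$. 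The existence arguments rely on $c_1$, $\kappa_i$ and similar quantities, but these are never computed, so they cause no difficulty. Given the constants, $K$ is the least positive integer satisfying \eqref{eq:Kchoice}, which the algorithm finds by checking rational inequalities, and it outputs $H=G-K\gradQ W$.

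Correctness, meaning \eqref{eq:mainGAS} and \eqref{eq:mainLS}, holds for each fixed input by Theorem~\ref{thm:cubicization}. The output of the uniform algorithm is exactly the vector field constructed in that proof, so correctness carries over directly.

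The main obstacle is checking that no hidden dependence on $(n,k)$ enters. One point is that the search order and the encoding of the sentences must be fixed once for all pairs. The other is that the searches must halt without the algorithm knowing the existential constants. Both points hold because the termination arguments in Lemmas~\ref{lem:gradineq}--\ref{lem:lipschitz} are proved for arbitrary $(n,k)$.
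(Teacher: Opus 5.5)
Your proposal is correct and takes essentially the same route as the paper, which also walks through the proof of Theorem~\ref{thm:cubicization}. It notes that $A_k$, $Q$, the binomials and $W$ depend only on $(n,k)$, that $G$ comes from expanding $D\nu_k(x)F(x)$, that $\eta,c,C_G,B_G,L_G$ are found by terminating searches in which each step is a decision by Theorem~\ref{thm:tarski}, and that $K$ is the least positive integer satisfying \eqref{eq:Kchoice}; your extra remarks on the case $k=1$ and on fixing the search order and sentence encoding once for all pairs simply spell out the same argument in more detail.
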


\begin{proof}
Every step in the proof of Theorem~\ref{thm:cubicization} is computable from $(n,k,F)$: the index set $A_k$, the form $Q$, the list of binomials and $W$ depend only on $(n,k)$; $G$ is obtained by expanding $D\nu_k(x)F(x)$; the rationals $\eta,c,C_G,B_G,L_G$ are found by searches over rationals in which every step is a decision of a sentence by Theorem~\ref{thm:tarski}, and these searches terminate by Lemmas~\ref{lem:gradineq}, \ref{lem:tangency} and~\ref{lem:lipschitz}; and $K$ is the least positive integer satisfying \eqref{eq:Kchoice}. None of these steps depends on $(n,k)$ other than through finite data computed from it.
\end{proof}

\begin{proof}[Proof of Proposition~\ref{prop:hls-variable}]
Let $e_\pm$ denote the codes of $x\mapsto\mp x^3$ on $\RR^1$; the origin is Lyapunov stable for $e_+$, and not for $e_-$, whose nonzero solutions blow up in finite time. We first show $\HLS\lem\HLS(\cdot,3)$. Let $e$ be a code. If $e$ is invalid, output $e_-$. Otherwise $e$ codes a triple $(n,d',f)$. If $d'=1$, decide Lyapunov stability by Lemma~\ref{lem:linear} and output $e_+$ or $e_-$. If $d'=2k$ is even, replace $(n,f)$ by $(n+1,\tilde f)$ from Lemma~\ref{lem:even}, which has degree $2k+1$ and the same answer. If the degree is now $2k+1\ge3$ and the dimension is $n'$, output the code of the vector field $H$ on $\RR^m$, $m=\binom{n'+k-1}{k}$, given by Proposition~\ref{prop:uniform}. This map is total and computable by Lemma~\ref{lem:linear} and Proposition~\ref{prop:uniform}, and it is a many-one reduction by Lemmas~\ref{lem:linear} and~\ref{lem:even} and Theorem~\ref{thm:cubicization}.

Next, let $d=3+2j$ be odd. The map which sends a valid code of a vector field $H$ of degree three on $\RR^m$ to the code of the vector field $\widehat H$ of Lemma~\ref{lem:padding} with $n=m$, and every other code to $e_-$, reduces $\HLS(\cdot,3)$ to $\HLS(\cdot,d)$ by Lemma~\ref{lem:padding}. Finally, for every $d\ge1$, the map which sends every code of degree $d$ to itself and every other code to $e_-$ reduces $\HLS(\cdot,d)$ to $\HLS$, since a code of degree $d$ lies in $\HLS(\cdot,d)$ exactly when it lies in $\HLS$, and $e_-$ lies in neither set. By transitivity \ref{C:trans}, $\HLS\lem\HLS(\cdot,3)\lem\HLS(\cdot,d)\lem\HLS$ for every odd $d\ge3$, which proves the first assertion, and $\HLS(\cdot,d)\lem\HLS\lem\HLS(\cdot,3)$ for every $d\ge1$.
\end{proof}

The relation of these results to the literature is discussed in Section~\ref{sec:intro}.

\subsection{Open problems}\label{subsec:open}
Degree three is the smallest possible degree for undecidability of GAS (Remark~\ref{rem:even} and Lemma~\ref{lem:linear}), but the bound $N$ on the dimension in Theorem~\ref{thm:main} is very large and is not computed. The least dimension in which GAS, or Lyapunov stability, of rational-coefficient homogeneous polynomial vector fields of degree three is undecidable is open. In particular, we do not know whether $\HGAS(3,3)$ or $\HLS(3,3)$ is decidable ($n=3$ is the first open case, since both of these sets are decidable for dimension $n \in \{1,2\}$; see \cite[Part III]{KordaSupp}). Also, for Lyapunov stability, Remark~\ref{rem:even} does not rule out the possibility of undecidability for even degrees; we do not know whether Lyapunov stability is decidable even for homogeneous polynomial vector fields of degree two. We note that the method of Part I cannot say anything about degree two: if $H$ has even degree, then $a_H(-u)=-a_H(u)$ and $Y_H(-u)=Y_H(u)$, so the antipodal map conjugates the angular flow of $H$ to its time reversal while changing the sign of the radial rate, and an antipodally symmetric invariant set, such as $\Sigma$, which attracts nearby angular trajectories would also repel them. We also do not know whether the sets $\mathrm{PGAS}(n,d)$ and $\mathrm{PLAS}(n,d)$ of Corollary~\ref{cor:nonhomog} are recursively enumerable; the proof of Proposition~\ref{prop:re} uses the converse theorem of \cite{AEK}, which is specific to homogeneous vector fields. It is interesting to note that if the halting set of \cite[Theorem~6.2(a)]{KordaSupp} (which is used in dimension five) is many-one complete, then $\HLS(n,d)$ is neither recursively enumerable nor the complement of a recursively enumerable set for large $n$ (Remark~\ref{rem:bothsides}); this would make Lyapunov stability strictly harder than GAS in the arithmetical hierarchy.

\subsection{Formal verification}\label{subsec:formal}

We have formalized the main results of this paper in the proof assistant Lean~4, using its mathematical library Mathlib \cite{Lean4,Mathlib}. The formalization is available at \url{https://github.com/j49liu/gas_decidability_lean/} and consists of two Lean libraries, each with its own root file. The library \texttt{HGAS} is the original formalization of Theorem~\ref{thm:main}, extended by the results of Part~I on Lyapunov stability. The library \texttt{Stability} uses \texttt{HGAS} without changing it, and adds a reduction from the halting problem to Lyapunov stability (described below), most of the consequences in Part~III, and a second proof of Theorem~\ref{thm:fixedpair}. As in Section~\ref{subsec:stability}, the decision problems are formalized as sets of codes of polynomial vector fields with rational coefficients. For the following results, the formalization proves the statement in this paper, or the equivalent form described, together with formal versions of the results on which their proofs depend.
\begin{itemize}
\item Theorems~\ref{thm:main} and~\ref{thm:fixedpair}, including formal versions of the classical results which Part~II cites: the case of the MRDP theorem needed for Lemma~\ref{lem:template}, the theta identities, the covering property used in Proposition~\ref{prop:modcomplete}, and the Dani--Smillie dichotomy for $\Gamma(2)$. The second proof of Theorem~\ref{thm:fixedpair}, in the library \texttt{Stability}, follows Sections~\ref{sec:orbits} and~\ref{sec:compact} more closely: it chooses $\kappa_e$ by the least-integer rule of \eqref{eq:kappaeps} and proves two bounds of the form of Proposition~\ref{prop:signgap}, with explicit constants of its own.
\item Lemma~\ref{lem:gascrit} for the Euclidean form, and versions of Lemmas~\ref{lem:scaling} and~\ref{lem:lyap} for the representation of solutions described below. The case of a general form $Q(y)=y^{\top}My$ follows by the linear change of coordinates $y\mapsto M^{1/2}y$, which maps $S_Q$ onto the unit sphere: the formalization proves that GAS and Lyapunov stability are invariant under linear changes of coordinates, and uses this for the form $Q$ of Section~\ref{sec:lifting}.
\item Theorem~\ref{thm:cubicization} for every fixed $n\ge2$ and $k\ge1$, with one computable map for both \eqref{eq:mainGAS} and \eqref{eq:mainLS}. Proposition~\ref{prop:shadow} is proved in an equivalent form, in which the bound on the trajectory and the bound on the radial integral are stated separately, for a norm equivalent to $\|\cdot\|_Q$. For $n=1$ the formalization proves the equivalences \eqref{eq:mainGAS} and \eqref{eq:mainLS}, and gives a computable map for GAS alone.
\item Lemma~\ref{lem:padding}, and the many-one reductions $\HGAS(n_*,d_*)\lem\HGAS(n,d)$ and $\HLS(n_*,d_*)\lem\HLS(n,d)$, for $n\ge N$, with $N$ as in Theorem~\ref{thm:transfer}, and odd $d\ge3$, from which Theorem~\ref{thm:transfer} follows (Section~\ref{subsec:computability}). The reduction for $\HLS$ is formalized for $n_*\ge2$. This is no restriction, since $\HLS(1,d)$ is decidable for every $d$ (Remark~\ref{rem:even}), and hence neither many-one hard nor undecidable.
\item Corollary~\ref{cor:ls-cubic}, without any hypothesis on the results of \cite{Korda}. The library \texttt{Stability} formalizes the reduction from the halting problem to Lyapunov stability constructed in \cite[Sections~3--6]{Korda}, following the written proof there, and applies it to a fixed register machine whose halting set is many-one complete. This machine has five registers and is obtained by compiling a universal machine from Mathlib, instead of being the machine $\mathsf R$ in the proof of Corollary~\ref{cor:ls-cubic}. The formalization does not reuse code from the Lean formalization of the main theorem of \cite{Korda} which is mentioned in \cite[Section~1.1]{Korda}.
\item Part of Corollary~\ref{cor:variable}: the set of codes of homogeneous polynomial vector fields whose origin is GAS, in which the dimension, an odd degree and the coefficients are all part of the input, is many-one complete. This set differs from $\HGAS$ only in conventions: codes of even degree are negative instances by definition, rather than by Remark~\ref{rem:even}, which is not formalized, and dimension zero is allowed. The statements for $\HGAS(\cdot,3)$ and $\HGAS(\cdot,d)$ are not formalized.
\item Corollary~\ref{cor:nonhomog}.
\end{itemize}
Remark~\ref{rem:dim5}, Propositions~\ref{prop:hls-variable} and~\ref{prop:uniform}, and Lemmas~\ref{lem:even} and~\ref{lem:linear} are not formalized. In particular, the formal versions of the map of Theorem~\ref{thm:cubicization} are computable for each fixed pair $(n,k)$, but not uniformly in $(n,k)$.

The proofs contain no unfinished steps (i.e., no occurrences of Lean's placeholder \texttt{sorry}), and the libraries declare no axioms of their own. Two scripts in the repository apply the command \texttt{\#print axioms}, which lists the axioms on which a proof depends, to 29 declarations of \texttt{HGAS}, among them the formal versions of Theorems~\ref{thm:main} and~\ref{thm:fixedpair}, and to 454 declarations of \texttt{Stability}, among them the formal versions of Corollaries~\ref{cor:ls-cubic} and~\ref{cor:nonhomog} and of the part of Corollary~\ref{cor:variable} described above. Each of these declarations depends only on the standard axioms \texttt{propext}, \texttt{Classical.choice} and \texttt{Quot.sound} of Lean.

The formalization follows the proofs in this paper, with the equivalent forms noted above and the following further differences. First, solutions of differential equations are represented by trajectories defined for all $t\ge0$, rather than by maximal solutions; accordingly, Lyapunov stability and local asymptotic stability include the requirement that such a trajectory exists from every initial state near the origin. Second, Proposition~\ref{prop:re} is formalized for odd degrees, uniformly in the dimension and the degree, through an exact finite grid certificate and a converse Lyapunov theorem for homogeneous vector fields proved in Lean, instead of \cite{AEK} and quantifier elimination. Third, the reduction to degree three uses, for each pair $(n,k)$, a coefficient $K$ of the penalty term in \eqref{eq:H} of the form $c_{n,k}(1+\|F\|_1)$, where $\|F\|_1$ is the sum of the absolute values of the coefficients of $F$ and $c_{n,k}$ is a natural number fixed once for each pair, instead of the searches by quantifier elimination. Fourth, the Diophantine system of Lemma~\ref{lem:template} is replaced by a system of the same kind whose right-hand side depends on $e$ in a different way, and whose numbers of equations and unknowns again do not depend on $e$. As in this paper, the integers $N$ of Theorem~\ref{thm:main} and Corollary~\ref{cor:ls-cubic} are given by expressions which are not evaluated. The file \texttt{FORMALIZATION\_LEDGER.md} in the repository records the detailed correspondence between the statements of this paper and their formal versions.

\section*{Acknowledgments and AI methodology statement}

The original results of this paper, including Theorem \ref{thm:main}, were generated and formalized in Lean using GPT-5.6 Sol Ultra (OpenAI) on July 26, 2026. While we were substantially restructuring the paper and revising the exposition, during which we also made use of various Claude (Anthropic) and OpenAI models, the preprint \cite{Korda} appeared. This new work addressed much of what we were doing, but for Lyapunov stability instead of GAS (see Section \ref{sec:intro} for a detailed discussion of our work in relation to \cite{Korda}). The supplement to this preprint also built on the work in \cite{Samardzija} to prove decidability of both Lyapunov stability and GAS for planar (i.e., dimension two) homogeneous polynomial vector fields \cite[Part~III]{KordaSupp}\footnote{The original draft of this paper also contained a proof of decidability of GAS in the planar homogeneous case, uniform in the degree of homogeneity. Following the treatment of the planar case in \cite[Part~III]{KordaSupp}, we omitted that proof from the present version to avoid duplication.}. We realized that our results (in particular, Theorem \ref{thm:transfer}) in Part~I could be extended from GAS to Lyapunov stability, and could subsequently be combined with the new results to prove undecidability of Lyapunov stability for homogeneous polynomial vector fields of degree three in a fixed dimension (Corollary~\ref{cor:ls-cubic}), among other results related to Lyapunov stability (Lemma~\ref{lem:tube}, Proposition~\ref{prop:shadow}, Proposition~\ref{prop:hls-variable}).

During the course of substantially reworking the manuscript into its current form, the authors have checked the statements, arguments, and references in the paper for correctness and accuracy. The formal statements corresponding to the results listed in Section~\ref{subsec:formal} are verified by the machine-checked Lean proofs and the reported axiom audit. Furthermore, the authors have spent considerable time checking manually that the formal statements of the headline results in the paper accurately correspond to the statements of these results as they are written in the paper. The authors take full responsibility for the content of this paper.

\bibliographystyle{alpha}
\bibliography{main_refs}

@article{ACH,
  author    = {M. J. Ablowitz and S. Chakravarty and R. G. Halburd},
  title     = {Integrable systems and reductions of the self-dual {Y}ang--{M}ills equations},
  journal   = {J. Math. Phys.},
  volume    = {44},
  number    = {8},
  pages     = {3147--3173},
  year      = {2003}
}

@book{Ahlfors,
  author    = {L. V. Ahlfors},
  title     = {Complex Analysis},
  edition   = {Third},
  publisher = {McGraw--Hill},
  address   = {New York},
  year      = {1979}
}

@inproceedings{Ahmadi12,
  author    = {A. A. Ahmadi},
  title     = {On the difficulty of deciding asymptotic stability of cubic homogeneous vector fields},
  booktitle = {2012 American Control Conference},
  pages     = {3334--3339},
  year      = {2012}
}

@article{AEK,
  author    = {A. A. Ahmadi and B. {El Khadir}},
  title     = {On algebraic proofs of stability for homogeneous vector fields},
  journal   = {IEEE Trans. Automat. Control},
  volume    = {65},
  number    = {1},
  pages     = {325--332},
  year      = {2020}
}

@book{Apostol,
  author    = {T. M. Apostol},
  title     = {Calculus, Volume {II}},
  edition   = {Second},
  publisher = {Wiley},
  address   = {New York},
  year      = {1969}
}

@incollection{Arnold76,
  author    = {V. I. Arnold},
  title     = {Problems of present day mathematics, {XVII} ({D}ynamical systems and differential equations)},
  booktitle = {Mathematical Developments Arising from {H}ilbert Problems},
  editor    = {F. E. Browder},
  series    = {Proc. Sympos. Pure Math.},
  volume    = {28},
  publisher = {Amer. Math. Soc.},
  address   = {Providence, RI},
  year      = {1976},
  pages     = {59}
}

@book{ArnoldMMCM,
  author    = {V. I. Arnold},
  title     = {Mathematical Methods of Classical Mechanics},
  edition   = {Second},
  series    = {Graduate Texts in Mathematics},
  volume    = {60},
  publisher = {Springer},
  address   = {New York},
  year      = {1989},
  note      = {Translated by K. Vogtmann and A. Weinstein}
}

@book{BPR,
  author    = {S. Basu and R. Pollack and M.-F. Roy},
  title     = {Algorithms in Real Algebraic Geometry},
  edition   = {Second},
  series    = {Algorithms and Computation in Mathematics},
  volume    = {10},
  publisher = {Springer},
  address   = {Berlin},
  year      = {2006}
}

@article{Baumgarte,
  author    = {J. Baumgarte},
  title     = {Stabilization of constraints and integrals of motion in dynamical systems},
  journal   = {Comput. Methods Appl. Mech. Engrg.},
  volume    = {1},
  number    = {1},
  pages     = {1--16},
  year      = {1972}
}

@article{Besicovitch,
  author    = {A. S. Besicovitch},
  title     = {On the linear independence of fractional powers of integers},
  journal   = {J. London Math. Soc.},
  volume    = {15},
  number    = {1},
  pages     = {3--6},
  year      = {1940}
}

@article{BlondelTsitsiklis2000,
  author    = {V. D. Blondel and J. N. Tsitsiklis},
  title     = {The boundedness of all products of a pair of matrices is undecidable},
  journal   = {Systems Control Lett.},
  volume    = {41},
  pages     = {135--140},
  year      = {2000}
}

@article{BBKT2001,
  author  = {V. D. Blondel and O. Bournez and P. Koiran and J. N. Tsitsiklis},
  title   = {The stability of saturated linear dynamical systems is undecidable},
  journal = {Journal of Computer and System Sciences},
  volume  = {62}, number = {3}, pages = {442--462}, year = {2001}}

@article{BGP17,
  author    = {O. Bournez and D. S. Gra{\c{c}}a and A. Pouly},
  title     = {Polynomial time corresponds to solutions of polynomial ordinary differential equations of polynomial length},
  journal   = {J. ACM},
  volume    = {64},
  number    = {6},
  pages     = {Art.~38},
  year      = {2017}
}

@article{Brezhnev,
  author    = {Y. V. Brezhnev},
  title     = {Non-canonical extension of $\theta$-functions and modular integrability of $\vartheta$-constants},
  journal   = {Proc. Roy. Soc. Edinburgh Sect.~A},
  volume    = {143},
  number    = {4},
  pages     = {689--738},
  year      = {2013}
}

@article{BLS,
  author    = {Y. V. Brezhnev and S. L. Lyakhovich and A. A. Sharapov},
  title     = {Dynamical systems defining {J}acobi's $\vartheta$-constants},
  journal   = {J. Math. Phys.},
  volume    = {52},
  number    = {11},
  pages     = {112704},
  year      = {2011}
}

@book{Brualdi,
  author    = {R. A. Brualdi},
  title     = {Introductory Combinatorics},
  edition   = {Fifth},
  publisher = {Pearson},
  address   = {Upper Saddle River, NJ},
  year      = {2010}
}

@article{BuserScarpellini,
  author    = {P. Buser and B. Scarpellini},
  title     = {Undecidable propositions by {ODE}'s},
  journal   = {Ann. Acad. Sci. Fenn. Math.},
  volume    = {32},
  number    = {2},
  pages     = {317--340},
  year      = {2007}
}

@article{BuserScarpellini16,
  author    = {P. Buser and B. Scarpellini},
  title     = {Undecidability through {F}ourier series},
  journal   = {Ann. Pure Appl. Logic},
  volume    = {167},
  number    = {7},
  pages     = {507--524},
  year      = {2016}
}

@inproceedings{BychkovPogudin,
  author    = {A. Bychkov and G. Pogudin},
  title     = {Optimal monomial quadratization for {ODE} systems},
  booktitle = {{IWOCA} 2021},
  editor    = {P. Flocchini and L. Moura},
  series    = {Lecture Notes in Comput. Sci.},
  volume    = {12757},
  publisher = {Springer},
  pages     = {122--136},
  year      = {2021}
}

@inproceedings{CaiPogudin,
  author    = {Y. Cai and G. Pogudin},
  title     = {Dissipative quadratizations of polynomial {ODE} systems},
  booktitle = {Tools and Algorithms for the Construction and Analysis of Systems ({TACAS} 2024), Part~{II}},
  editor    = {B. Finkbeiner and L. Kov{\'a}cs},
  series    = {Lecture Notes in Comput. Sci.},
  volume    = {14571},
  publisher = {Springer},
  pages     = {323--342},
  year      = {2024}
}

@article{CPSW,
  author    = {D. C. Carothers and G. E. Parker and J. S. Sochacki and P. G. Warne},
  title     = {Some properties of solutions to polynomial systems of differential equations},
  journal   = {Electron. J. Differential Equations},
  volume    = {2005},
  number    = {40},
  pages     = {1--17},
  year      = {2005}
}

@article{CimaLlibre,
  author    = {A. Cima and J. Llibre},
  title     = {Bounded polynomial vector fields},
  journal   = {Trans. Amer. Math. Soc.},
  volume    = {318},
  number    = {2},
  pages     = {557--579},
  year      = {1990}
}

@article{CEMR,
  author    = {L. Colarte-G{\'o}mez and J. Elias and R. M. Mir{\'o}-Roig},
  title     = {Sumsets and {V}eronese varieties},
  journal   = {Collect. Math.},
  volume    = {74},
  pages     = {353--374},
  year      = {2023}
}

@article{ColdingMinicozzi,
  author    = {T. H. Colding and W. P. {Minicozzi II}},
  title     = {{\L}ojasiewicz inequalities and applications},
  journal   = {Surv. Differ. Geom.},
  volume    = {19},
  pages     = {63--82},
  year      = {2014}
}

@book{ColoniusKliemann,
  author    = {F. Colonius and W. Kliemann},
  title     = {The Dynamics of Control},
  series    = {Systems \& Control: Foundations \& Applications},
  publisher = {Birkh{\"a}user},
  address   = {Boston, MA},
  year      = {2000}
}

@book{CFS,
  author    = {I. P. Cornfeld and S. V. Fomin and Ya. G. Sinai},
  title     = {Ergodic Theory},
  series    = {Grundlehren der mathematischen Wissenschaften},
  volume    = {245},
  publisher = {Springer},
  address   = {New York},
  year      = {1982}
}

@book{Sipser,
  author    = {M. Sipser},
  title     = {Introduction to the Theory of Computation},
  edition   = {Third},
  publisher = {Cengage Learning},
  address   = {Boston},
  year      = {2013}
}

@incollection{dCD93,
  author    = {N. C. A. da Costa and F. A. Doria},
  title     = {On {A}rnol'd's {H}ilbert symposium problems},
  booktitle = {Computational Logic and Proof Theory ({KGC} 1993)},
  editor    = {G. Gottlob and A. Leitsch and D. Mundici},
  series    = {Lecture Notes in Comput. Sci.},
  volume    = {713},
  publisher = {Springer},
  address   = {Berlin},
  pages     = {152--158},
  year      = {1993}
}

@article{DaniSmillie,
  author    = {S. G. Dani and J. Smillie},
  title     = {Uniform distribution of horocycle orbits for {F}uchsian groups},
  journal   = {Duke Math. J.},
  volume    = {51},
  number    = {1},
  pages     = {185--194},
  year      = {1984}
}

@article{DPR61,
  author    = {M. Davis and H. Putnam and J. Robinson},
  title     = {The decision problem for exponential {D}iophantine equations},
  journal   = {Ann. of Math. (2)},
  volume    = {74},
  pages     = {425--436},
  year      = {1961}
}

@book{DLA,
  author    = {F. Dumortier and J. Llibre and J. C. Art{\'e}s},
  title     = {Qualitative Theory of Planar Differential Systems},
  series    = {Universitext},
  publisher = {Springer},
  year      = {2006}
}

@book{EW,
  author    = {M. Einsiedler and T. Ward},
  title     = {Ergodic Theory with a view towards Number Theory},
  series    = {Graduate Texts in Mathematics},
  volume    = {259},
  publisher = {Springer},
  address   = {London},
  year      = {2011}
}

@article{Fenichel74,
  author    = {N. Fenichel},
  title     = {Asymptotic stability with rate conditions},
  journal   = {Indiana Univ. Math. J.},
  volume    = {23},
  number    = {12},
  pages     = {1109--1137},
  year      = {1974}
}

@article{GarayHofbauer,
  author    = {B. M. Garay and J. Hofbauer},
  title     = {Robust permanence for ecological differential equations, minimax, and discretizations},
  journal   = {SIAM J. Math. Anal.},
  volume    = {34},
  number    = {5},
  pages     = {1007--1039},
  year      = {2003}
}

@article{Velasco69,
  author    = {E. A. {Gonz{\'a}lez Velasco}},
  title     = {Generic properties of polynomial vector fields at infinity},
  journal   = {Trans. Amer. Math. Soc.},
  volume    = {143},
  pages     = {201--222},
  year      = {1969}
}

@article{GCB,
  author    = {D. S. Gra{\c{c}}a and M. L. Campagnolo and J. Buescu},
  title     = {Computability with polynomial differential equations},
  journal   = {Adv. in Appl. Math.},
  volume    = {40},
  number    = {3},
  pages     = {330--349},
  year      = {2008}
}

@article{gracca2008boundedness,
  author    = {D. S. Gra{\c{c}}a and J. Buescu and M. L. Campagnolo},
  title     = {Boundedness of the domain of definition is undecidable for polynomial {ODE}s},
  journal   = {Electron. Notes Theor. Comput. Sci.},
  volume    = {202},
  pages     = {49--57},
  year      = {2008}
}

@book{HardyWright,
  author    = {G. H. Hardy and E. M. Wright},
  title     = {An Introduction to the Theory of Numbers},
  edition   = {Sixth},
  publisher = {Oxford University Press},
  address   = {Oxford},
  year      = {2008},
  note      = {Revised by D. R. Heath-Brown and J. H. Silverman}
}

@book{Hatcher,
  author    = {A. Hatcher},
  title     = {Algebraic Topology},
  publisher = {Cambridge University Press},
  address   = {Cambridge},
  year      = {2002}
}

@book{HPS,
  author    = {M. W. Hirsch and C. C. Pugh and M. Shub},
  title     = {Invariant Manifolds},
  series    = {Lecture Notes in Math.},
  volume    = {583},
  publisher = {Springer},
  address   = {Berlin},
  year      = {1977}
}

@article{Hofbauer81,
  author    = {J. Hofbauer},
  title     = {A general cooperation theorem for hypercycles},
  journal   = {Monatsh. Math.},
  volume    = {91},
  number    = {3},
  pages     = {233--240},
  year      = {1981}
}

@book{HornJohnson,
  author    = {R. A. Horn and C. R. Johnson},
  title     = {Matrix Analysis},
  edition   = {Second},
  publisher = {Cambridge University Press},
  address   = {Cambridge},
  year      = {2013}
}

@incollection{HubbardMiller,
  author    = {J. H. Hubbard and R. L. Miller},
  title     = {Equidistribution of horocyclic flows on complete hyperbolic surfaces of finite area},
  booktitle = {In the Tradition of {A}hlfors--{B}ers, {V}},
  series    = {Contemp. Math.},
  volume    = {510},
  publisher = {Amer. Math. Soc.},
  address   = {Providence, RI},
  pages     = {129--150},
  year      = {2010}
}

@article{Jenkinson,
  author    = {O. Jenkinson},
  title     = {Ergodic optimization in dynamical systems},
  journal   = {Ergodic Theory Dynam. Systems},
  volume    = {39},
  number    = {10},
  pages     = {2593--2618},
  year      = {2019}
}

@article{JPS87,
  author    = {R. A. Johnson and K. J. Palmer and G. R. Sell},
  title     = {Ergodic properties of linear dynamical systems},
  journal   = {SIAM J. Math. Anal.},
  volume    = {18},
  number    = {1},
  pages     = {1--33},
  year      = {1987}
}

@article{Kerner,
  author    = {E. H. Kerner},
  title     = {Universal formats for nonlinear ordinary differential systems},
  journal   = {J. Math. Phys.},
  volume    = {22},
  number    = {7},
  pages     = {1366--1371},
  year      = {1981}
}

@book{Khalil,
  author    = {H. K. Khalil},
  title     = {Nonlinear Systems},
  edition   = {Third},
  publisher = {Prentice Hall},
  address   = {Upper Saddle River, NJ},
  year      = {2002}
}

@misc{Korda,
  author       = {M. Korda},
  title        = {{L}yapunov stability of polynomial vector fields is undecidable},
  howpublished = {arXiv:2609.22058v2},
  year         = {2026}
}

@misc{KordaSupp,
  author       = {M. Korda},
  title        = {Supplementary material for ``{L}yapunov stability of polynomial vector fields is undecidable'': dimension five, global asymptotic stability, and the planar case},
  howpublished = {\url{https://homepages.laas.fr/mkorda/arnold_supplement.pdf}},
  year         = {2026}
}

@book{Kuczma,
  author    = {M. Kuczma},
  title     = {An Introduction to the Theory of Functional Equations and Inequalities: {C}auchy's Equation and {J}ensen's Inequality},
  edition   = {Second},
  publisher = {Birkh{\"a}user},
  address   = {Basel},
  year      = {2009},
  note      = {Edited by A. Gil{\'a}nyi}
}

@book{Lawden,
  author    = {D. F. Lawden},
  title     = {Elliptic Functions and Applications},
  series    = {Applied Mathematical Sciences},
  volume    = {80},
  publisher = {Springer},
  address   = {New York},
  year      = {1989}
}

@inproceedings{Lean4,
  author    = {L. de Moura and S. Ullrich},
  title     = {The {L}ean~4 theorem prover and programming language},
  booktitle = {Automated Deduction -- {CADE} 28},
  series    = {Lecture Notes in Comput. Sci.},
  volume    = {12699},
  publisher = {Springer},
  pages     = {625--635},
  year      = {2021}
}

@book{Lee,
  author    = {J. M. Lee},
  title     = {Introduction to Smooth Manifolds},
  edition   = {Second},
  series    = {Graduate Texts in Mathematics},
  volume    = {218},
  publisher = {Springer},
  address   = {New York},
  year      = {2013}
}

@misc{liu2026contr,
  author       = {J. Liu and I. O. Shevchenko},
  title        = {Controllability of polynomial systems is undecidable and not recursively enumerable},
  howpublished = {arXiv:2610.26832},
  year         = {2026}
}

@incollection{Lojasiewicz63,
  author    = {S. {\L}ojasiewicz},
  title     = {Une propri{\'e}t{\'e} topologique des sous-ensembles analytiques r{\'e}els},
  booktitle = {Les {\'E}quations aux D{\'e}riv{\'e}es Partielles (Paris, 1962)},
  series    = {Colloques Internationaux du CNRS},
  volume    = {117},
  publisher = {{\'E}ditions du CNRS},
  address   = {Paris},
  pages     = {87--89},
  year      = {1963}
}

@inproceedings{Mathlib,
  author    = {{The Mathlib Community}},
  title     = {The {L}ean mathematical library},
  booktitle = {Proceedings of the 9th {ACM} {SIGPLAN} International Conference on Certified Programs and Proofs ({CPP} 2020)},
  pages     = {367--381},
  year      = {2020}
}

@article{Matiyasevich70,
  author    = {Y. V. Matiyasevich},
  title     = {Enumerable sets are {D}iophantine},
  journal   = {Soviet Math. Dokl.},
  volume    = {11},
  number    = {2},
  pages     = {354--357},
  year      = {1970}
}

@book{Matiyasevich93,
  author    = {Y. V. Matiyasevich},
  title     = {Hilbert's Tenth Problem},
  series    = {Foundations of Computing},
  publisher = {MIT Press},
  address   = {Cambridge, MA},
  year      = {1993}
}

@article{Mencinger,
  author    = {M. Mencinger},
  title     = {On quadratizations of homogeneous polynomial systems of {ODE}s},
  journal   = {Glas. Mat. Ser.~III},
  volume    = {50(70)},
  number    = {1},
  pages     = {163--182},
  year      = {2015}
}

@book{Meyer,
  author    = {C. D. Meyer},
  title     = {Matrix Analysis and Applied Linear Algebra},
  publisher = {SIAM},
  address   = {Philadelphia, PA},
  year      = {2000}
}

@article{Moore1990,
  author    = {C. Moore},
  title     = {Unpredictability and undecidability in dynamical systems},
  journal   = {Phys. Rev. Lett.},
  volume    = {64},
  number    = {20},
  pages     = {2354--2357},
  year      = {1990}
}

@article{Poincare1881,
  author    = {H. Poincar{\'e}},
  title     = {M{\'e}moire sur les courbes d{\'e}finies par une {\'e}quation diff{\'e}rentielle},
  journal   = {J. Math. Pures Appl. (3)},
  volume    = {7},
  pages     = {375--422},
  year      = {1881}
}

@article{Rosier,
  author    = {L. Rosier},
  title     = {Homogeneous {L}yapunov function for homogeneous continuous vector field},
  journal   = {Systems Control Lett.},
  volume    = {19},
  number    = {6},
  pages     = {467--473},
  year      = {1992}
}

@book{Rudin,
  author    = {W. Rudin},
  title     = {Principles of Mathematical Analysis},
  edition   = {Third},
  publisher = {McGraw--Hill},
  address   = {New York},
  year      = {1976}
}

@article{Samardzija,
  author    = {N. Samard{\v{z}}ija},
  title     = {Stability properties of autonomous homogeneous polynomial differential systems},
  journal   = {J. Differential Equations},
  volume    = {48},
  number    = {1},
  pages     = {60--70},
  year      = {1983}
}

@article{Schreiber98,
  author    = {S. J. Schreiber},
  title     = {On growth rates of subadditive functions for semiflows},
  journal   = {J. Differential Equations},
  volume    = {148},
  number    = {2},
  pages     = {334--350},
  year      = {1998}
}

@inproceedings{Sontag1988,
  author    = {E. D. Sontag},
  title     = {Some complexity questions regarding controllability},
  booktitle = {Proceedings of the 27th {IEEE} Conference on Decision and Control},
  volume    = {2},
  pages     = {1326--1329},
  year      = {1988}
}

@book{Sturmfels,
  author    = {B. Sturmfels},
  title     = {Gr{\"o}bner Bases and Convex Polytopes},
  series    = {University Lecture Series},
  volume    = {8},
  publisher = {Amer. Math. Soc.},
  address   = {Providence, RI},
  year      = {1996}
}

@book{Tarski,
  author    = {A. Tarski},
  title     = {A Decision Method for Elementary Algebra and Geometry},
  edition   = {Second},
  publisher = {University of California Press},
  address   = {Berkeley and Los Angeles},
  year      = {1951}
}

@book{Teschl,
  author    = {G. Teschl},
  title     = {Ordinary Differential Equations and Dynamical Systems},
  series    = {Graduate Studies in Mathematics},
  volume    = {140},
  publisher = {Amer. Math. Soc.},
  address   = {Providence, RI},
  year      = {2012}
}

@book{VO,
  author    = {M. Viana and K. Oliveira},
  title     = {Foundations of Ergodic Theory},
  series    = {Cambridge Studies in Advanced Mathematics},
  volume    = {151},
  publisher = {Cambridge University Press},
  address   = {Cambridge},
  year      = {2016}
}

@book{WW,
  author    = {E. T. Whittaker and G. N. Watson},
  title     = {A Course of Modern Analysis},
  edition   = {Fifth},
  publisher = {Cambridge University Press},
  address   = {Cambridge},
  year      = {2021},
  note      = {Edited by V. H. Moll}
}

@incollection{Zagier123,
  author    = {D. Zagier},
  title     = {Elliptic modular forms and their applications},
  booktitle = {The 1-2-3 of Modular Forms},
  editor    = {K. Ranestad},
  series    = {Universitext},
  publisher = {Springer},
  address   = {Berlin},
  pages     = {1--103},
  year      = {2008},
  note      = {By J. H. Bruinier, G. van der Geer, G. Harder and D. Zagier}
}

@book{Folland,
  author    = {G. B. Folland},
  title     = {Real Analysis: Modern Techniques and Their Applications},
  edition   = {Second},
  publisher = {Wiley},
  address   = {New York},
  year      = {1999}
}

@article{KoiranCosnardGarzon94,
  author  = {P. Koiran and M. Cosnard and M. Garzon},
  title   = {Computability with low-dimensional dynamical systems},
  journal = {Theoretical Computer Science},
  volume  = {132}, number = {1--2}, pages = {113--128}, year = {1994}}

@article{KoiranMoore99,
  author  = {P. Koiran and C. Moore},
  title   = {Closed-form analytic maps in one and two dimensions can simulate universal {Turing} machines},
  journal = {Theoretical Computer Science},
  volume  = {210}, number = {1}, pages = {217--223}, year = {1999}}

\end{document}